\documentclass[a4paper, 11pt]{amsart}
\usepackage{amssymb}
\usepackage{tikz-cd,adjustbox,dsfont,partmac}

\usepackage[all]{xy}
\usepackage[linesnumbered,ruled,vlined]{algorithm2e}
\usepackage[colorlinks=true,linkcolor=magenta,citecolor=magenta]{hyperref}
\usepackage[hmargin=1.5cm, vmargin=2.5cm]{geometry}

\theoremstyle{plain}

\newtheorem{theorem}{Theorem}[section]
\newtheorem*{theorem*}{Theorem}
\newtheorem{lemma}[theorem]{Lemma}
\newtheorem{proposition}[theorem]{Proposition}
\newtheorem{corollary}[theorem]{Corollary}
\newtheorem{conj}[theorem]{Conjecture}

\newtheorem{thmx}{Theorem}

\theoremstyle{definition}
\newtheorem{definition}[theorem]{Definition}
\newtheorem{question}[theorem]{Question}

\newtheorem{remark}[theorem]{Remark}

\begin{document}

\title[Maximality Levels of $S_N\leq S_N^+$]{Maximality Levels of the classical permutation group in the quantum permutation group}

\author{J.P. McCarthy}
\address{%
Department of Mathematics\\
Munster Technological University\\
Cork\\
Ireland}

\email{jp.mccarthy@mtu.ie}
\subjclass[2020]{46L67; 05A18}
\keywords{quantum permutation groups, Tannaka--Krein duality}
\date{February 18, 2026}
\begin{abstract}
Progress on the conjecture of Banica and Bichon that the classical permutation group is a maximal quantum subgroup of the quantum permutation group remains limited to a handful of small-parameter results. By Tannaka--Krein duality, any counterexample to this Maximality Conjecture must arise from a category strictly intermediate between the category $\mathcal{NC}$ of non-crossing partitions and the category $\mathcal{P}$ of all partitions. Any such exotic category must therefore contain a linear combination of crossing-partition vectors. The categories generated by $\mathcal{NC}$ together with some such vectors are studied, with a number of generation results. It is shown that no exotic category can contain a linear combination of three crossing-partition vectors, and, at $N=6$, there is no exotic category containing a linear combination of 31 crossing-partition vectors that is distinguished from $\mathcal{NC}$ or $\mathcal{P}$ at moments of order six.
\end{abstract}

\maketitle

\section{Introduction }
About ten years after the 1998 discovery of the quantum permutation group by Wang \cite{wa2}, it was announced in a widely cited survey \cite{bbc} that Banica--Bichon \cite{bb3} had classified the quantum subgroups of the quantum permutation group $S_4^+$, and noted:
\begin{theorem}[Banica--Bichon]\label{fourmax}
The classical permutation group $S_4$ is a maximal quantum subgroup of the quantum permutation group $S_4^+$.
\end{theorem}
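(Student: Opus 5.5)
The plan follows the Banica--Bichon route: classify enough of the quantum subgroups of $S_4^+$ to see that nothing sits strictly between $S_4$ and $S_4^+$. The starting point is the isomorphism $S_4^+\cong SO_{-1}(3)$, the twist of $SO(3)$ at $q=-1$. Concretely, I would write down a magic unitary $u\in M_4(C(S_4^+))$ and, from the fundamental representation $u=1\oplus v$, produce a $3$-dimensional corepresentation $v$. This $v$ satisfies the $q=-1$ twisted orthogonality relations, and I would check that the assignment is an isomorphism of Hopf $*$-algebras. Under this isomorphism, $S_4\cong SO(3)\cap$ (the classical points) corresponds to the classical version of $SO(3)$ twisted back.

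Next I would use the cocycle twisting correspondence. Quantum subgroups of $SO_{-1}(3)$ that contain the diagonal Klein group $K=\mathbb{Z}_2\times\mathbb{Z}_2$ correspond bijectively to subgroups of $SO(3)$ containing $K$, because the twist is by a 2-cocycle on $\widehat{K}$ and twisting commutes with passing to quotients that retain $K$. Subgroups of $SO(3)$ are classified: cyclic, dihedral, $A_4$, $S_4$, $A_5$, and $SO(3)$ itself. Those containing $K$ and lying between the image of $S_4$ and $SO(3)$ are only $S_4$ and $SO(3)$, since $A_5$ does not contain $S_4$ and $S_4$ is maximal closed in $SO(3)$ apart from $SO(3)$. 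Hence any intermediate $S_4\subseteq G\subseteq S_4^+$ contains $K$, so it corresponds to a closed subgroup of $SO(3)$ containing the $S_4$-image. That subgroup is therefore $S_4$ or $SO(3)$, giving $G=S_4$ or $G=S_4^+$.

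The main obstacle is justifying the subgroup correspondence under twisting. One has to show that a quantum subgroup $G\supseteq S_4$ of the twisted group untwists to a genuine compact quantum group with the same representation-category data. One also has to show that the untwisted object is classical, which requires commutativity after twisting back. I would handle this with Bichon's result that twisting by a cocycle induced from a common subgroup gives a lattice isomorphism of subgroups containing that subgroup. In that setting commutativity follows because the untwisted algebra is a quotient of $C(SO(3))$.

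A fallback route uses Tannaka--Krein duality, as in the paper's framework. An intermediate category between $\mathcal{NC}$ and $\mathcal{P}$ at $N=4$ must, by fusion-rule considerations for $SO_{-1}(3)$, differ from $\mathcal{NC}$ already at order $4$. One would then verify directly that adding any nonzero crossing-partition combination at low order regenerates all of $\mathcal{P}$.
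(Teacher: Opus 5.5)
The paper gives no proof of this statement. It is quoted from Banica--Bichon, and the paper's closing questions say that no Tannaka--Krein proof is known at $N=4$.

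Your main route is the Banica--Bichon one: $S_4^+\cong SO_{-1}(3)$, the $2$-cocycle twist of $SO(3)$ over the diagonal Klein group $K$, the inclusion-preserving bijection between quantum subgroups containing $K$, and then the closed subgroups of $SO(3)$. It is sound, but it hinges on a step you only assert, and in garbled form, since the classical version of $SO(3)$ is $SO(3)$ itself. What you need is that the classical version $S_4\subset SO_{-1}(3)$ untwists to the \emph{octahedral} subgroup of $SO(3)$. This does not follow from ``order $24$ and contains $K$''. The dihedral group of order $24$ whose principal axis is a coordinate axis also contains $K$, and it is not maximal, since it lies in a copy of $O(2)$. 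You can close this in either of two ways:
\begin{itemize}
\item The commutator ideal of $\mathcal{O}(SO_{-1}(3))$ is generated by $v_{ij}v_{ik}$ and $v_{ji}v_{ki}$ ($j\neq k$). These are bihomogeneous for the left and right $K$-coactions, so their twisted and untwisted products differ by non-zero scalars. Hence the same ideal cuts out the signed permutation matrices of determinant one in $SO(3)$.
\item The twist does not change the coalgebra. So the untwisted subgroup has order $24$ and $v$ stays irreducible on it, which excludes the dihedral group.
\end{itemize}
Your list of closed subgroups of $SO(3)$ also omits $SO(2)$ and $O(2)$. Moreover, ``$S_4$ is maximal closed in $SO(3)$'' is exactly what has to be checked, so citing it is circular. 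Check it directly:
\begin{itemize}
\item A closed overgroup of the octahedral group with identity component $SO(2)$ would lie in the normaliser $O(2)$, whose finite subgroups are cyclic or dihedral. So no such overgroup exists.
\item Finite overgroups are excluded by the finite classification.
\item $\mathfrak{so}(3)$ has no $2$-dimensional subalgebra, so the only remaining option is $SO(3)$.
\end{itemize}

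Your fallback Tannaka--Krein route is not an argument. At $N=4$ one has $\dim\hom_{\mathcal{P}}(0,4)=B_4=15$ and $\dim\hom_{\mathcal{NC}}(0,4)=C_4=14$. So any Tannaka--Krein dual that differs from $\mathcal{NC}$ at order four already contains the basic crossing and equals $\mathcal{P}$. Your claim that an intermediate dual ``must differ from $\mathcal{NC}$ already at order $4$'' is therefore equivalent to the theorem itself. The fusion rules of $SO_{-1}(3)$ (those of $SO(3)$) impose no such constraint on an intermediate subgroup; Frobenius reciprocity only guarantees a difference at \emph{some} order. The paper's own order-six computation at $N=4$ leaves an unresolved one-dimensional gap, which is why it poses a Tannaka--Krein proof at $N=4$ as an open question. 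Drop the fallback, or label it as speculative.
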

This led Banica--Bichon to formulate the following conjecture:
\begin{conj}[The Maximality Conjecture]\label{maxconj}
The quantum subgroup $S_N\leq S_N^+$ is maximal at all $N$.
\end{conj}
Around this time there was a flurry of activity surrounding the quantum permutation group.
Banica--Collins \cite{bac} determined the precise relationship between the representation
theory of $S_N^+$ and non-crossing partitions. Soon after, the seminal paper of
Banica--Speicher \cite{bas} introduced \emph{easy quantum groups}, organising the
relationship between \emph{homogeneous} (orthogonal) compact matrix quantum groups, those compact matrix quantum groups intermediate between $S_N\leq \mathbb{G}\leq O_N^+$, and categories of partitions\footnote{This work relies heavily on their framework: further references to ``Banica--Speicher'' will refer always to \cite{bas}}. The appearance of partitions in this setting can be understood from symmetry considerations: the action of $S_N\times S_N$ permuting the rows and columns of the fundamental representation implies that relations depend only on patterns of index coincidences. These patterns correspond precisely to set partitions (see Section \ref{symm}).

\bigskip\noindent

 To a compact matrix quantum group $\mathbb{G}$ with fundamental representation $u\in M_N(C(\mathbb{G}))$ one can associate a Tannaka--Krein category $\mathcal{C}$, which has for objects $\operatorname{Ob}(\mathcal{C})=\mathbb{N}_0$, the non-negative integers, and for morphisms $T\in\hom_{\mathcal{C}}(k,\ell)$, the linear maps satisfying $Tu^{\otimes k}=u^{\otimes \ell}T$, the Peter--Weyl intertwiners. Note that the entries of $u^{\otimes k}\in M_{N^k}(C(\mathbb{G}))$ are given by:
 $$u^{\otimes k}_{\mathbf{i},\mathbf{j}}=u_{i_1j_1}u_{i_2j_2}\cdots u_{i_kj_k};\qquad \mathbf{i},\mathbf{j}\in \{1,2,\dots,N\}^k.$$
  The entries of the matrix $Tu^{\otimes k}-u^{\otimes \ell}T$ give relations that hold in the algebra of continuous functions $C(\mathbb{G})$.
A matrix quantum subgroup $\mathbb{H}\leq \mathbb{G}$, given by a comultiplication-preserving quotient $C_{\mathrm{u}}(\mathbb{G})\twoheadrightarrow C_{\mathrm{u}}(\mathbb{H})$, gives a contravariant inclusion of Tannaka--Krein categories $\mathcal{C}_{\mathbb{G}}\leq \mathcal{C}_{\mathbb{H}}$: that is the `smaller' algebra of continuous functions $C_{\mathrm{u}}(\mathbb{H})$ satisfies more relations than the  `larger' algebra of continuous functions $C_{\mathrm{u}}(\mathbb{G})$.  A counterexample to the Maximality Conjecture \ref{maxconj} --- a strictly intermediate quantum subgroup $S_N\lneq \mathbb{G}\lneq S_N^+$ --- will be called an \emph{exotic} quantum permutation group. This necessarily gives a strictly intermediate  exotic Tannaka--Krein category:
$$\mathcal{C}_{S_N^+}\lneq \mathcal{C}_{\mathbb{G}}\lneq \mathcal{C}_{S_N}.$$
Banica--Collins \cite{bac} determined that the Tannaka--Krein category of $S_N^+$ is the ``category of non-crossing partitions'' (at parameter $N$), and Banica--Speicher presented that the Tannaka--Krein category of $S_N$ is the ``category of all partitions'' (at parameter $N$).
 Therefore an exotic Tannaka--Krein category is a strict intermediate:
 \begin{equation}\mathcal{NC}\lneq\mathcal{C}\lneq \mathcal{P}.\label{strict}\end{equation}

 Banica--Speicher explain how to associate a linear map $T_p$ to a partition $p$, and this makes it natural to talk about a partition being in the Tannaka--Krein category $\mathcal{C}_{\mathbb{G}}$ of a compact matrix quantum group, thereby giving relations that hold in $C(\mathbb{G})$. For example,
 \begin{align*}
   \Labab\in\mathcal{C}_{\mathbb{G}} & \implies C(\mathbb{G})\text{ commutative}\qquad (\text{Prop. \ref{abelian}}), \\
   \Laaa\in\mathcal{C}_{\mathbb{G}} & \implies u^{2}_{ij}=u_{ij}\qquad(1\leq i,j\leq N).
 \end{align*}
Banica--Speicher defined a homogeneous compact matrix quantum group  $S_N\leq \mathbb{G}\leq O_N^+$ to be \emph{easy} when there exists a category of partitions $\mathcal{D}\leq\mathcal{P}$ such that
 $$\hom_{\mathcal{C}_{\mathbb{G}}}(k,\ell)=\operatorname{span}(T_p\colon p\in\mathcal{D}(k,\ell)).$$
  See Section \ref{CoP} for more.
  A second major advance was obtained by Banica--Curran--Speicher \cite{bcs}.
\begin{theorem}[Banica--Curran--Speicher \cite{bcs}]\label{noez}
There is no easy exotic quantum permutation group.
\end{theorem}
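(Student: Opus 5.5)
The plan is to reduce the statement to a purely combinatorial claim about categories of partitions, and then verify that claim by a generation argument. Suppose $S_N\leq\mathbb{G}\leq S_N^+$ is easy, with associated category of partitions $\mathcal{D}\leq\mathcal{P}$. The inclusions of quantum groups give contravariant inclusions of Tannaka--Krein categories $\mathcal{C}_{S_N^+}\leq\mathcal{C}_{\mathbb{G}}\leq\mathcal{C}_{S_N}$. The first step is to pass from these inclusions of linear spans back to inclusions of partition categories, $\mathcal{NC}\subseteq\mathcal{D}\subseteq\mathcal{P}$. This is immediate if we replace $\mathcal{D}$ by the maximal category $\{p : T_p\in\mathcal{C}_{\mathbb{G}}\}$, which is still a category of partitions: it is closed under tensor product, composition and involution, because $T_p\otimes T_q=T_{p\otimes q}$, $T_p^*=T_{p^*}$ and $T_qT_p=N^{c(p,q)}T_{qp}$ with $N\neq0$. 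Since $\mathcal{C}_{S_N^+}\subseteq\mathcal{C}_{\mathbb{G}}$, every non-crossing $T_p$ lies in $\mathcal{C}_{\mathbb{G}}$, so $\mathcal{NC}\subseteq\mathcal{D}$. Because $\mathbb{G}$ is easy, $\mathcal{C}_{\mathbb{G}}$ is spanned by the $T_p$ with $p\in\mathcal{D}$.

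The second step is the key combinatorial claim: any category of partitions $\mathcal{D}$ with $\mathcal{NC}\subsetneq\mathcal{D}$ equals $\mathcal{P}$. Granting this, the argument splits into two cases. If $\mathcal{D}=\mathcal{NC}$, then $\mathcal{C}_{\mathbb{G}}=\mathcal{C}_{S_N^+}$, and Tannaka--Krein duality gives $\mathbb{G}=S_N^+$. If $\mathcal{D}=\mathcal{P}$, then $\mathcal{C}_{\mathbb{G}}=\mathcal{C}_{S_N}$, so $\mathbb{G}=S_N$. In neither case is $\mathbb{G}$ exotic.

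To prove the claim, take a crossing partition $p\in\mathcal{D}$. Using rotation (Frobenius reciprocity), capping adjacent points with the pair partition, and fusing points with $\Laaa$-type singleton and fork operations, which are all available in $\mathcal{NC}$, we reduce $p$ step by step while preserving a crossing. The target is the basic crossing $\Labab$. The plan is as follows:
\begin{itemize}
\item Rotate $p$ to a one-row partition.
\item Choose two blocks that cross, say via points $a<b<c<d$ with $a,c$ in one block and $b,d$ in the other.
\item Merge every other block into a neighbouring block, or delete it using singletons, which are non-crossing.
\item Cap off redundant points inside each block using the fork and pair partitions, until only four points $abab$ remain.
\end{itemize}
The resulting partition is $\Labab$. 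Since $\Labab$ together with $\mathcal{NC}$ generates $\mathcal{P}$, every partition becomes a product of non-crossing ones with crossings, and so $\mathcal{D}=\mathcal{P}$.

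The main obstacle is making the reduction rigorous. One must check that each deletion or merging step keeps at least one crossing, and that merging extra blocks does not accidentally collapse the two crossing blocks into one. The natural approach is induction on the number of points. A block containing none of $a,b,c,d$ can either be removed, if it is a singleton, or fused into an adjacent block that does not lie between the two crossing blocks. When no such safe block exists, one argues via a minimal crossing configuration instead. This is the case analysis carried out in Banica--Curran--Speicher \cite{bcs}, and I would follow it.
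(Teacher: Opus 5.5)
Your first step is sound. Passing to the maximal category $\{p : T_p\in\mathcal{C}_{\mathbb{G}}\}$ is legitimate, since closure under composition uses $N^{c(p,q)}\neq 0$. It also makes explicit the reduction from ``easy and intermediate'' to ``a category of partitions between $\mathcal{NC}$ and $\mathcal{P}$'', which the paper leaves implicit.

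The gap is in the second step, which is the actual content of the theorem. You propose a merge-and-cap reduction and name its rigour (keeping a crossing, not collapsing the two crossing blocks) as the main obstacle. You then defer to the case analysis of Banica--Curran--Speicher, which is the very result you are proving. As written, the claim that $\mathcal{NC}\subsetneq\mathcal{D}$ forces $\mathcal{D}=\mathcal{P}$ is not established. The ``safe block'' and ``minimal crossing configuration'' steps are also left unspecified.

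The missing idea is that the upper singleton $\upsingleton\in\mathcal{NC}(1,0)$ can cap \emph{any} point, not only a point forming a singleton block. Capping $x$ simply deletes $x$ from its block, with a factor $N$ only if that block was $\{x\}$. So rotate $p$ to one row, take a crossing $\kappa=(a,b,c,d)$ of $p$, and cap every point outside $\kappa$. This is the restriction map $R_\kappa$, a tensor product of copies of $\idpart$ and $\upsingleton$, hence in $\mathcal{NC}$.

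The blocks of $R_\kappa(p)$ are the intersections of the blocks of $p$ with $\kappa$. Since $a\sim_p c$, $b\sim_p d$ and $a\not\sim_p b$, this gives exactly $R_\kappa(p)=\Labab$; on vectors, $R_\kappa(\xi_p)=N^{m}\xi_{\Labab}$, where $m$ is the number of blocks of $p$ disjoint from $\kappa$. Nothing is ever merged, so the crossing cannot be lost and the two crossing blocks cannot collapse. Hence no induction, forks, pair cappings or case analysis are needed: rotating $\Labab$ to $\crosspart$ finishes. This one-line argument is the paper's proof.
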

A non-easy homogeneous compact matrix quantum group is one whose $\hom_{\mathcal{C}}(k,\ell)$ spaces are not all spanned by linear maps coming from partitions, but at least some by linear combinations of such linear maps.

Non-easy homogeneous compact matrix quantum groups $S_N\leq \mathbb{G}\leq O_N^+$ do exist \cite{maa}, and so the result of Banica--Curran--Speicher
does not rule out a non-easy exotic quantum permutation group. It follows from (\ref{strict}) that  an exotic Tannaka--Krein category contains a linear combination $T=\sum_{i=1}^m \alpha_i T_{p_i}$ of  partition-linear maps with crossing-partition terms, and a basic strategy of partial progress towards the Maximality Conjecture is to consider $\langle \mathcal{NC},T\rangle$, the smallest Tannaka--Krein category containing $\mathcal{NC}$ and $T$, and show that it does not generate anything strictly intermediate to $\mathcal{NC}$ or $\mathcal{P}$. Theorem \ref{noez} says that for any crossing $p\in\mathcal{P}(k,\ell)$, $\langle\mathcal{NC},T_p\rangle =\mathcal{P}$ or $\mathcal{NC}$.

\bigskip\noindent

 Banica \cite{ban} defined the ``easiness level'' of a homogeneous compact matrix quantum group $S_N\leq\mathbb{G}\leq O_N^+$ to be $m_0$ if it is the minimal $m$ such that the Tannaka--Krein category is generated by linear combinations of length $m$
$$\mathcal{C}=\left\langle \sum_{i=1}^m\alpha_iT_{p_i}\in \hom_{\mathcal{C}_{\mathbb{G}}}(k,\ell)\colon \alpha_i\in\mathbb{C},\,p_i\in\mathcal{P}(k,\ell),\,k,\ell\geq 0\right\rangle,$$
and that a homogeneous quantum subgroup $\mathbb{H}\leq \mathbb{G}$ is maximal at easiness level $m$ if there exists no easiness level $m$ intermediate quantum subgroup $\mathbb{H}\leq \mathbb{K}\leq\mathbb{G}$. The next advances on the Maximality Conjecture came with this definition about ten years after Theorem \ref{noez}:
\begin{theorem}[Banica (\cite{ban}; Th. 7.10, Prop. 8.3)]\label{bane2} The following statements hold:
\begin{enumerate}
  \item The inclusion $S_5<S_5^+$ is maximal.
  \item There is no easiness level two exotic quantum permutation group.
\end{enumerate}
\end{theorem}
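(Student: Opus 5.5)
Theorem \ref{bane2} has two parts, and I would handle them separately, in both cases using Tannaka--Krein duality and the partition framework of Banica--Speicher.

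\textbf{Part (2): no easiness level two exotic group.} Suppose $\mathcal{C}$ is exotic. Then $\mathcal{C}$ contains some $T=\alpha_1T_{p_1}+\alpha_2T_{p_2}\in\hom_{\mathcal{C}}(k,\ell)$ with $\mathcal{C}\neq\mathcal{NC}$, and at least one $p_i$ crossing. If exactly one term is non-zero, Theorem \ref{noez} applies directly. So assume $\alpha_1,\alpha_2\neq0$ and $p_1\neq p_2$.
\begin{enumerate}
\item Use Frobenius reciprocity (rotation) and the category operations with non-crossing partitions to reduce to $k=0$. We may then regard $T$ as a vector $\xi=\alpha_1\xi_{p_1}+\alpha_2\xi_{p_2}$.
\item Compose with the non-crossing ``fusion'' maps $T_q$, $q\in\mathcal{NC}$, that merge blocks. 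Since $p_1\neq p_2$, some pair of legs lies in a common block of one partition but not of the other.
\item Cap off, or pair via the inner product, with suitable non-crossing partitions. The action of $\mathcal{NC}$ on partition vectors produces factors $N^{\#\text{blocks}}$ together with coarsening. The aim is a combination in which the coefficient of one term vanishes, or in which we can take a difference $\xi-\lambda\xi'$ leaving a single crossing partition vector $\xi_{p}$. Theorem \ref{noez} then gives $\langle\mathcal{NC},T_p\rangle=\mathcal{P}$, a contradiction. If instead only non-crossing partitions survive, we conclude $T\in\mathcal{NC}$.
\end{enumerate}
The main obstacle is the case analysis when $p_1,p_2$ are both crossing and have the same kernel refinements. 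Here I would first try comparing the numbers of blocks after contracting with the ``identify all legs'' map and with the ``identify a pair'' map. This gives two linear equations in $\alpha_1,\alpha_2$ whose determinant depends on $N$. The determinant is non-zero unless the two partitions have identical block-count profiles, and in that degenerate case a transposition-type rotation should separate them.

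\textbf{Part (1): $S_5<S_5^+$ is maximal.} Here I would not use partitions alone. Instead I would combine representation theory with the known subgroup structure.
\begin{enumerate}
\item An intermediate $\mathbb{G}$ has $\hom(0,k)$ strictly between $\operatorname{span}\mathcal{NC}(0,k)$ and $\operatorname{span}\mathcal{P}(0,k)$ for some minimal $k$.
\item The dimensions at $N=5$ are given by Catalan and Bell numbers, with linear dependence among the $\xi_p$ once $k>10$. We can compare the fixed-point spaces of $S_5$ and $S_5^+$ on $u^{\otimes k}$ at small $k$. Using that $\mathbb{G}\supseteq S_5$ acts on the orbit structure, an orbit-counting argument shows that a strictly intermediate category would force a classical group strictly between $S_5$ and the classical version, or a proper quantum group whose classical part is $S_5$ and which acts transitively on five points.
\item Then I would invoke the classification of quantum actions on five points (via quantum orbitals, following the $S_4$ argument of Theorem \ref{fourmax}) to exclude such groups. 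This is the hardest step, and I would try first to show that the orbital algebra must be commutative.
\end{enumerate}
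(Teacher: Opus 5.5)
\textbf{Part (2).} Your reduction to $\hom(0,k)$ and the easy cases match the paper: a single non-zero term, or one crossing and one non-crossing term, where subtracting the non-crossing vector leaves a single $\xi_p$. The gap is the case that carries all the content, $p_1,p_2$ both crossing, which your plan does not actually handle.

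Steps 2--3 state the goal rather than give a mechanism, and the mechanism you sketch fails as written:
\begin{itemize}
\item Pairing with non-crossing vectors only produces elements of $\hom(0,0)=\mathbb{C}$.
\item Identifying all legs sends both terms to the one-block vector.
\end{itemize}
Neither gives any information modulo $\mathcal{NC}$. Your fallback for the ``degenerate case'' is not available. Dihedral symmetries act on both terms at once, so they cannot separate them. Any other permutation of legs sends some non-crossing partition to a crossing one, so it is an operator on $\langle\mathcal{NC},\xi\rangle$ only if that dual is already $\mathcal{P}$. The clause ``if only non-crossing partitions survive, then $T\in\mathcal{NC}$'' is a non sequitur; two crossing terms in fact always generate $\mathcal{P}$.

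What is missing is the structure the paper exploits:
\begin{itemize}
\item The restriction $R_\kappa$ to a crossing $\kappa$ sends $\xi_p$ to $N^{|p|-2}$ times the basic crossing vector when $\kappa\in\operatorname{cr}(p)$, and to a non-crossing vector otherwise. This settles the case of a crossing unique to one term (Proposition \ref{lem1}).
\item If $\operatorname{cr}(p_1)=\operatorname{cr}(p_2)$, Theorem \ref{th0} gives $p_1^{\mathcal{CR}}=p_2^{\mathcal{CR}}$. So a pair with $x\sim_{p_1}y$, $x\not\sim_{p_2}y$ (after relabelling) consists of non-crossers.
\item Lemma \ref{nclemma} then lets you restrict to $\kappa\cup\{x,y\}$ and rotate. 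This gives $\{1,3\}\sqcup\{2,4\}\sqcup\{5,6\}$ versus $\{1,3\}\sqcup\{2,4\}\sqcup\{5\}\sqcup\{6\}$.
\end{itemize}
The paper finishes with the semicircle capping $S_{4,5}$. It produces the basic crossing from the first term and the non-crossing $\{1,3\}\sqcup\{2\}\sqcup\{4\}$ from the second.

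Once this set-up is in place, your block-counting idea also closes the case, and no degenerate case arises. Capping the now-adjacent pair $x,y$ by one cup gives basic-crossing coefficient $N^{|p_1|-2}\alpha_1+N^{|p_2|-3}\alpha_2$. Capping them by two singletons gives $N^{|p_1|-2}\alpha_1+N^{|p_2|-2}\alpha_2$. These vanish together only if $\alpha_1=\alpha_2=0$. Without Theorem \ref{th0} and Lemma \ref{nclemma}, however, ``identify a pair'' need not be a non-crossing operation and may change the crossings.

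\textbf{Part (1).} The paper does not prove this; it quotes it from Banica, so there is no internal argument to compare with. Your sketch would not prove it either.
\begin{itemize}
\item Step 2 contains an error. At $N=5$ the $\xi_p$ are already linearly dependent at $k=6$, since Theorem \ref{inj} gives independence iff $k\le N$. Also $\dim\hom_{\mathcal{P}}(0,k)$ counts partitions with at most five blocks, not $B_k$.
\item The orbit-counting dichotomy is empty. The classical version of any $S_5\le\mathbb{G}\le S_5^+$ is $S_5$ itself, and transitivity is automatic. So your second alternative merely restates that $\mathbb{G}$ is exotic.
\item Step 3 has nothing to invoke. There is no classification of quantum subgroups of $S_5^+$ to quote. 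The $N=4$ result rests on $S_4^+\cong SO_3^{-1}$ and the classification of its quantum subgroups, not on orbitals.
\item Orbitals cannot help here. Since $\mathbb{G}\supseteq S_5$ is 2-transitive, its orbital algebra is spanned by $I$ and $J-I$, exactly as for $S_5^+$. Its commutativity is therefore automatic and distinguishes nothing. The higher analogues of orbitals are precisely the spaces $\hom_{\mathcal{C}}(0,k)$, and showing these equal $\hom_{\mathcal{NC}}(0,k)$ is the entire problem.
\end{itemize}
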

Since these results, little further progress has been made: the author has attempted to outline an approach using idempotent states \cite{mc1,mc3}. This approach yielded  only the result that any \emph{finite} exotic quantum permutation group
must have an algebra of functions of dimension exceeding $1440$.

\bigskip\noindent

Any exotic quantum permutation group is distinguished from $S_N^+$ by (sufficiently) higher order moments with respect to the Haar state. That is, if $\varphi$ is the Haar state of an exotic quantum permutation group (say with fundamental representation $v$), and $h$ the Haar state of $S_N^+$, then there would exist a $k>0$ such that:
$$\varphi(v_{i_1j_1}v_{i_2j_2}\cdots v_{i_kj_k})\neq h(u_{i_1j_1}u_{i_2j_2}\cdots u_{i_kj_k}).$$

  The author explicitly computed the fourth order moments for an exotic quantum permutation group, and showed they are the same as those of $S_N^+$ at moment order four\footnote{This observation is already implicit from dimension counts; see the comments
after Theorem \ref{frob}} \cite{mc2}. Some of these fourth order moments were found independently \cite{fsw}. In an unpublished note \cite{fre}, an argument of Freslon--Speicher is presented that  shows that  exotic quantum permutation groups are not distinguished from $S_N^+$ at ``moment level'' five:
\begin{theorem}[Freslon--Speicher]
The quantum subgroup $S_N\leq S_N^+$ is maximal at moment level five.
\end{theorem}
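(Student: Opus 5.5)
The plan is to reduce "maximal at moment level five" to a statement about Tannaka--Krein categories in low degree. The Haar-state moments of order $k$ are determined by the projection onto the fixed points of $v^{\otimes k}$, that is, by the space $\hom_{\mathcal{C}_{\mathbb{G}}}(0,k)$. If an exotic $\mathbb{G}$ has moments of order at most five agreeing with those of $S_N$, I will show this forces $\mathcal{C}_{\mathbb{G}}=\mathcal{P}$, contradicting exoticness. The same argument with $S_N^+$ in place of $S_N$ should give $\mathcal{C}_{\mathbb{G}}=\mathcal{NC}$. Using Frobenius reciprocity (rotation), $\hom(k,\ell)\cong\hom(0,k+\ell)$, so it suffices to show the following. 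If $\mathcal{NC}\le\mathcal{C}$ and $\hom_{\mathcal{C}}(0,m)$ strictly contains $\operatorname{span}\{T_p : p\in NC(m)\}$ for some $m\le 5$, then $\mathcal{C}=\mathcal{P}$. Since the fourth order moments already coincide with those of $S_N^+$ (the result of \cite{mc2} cited above), only $m=5$ is new.

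First, I take $T=\sum_p\alpha_pT_p\in\hom_{\mathcal{C}}(0,5)$ that is not in the non-crossing span. I subtract the non-crossing part, so that $T$ is supported on crossing partitions of $\{1,\dots,5\}$. For $N\ge5$ the maps $T_p$ are linearly independent, so the coefficients are well defined; small $N$ is covered by the known results for $S_4$ and $S_5$.

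Second, I apply category operations by non-crossing partitions to kill terms. Capping adjacent legs with the pair partition in $\mathcal{NC}$ sends $T$ into $\hom(0,3)$ or into a lower degree. Multiplying legs together with the fork $\Laaa$-type maps (merging adjacent blocks) sends partitions to coarser partitions. Rotating lets me choose which adjacent pair to act on. The crossing partitions of five points up to rotation are few: those containing a 2+2 crossing with a singleton, the 3+2 crossings, and similar. Merging two adjacent legs in a crossing partition on five points yields a partition on four points. I plan to choose the merge so that exactly one crossing partition on four points, $\Labab$ (or its rotation), survives with nonzero coefficient. All other terms either become non-crossing, and are discarded modulo $\mathcal{NC}$, or cancel. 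The result is a nonzero multiple of $T_{\Labab}$ plus non-crossing terms, so $\Labab\in\mathcal{C}$. Then Prop.~\ref{abelian} gives commutativity, hence $\mathcal{C}=\mathcal{P}$.

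The main obstacle is this combinatorial step: guaranteeing a merge or cap under which some crossing term survives without cancellation, for an arbitrary coefficient vector $\alpha$. To handle it, I will order crossing partitions by number of blocks and take a term $p$ with the maximal number of blocks. Merging two adjacent points lying in distinct blocks that both participate in the crossing produces an image that can only come from $p$ or from coarser partitions, and coarser ones have different images after a suitable choice. If cancellations persist, I will use the inner product and the Frobenius/dimension count of $\hom(0,5)$ to argue that $T$ has a nonzero component along some specific crossing direction.
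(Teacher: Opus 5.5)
Your reduction is fine in its operative form. The aside that agreement with $S_N^+$ up to order five would force $\mathcal{C}=\mathcal{NC}$ is not something this argument can give, since it would amount to the full conjecture, but you never use it. What you need is: for $N\ge5$, every non-zero $\xi=\sum_{p\in\mathcal{CR}(5)}\alpha_p\xi_p$ is sent by some non-crossing operation to a vector with non-zero $\xi_{\Labab}$-coefficient.

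\textbf{The gap: your operations cannot achieve this.}

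Pair cappings land in $\hom(0,3)$, which has no crossing vectors, and rotations only permute legs. So everything rests on the five cyclic merges $\hom(0,5)\to\hom(0,4)$. Each merge gives one linear condition on ten unknowns, so the merges have at least a five-dimensional common kernel.

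Concretely, write $\mathcal{CR}(5)=\{b_1,\dots,b_5\}\sqcup\{a_1,\dots,a_5\}$. Here $b_s$ is the 2-2-1 partition with singleton at $s$, and $a_x$ is the 3-2 partition whose only cyclically adjacent same-block pair is $\{x,x+1\}$. Merging $x$ and $x+1$ sends exactly $a_x,b_x,b_{x+1}$ to $\Labab$, each with coefficient $1$. Hence
\[
\eta=\sum_{s=1}^5\xi_{b_s}-2\sum_{x=1}^5\xi_{a_x}
\]
has zero $\xi_{\Labab}$-coefficient under every merge. Yet $\eta\notin\hom_{\mathcal{NC}}(0,5)$ for $N\ge5$, by linear independence.

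Your maximal-blocks heuristic fails on both counts:
\begin{itemize}
\item Merging adjacent points from two distinct crossing blocks of a partition of $[5]$ leaves at most one block of size $\ge2$ on four points. The image is therefore always non-crossing.
\item Every merge that detects a maximal-block term $b_s$ also detects another $b$ and an $a$, so no term is ever isolated.
\end{itemize}
The fallback via inner products and the dimension count is not an argument. The count only restates your hypothesis $\hom_{\mathcal{C}}(0,5)\supsetneq\hom_{\mathcal{NC}}(0,5)$; it does not produce $\xi_{\Labab}$.

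\textbf{The missing ingredient: singleton cappings.}

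Use $S_x=R_{[5]\setminus\{x\}}$. It sends $b_x$ to $N\xi_{\Labab}$ (a closed block is deleted) and $a_{x-1},a_x$ to $\xi_{\Labab}$. On $\eta$ it gives $(N-4)\xi_{\Labab}$.

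The paper's Tannaka--Krein proof uses exactly these five cappings together with the five cyclic merges, the fifth being $M_{1,2}\circ T_r\in\operatorname{Op}_{\mathcal{NC}}(5,4)$. This gives a $10\times10$ system $M\alpha=0$ with $|\det M|=(N-4)(N^2-3N+1)^2$, which is non-zero for $N\ge5$. The appendix gives an independent Haar-state proof.

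You can verify the determinant by hand. The system is $\mathbb{Z}_5$-circulant. The ansatz $\alpha_{a_x}=A\omega^x$, $\alpha_{b_x}=B\omega^x$ with $\omega^5=1$ reduces it to
\[
A+(1+\omega)B=0,\qquad (1+\overline{\omega})A+NB=0,
\]
whose determinant is $N-|1+\omega|^2$. Taking the product over the five roots of unity gives $(N-4)(N^2-3N+1)^2$.

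The factor $N-4$ also shows why no $N$-independent triangularity argument like yours can work. At $N=4$ the same $\eta$ lies in $\mathcal{NC}$, so every non-crossing operation must give it zero $\xi_{\Labab}$-coefficient. Any correct argument must therefore use $N\neq4$ somewhere.
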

This argument will be presented in Appendix \ref{app:freslon}.
These results essentially summarise the current knowledge regarding the conjecture: some works related to the problem include \cite{bcf,grw,juw,scw}; Banica's book \cite{ba2} devotes a section to the problem; while both Freslon \cite{fr2} and Weber \cite{web} name it as an open problem. The Maximality Conjecture, mentioned as an open problem in any talk, lecture, or  survey on the quantum permutation group, remains wide open.

\bigskip\noindent

Despite this interest, relatively few approaches have focused on pushing
these small-parameter results further. The previous state of the art on the problem can  be summarised as: there is no exotic quantum permutation group
\begin{enumerate}
  \item  at $N\leq 5$ (Wang for $N\leq 3$; Banica--Bichon for $N=4$; Banica for $N=5$);
  \item at easiness level two (Banica);
  \item at moment level five (Freslon--Speicher).
\end{enumerate}

The present work makes incremental advances beyond this state of the art. Banica explicitly identified the proof of maximality at easiness level three (or higher) as requiring new ideas and as a ``good question'' \cite{ban}. This has been achieved, pushing Theorem \ref{bane2} (2) by one:
\begin{thmx}
The quantum subgroup $S_N\leq S_N^+$ is maximal at easiness level three.
\end{thmx}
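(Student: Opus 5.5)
Suppose $\mathcal{C}$ is an exotic category generated by linear combinations of at most three partition maps. Since $\mathcal{C}\neq\mathcal{NC}$, it contains some $T=\alpha_1T_{p_1}+\alpha_2T_{p_2}+\alpha_3T_{p_3}$ that is not in $\mathcal{NC}$. The first step is to reduce to the case where every $p_i$ is crossing. Non-crossing terms already lie in $\mathcal{NC}\subseteq\mathcal{C}$, so they can be subtracted off. This leaves a combination of at most three crossing-partition maps, which are linearly independent for $N$ large relative to $k+\ell$; small $N$ is covered by Theorem \ref{bane2}(1) and the results for $N\le 4$. With one crossing term, Theorem \ref{noez} gives $\langle\mathcal{NC},T_p\rangle=\mathcal{P}$. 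With two, Theorem \ref{bane2}(2) gives the same conclusion. So the new case is exactly three crossing partitions with nonzero coefficients. The goal is to show $\langle \mathcal{NC},T\rangle=\mathcal{P}$, or more precisely that it contains a single crossing $T_q$, after which Theorem \ref{noez} finishes.

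The tools are the category operations applied with non-crossing partitions: rotation, tensoring, composition with non-crossing maps (in particular the merging of two adjacent legs by the fork $\Laaa$-type partitions), and capping. All of these act on partition maps by $T_p\mapsto N^{c}T_{p'}$, where $c$ counts removed loops. First, rotate so that all $p_i$ lie in $\mathcal{P}(0,k)$. Next, compose with a suitable non-crossing projection, namely fusing legs $j$ and $j+1$. This sends each $p_i$ to the partition obtained by joining the blocks containing those legs. Choose the pair of legs so that some $p_i$ becomes non-crossing, or two of the $p_i$ collide, while the images stay distinct from the other terms. This shortens the combination to length $\le 2$, modulo $\mathcal{NC}$, while keeping a crossing term alive; then Theorem \ref{bane2}(2) or Theorem \ref{noez} applies. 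A second, complementary move is composing $T$ with $T^*$ (or $T$ with a rotated copy of itself) and capping. This produces scalars of the form $N^{\#\text{blocks}(p_i\vee p_j)}$, and the leading power of $N$ isolates diagonal terms. Both moves use the fact that crossings are preserved or destroyed in a controlled way under fusion of adjacent legs.

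The case analysis is organised by the combinatorics of the three crossing partitions. Let $k$ be minimal and consider the partition $p_1\wedge p_2\wedge p_3$ and the join lattice. If some adjacent pair of legs is in one block for one or two of the $p_i$ but not all, then fusing those legs collapses those terms relative to the others. The argument is then an induction on $k$: after a fusion, the coefficient vector changes (terms where the legs were already joined acquire a factor of $N$ via loop removal after capping), so one obtains a combination of the same three images that is not proportional to the original. Subtracting two such combinations kills a term. The remaining case is when the three partitions agree on every adjacent pair, and under all rotations. One must show that this forces the crossing structure to be identical modulo non-crossing moves, or else handle it by capping a block that differs.

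I expect the main obstacle to be this degenerate case. There all fusions act uniformly on the $p_i$, and the coefficients could in principle conspire so that every derived combination either keeps length three or vanishes; this is exactly how a genuinely non-easy intermediate category would appear. My first attempt would be to use the $S_N$-invariance and the idempotent structure of Theorem \ref{frob}-style arguments. Concretely, I would compute the Gram matrix $\big(N^{|p_i\vee p_j|}\big)$ and the action of a single crossing-creating rotation. From these I would extract a polynomial identity in $N$ that forces some $\alpha_i=0$, since the parameter $N\ge 6$ is too large for accidental cancellation.
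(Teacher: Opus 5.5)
\textbf{There is a genuine gap.} Your reduction to exactly three crossing terms is fine: subtract the non-crossing terms, then use Theorem \ref{noez} and Theorem \ref{bane2}~(2) for one or two terms. But the three-term case, which is the whole content of the theorem, is not proved.

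\emph{The fusion step is unsupported.} You assert without argument that some adjacent pair of legs can be fused so that the combination shortens while a crossing term survives. This is exactly what fails in the relevant configurations. When two of the $p_i$ collide under a fusion, their coefficients add, and $\alpha_1+\alpha_2=0$ kills the crossing term. When all $\operatorname{cr}(p_i)$ are equal (so the $p_i$ differ only in their non-crossing parts), or when $\operatorname{cr}(p_1)=\operatorname{cr}(p_2)\sqcup\operatorname{cr}(p_3)$, nothing guarantees that any single fusion reduces the length.

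\emph{The proposed induction is not well-founded.} The fork merge produces no loops at all. Capping legs $j,j+1$ is just fusing them and then capping the fused leg with a singleton, so comparing the two gives no new coefficient information. Loop factors from capping arise only when the blocks through $j,j+1$ close up (a block $\{j,j+1\}$, or two singletons), not according to whether the legs were joined. Fused and capped images also live in $\hom(0,k-1)$ and $\hom(0,k-2)$ respectively, so they cannot be subtracted.

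\emph{The degenerate case is left open, and the Gram-matrix idea cannot close it.} The numbers $\langle \xi_{p_i},\xi_{p_j}\rangle=N^{|p_i\vee p_j|}$ are scalars in $\hom(0,0)$ and cannot produce a crossing vector. A leading-power-of-$N$ argument gives at best a statement for $N$ large depending on $k$, whereas the theorem is needed at every $N$. Nothing about $N\geq 6$ avoids the roots of the resulting polynomials.

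\textbf{The missing idea} is to organise everything by crossing sets rather than adjacencies. The restriction $R_\kappa\in\hom_{\mathcal{NC}}(k,4)$ to a four-element set $\kappa$ sends $\xi_{p_i}$ to $N^{|p_i|-2}\xi_{\Labab}$ if $\kappa\in\operatorname{cr}(p_i)$, and to a non-crossing vector otherwise. So after $\operatorname{red}_{\mathcal{CR}}$, each candidate crossing $\kappa$ yields the equation $\sum_{i\colon\kappa\in\operatorname{cr}(p_i)}\alpha_iN^{|p_i|-2}=0$, which must hold unless $\xi_{\Labab}\in\langle\mathcal{NC},\xi\rangle$. The paper then runs through the Venn diagram of $\operatorname{cr}(p_1),\operatorname{cr}(p_2),\operatorname{cr}(p_3)$:
\begin{itemize}
\item a crossing unique to one $p_j$ is handled by Proposition \ref{lem1};
\item equal crossing sets are handled by Theorem \ref{alleq}: Lemma \ref{nclemma} confines the non-crossers to intervals between consecutive points of a crossing, and functionals in $\hom_{\mathcal{NC}}$ dual to linearly independent non-crossing vectors isolate one $p_j$;
\item further configurations are handled by Propositions \ref{propall} and \ref{propall2};
\item the genuinely hard case $\operatorname{cr}(p_1)=\operatorname{cr}(p_2)\sqcup\operatorname{cr}(p_3)$ is Proposition \ref{propdis}. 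There $R_\kappa$ and $R_\lambda$ give only two equations, which have nonzero solutions. A third, independent equation is extracted by a merge after $R_{\kappa\cup\lambda}$, via a case analysis on $|\kappa\cup\lambda|$; for example, it forces $\alpha_1N^{|p_1|-3}(N-2)=0$.
\end{itemize}

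Working at the level of coefficients in this way also makes your appeal to linear independence of the crossing vectors unnecessary. That appeal is incorrect as stated: for fixed $N\geq 6$ one can have $k>N$, and then by Theorem \ref{inj} the $\xi_{p_i}$ need not be independent. This situation is not covered by the results for $N\leq 5$.
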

Using Frobenius Reciprocity (Theorem \ref{frob}), it suffices to work with the collection of $\hom(0,k)\subseteq (\mathbb{C}^N)^{\otimes k}$ spaces, where, for a partition $p$ of $\{1,2,\dots,k\}$, $\xi_p$ is written for $T_p$. Theorem A is proved by taking a linear combination $\xi\in\hom_{\mathcal{P}}(0,k)$ of length three $$\xi:=\alpha_1\xi_{p_1}+\alpha_2\xi_{p_2}+\alpha_3\xi_{p_3},$$
and showing that the smallest Tannaka--Krein category containing $\mathcal{NC}$ and $\xi$ is all of $\mathcal{P}$, the Tannaka--Krein category of $S_N$.
 This advance requires a careful study of partitions (see Section \ref{part}). The Tannaka--Krein methods used by Banica in establishing Theorem \ref{bane2} (2) also had to be organised, and extended, leading to general results that contribute to the broader programme. One such general result in this context is:
\begin{thmx}
Let $\xi\in\hom_{\mathcal{P}}(0,k)$ be a non-zero linear combination of crossing-partition vectors such that every summand has equal set of crossings. Then the Tannaka--Krein category generated by $\mathcal{NC}$ together with $\xi$ is the category of all partitions.
\end{thmx}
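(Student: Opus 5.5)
Write $\xi=\sum_{i=1}^m\alpha_i\xi_{p_i}$ with all $\alpha_i\neq0$, the $p_i$ distinct partitions of $\{1,\dots,k\}$, each crossing, and with a common set of crossings. By a \emph{crossing} we mean a quadruple of positions $a<b<c<d$ with $a,c$ in one block and $b,d$ in another. The plan is to use non-crossing operations to strip the $p_i$ down to a single crossing configuration. This should produce a scalar multiple of a basic crossing vector, after which Theorem \ref{noez} (Banica--Curran--Speicher) finishes: for a crossing partition $p$ we have $\langle\mathcal{NC},\xi_p\rangle=\mathcal{P}$ or $\mathcal{NC}$, and since $\xi_p$ is crossing the answer must be $\mathcal{P}$. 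By Frobenius Reciprocity (Theorem \ref{frob}) we may rotate freely and work only in the $\hom(0,k)$ spaces.

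\textbf{Step 1: pick a crossing and isolate it.} Fix a crossing $a<b<c<d$ lying in the common crossing set. In every $p_i$ the points $a,c$ lie in one block and $b,d$ in another, and these two blocks are distinct.

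\textbf{Step 2: remove the other points.} Apply non-crossing maps to every leg except $a,b,c,d$. The tools are contraction by the singleton/cap $T_{\text{sing}}^*$, which deletes a point, and multiplication against the fork $\Laaa$ (the map $T_{\sqcap}$ combined with $\mu$), which merges adjacent points. Both preserve the class of partitions, so each $p_i$ is sent to a scalar times some partition. Capping legs with the singleton $\uparrow^*$ removes them at the cost of a factor $N^{\#\text{blocks removed}}$, or $1$ if the block survives. Once everything else is removed, each $p_i$ restricts to the partition $\{\{a,c\},\{b,d\}\}$ of four points, which after rotation is the basic crossing $\Labab$. We then get
$$
\xi\mapsto\Big(\sum_i\alpha_i\lambda_i\Big)\xi_{\Labab},
$$
with $\lambda_i=N^{r_i}>0$.

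\textbf{Step 3: handle the restriction to the crossing points.} Restriction alone is not safe, because different $p_i$ give different exponents $r_i$ and $\sum\alpha_iN^{r_i}$ might vanish. This is where the hypothesis of equal crossing sets enters. The partitions $p_i$ must differ somewhere, for instance in blocks that are non-crossing relative to everything else. I would attempt to choose the deletion order so that the terms separate: first delete points of blocks that cross nothing, where the equal-crossings hypothesis says the crossing structure is shared; then compare the remaining partitions, which now have the same crossing graph.

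\textbf{Main obstacle: non-vanishing of the coefficient.} The hard step is ensuring $\sum\alpha_i\lambda_i\neq0$. What I would try first is not to delete but to \emph{merge}. Using the fork $\Laaa$, fuse every other point into an adjacent point belonging to one of the crossing blocks, which is allowed non-crossingly when the point is adjacent, with multiplication by $u$-relations. Merging collapses a partition without introducing $N$-powers, and all $p_i$ collapse to the same $\Labab$ pattern with coefficient $\sum\alpha_i$.

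If $\sum\alpha_i=0$, I would instead take the inner product with the adjoint of a chosen non-crossing projection that distinguishes $p_1$. Concretely, the capping contraction against $\xi_{q}$ for non-crossing $q$ gives coefficients $N^{\#\text{blocks}(p_i\vee q)}$, viewed as a polynomial identity in $N$. Varying $q$ then yields a nonsingular system, because Gram-type matrices of partition vectors are invertible for $N\geq4$. Hence some choice gives a nonzero multiple of $\xi_{\Labab}$, and Theorem \ref{noez} concludes.
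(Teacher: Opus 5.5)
There is a genuine gap. The whole theorem rests on producing, by non-crossing operations, a \emph{non-zero} multiple of $\xi_{\Labab}$, and your proposal never does this. Restriction to $\kappa$ gives the coefficient $\sum_i\alpha_iN^{|p_i|-2}$, and merging gives at best a multiple of $\sum_i\alpha_i$; both can vanish, as you note.

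Your Step 3 then goes in the wrong direction. By Theorem~\ref{th0}, equal crossing sets force equal crossing parts $p_i^{\mathcal{CR}}$. So deleting the non-crossers sends every $\xi_{p_i}$ to $N^{|p_i^{\mathcal{NC}}|}\xi_{p^{\mathcal{CR}}}$, with one and the same vector, and the result is $(\sum_i\alpha_iN^{|p_i^{\mathcal{NC}}|})\xi_{p^{\mathcal{CR}}}$, which can be zero. You have discarded exactly the data that distinguishes the $p_i$.

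The fallback is asserted rather than proved, and as stated it fails:
\begin{itemize}
\item Contracting all legs against $\xi_q$ yields a scalar $\sum_i\alpha_iN^{|p_i\vee q|}\in\hom(0,0)$, not a multiple of $\xi_{\Labab}$.
\item Contracting only the legs off $\kappa$ against an arbitrary $q\in\mathcal{NC}$ is not a non-crossing map when blocks of $q$ straddle the retained legs.
\item Gram matrices of partition vectors are not invertible in general: for $k>N$ the $\xi_p$ are dependent (Theorem~\ref{inj}). Only non-crossing Gram matrices are invertible, and since $N$ is fixed, a ``polynomial identity in $N$'' is beside the point.
\end{itemize}

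The missing idea is structural. Keep the non-crossers and delete the \emph{other} crossers: apply $R_A$ with $A=\kappa\cup\chi_0(p)$, where $\chi_0$ is common to all $p_i$ because the crossing sets agree. Since the crossing parts coincide, this costs only the uniform factor $N^{|p^{\mathcal{CR}}|-2}$. The restrictions $q_i=R_A(p_i)$ stay pairwise distinct, because distinct $p_i$ with equal crossing parts differ only in their non-crossing parts.

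By Lemma~\ref{nclemma}, after a rotation every non-crossing block of $q_i$ lies inside one of the gaps $A_1,\dots,A_4$ following $\kappa_1,\dots,\kappa_4$. Hence $\xi_{q_i}=\sum_{a,b}e_a\otimes\xi_{r^i_1}\otimes e_b\otimes\xi_{r^i_2}\otimes e_a\otimes\xi_{r^i_3}\otimes e_b\otimes\xi_{r^i_4}$ with $r^i_u=R_{A_u}(q_i)$ non-crossing.

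Only with this factorisation does linear independence of non-crossing partition vectors ($N\geq4$), applied gap by gap, give functionals $\varphi_u\in\hom_{\mathcal{NC}}(|A_u|,0)$ with $\varphi_u(\xi_{r^i_u})=\delta_{r^i_u,r^j_u}$. The non-crossing map $\operatorname{id}\otimes\varphi_1\otimes\operatorname{id}\otimes\varphi_2\otimes\operatorname{id}\otimes\varphi_3\otimes\operatorname{id}\otimes\varphi_4$ then sends $R_A(\xi)$ to $\alpha_jN^{|p^{\mathcal{CR}}|-2}\xi_{\Labab}\neq0$, and Theorem~\ref{noez} finishes. Your last paragraph gestures at this final step, but without the restriction to $\kappa\cup\chi_0(p)$, the gap factorisation and the distinctness of the $q_i$, the claimed nonsingular system is unjustified.
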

Freslon--Speicher's argument uses the properties of the Haar state of any exotic quantum permutation group to conclude maximality of $S_N\leq S_N^+$ at moment level five (see Appendix \ref{app:freslon}). These moments are  determined by the dimension of the $\hom(0,5)$ space, and this work uses Tannaka--Krein duality to give an alternative proof of maximality at moment level five. Just as Freslon--Speicher's Haar state approach falls short at moment level six, this Tannaka--Krein duality approach also falls short.

\bigskip \noindent

The analysis still sheds light on the structure of the problem (see Section \ref{ml6}).  It is natural to mix easiness level and moment level, and say that $S_N\leq S_N^+$ is maximal at easiness-moment-level $(m,k)$ if any linear combination of $m$ partition-of-$\{1,2,\dots,k\}$ vectors in $\hom_{\mathcal{P}}(0,k)$ together with $\mathcal{NC}$ does not generate an exotic Tannaka--Krein category. From the attempt at showing maximality at moment level six, it was possible to show:
\begin{thmx}
$S_6<S_6^+$ is maximal at easiness-moment-level $(31,6)$.
\end{thmx}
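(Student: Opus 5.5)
The plan is to show that for $N=6$, any $\xi\in\hom_{\mathcal{P}}(0,6)$ that is a combination of at most $31$ partition vectors and lies outside $\hom_{\mathcal{NC}}(0,6)$ satisfies $\langle\mathcal{NC},\xi\rangle=\mathcal{P}$. By Frobenius reciprocity (Theorem \ref{frob}) it suffices to work in the $\hom(0,6)$ spaces. We may subtract from $\xi$ its component in $\operatorname{span}\{\xi_p : p\in NC(6)\}$, because that span already lies in the category. So we may assume $\xi$ is supported on the crossing partitions of $\{1,\dots,6\}$.

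\textbf{Counting.} The number of partitions of $\{1,\dots,6\}$ is the Bell number $B_6=203$, and the number of non-crossing ones is the Catalan number $C_6=132$. Hence there are $71$ crossing partitions. At $N=6$ all $203$ vectors $\xi_p$ are linearly independent, since $N\ge k$. So $\dim\hom_{\mathcal{P}}(0,6)/\hom_{\mathcal{NC}}(0,6)=71$.

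\textbf{Reduction to an invariant subspace.} An intermediate category $\mathcal{C}$ gives a subspace $V=\hom_{\mathcal{C}}(0,6)$ with
\[
\hom_{\mathcal{NC}}(0,6)\subseteq V\subseteq \hom_{\mathcal{P}}(0,6).
\]
This $V$ is stable under rotation, under reflection, and under the ``capping then re-cupping'' operations. The latter move $\hom(0,6)$ to $\hom(0,4)$ or $\hom(0,5)$ and back, via non-crossing partitions.

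The first key step uses maximality at moment level five, established earlier by the Tannaka--Krein argument. If any contraction of $\xi$ to order $\le 5$ is non-zero modulo $\mathcal{NC}$, then the category is already all of $\mathcal{P}$. Therefore, for a genuinely exotic order-six phenomenon, $\xi$ must lie in the joint kernel (modulo $\mathcal{NC}$) of all the maps $\hom(0,6)\to\hom(0,4)$ obtained by capping adjacent legs. It must also lie in the kernel of the multiplication-type maps $\hom(0,6)\to\hom(0,5)$ that merge adjacent blocks.

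These are explicit linear conditions on the $71$ coefficients. After quotienting by the rotation/reflection action, I would compute the solution space $K$ of these conditions by organising the crossing partitions into dihedral orbits. I expect $K$ to be nontrivial, which is exactly why the moment-level-six argument falls short.

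\textbf{Main step: a support bound.} The crucial claim is that every nonzero vector of $K$, and more generally every nonzero vector of the smallest dihedrally stable, contraction-closed subspace it generates, has support of size at least $32$ on crossing partitions. Granting this, any $\xi$ with at most $31$ crossing terms either has a nonzero lower-order contraction or is zero mod $\mathcal{NC}$. In the first case the moment-level-five result and Theorem B-type generation results give $\langle\mathcal{NC},\xi\rangle=\mathcal{P}$. In the second case $\xi$ does not distinguish the category from $\mathcal{NC}$ at order six.

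To prove the support bound, I would first try a structural argument. The cap conditions couple each crossing partition to others through a connected incidence structure, forcing many equal or proportional coefficients. For instance, the condition coming from capping a singleton or a pair should propagate equality across whole orbits. If that fails, the fallback is an explicit linear-algebra computation of $K$: a $71$-variable system with rational coefficients depending on $N=6$. This would be followed by a minimum-weight (minimum-support) analysis, using orbit decomposition to keep it tractable.

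The main obstacle is this minimum-support step. Computing $K$ is routine, but certifying that no sparse combination of $\le 31$ vectors survives all constraints requires controlling all sparse patterns, not just a basis. The plan is to show $K$ is spanned by few explicit vectors, each with large orbit-forced support, and that cancellations among them cannot shrink the support below $32$.
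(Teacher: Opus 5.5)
\textbf{The gap: the support bound is assumed, not proved.} Your reduction matches the paper's. You discard the non-crossing part. You note that every non-crossing contraction of $\xi$ to order $\le 5$ (composed with rotations) must have vanishing crossing part, unless $\langle\mathcal{NC},\xi\rangle=\mathcal{P}$ by maximality at moment level five for $N\ge 5$. You then need the resulting solution space to contain no nonzero vector supported on at most $31$ of the $71$ crossing partitions. That last claim is the entire content of the theorem, and you grant it rather than prove it. What is missing is a way to turn ``minimum support over a subspace'' into a finite, certifiable check.

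In the paper, the $66$ maps in $\operatorname{Op}_{\mathcal{NC}}(6,4)$ give a matrix $M$ whose kernel at $N=6$ is $5$-dimensional, with basis $v_1,\dots,v_5$. A kernel vector $\sum_i t_iv_i$ has zero $j$-th coordinate exactly when $t\in\mathbb{C}^5$ lies on the hyperplane $H_j$ defined by the $j$-th coordinates of the $v_i$. So the minimum support is $71-n_0$, where $n_0$ is the largest number of the $H_j$ passing through a nonzero point. Suppose a maximiser $t$ lay on hyperplanes whose defining functionals have rank $<4$. Their common intersection would then have dimension $\ge 2$. Since the $v_i$ are independent, the $H_j$ have trivial total intersection, so some further $H_{j'}$ cuts that intersection in a nonzero vector lying on one more hyperplane, contradicting maximality. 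Hence every maximiser spans a line cut out by four independent $H_j$. It therefore suffices to enumerate the $\binom{71}{4}$ four-subsets, take the line whenever the rank is $4$, and count vanishing coordinates. This is Algorithm~1, and it returns $n_0=39$, so the minimum support is exactly $32$.

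\textbf{Why your suggested routes do not close it.} The bound is sharp: the minimiser, a multiple of $v_1$, has exactly $32$ terms. A structural argument through ``orbit-forced equalities'' therefore has no slack. It would have to exclude every cancellation pattern in a $5$-parameter family, which is exactly what the enumeration does.

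Organising by dihedral orbits helps only with bookkeeping. The kernel is $D_6$-stable, but a minimum-support vector of a $D_6$-stable space need not be $D_6$-invariant, so the search cannot be restricted to orbit-constant vectors.

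Finally, because the bound is a statement about one specific kernel, the choice of constraints matters quantitatively. The two families you name explicitly (capping adjacent legs to order four, merging adjacent blocks to order five) do not obviously imply the paper's restriction conditions $R_\kappa$ or its singleton-cap-plus-merge conditions. A weaker family gives a larger $K$, whose minimum support may fall below $32$. You should impose all singleton cappings and all cyclically adjacent merges to order five, together with the semicircle cappings. These imply the paper's $66$ conditions. By the paper's rank check (the extra order-five equations add no rank), they cut out the same $5$-dimensional kernel.
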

The author is of the opinion that advances on these small-parameter questions necessarily sharpen our understanding of the inclusion $S_N\leq S_N^+$, and are worthy foothills of any full assault on the Maximality Conjecture. In organising the necessary background for these brute-force assaults on small-parameter questions, it is hoped that this work comprises a basic primer for any such full assault.

\bigskip\noindent

The paper is organised as follows: Section \ref{cmqp} is a basic introduction to the theory of (orthogonal) compact matrix quantum groups; this paper works largely at the level of the dense Hopf*-algebra of regular functions $\mathcal{O}(\mathbb{G})$. In Section \ref{symm} it will be explained why partitions emerge in the representation theory of homogeneous compact matrix quantum groups. In Section \ref{part}, a careful study of partitions is undertaken. Section \ref{Rep} introduces the representation theory of (orthogonal) compact matrix quantum groups from the point of view of Tannaka--Krein duality, and introduces and illustrates how a category of partitions gives a Tannaka--Krein category.  The choice has  been made to define the linear maps coming from partitions using indices, and to suppress their specific action on elementary tensors. The standard introduction to Tannaka--Krein duality establishes that given a Tannaka--Krein category $\mathcal{C}$, one can find a compact matrix quantum group $\mathbb{G}$ whose category of representations is $\mathcal{C}$, using e.g. the approach of Malacarne \cite{mal}. However, when the orthogonal quantum group $O_N^+$ is treated as an ambient compact matrix quantum group, as in the case of homogeneous compact matrix quantum groups (which include exotic quantum permutation groups), things are more straightforward, and the exact role of the Tannaka--Krein axioms is perhaps clearer to see (see Remark \ref{amb}). Accordingly, a stylistic choice is made  to suppress the use of the word ``category'', and instead call a Tannaka--Krein category by a \emph{Tannaka--Krein Dual}. In the case of an orthogonal compact matrix quantum group, the author is of the opinion that the categorical language slightly obscures that the morphisms $T\in\hom(k,\ell)$ are concrete linear maps. Similarly, what are known as the \emph{category operations} --- which at the categorical level resemble 2-morphisms --- get a careful and deliberate analysis in Section \ref{TKO}, and get called instead by \emph{Tannaka--Krein Operators}. These are key because in this context, what is important in Tannaka--Krein duality are the morphisms. In particular, in order to show that a potential exotic Tannaka--Krein dual is actually all of $\mathcal{P}$,  frequently it will need to be shown that the \emph{morphism} $\xi_{\Labab}$ is in the Tannaka--Krein Dual, and these Tannaka--Krein Operators are needed for this.

\bigskip\noindent

The new advances on the problem begin in Section \ref{Generation}, which comprises a series of results which will come together to prove Theorem A in Section \ref{Easi}. In Section \ref{Moment}, attention will turn towards the question of whether an exotic quantum permutation group can be distinguished from $S_N^+$ at moments of order five or six. In the Tannaka--Krein picture, via Theorem \ref{Haardet}, this is a question about $\dim \hom_{\mathcal{C}}(0,5)$ and $\dim \hom_{\mathcal{C}}(0,6)$ for exotic $\mathcal{NC}\lneq \mathcal{C}\lneq \mathcal{P}$. The Tannaka--Krein approach here is rather brute-force in nature, requiring the use of mathematical software. Symbolic computation in \emph{Maple} was used: to an extent this is unsatisfactory, the author is not satisfied that the linear algebraic calculations are valid for all parameters $N$. Therefore some results are stated for ``generic $N$'', and exact computation undertaken in one instant for all $N\in [4,1000]$, and another at the specialised case of $N=6$ (Theorem C is in fact true for ``generic $N$''). Finally, Appendix A presents Freslon--Speicher's unpublished proof of maximality at moment level five.

\section{Compact matrix quantum groups}\label{cmqp}
Modern references for compact matrix quantum groups include the books of Banica \cite{ba1} and Freslon \cite{fr1}.
\begin{definition}[Woronowicz \cite{wo1}]\label{ocmqg}
\textbf{An} \emph{algebra of continuous functions on an orthogonal compact matrix quantum group} is a unital $\mathrm{C}^\ast$-algebra $C(\mathbb{G})$:
  \begin{enumerate}
    \item[(i)] generated by the entries of a unitary matrix $u\in M_N(C(\mathbb{G}))$, such that
    \item[(ii)] $u=\overline{u}$, and
    \item[(iii)] there exists a $*$-homomorphism $\Delta:C(\mathbb{G})\to C(\mathbb{G})\underset{\min}{\otimes}C(\mathbb{G})$ called the \emph{comultiplication}, such that
    $$\Delta(u_{ij})= \sum_{k=1}^Nu_{ik}\otimes u_{kj}.$$
  \end{enumerate}
  The matrix  $u\in M_N(C(\mathbb{G}))$ is called the \emph{fundamental representation}.
 \end{definition}
The universal example is \textbf{the} quantum orthogonal group introduced by Wang \cite{wa1}:

\begin{definition}
The algebra of continuous functions on the free orthogonal quantum group $O_N^+$ is the universal $\mathrm C^\ast$-algebra $C_{\mathrm u}(O_N^+)$ generated by the entries of a matrix $u\in M_N(C_{\mathrm u}(O_N^+))$ satisfying conditions {\rm (i)} and {\rm (ii)} of Definition \ref{ocmqg}.
\end{definition}
Algebras of continuous functions on orthogonal compact matrix quantum groups admit a dense Hopf*-algebra $\mathcal{O}(\mathbb{G})\subseteq C(\mathbb{G})$ of \emph{regular functions}, which admits a faithful, invariant Haar state $h:\mathcal{O}(\mathbb{G})\to \mathbb{C}$. In general, the algebra of regular functions on an orthogonal compact matrix quantum group does not have a unique completion to a $\mathrm{C}^*$-algebra (hence the use of \textbf{An} in Definition \ref{ocmqg}). Following the approach of Freslon's book \cite{fr1}, this work will operate on the level of the regular functions, with occasional references to the $\mathrm{C}^*$-setting.

 \bigskip\noindent

   Quotients of $\mathcal{O}(O_N^+)$ by Hopf*-ideals give algebras of regular functions on other orthogonal compact matrix quantum groups. As a general principle, in order to respect the comultiplication structure, these ideals are generated by \emph{Peter--Weyl intertwiners}, linear maps $T\in M_{N^\ell\times N^k}(\mathbb{C})$ such that:
 $$Tu^{\otimes k}=u^{\otimes \ell}T,$$
 where $u^{\otimes k}$ is the Kronecker tensor product of $k$ copies of $u$.  The precise mechanism by which intertwiners generate Hopf*-ideals will be recalled in Section \ref{Rep}.  Examples of ideals given by such intertwiners include:
 \begin{align*}
   I_N & =\langle u_{ij}-u_{ij}^2\colon 1\leq i,j\leq N\rangle, \\
   J_N & =\langle u_{ij}u_{k\ell}-u_{k\ell}u_{ij}\colon 1\leq i,j,k,\ell\leq N\rangle, \\
   F_N & =\langle 1-u_{NN}\rangle,
 \end{align*}
quotienting $\mathcal{O}(O_N^+)$ by which gives, respectively, the algebras of regular functions on \emph{the} quantum permutation group $S_N^+$ \cite{wa2}, the classical orthogonal group $O_N$, and an isotropy subgroup $O_{N-1}^+$. Quotienting by a Hopf*-ideal corresponds to the notion of a quantum subgroup:
\begin{definition}
A quantum subgroup $\mathbb{H}\leq \mathbb{G}$ is given by a surjective $\ast$-homomorphism $\pi: \mathcal{O}(\mathbb{G})\to \mathcal{O}(\mathbb{H})$ that respects the comultiplication in the sense that
$$\Delta_{\mathcal{O}(\mathbb{H})}\circ\pi=(\pi\otimes \pi)\circ \Delta;$$
that is $\ker \pi$ is a Hopf*-ideal.
\end{definition}
That is, there are quantum subgroups
\[
S_N^+ \leq O_N^+, \qquad
O_N \leq O_N^+, \qquad
O_{N-1}^+ \leq O_N^+ .
\]

Furthermore, quotienting by combinations of the above ideals yields
\[
S_N \leq O_N^+, \qquad
S_{N-1}^+ \leq O_N^+, \qquad
O_{N-1} \leq O_N^+, \qquad
S_{N-1} \leq O_N^+ .
\]

\bigskip\noindent

All of the quantum groups of interest in this work are homogeneous compact matrix quantum groups, and there is a specific interest in \emph{exotic quantum permutation groups}:
\begin{definition}
An orthogonal compact matrix quantum group $\mathbb{G}\leq O_N^+$ is called:
\begin{enumerate}
  \item \emph{homogeneous} when it contains the classical permutation group $S_N\leq\mathbb{G}$,
  \item an \emph{exotic quantum permutation group} when it is a strict intermediate $S_N\lneq \mathbb{G}\lneq S_N^+$.
\end{enumerate}
\end{definition}
Do exotic quantum permutation groups exist? The Maximality Conjecture predicts that they do not.
\subsection{Partitions via Index Permutation Symmetry}\label{symm}
As will be seen explicitly in Section \ref{CoP}, partitions play a prominent role in the representation theory of homogeneous compact matrix quantum groups. A homogeneous quantum group $\mathbb{G}\leq O_N^+$ has a classical version $G$ containing $S_N\leq G$, whose algebra of regular functions is the quotient of $\mathcal{O}(\mathbb{G})$ by the commutator ideal $J_N$. It is possible to anticipate the emergence of partitions from the point of view of $*$-automorphisms of the algebra of regular functions of homogeneous $\mathbb{G}$ that lift the automorphisms $\alpha_{\sigma,\tau}(f)(g)=f(\sigma^{-1}g\tau)$ of $\mathcal{O}(G)$ induced by the $S_N\times S_N$-action on $G$:

$$\begin{tikzcd}
\mathcal{O}(\mathbb{G}) \arrow[r, "\pi_{\sigma,\tau}"] \arrow[d, two heads]
    & \mathcal{O}(\mathbb{G}) \arrow[d, two heads] \\
\mathcal{O}(G) \arrow[r, "\alpha_{\sigma,\tau}"] & \mathcal{O}(G)
\end{tikzcd}$$
Commutativity of the diagram is the content of Proposition \ref{2.8} below.

\bigskip\noindent

These automorphisms permute the indices
of the matrix coefficients $u_{ij}$. Consequently, relations between
the generators can depend only on the pattern of coincidences among the indices.
The use of these $*$-automorphisms is often implicit in the study of homogeneous compact matrix quantum groups, but here they will be made explicit.
\begin{proposition}[Index permutation symmetry]
Let $S_N\leq \mathbb{G}\leq O_N^+$ be homogeneous via a Hopf*-ideal $I\subset \mathcal{O}(O_N^+)$.
For every $(\sigma,\tau)\in S_N\times S_N$ there exists a $*$-automorphism
$\pi_{\sigma,\tau}:\mathcal{O}(\mathbb{G})\to\mathcal{O}(\mathbb{G})$
satisfying
$$
\pi_{\sigma,\tau}(u_{ij})=u_{\sigma(i),\tau(j)} .
$$
Consequently, any $f\in I$ must be invariant under the simultaneous permutation of row and column indices.
\end{proposition}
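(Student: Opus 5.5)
The plan is to build $\pi_{\sigma,\tau}$ at the level of $\mathcal{O}(O_N^+)$ and then show that it descends to the quotient $\mathcal{O}(\mathbb{G})=\mathcal{O}(O_N^+)/I$. Write $P_\sigma\in M_N(\mathbb{C})$ for the permutation matrix with $(P_\sigma)_{i,k}=\delta_{i,\sigma(k)}$. The matrix $u'=P_\sigma^{T}uP_\tau$ has entries $u'_{ij}=u_{\sigma(i),\tau(j)}$, it is self-conjugate, and it is unitary because the permutation matrices are real orthogonal. By the universal property of $\mathcal{O}(O_N^+)$ there is therefore a $*$-homomorphism $\tilde\pi_{\sigma,\tau}$ with $u_{ij}\mapsto u_{\sigma(i),\tau(j)}$. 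Since $\tilde\pi_{\sigma^{-1},\tau^{-1}}$ is a two-sided inverse on generators, $\tilde\pi_{\sigma,\tau}$ is a $*$-automorphism. The only substantive point is then that $\tilde\pi_{\sigma,\tau}(I)\subseteq I$ (and the same for the inverse pair), which gives an induced automorphism of the quotient.

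For invariance of $I$ I would use homogeneity together with the Peter--Weyl/intertwiner description of Hopf*-ideals, in the form to be recalled in Section \ref{Rep}. The ideal $I$ is generated by the entries of $Tu^{\otimes k}-u^{\otimes \ell}T$ for $T$ running over the intertwiner spaces $\hom_{\mathbb{G}}(k,\ell)$. Since $S_N\leq\mathbb{G}$, each such $T$ is also an intertwiner for the permutation matrices, so $TP^{\otimes k}=P^{\otimes \ell}T$ for every $P=P_\sigma$. Applying $\tilde\pi_{\sigma,\tau}$ to the relation matrix replaces $u^{\otimes k}$ by $(P_\sigma^{T})^{\otimes k}u^{\otimes k}P_\tau^{\otimes k}$, and similarly for $\ell$. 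This gives
\[
\tilde\pi_{\sigma,\tau}\bigl(Tu^{\otimes k}-u^{\otimes \ell}T\bigr)=T(P_\sigma^T)^{\otimes k}u^{\otimes k}P_\tau^{\otimes k}-(P_\sigma^T)^{\otimes \ell}u^{\otimes \ell}P_\tau^{\otimes \ell}T
=(P_\sigma^T)^{\otimes \ell}\bigl(Tu^{\otimes k}-u^{\otimes \ell}T\bigr)P_\tau^{\otimes k},
\]
where the commutation $TP^{\otimes k}=P^{\otimes \ell}T$ is used at both ends, together with $P^T=P^{-1}$. The right-hand side is a scalar matrix times the relation matrix times a scalar matrix. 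Hence each of its entries is a $\mathbb{C}$-linear combination (in fact a single entry with permuted indices) of generators of $I$. It follows that $\tilde\pi_{\sigma,\tau}(I)\subseteq I$. The same argument applied to $(\sigma^{-1},\tau^{-1})$ gives equality, so $\pi_{\sigma,\tau}$ is a well-defined $*$-automorphism of $\mathcal{O}(\mathbb{G})$ with $\pi_{\sigma,\tau}(u_{ij})=u_{\sigma(i),\tau(j)}$.

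The consequence follows immediately. If $f\in I$, then $\tilde\pi_{\sigma,\tau}(f)\in I$ for all $(\sigma,\tau)$, so $I$ is stable under simultaneous permutation of row and column indices of the generators. This is the invariance claimed, and it is what forces the relations to depend only on coincidence patterns of indices. Commutativity of the diagram with $\alpha_{\sigma,\tau}$ can be checked on generators, up to the inverse convention.

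The main obstacle is the middle step: justifying that $I$ is generated by intertwiner relations, and that homogeneity means these intertwiners commute with the $P_\sigma^{\otimes k}$. The second fact is the Tannaka--Krein inclusion $\mathcal{C}_{\mathbb{G}}\leq\mathcal{C}_{S_N}$ applied at the generators. If the intertwiner description is not yet available at this point in the paper, I would argue directly instead. Homogeneity gives the quotient map $\mathcal{O}(\mathbb{G})\to\mathcal{O}(S_N)$, and I would compose the comultiplication with it on each leg: $u_{ij}\mapsto\sum_{a,b}\varepsilon_\sigma(u_{ia})\,u_{ab}\,\varepsilon_\tau(u_{bj})$, where $\varepsilon_\sigma$ is evaluation of $\mathcal{O}(S_N)$ at $\sigma$. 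This is a well-defined $*$-homomorphism on $\mathcal{O}(\mathbb{G})$, and it equals $\pi_{\sigma,\tau}$ up to inverting $\sigma$. This route is the cleaner fallback, since it needs only that $\Delta$ and the quotient maps are $*$-homomorphisms.
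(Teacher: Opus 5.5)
Your proposal is correct, but it runs in the opposite direction from the paper's argument, which is Lemma \ref{permute} and Proposition \ref{2.8} specialised to $G\cap S_N=S_N$.

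\textbf{The paper's route.} The paper never works with $I$ directly. It defines $\pi_{\sigma,\tau}$ on monomials in $\mathcal{O}(\mathbb{G})$. It obtains $h\circ\pi_{\sigma,\tau}=h$ on monomials by convolving the Haar state with the characters $\operatorname{ev}_{\sigma^{-1}}$ and $\operatorname{ev}_\tau$ and using invariance of $h$. Well-definedness and injectivity then follow from faithfulness of $h$, applied to $|\sum_a\alpha_ax_a-\sum_b\beta_by_b|^2$, and surjectivity from the inverse pair. Stability of $I$ is read off only afterwards.

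\textbf{Your main route.} You lift to $\mathcal{O}(O_N^+)$ by the universal property, much as in the paper's remark following Proposition \ref{2.8}. You then prove $\tilde\pi_{\sigma,\tau}(I)\subseteq I$ first, via $\tilde\pi_{\sigma,\tau}(Tu^{\otimes k}-u^{\otimes\ell}T)=(P_\sigma^T)^{\otimes\ell}(Tu^{\otimes k}-u^{\otimes\ell}T)P_\tau^{\otimes k}$, and finally descend to the quotient. This is transparent and fits the later intertwiner and partition picture. Its cost is that it depends on the nontrivial reconstruction fact that $I=I_{\mathcal{C}}$ is generated by intertwiner relations, which the paper only recalls in Section \ref{TKD}.

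\textbf{Your fallback.} This is the most economical of the three arguments. The map $(\operatorname{ev}_\sigma\otimes\operatorname{id}\otimes\operatorname{ev}_\tau)\circ(\Delta\otimes\operatorname{id})\circ\Delta$ is the algebra-level counterpart of Lemma \ref{permute}. Because it is a composite of $*$-homomorphisms, it is automatically well defined, so you need neither the Haar state, nor faithfulness, nor Tannaka--Krein reconstruction. It also works verbatim for $\sigma,\tau\in G\cap S_N$, the generality of Proposition \ref{2.8}.

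\textbf{What each buys.} The paper's route produces the moment invariance (\ref{inv}) along the way, and this is reused in Appendix \ref{app:freslon}. That identity also follows at once from your fallback: apply $h$ to the map and use bi-invariance of the Haar state.
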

\begin{proposition}
Let $\mathbb{G}\leq O_N^+$ with classical version $G\leq \mathbb{G}$. There is a diagram of quantum subgroups:
$$\begin{tikzcd}[row sep=1.2em, column sep=2.5em, cells={nodes={anchor=center}}]
G \arrow[r] & \mathbb{G} \\
G \cap S_N \arrow[u]\arrow[r] & \mathbb{G} \cap S_N^+\arrow[u]
\end{tikzcd}$$
\end{proposition}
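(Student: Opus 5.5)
The plan is to describe each of the four quantum groups by a Hopf$^*$-ideal of $\mathcal{O}(O_N^+)$ and to realise each arrow as the surjection between quotients induced by an inclusion of ideals. Write $\mathcal{O}(\mathbb{G})=\mathcal{O}(O_N^+)/I$ for a Hopf$^*$-ideal $I$. By definition the classical version $G$ has $\mathcal{O}(G)=\mathcal{O}(O_N^+)/(I+J_N)$. For the intersections, the natural choice is the smallest common quantum subgroup given by adding ideals:
$$\mathcal{O}(\mathbb{G}\cap S_N^+)=\mathcal{O}(O_N^+)/(I+I_N),\qquad \mathcal{O}(G\cap S_N)=\mathcal{O}(O_N^+)/(I+J_N+I_N),$$
using that $S_N\leq O_N^+$ corresponds to the ideal $I_N+J_N$.

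The first step is to check that each of these sums is a Hopf$^*$-ideal. A sum of $*$-ideals is a $*$-ideal. For the coalgebra conditions, $\Delta(I+I')\subseteq I\otimes\mathcal{O}+\mathcal{O}\otimes I+I'\otimes\mathcal{O}+\mathcal{O}\otimes I'$ holds by linearity, and similarly $\varepsilon$ vanishes on the sum and $S$ preserves it. So each quotient is the algebra of regular functions of a quantum subgroup of $O_N^+$, with the matrix $u$ (its image) serving as fundamental representation.

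The second step is to produce the four arrows. Each is the canonical surjection $\mathcal{O}(O_N^+)/A\twoheadrightarrow\mathcal{O}(O_N^+)/B$ for ideals $A\subseteq B$. The relevant inclusions are:
\begin{itemize}
\item $I\subseteq I+J_N$, giving $G\to\mathbb{G}$;
\item $I\subseteq I+I_N$, giving $\mathbb{G}\cap S_N^+\to\mathbb{G}$;
\item $I+J_N\subseteq I+J_N+I_N$, giving $G\cap S_N\to G$;
\item $I+I_N\subseteq I+I_N+J_N$, giving $G\cap S_N\to\mathbb{G}\cap S_N^+$.
\end{itemize}
Each such map sends $u_{ij}\mapsto u_{ij}$, so it intertwines the comultiplications, since these are defined on generators by the same formula $\Delta(u_{ij})=\sum_k u_{ik}\otimes u_{kj}$. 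Hence each is a quantum subgroup inclusion. The diagram commutes because all four maps are quotient maps of $\mathcal{O}(O_N^+)$ fixing generators, so both composites $\mathcal{O}(\mathbb{G})\to\mathcal{O}(G\cap S_N)$ agree on generators.

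The main point requiring care is identifying the bottom-left corner with the classical group $G\cap S_N$. The quotient $\mathcal{O}(O_N^+)/(I+J_N+I_N)$ is commutative and is generated by the entries of an orthogonal matrix with idempotent self-adjoint entries. By Gelfand duality it is the function algebra of a closed subgroup of $O_N$ consisting of $0/1$ orthogonal matrices, that is, permutation matrices. It is therefore the subgroup of $S_N$ cut out by $I$, namely $G\cap S_N$. Equivalently, this corner is the classical version of $\mathbb{G}\cap S_N^+$, which gives the square its natural interpretation.
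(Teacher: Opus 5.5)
Your proposal is correct and follows essentially the paper's route: the paper also realises each corner by a Hopf$^*$-ideal, quotienting $\mathcal{O}(\mathbb{G})$ by the images of $J_N$ and $I_N$ and then by $\pi_{\operatorname{ab}}(I_N)$ (equivalently $\pi_{\operatorname{qp}}(J_N)$). By the third isomorphism theorem these are exactly your quotients of $\mathcal{O}(O_N^+)$ by $I+J_N$, $I+I_N$ and $I+J_N+I_N$; your checks that sums of Hopf$^*$-ideals are Hopf$^*$-ideals, that the square commutes, and that the bottom-left corner is genuinely the classical group $G\cap S_N$ supply details the paper leaves implicit.
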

\begin{proof}
It suffices to describe the Hopf*-ideals corresponding to these quotients: the classical version $G\leq \mathbb{G}$ is given by $\pi_{\operatorname{ab}}:\mathcal{O}(\mathbb{G})\to \mathcal{O}(\mathbb{G})/J_N$; and the quantum subgroup $\mathbb{G}\cap S_N^+$ is given by $\pi_{\operatorname{qp}}:\mathcal{O}(\mathbb{G})\to \mathcal{O}(\mathbb{G})/I_N$. Then $G\cap S_N\leq G$ is given by $\pi_{\operatorname{ab}}(I_N)$, and $G\cap S_N\leq \mathbb{G}\cap S_N^+$ is given by $\pi_{\operatorname{qp}}(J_N)$.
\end{proof}

Let $\pi:\mathcal{O}(\mathbb{G})\to \mathcal{O}(G\cap S_N)$ denote the
quotient map. Each $\sigma\in G\cap S_N$ then defines a character on
$\mathcal{O}(\mathbb{G})$ by
$$\operatorname{ev}_\sigma(f)=\pi(f)(\sigma)\qquad (f\in\mathcal{O}(\mathbb{G})).$$
These characters allow the permutation action of $G\cap S_N$ on the rows and columns of the fundamental representation to extend to an algebra $*$-automorphism of $\mathcal{O}(\mathbb{G})$.
\begin{lemma}\label{permute}
Suppose $\mathbb{G}\leq O_N^+$  with fundamental representation $u\in M_N(\mathcal{O}(\mathbb{G}))$, and classical version $G\leq \mathbb{G}$.  Then for all states $\varphi$ on $\mathcal{O}(\mathbb{G})$ and $\sigma,\tau\in G\cap S_N$:
$$(\operatorname{ev}_{\sigma^{-1}}\star \varphi\star \operatorname{ev}_\tau)(u_{i_1j_1}\cdots u_{i_mj_m})=\varphi(u_{\sigma(i_1)\tau(j_1)}\cdots u_{\sigma(i_m)\tau(j_m)}).$$
\end{lemma}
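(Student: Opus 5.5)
We compute directly from the definition of the convolution product $\phi\star\psi=(\phi\otimes\psi)\circ\Delta$ and the multiplicativity of $\Delta$. By coassociativity the triple convolution $\operatorname{ev}_{\sigma^{-1}}\star\varphi\star\operatorname{ev}_\tau$ is well defined as $(\operatorname{ev}_{\sigma^{-1}}\otimes\varphi\otimes\operatorname{ev}_\tau)\circ\Delta^{(2)}$, where $\Delta^{(2)}=(\Delta\otimes\mathrm{id})\circ\Delta$. Since $\Delta$ is a $*$-homomorphism with $\Delta(u_{ij})=\sum_k u_{ik}\otimes u_{kj}$, we have
$$\Delta^{(2)}(u_{i_1j_1}\cdots u_{i_mj_m})=\sum_{\mathbf{k},\mathbf{l}} u_{i_1k_1}\cdots u_{i_mk_m}\otimes u_{k_1l_1}\cdots u_{k_ml_m}\otimes u_{l_1j_1}\cdots u_{l_mj_m},$$
with $\mathbf{k},\mathbf{l}\in\{1,\dots,N\}^m$.

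Next we evaluate the characters. For $\sigma\in G\cap S_N$, $\operatorname{ev}_\sigma$ factors through $\pi:\mathcal{O}(\mathbb{G})\to\mathcal{O}(G\cap S_N)$. There $u_{ij}$ is the coordinate function $\rho\mapsto\rho_{ij}$ of the permutation matrix, so $\operatorname{ev}_\sigma(u_{ij})=\delta_{i,\sigma(j)}$ in the convention where the permutation matrix of $\sigma$ has entries $\delta_{i\sigma(j)}$. We fix this convention and note that it matches the claimed formula; with the opposite convention, $\sigma^{-1}$ and $\sigma$ simply swap roles. The map $\operatorname{ev}_\sigma$ is a character, so it is multiplicative. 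Hence
$$\operatorname{ev}_{\sigma^{-1}}(u_{i_1k_1}\cdots u_{i_mk_m})=\prod_r\delta_{i_r,\sigma^{-1}(k_r)}=\prod_r\delta_{k_r,\sigma(i_r)},\qquad \operatorname{ev}_\tau(u_{l_1j_1}\cdots u_{l_mj_m})=\prod_r\delta_{l_r,\tau(j_r)}.$$

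Applying $\varphi$ to the middle leg and summing over $\mathbf{k},\mathbf{l}$, the Kronecker deltas collapse the sum to the single term $k_r=\sigma(i_r)$, $l_r=\tau(j_r)$. This gives $\varphi(u_{\sigma(i_1)\tau(j_1)}\cdots u_{\sigma(i_m)\tau(j_m)})$, as claimed.

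The only delicate point is bookkeeping. We must check that $G\cap S_N$ is a group, so that $\sigma^{-1}\in G\cap S_N$ and $\operatorname{ev}_{\sigma^{-1}}$ is defined; this holds because it is a closed subgroup of $S_N$. We must also align the convention for permutation matrices with the convention for $\operatorname{ev}$ so that $\sigma$, rather than $\sigma^{-1}$, appears on the row indices. The remaining ingredients are multiplicativity of characters and coassociativity, neither of which presents an obstacle.
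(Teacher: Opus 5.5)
Your proof is correct and follows the paper's route. The paper fixes the same convention, $\pi(u_{ij})=\mathds{1}_{j\to i}$ with $\mathds{1}_{j\to i}(\sigma)=\delta_{i,\sigma(j)}$, and then cites the direct computation via the comultiplication and the characters in \cite{mc1}, Prop.~6.4, which is the collapse of the Kronecker deltas that you write out explicitly.
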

\begin{proof}
Noting that $\pi(u_{ij})$ is the projection $\mathds{1}_{j\to i}\in \mathcal{O}(G\cap S_N)$, given by $\mathds{1}_{j\to i}(\sigma)=\delta_{i,\sigma(j)}$,  this is then a slight generalisation of (Prop. 6.4, \cite{mc1}), albeit with the same proof.
\end{proof}
 \begin{proposition}\label{2.8}
 Suppose $\mathbb{G}\leq O_N^+$  with fundamental representation $u\in M_N(\mathcal{O}(\mathbb{G}))$, and classical version $G\leq \mathbb{G}$. For $\sigma,\tau\in G\cap S_N$,  the map
 $$\pi_{\sigma,\tau}(u_{i_1j_1}\cdots u_{i_mj_m})=u_{\sigma(i_1)\tau(j_1)}\cdots u_{\sigma(i_m)\tau(j_m)},$$ extended linearly, is an algebra $*$-automorphism $\pi_{\sigma,\tau}:\mathcal{O}(\mathbb{G})\to \mathcal{O}(\mathbb{G})$.
    \end{proposition}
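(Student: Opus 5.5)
The plan is to realise $\pi_{\sigma,\tau}$ as a convolution of characters with the identity, mirroring the formula in Lemma \ref{permute}. Define
$$\pi_{\sigma,\tau}:=(\operatorname{ev}_{\sigma^{-1}}\otimes\mathrm{id}\otimes\operatorname{ev}_\tau)\circ(\Delta\otimes\mathrm{id})\circ\Delta .$$
Since $\operatorname{ev}_{\sigma^{-1}}$ and $\operatorname{ev}_\tau$ are $*$-characters on $\mathcal{O}(\mathbb{G})$ and $\Delta$ is a $*$-homomorphism, $\pi_{\sigma,\tau}$ is automatically a unital $*$-homomorphism $\mathcal{O}(\mathbb{G})\to\mathcal{O}(\mathbb{G})$. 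This avoids any need to check directly that the index-permuted relations still hold in $\mathcal{O}(\mathbb{G})$, which would be the naive and harder route.

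Next compute it on generators. We have $(\Delta\otimes\mathrm{id})\Delta(u_{ij})=\sum_{k,l}u_{ik}\otimes u_{kl}\otimes u_{lj}$. By the proof of Lemma \ref{permute}, $\operatorname{ev}_{\rho}(u_{ab})=\delta_{a,\rho(b)}$. Hence
$$\operatorname{ev}_{\sigma^{-1}}(u_{ik})=\delta_{i,\sigma^{-1}(k)}=\delta_{k,\sigma(i)},\qquad \operatorname{ev}_\tau(u_{lj})=\delta_{l,\tau(j)},$$
and so $\pi_{\sigma,\tau}(u_{ij})=u_{\sigma(i)\tau(j)}$. Multiplicativity then gives the stated formula on monomials, and linearity extends it to all of $\mathcal{O}(\mathbb{G})$, which is spanned by monomials in the $u_{ij}$ because $u=\overline{u}$.

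To get an automorphism, I will exhibit an inverse. On generators $\pi_{\sigma^{-1},\tau^{-1}}\circ\pi_{\sigma,\tau}$ and $\pi_{\sigma,\tau}\circ\pi_{\sigma^{-1},\tau^{-1}}$ both act as the identity. Both are algebra homomorphisms and the generators generate the algebra, so both composites are the identity. This uses that $G\cap S_N$ is a group, so that $\sigma^{-1},\tau^{-1}\in G\cap S_N$ and the characters $\operatorname{ev}_{\sigma},\operatorname{ev}_{\tau^{-1}}$ are defined.

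The only real care point is the sign and inverse convention in $\operatorname{ev}$. We must check that $\pi(u_{ij})=\mathds{1}_{j\to i}$ evaluated at $\sigma^{-1}$ produces $\sigma(i)$ rather than $\sigma^{-1}(i)$ in the first index. I will verify this against the statement of Lemma \ref{permute}; if the convention comes out reversed, I will simply swap which character is placed on which side.
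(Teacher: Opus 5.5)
Your argument is correct, and it takes a genuinely different route from the paper.

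The paper defines $\pi_{\sigma,\tau}$ by the formula on monomials and then has to prove well-definedness analytically:
\begin{itemize}
\item it combines Lemma \ref{permute} with invariance of the Haar state to get $h(x)=h(\pi_{\sigma,\tau}(x))$ for monomials $x$;
\item it then uses faithfulness of $h$ on $\mathcal{O}(\mathbb{G})$ to show that two expressions of the same element have the same image;
\item injectivity comes the same way from $h(|\pi_{\sigma,\tau}(f)|^2)=h(|f|^2)$, and surjectivity from $\pi_{\sigma^{-1},\tau^{-1}}$.
\end{itemize}

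You instead write $\pi_{\sigma,\tau}$ as the two-sided translation $(\operatorname{ev}_{\sigma^{-1}}\otimes\mathrm{id}\otimes\operatorname{ev}_\tau)\circ(\Delta\otimes\mathrm{id})\circ\Delta$. Well-definedness and the $*$-homomorphism property are then automatic, and the explicit two-sided inverse $\pi_{\sigma^{-1},\tau^{-1}}$ gives bijectivity at once.

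What your route buys:
\begin{itemize}
\item It is more elementary: no Haar state and no appeal to its faithfulness.
\item It works for any pair of characters.
\item It carries over unchanged to $C_{\mathrm{u}}(\mathbb{G})$, where the characters $\operatorname{ev}_\sigma$ extend.
\item It explains Lemma \ref{permute}, which is just the identity $\varphi\circ\pi_{\sigma,\tau}=\operatorname{ev}_{\sigma^{-1}}\star\varphi\star\operatorname{ev}_\tau$.
\end{itemize}
Bi-invariance of $h$ then recovers (\ref{inv}) as a corollary. Producing (\ref{inv}) is essentially the only extra output of the paper's route, and it is reused in Appendix \ref{app:freslon}.

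One small correction to your final paragraph. The convention check already succeeds: with $\operatorname{ev}_\rho(u_{ab})=\delta_{a,\rho(b)}$, your computation gives $u_{\sigma(i)\tau(j)}$. Had it come out reversed, the fix would be to replace $\operatorname{ev}_{\sigma^{-1}},\operatorname{ev}_\tau$ by $\operatorname{ev}_\sigma,\operatorname{ev}_{\tau^{-1}}$ in the same slots, not to swap the slots. In $(\Delta\otimes\mathrm{id})\Delta(u_{ij})=\sum_{k,l}u_{ik}\otimes u_{kl}\otimes u_{lj}$ the left leg always carries the row index $i$ and the right leg the column index $j$.
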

    \begin{proof} To show that the map is well defined, suppose
$$
f=\sum_a \alpha_a x_a=\sum_b \beta_b y_b$$
are two representations of $f$ as linear combinations of monomials
in the generators $u_{ij}$.
Applying the Haar state, which is faithful on $\mathcal{O}(\mathbb{G})$, to $|\sum_a\alpha_a x_a-\sum_b \beta_by_b|^2=0$, then using Lemma \ref{permute}  and the invariance of the Haar state:
    \begin{align}h(u_{i_1j_1}\cdots u_{i_mj_m})=h(\pi_{\sigma,\tau}(u_{i_1j_1}\cdots u_{i_mj_m}))\qquad (\sigma,\,\tau\in G\cap S_N).\label{inv}\end{align}
Using this equation to replace instances of $h(f^*g)$ with $h(\pi_{\sigma,\tau}(f^*g))$ gives:

  \begin{align*}h\left(\left|\pi_{\sigma,\tau}\left(\sum_a \alpha_ax_a\right)-\pi_{\sigma,\tau}\left(\sum_b\beta_b y_b\right)\right|^2\right)&=0,
\end{align*}
which implies $\pi_{\sigma,\tau}$ is well-defined. That $\pi_{\sigma,\tau}$ is  a *-homomorphism is straightforward. Suppose  $f\in \mathcal{O}(\mathbb{G})$ is non-zero, so that $h(|f|^2)>0$. Using (\ref{inv}) again:
\begin{align*}
  h\left(|\pi_{\sigma,\tau}(f)|^2\right) & =h\left(\pi_{\sigma,\tau}(f)^*\pi_{\sigma,\tau}(f)\right) \\
   & =h\left(\pi_{\sigma,\tau}(f^*f)\right) \\
   & =h(f^*f)=h(|f|^2)>0,
\end{align*}
therefore $\ker \pi_{\sigma,\tau}=\{0\}$. Note further that for any $f\in \mathcal{O}(\mathbb{G})$, $$\pi_{\sigma,\tau}(\pi_{\sigma^{-1},\tau^{-1}}(f))=f,$$ and so $\pi_{\sigma,\tau}$ is surjective.
    \end{proof}

The following observations clarify the scope of Proposition \ref{2.8} and its relationship to the $\mathrm{C}^*$-setting:
\begin{remark}
\begin{enumerate}
\item The argument can be more straightforward in the case of compact matrix quantum groups defined by universal $\mathrm{C}^*$-algebras: for example, consider the fundamental representation $u\in M_N(C_{\text{u}}(O_N^+))$, and for $\sigma,\tau\in S_N$, consider the following matrix, which is obtained by permuting the rows and columns:

$$u^{\sigma,\tau}_{ij}=u_{\sigma(i),\tau(j)}.$$
The entries of $u^{\sigma,\tau}$ generate $C_{\text{u}}(O_N^+)$, and the entries of $u^{\sigma,\tau}$ also satisfy the relations of $C_{\text{u}}(O_N^+)$, and thus $\pi_{\sigma,\tau}$ is a *-homomorphism (even a *-automorphism) by the universal property.
\item In the case of a quantum permutation group, $\mathbb{G}\leq S_N^+$, if $\sigma,\tau$ are \emph{not} elements of the classical version, then $\pi_{\sigma,\tau}$ is certainly not an algebra automorphism of $\mathcal{O}(\mathbb{G})$. Note first that $\pi_{\sigma,\tau}=\pi_{e,\tau}\circ \pi_{\sigma,e}$. If $\pi_{\sigma,\tau}:\mathcal{O}(\mathbb{G})\to \mathcal{O}(\mathbb{G})$ is injective, then so is $\pi_{\sigma,e}$. Then $\pi_{\sigma,e}$  extends to an injective $\sigma$-weakly continuous map $\pi^{**}_{\sigma,e}:C_{\text{u}}(\mathbb{G})^{**}\to C_{\text{u}}(\mathbb{G})^{**}$. Where $p_{e}$ is the (non-zero) support projection of the counit (\cite{mc3}, Prop. 4.3),
    $$\pi_{\sigma,e}^{**}(p_e)=u_{\sigma(1),1}\wedge u_{\sigma(2),2}\wedge\cdots\wedge u_{\sigma(N),N}=p_\sigma,$$
     the support projection of $\operatorname{ev}_\sigma$. But this projection is zero for $\sigma$ outside the classical version. For further details on this point, see \cite{mc3}.
\item Note that $\pi_{\sigma,\tau}$ is not a Hopf*-algebra homomorphism in general as it does not preserve the counit. However $\pi_{\tau,\tau}:\mathcal{O}(\mathbb{G})\to\mathcal{O}(\mathbb{G})$ is a Hopf*-algebra automorphism that can be viewed as a lift of the inner automorphism of the classical version $G$, $g\mapsto \tau^{-1}g\tau$.
\end{enumerate}
\end{remark}
In the case, therefore, of a homogeneous  compact matrix quantum group $S_N\leq \mathbb{G}\leq O_N^+$,  any pair $(\sigma,\tau)\in S_N\times S_N$ gives a $*$-automorphism $\pi_{\sigma,\tau}:\mathcal{O}(\mathbb{G})\to\mathcal{O}(\mathbb{G})$. This implies, in particular, that the Hopf*-ideal $I$ defining  $\mathcal{O}(\mathbb{G})=\mathcal{O}(O_N^+)/I$ satisfies:
$$f\in I\implies \pi_{\sigma,\tau}(f)\in I\qquad (\sigma,\tau\in S_N).$$
Therefore relations defining $\mathcal{O}(\mathbb{G})$ must be invariant under the permutations $\pi_{\sigma,\tau}$ with $(\sigma,\tau)\in S_N\times S_N$. Such relations depend only on which indices are equal. See Section \ref{CoP} for the precise correspondence.
\section{Partitions}\label{part}
An overarching reference for this section is Nica--Speicher \cite{nis}. Let $k>0$ be an integer, $[k]:=\{1,2,\dots,k\}$, and, for $x,y\in [k]$, $x<y$, denote the interval $[x,y]=\{x,x+1,x+2,\dots,y\}$. A partition $p$ on $[k]$ is a set of non-empty blocks $B_i\subset [k]$, which disjointly union to $[k]$:
$$[k]=\bigsqcup_{i=1,\dots,|p|}B_i.$$
Denote the set of partitions of $[k]$ by $\mathcal{P}(k)$. If $B\in p$, and $x\in B$, denote $[x]_p=B$. A partition $p\in \mathcal{P}(k)$ induces an equivalence relation $\sim_p$ on $[k]$:
$$x\sim_p y\iff [x]_p=[y]_p.$$

\begin{definition}
Given a partition $p\in\mathcal{P}(k)$, a crossing $\kappa$ is a $\{\kappa_1,\kappa_2,\kappa_3,\kappa_4\}\in\mathcal{P}_4([k])$ such that:
$$\kappa_1<\kappa_2<\kappa_3<\kappa_4,\qquad \kappa_1\sim_p \kappa_3,\quad \kappa_2\sim_p \kappa_4,\text{ but }\kappa_1\not\sim_p \kappa_2.$$
\end{definition}
By abuse of notation, write $(\kappa_1,\kappa_2,\kappa_3,\kappa_4)$ for $\{\kappa_1,\kappa_2,\kappa_3,\kappa_4\}$ when the ordering  $\kappa_1<\kappa_2<\kappa_3<\kappa_4$ is known, otherwise write $\kappa=\{\kappa_1,\kappa_2,\kappa_3,\kappa_4\}$ if the $\kappa_i$ form a crossing, but the exact ordering of $\kappa$ is unknown. When drawing a partition, place the elements of $[k]$ in order from left-to-right\footnote{See Remark \ref{order}}, so,
for example, in the below there are two crossings, $5<6<7<9$ and $5<6<7<10$:
\begin{equation}\BigPartition{
\Psingletons 0 to 0.25:1,4,8
\Pblock 0 to 0.25:2,3
\Pblock 0 to 0.25:5,7
\Pblock 0 to 0.5:6,9,10
\Ptext(5,-0.2){5}
\Ptext(6,-0.2){6}
\Ptext(7,-0.2){7}
\Ptext(9,-0.2){9}
\Ptext(10,-0.2){10}
}\label{eqpart}\end{equation}
\begin{definition}\label{2.2}
Collect all the $\mathcal{P}(k)$ into an ambient set:
$$\mathcal{P}:=\bigsqcup_{k\geq 1}\mathcal{P}(k).$$
Let $\mathcal{NC}(k)\subseteq \mathcal{P}(k)$ be the set of partitions of $[k]$ that have no crossings, and:
$$\mathcal{NC}:=\bigsqcup_{k\geq 1}\mathcal{NC}(k).$$
Let $\mathcal{CR}:=\mathcal{P}\backslash \mathcal{NC}$ be the set of partitions with a crossing. Given a partition $p\in \mathcal{CR}(k)$, let $\operatorname{cr}(p)\subset \mathcal{P}_4([k])$ be the set of crossings in $p$.
\end{definition}
This definition will be extended in Section \ref{CoP} to include the data of $k$ `upper' points and $\ell$ `lower points'.
\begin{lemma}\label{containlemma}
Let $p\in \mathcal{CR}$ be a partition and $\kappa\in \operatorname{cr}(p)$. Then for all $x\in [\kappa_i]_p$, there is a crossing $\kappa_x\in\operatorname{cr}(p)$ containing $x$.
\end{lemma}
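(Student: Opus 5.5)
The plan is a direct construction: for a given $x$ in one of the two blocks involved in the crossing $\kappa$, I will exhibit a crossing through $x$ explicitly. Write $\kappa=(\kappa_1,\kappa_2,\kappa_3,\kappa_4)$ with $\kappa_1<\kappa_2<\kappa_3<\kappa_4$. Set $A=[\kappa_1]_p=[\kappa_3]_p$ and $B=[\kappa_2]_p=[\kappa_4]_p$, so that $A\neq B$. Each block $[\kappa_i]_p$ is either $A$ or $B$. The roles of $A$ and $B$ are symmetric under the reflection $i\mapsto k+1-i$ of $[k]$, which preserves crossings, and under simply relabelling which pair is called the odd-indexed one. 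So it suffices to treat $x\in A$, where $x\neq\kappa_1,\kappa_3$; if $x$ equals one of these, $\kappa$ itself is the required crossing.

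For $x\in A$, I will split into cases according to the position of $x$ relative to $\kappa_2$ and $\kappa_4$.

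If $x<\kappa_2$, then $x<\kappa_2<\kappa_3<\kappa_4$ with $x\sim_p\kappa_3$ and $\kappa_2\sim_p\kappa_4$. Since $x\not\sim_p\kappa_2$ (because $A\neq B$), the set $\{x,\kappa_2,\kappa_3,\kappa_4\}$ is a crossing.

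If $\kappa_2<x<\kappa_4$, then $\kappa_1<\kappa_2<x<\kappa_4$ with $\kappa_1\sim_p x$ and $\kappa_2\sim_p\kappa_4$, so $\{\kappa_1,\kappa_2,x,\kappa_4\}$ is a crossing.

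If $x>\kappa_4$, then $\kappa_2<\kappa_3<\kappa_4<x$ with $\kappa_2\sim_p\kappa_4$ and $\kappa_3\sim_p x$, so $\{\kappa_2,\kappa_3,\kappa_4,x\}$ is a crossing. Here the ordered pattern is "$B,A,B,A$", which is still a crossing because the definition only requires the first and third elements to be equivalent, the second and fourth to be equivalent, and the first two to be inequivalent.

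Since $x\in A$ and $A\neq B$, the element $x$ cannot equal $\kappa_2$ or $\kappa_4$, so these three cases are exhaustive.

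For $x\in B$, I will apply the same argument with the roles exchanged, using $\kappa_1,\kappa_3$ as the pivots. If $x<\kappa_1$, use $\{x,\kappa_1,\kappa_2,\kappa_3\}$. If $\kappa_1<x<\kappa_3$, use $\{\kappa_1,x,\kappa_3,\kappa_4\}$. If $x>\kappa_3$, use $\{\kappa_1,\kappa_2,\kappa_3,x\}$.

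There is no real obstacle here. The only points needing care are checking that the four chosen points are distinct and correctly ordered in each case, and checking the non-equivalence condition, which always follows from $A\neq B$.
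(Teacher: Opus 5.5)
Your proof is correct and takes the same approach as the paper: a case split on the position of $x$ relative to $\kappa_1<\kappa_2<\kappa_3<\kappa_4$, with an explicit crossing through $x$ in each case. Your three cases per block merge the paper's five (for instance, $x<\kappa_1$ and $\kappa_1<x<\kappa_2$ both give $(x,\kappa_2,\kappa_3,\kappa_4)$), and you write out the $x\in[\kappa_2]_p$ case explicitly where the paper only says ``similarly''.
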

\begin{proof}
If $x\in \kappa$, there is nothing to do. Assume that $x\not\in\kappa$ and consider the crossing:
$$\kappa_1<\kappa_2<\kappa_3<\kappa_4.$$
Assume $x\in [\kappa_1]_p$; then wherever $x$ is in the ordering of the $\kappa_i$, there is a crossing including $x$:
\begin{align}
  x<\kappa_1<\kappa_2<\kappa_3<\kappa_4 & \implies (x,\kappa_2,\kappa_3,\kappa_4)\in\operatorname{cr}(p)\nonumber \\
  \kappa_1<x<\kappa_2<\kappa_3<\kappa_4 & \implies  (x,\kappa_2,\kappa_3,\kappa_4)\in\operatorname{cr}(p)\nonumber\\
  \kappa_1<\kappa_2<x<\kappa_3<\kappa_4 & \implies(\kappa_1,\kappa_2,x,\kappa_4)\in\operatorname{cr}(p)\label{eq1} \\
  \kappa_1<\kappa_2<\kappa_3<x<\kappa_4 & \implies(\kappa_1,\kappa_2,x,\kappa_4)\in\operatorname{cr}(p)\nonumber  \\
  \kappa_1<\kappa_2<\kappa_3<\kappa_4<x & \implies(\kappa_2,\kappa_3,\kappa_4,x)\in\operatorname{cr}(p)\nonumber
\end{align}
Similarly if $x\in[\kappa_2]_p$.
\end{proof}
Define $\chi(p)=\cup_{\kappa\in\operatorname{cr}(p)}\kappa$, the subset $\chi(p)\subseteq [k]$ of crossers in $p$, and $\chi_0(p):=[k]\backslash\chi(p)$ the set of non-crossers.
\begin{definition}
Given a non-empty $A\subset[k]$, the \emph{restriction map} $R_{A}:\mathcal{P}(k)\to \mathcal{P}(|A|)$ is given by:
$$R_A(p)=p\cap A.$$
\end{definition}

\begin{corollary}
A partition $p\in \mathcal{P}$ decomposes as $p=p^{\mathcal{CR}}\sqcup p^{\mathcal{NC}}$, with every element in a block $B_1\in p^{\mathcal{CR}}$ in a crossing, and every element in a block $B_2\in p^{\mathcal{NC}}$ not in a crossing.
\end{corollary}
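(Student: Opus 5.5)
The plan is to obtain the decomposition directly from Lemma \ref{containlemma}, which says that membership in a crossing is a property of a whole block. For each block $B\in p$ there are two possibilities.

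\emph{Case 1: some element of $B$ lies in a crossing.} Suppose $y\in B$ lies in some $\kappa\in\operatorname{cr}(p)$, say $y=\kappa_i$. Then $B=[\kappa_i]_p$. By Lemma \ref{containlemma}, every $x\in B$ lies in some crossing $\kappa_x\in\operatorname{cr}(p)$. Hence $B\subseteq\chi(p)$.

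\emph{Case 2: no element of $B$ lies in a crossing.} Then $B\subseteq\chi_0(p)$.

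Thus every block lies entirely in $\chi(p)$ or entirely in $\chi_0(p)$, and not both, since $\chi(p)\cap\chi_0(p)=\emptyset$. Set
$$p^{\mathcal{CR}}:=\{B\in p\colon B\subseteq \chi(p)\},\qquad p^{\mathcal{NC}}:=\{B\in p\colon B\subseteq\chi_0(p)\}.$$
These give a disjoint union $p=p^{\mathcal{CR}}\sqcup p^{\mathcal{NC}}$. By construction, every element of a block of $p^{\mathcal{CR}}$ is in a crossing, and no element of a block of $p^{\mathcal{NC}}$ is.

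Equivalently, $p^{\mathcal{CR}}=R_{\chi(p)}(p)$ and $p^{\mathcal{NC}}=R_{\chi_0(p)}(p)$. The restriction map does not cut any block, because each block lies wholly in $\chi(p)$ or wholly in $\chi_0(p)$.

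The only point needing care is that Lemma \ref{containlemma} is stated for $x\in[\kappa_i]_p$ with $\kappa$ a crossing. We apply it with $\kappa_i$ the element $y$ of $B$ known to be a crossing point, so that $[\kappa_i]_p$ is exactly $B$. The lemma's proof treats $x\in[\kappa_1]_p$ explicitly and covers $x\in[\kappa_2]_p$ by symmetry; the cases $i=3,4$ reduce to these because $\kappa_3\sim_p\kappa_1$ and $\kappa_4\sim_p\kappa_2$. Beyond this there is no real obstacle.

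If $p\in\mathcal{NC}$, then $\chi(p)=\emptyset$ and $p^{\mathcal{CR}}$ is empty, which is consistent with the statement.
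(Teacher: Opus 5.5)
Your proof is correct and takes the same approach as the paper. The paper likewise invokes Lemma \ref{containlemma} to see that each block lies wholly in $\chi(p)$ or wholly in $\chi_0(p)$, and writes the decomposition as $p=R_{\chi(p)}(p)\sqcup R_{\chi_0(p)}(p)$; you have just spelled out the block-by-block case split, the reduction of $i=3,4$ to $i=1,2$, and the trivial non-crossing case.
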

\begin{proof}
By Lemma \ref{containlemma}, the decomposition is simply
$$p=R_{\chi(p)}(p)\sqcup R_{\chi_0(p)}(p).$$
\end{proof}
\begin{theorem}\label{th0}
If $\operatorname{cr}(p)=\operatorname{cr}(q)$, then $p^{\mathcal{CR}}=q^{\mathcal{CR}}$.
\end{theorem}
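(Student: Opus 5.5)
The plan is to show that $\operatorname{cr}(p)$ determines both the ground set and the equivalence relation of $p^{\mathcal{CR}}$. First, the ground set: by definition $\chi(p)=\bigcup_{\kappa\in\operatorname{cr}(p)}\kappa$, so $\operatorname{cr}(p)=\operatorname{cr}(q)$ gives $\chi(p)=\chi(q)=:X$. By the Corollary, $p^{\mathcal{CR}}=R_X(p)$ and $q^{\mathcal{CR}}=R_X(q)$. It therefore suffices to show that for $x,y\in X$,
$$x\sim_p y\iff x\sim_q y,$$
with the right-hand relation read off from $\operatorname{cr}(q)=\operatorname{cr}(p)$ alone.

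The key step is a characterisation of $\sim_p$ on $X$ purely in terms of the set of crossings. Note that a $4$-set $\kappa=\{\kappa_1<\kappa_2<\kappa_3<\kappa_4\}\in\operatorname{cr}(p)$ records exactly $\kappa_1\sim_p\kappa_3$, $\kappa_2\sim_p\kappa_4$ and $\kappa_1\not\sim_p\kappa_2$. So set $x\approx y$ if $x=y$, or if there is some $\kappa\in\operatorname{cr}(p)$ in which $x,y$ occupy positions $\{1,3\}$ or $\{2,4\}$ of the sorted $\kappa$. Then let $\equiv$ be the transitive closure of $\approx$. Clearly $x\equiv y\Rightarrow x\sim_p y$. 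The relation $\equiv$ depends only on $\operatorname{cr}(p)$, so it coincides for $p$ and $q$.

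For the converse, suppose $x\sim_p y$ with $x,y\in X$ and $x\neq y$. Since $x\in X$, some crossing $\kappa$ contains $x$, and $x$ lies in the block $[\kappa_i]_p$ of one of its two crossing blocks. Now $y$ lies in the same block, so I will apply Lemma \ref{containlemma} to $\kappa$ and $y$. Its case analysis (\ref{eq1}) produces a crossing $\kappa'$ that contains $y$ and is obtained from $\kappa$ by replacing one element of the same block as $y$. If that replaced element is the partner of $x$, then $\kappa'$ contains both $x$ and $y$ in the same block, so $x\approx y$. Otherwise $\kappa'$ still contains $x$, which gives $x\approx x'$ and $x'\approx y$ for the partner $x'$ of $x$ or $y$ in $\kappa'$. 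In either case $x\equiv y$.

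The main obstacle is the bookkeeping in this last step. One has to check, across the five positional cases of Lemma \ref{containlemma}, that the new crossing always keeps one element of the original block in a partner position with $y$. The case $y<\kappa_1$ with $y\in[\kappa_1]_p$, for example, replaces $\kappa_1$ by $y$, so $y$ pairs with $\kappa_3$. I would handle this uniformly with one observation: in every case the new crossing contains $y$ together with some other element $z$ of the same block, in partner positions, so $y\approx z$. If $z=x$ we are done. If $z\neq x$, then $x$ and $z$ are both in the original crossing $\kappa$, where they are partners, so $x\approx z$.

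Combining the steps, $\sim_p|_X=\equiv=\sim_q|_X$, hence $p^{\mathcal{CR}}=q^{\mathcal{CR}}$.
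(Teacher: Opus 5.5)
Your proof is correct and follows essentially the same route as the paper. Both arguments use the replacement crossings of Lemma \ref{containlemma} to place any element of a crossing block in partner position with an element of a common crossing $\kappa$, and then transfer the relation through $\operatorname{cr}(p)=\operatorname{cr}(q)$. Your repackaging, which shows that the crossing set alone determines $\sim_p$ on $\chi(p)$, also makes explicit the ground-set equality $\chi(p)=\chi(q)$ that the paper leaves implicit.
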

\begin{proof}
Suppose that $\operatorname{cr}(p)=\operatorname{cr}(q)$, and $\kappa\in \operatorname{cr}(p)\cap \operatorname{cr}(q)$ such that $[\kappa_i]_{p}\neq [\kappa_i]_{q}$.  Assume without loss of generality that there exists $x\in [\kappa_i]_{p}\backslash [\kappa_i]_{q}$. With indices assumed $\mod 2$, as $\kappa_j\sim_p \kappa_{j+2}$ and $\kappa_j\sim_q \kappa_{j+2}$, and $x\sim_p \kappa_i$, it is not the case that $x\in\kappa$. Therefore $x$ can be placed in one of  five positions, then giving new crossings in $\operatorname{cr}(p)$ depending on $x\in[\kappa_1]_{p}$ or $x\in [\kappa_2]_{p}$. In the first case, there are the same five cases as in (\ref{eq1}). Similarly if $x\in [\kappa_2]_p$. However, recall that $\operatorname{cr}(q)=\operatorname{cr}(p)$, and so each of these cases gives $x\in [\kappa_1]_q$,
and so $p^{\mathcal{CR}}=q^{\mathcal{CR}}$.
\end{proof}

The set $[k]$ can be split into alternating subintervals of crossers $[x_a,y_a]\subset\chi(p)$ and non-crossers $[z_b,w_b]\subset \chi_0(p)$. So say, for  $1\in \chi(p)$ (similar if $1\in\chi_0(p)$):
$$[k]=[1,y_1]\sqcup [z_1,w_1]\sqcup [x_2,y_2]\sqcup[z_2,w_2]\sqcup\cdots$$
As an illustration, in the following
$$\LPartition{0.25:3}{1:1,15;0.5:2,5;0.75:4,8,11;0.25:9,10;0.5:12,13,14;0.25:6,7},$$
the intervals of non-crossers and crossers are:
$$[15]=\{1\}\sqcup\{2\}\sqcup\{3\}\sqcup[4,5]\sqcup[6,7]\sqcup\{8\}\sqcup[9,10]\sqcup\{11\}\sqcup[12,15].$$
Given a permutation $\sigma\in S_k$ and, for $A\subset[k]$, $\sigma(A):=\{\sigma(a)\colon a\in A\}$, define the operation $\sigma:\mathcal{P}(k)\to\mathcal{P}(k)$ given by
$$B\in p\iff \sigma(B)\in \sigma(p).$$
If $r^n\in S_k$ is a cyclic permutation, $r^n(i)=(i+n-1)\mod k+1$, then $r^n(p)$ is a \emph{rotation} of $p\in\mathcal{P}(k)$. For example,
$$r^2(\LPartition{0.25:6}{0.25:4,5;0.5:1,3;0.75:2,7})=\LPartition{0.25:1}{0.25:6,7;0.5:3,5;0.75:4,2}.$$
The following lemma states that when restricted to a crossing plus the non-crossers, the block of a non-crosser lies between two adjacent elements of the crossing:
\begin{lemma}\label{nclemma}
Let $p\in \mathcal{CR}(k)$. If $a\in\chi_0(p)$ is a non-crosser, then, up to rotation, $[a]_{R_{\kappa\cup\chi_0(p)}(p)}\subseteq[z,w]$, some interval of non-crossers $[z,w]\subset \chi_0(R_{\kappa\cup\chi_0(p)}(p))$, entirely contained between two elements of $\kappa$.
\end{lemma}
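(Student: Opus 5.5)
The plan is to pass to the restricted partition $q:=R_{\kappa\cup\chi_0(p)}(p)$ on the point set $\kappa\cup\chi_0(p)$, with the induced cyclic order, and show that for a non-crosser $a$ its block $[a]_q$ avoids every element of $\kappa$ and never straddles one. Since $\kappa$ has four points, they cut the circle into four open arcs. After a rotation we may assume the arc containing $a$ does not wrap around the end of the line, so it is a genuine interval strictly between two consecutive elements of $\kappa$.

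The first step is that $[a]_q$ contains no element of $\kappa$. If $a\sim_p\kappa_i$ for some $i$, then Lemma \ref{containlemma} puts $a$ in a crossing of $p$, contradicting $a\in\chi_0(p)$. Hence $[a]_q=[a]_p\cap\chi_0(p)=[a]_p$, again using Lemma \ref{containlemma}: every element of $[a]_p$ is a non-crosser, since otherwise $a$ would be a crosser.

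The second step is the main one. Suppose two elements $x<y$ of $[a]_p$ lie in different arcs. Then, cyclically, some element of $\kappa$ lies strictly between $x$ and $y$ on one side, and some element of $\kappa$ lies on the other side. The two "sides" are $(x,y)$ and $[k]\setminus[x,y]$. The four points of $\kappa$ are split between these two sides, with at least one on each side.

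We then show that one of the two blocks $[\kappa_1]_p$ or $[\kappa_2]_p$ has points on both sides. Write $\kappa_1,\kappa_3$ for the points of the first block and $\kappa_2,\kappa_4$ for the second. If every such block lay entirely on one side, then, since the blocks alternate around the circle as $\kappa_1,\kappa_2,\kappa_3,\kappa_4$, one side would contain a full block $\{\kappa_1,\kappa_3\}$. That side would then contain one of $\kappa_2,\kappa_4$, because each of these lies on a cyclic arc between $\kappa_1$ and $\kappa_3$ and one of those arcs lies inside the side. This forces both blocks onto the same side, contradicting the fact that each side contains some element of $\kappa$.

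So some block, say containing $\kappa_j$ and $\kappa_{j+2}$, has one point $u\in(x,y)$ and another point $v\notin[x,y]$. Then $\{x,u,y,v\}$ in increasing order forms a crossing, because $x\sim_p y$, $u\sim_p v$, and $x\not\sim_p u$ by the first step. This puts $x$ in a crossing, contradicting $x\in\chi_0(p)$.

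Hence all of $[a]_q$ lies in a single arc between two cyclically adjacent elements of $\kappa$. By definition that arc consists of non-crossers of $q$ (interleaved with nothing from $\kappa$), so it is an interval $[z,w]\subset\chi_0(q)$. After rotating so that this arc does not contain the wraparound point, we obtain the statement.

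I expect the main obstacle to be the combinatorial middle step: cleanly verifying that a set separating elements of an alternating 4-point crossing must separate one of its two blocks. This is also where the cyclic-order ("up to rotation") bookkeeping must be handled carefully, since the arc between $\kappa_4$ and $\kappa_1$ wraps around the end of the line.
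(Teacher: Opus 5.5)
Your proof is correct and follows the same route as the paper. Both arguments first use Lemma \ref{containlemma} to see that $[a]_p$ consists of non-crossers lying in no block of $\kappa$, and then show that two elements of $[a]_p$ in different cyclic gaps of $\kappa$ would form a crossing with $\kappa$. The difference is that the paper simply lists the admissible placements of $a,b$ relative to $\kappa_1<\kappa_2<\kappa_3<\kappa_4$, whereas your cut argument actually proves that list is complete: a separation of the four alternating points must split one of the pairs $\{\kappa_1,\kappa_3\}$ or $\{\kappa_2,\kappa_4\}$.
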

\begin{proof} Restrict to $R_{\kappa\cup\chi_0(p)}(p)$, which has only one crossing.  Consider $b\neq a$ in $[a]_p$: by Lemma \ref{containlemma}, $b\in\chi_0(p)$, and the only ways to place $a,b\in\chi_0(p)$ in the ordering $\kappa_1<\kappa_2<\kappa_3<\kappa_4$ without forming crossings with $a,b$ are:
$$a<\kappa_1<\kappa_4<b,\qquad b<\kappa_1<\kappa_4<a,$$
which can be rotated to $\kappa_1<\kappa_4<b<a$ respectively $\kappa_1<\kappa_4<a<b$; or
$$\kappa_i<a<b<\kappa_{i+1},\qquad \kappa_i<b<a<\kappa_{i+1}\qquad (\text{for some }1\leq i\leq 3).$$
\end{proof}
A partition $p$ \emph{refines} $q$, written $p\preceq q$, if every block $B\in p$ is a subset of a block of $q$. The following result, about $p^{\mathcal{CR}}$ refining $q$ restricted to the crossers of $p$, may be of independent interest.
\begin{proposition}\label{prop1}
Suppose that $\operatorname{cr}(p)\subset \operatorname{cr}(q)$ for partitions $p,q\in \mathcal{CR}(k)$. Then $p^{\mathcal{CR}}\preceq R_{\chi(p)}(q)$.
\end{proposition}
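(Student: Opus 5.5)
To prove $p^{\mathcal{CR}}\preceq R_{\chi(p)}(q)$ I will show that any two crossers of $p$ lying in the same block of $p$ also lie in the same block of $q$. Since every block of $p^{\mathcal{CR}}$ is a block of $p$ contained in $\chi(p)$ (by Lemma \ref{containlemma} and the decomposition corollary), this is exactly the refinement. First note that $\chi(p)\subseteq\chi(q)$, because every crossing of $p$ is a crossing of $q$. So the restriction $R_{\chi(p)}(q)$ only involves elements that are crossers for both partitions.

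Fix $x,y\in\chi(p)$ with $x\sim_p y$. By Lemma \ref{containlemma} there is a crossing $\kappa\in\operatorname{cr}(p)$ with $x\in\kappa$, say $x=\kappa_i$, so that $y\in[\kappa_i]_p$.
\begin{itemize}
\item If $y\in\kappa$, then $y=\kappa_{i+2}$ (indices mod $2$ as in Theorem \ref{th0}). Since $\kappa\in\operatorname{cr}(q)$, the definition of a crossing gives $\kappa_i\sim_q\kappa_{i+2}$, hence $x\sim_q y$.
\item Otherwise $y\notin\kappa$, and I repeat the five-position case analysis of (\ref{eq1}), exactly as in the proof of Theorem \ref{th0}. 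Take $x\in[\kappa_1]_p$ (the case $[\kappa_2]_p$ is symmetric). Wherever $y$ sits relative to $\kappa_1<\kappa_2<\kappa_3<\kappa_4$, the list (\ref{eq1}) produces a crossing $\kappa'\in\operatorname{cr}(p)$ in which $y$ replaces $\kappa_1$ or $\kappa_3$. For example, $(y,\kappa_2,\kappa_3,\kappa_4)$ or $(\kappa_1,\kappa_2,y,\kappa_4)$, or the rotated form $(\kappa_2,\kappa_3,\kappa_4,y)$ when $y>\kappa_4$.
\end{itemize}

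By hypothesis $\kappa'\in\operatorname{cr}(q)$. In $\kappa'$, the element $y$ is paired (positions $1\leftrightarrow3$ or $2\leftrightarrow4$) with one of $\kappa_1$ or $\kappa_3$. Hence $y\sim_q\kappa_1$ or $y\sim_q\kappa_3$, and since $\kappa_1\sim_q\kappa_3$ (as $\kappa\in\operatorname{cr}(q)$), in either case $y\sim_q\kappa_1$. Combining with $x\sim_q\kappa_1$ from the first step gives $x\sim_q y$.

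The main thing to check carefully is that each new quadruple produced in (\ref{eq1}) really is a crossing of $p$, including the condition $\kappa'_1\not\sim_p\kappa'_2$ and the reordering when $y$ falls outside $[\kappa_1,\kappa_4]$. This follows because $y\sim_p\kappa_1\not\sim_p\kappa_2$. The second point to watch is that, in the rotated cases, $y$ is matched with the correct partner: for $y>\kappa_4$ it occupies position $4$ in $(\kappa_2,\kappa_3,\kappa_4,y)$ and is paired with $\kappa_3$. Everything else is direct transitivity of $\sim_q$. Note that we only obtain refinement rather than equality, because $q$ may merge blocks of $p$.
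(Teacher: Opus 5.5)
Your proof is correct and follows essentially the same route as the paper. Both arguments take a crossing of $p$ through one element of the block and substitute the second element to obtain a new crossing of $p$, hence of $q$. Both then conclude by transitivity of $\sim_q$ through the original same-block pair. The only difference is organisational: you reuse the five-position analysis (\ref{eq1}) from Lemma \ref{containlemma}, whereas the paper redoes a three-case analysis according to where the partner $z$ sits relative to $x_1<x_2$.
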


\begin{proof}
Note that by $\operatorname{cr}(p)\subset \operatorname{cr}(q)$, $\chi(p)\subseteq \chi(q)$, so that $\operatorname{cr}(R_{\chi(p)}(q))\subset\operatorname{cr}(q)$. Let $B$ be a block  in $p^{\mathcal{CR}}$, and let $\{x_1,x_2\}\subset B$ with $x_1<x_2$. It will be shown that $x_1\sim_q x_2$.

\bigskip\noindent

If $\{x_1,x_2\}$ itself appears as the same-block pair in a crossing of $p$, then
$x_1\sim_q x_2$ immediately. Note this happens if $B=\{x_1,x_2\}$.  Otherwise, choose a $z\neq x_2$ such that $\{x_1,z\}$
is a same-block pair in a crossing of $p$. Using this, the constructions of crossings in $p$ with $x_1\sim_px_2$ or $x_2\sim_pz$ yield $x_1\sim_qx_2$, immediately, and via transitivity, respectively.

\begin{enumerate}
\item[\textbf{Case 1}:] $z<x_1<x_2$: then the crossing in $p$ containing $\{x_1,z\}$ has one of the forms
$$(a,z,b,x_1),\qquad (z,a,x_1,b).$$
Thus $a<z<b<x_1<x_2$, or $z<a<x_1<b<x_2$, or $z<a<x_1<x_2<b$.
These yield respectively, the crossings
$$(a,z,b,x_2),\qquad (a,x_1,b,x_2),\qquad (z,a,x_2,b)$$
in $p$, hence also in $q$.  Each forces $x_2$ to be equivalent in $q$ to
$z$ or $x_1$, and therefore $x_1\sim_q x_2$.
\item[\textbf{Case 2}:] $x_1<z<x_2$: let the crossing in $p$ be $(a,x_1,b,z)$ or $(x_1,a,z,b)$; then one of
$$a<x_1<b<z<x_2,\qquad
x_1<a<z<b<x_2,\qquad
x_1<a<z<x_2<b$$
occurs.  These give the crossings
$$(a,x_1,b,x_2),\qquad
(a,z,b,x_2),\qquad
(x_1,a,x_2,b)$$
respectively.  In each case $x_1\sim_q x_2$ (directly or via $z$).
\item[\textbf{Case 3}:] $x_1<x_2<z$: again the crossing in $p$ is $(a,x_1,b,z)$ or $(x_1,a,z,b)$.
The ordering must be one of
$$a<x_1<x_2<b<z,\quad
x_1<a<x_2<z<b,\quad
x_1<x_2<a<z<b,\quad
a<x_1<b<x_2<z.$$
The corresponding crossings are
$$(a,x_2,b,z),\qquad
(x_1,a,x_2,b),\qquad(x_2,a,z,b),\qquad
(a,x_1,b,x_2),$$
each of which is also a crossing in $q$, and each forces $x_1\sim_q x_2$
(directly or via $z$).
\end{enumerate}

In all cases, $x_1\sim_q x_2$.  Since $\{x_1,x_2\}$ was an arbitrary
pair from the block $B$, this shows that $p^{\mathcal{CR}}$ is a refinement of
$R_{\chi(p)}(q)$.
\end{proof}
\begin{remark}
If $\operatorname{cr}(p)\subset \operatorname{cr}(q)$, then if a block $B\in R_{\chi(p)}(q)$ has non-trivial decomposition  in $p^{\mathcal{CR}}$ --- that is $|R_B(p)|>1$ ---  then $|B|\geq 4$. This is because every crossing block must have at least two elements.
\end{remark}
\section{Representation Theory}\label{Rep}
Given a compact matrix quantum group with fundamental representation $u\in M_N(\mathcal{O}(\mathbb{G}))$, rather than focussing on irreducible representations, Tannaka--Krein duality centres the Peter--Weyl representations $(u^{\otimes k})_{k\geq 0}$, and their intertwiners.  In this section, the precise relationship  between partitions coming from index permutation symmetry (see Section \ref{symm}) and the Tannaka--Krein representation theory of homogeneous compact matrix quantum groups will be established.  As in the case of Section \ref{cmqp}, modern references for the representation theory of compact matrix quantum groups include the books of Banica \cite{ba1}, and Freslon \cite{fr1}. As referred to in the Introduction, the categorical language will be somewhat suppressed.

\subsection{Tannaka--Krein Duality}\label{TKD}

Tannaka--Krein duality associates to a compact matrix quantum group $\mathbb{G}\leq O_N^+$ with self-conjugate fundamental representation $u\in M_N(\mathcal{O}(\mathbb{G}))$, a collection of vector spaces $\hom_{\mathcal{C}}(k,\ell)\subseteq M_{N^{\ell}\times N^{k}}(\mathbb{C})$, namely:
$$\hom_{\mathcal{C}}(k,\ell):=\hom(u^{\otimes k},u^{\otimes\ell})\qquad(k,\ell\geq 0),$$
where
$$\hom(u^{\otimes k},u^{\otimes \ell}):=\{T\in M_{N^{\ell}\times N^{k}}(\mathbb{C})\colon Tu^{\otimes k}=u^{\otimes \ell}T\}.$$
The collection of all such vector spaces
$$\mathcal{C}=\bigcup_{k,\ell\geq 0}\hom_{\mathcal{C}}(k,\ell),$$
is called the \emph{Tannaka--Krein dual} of $\mathbb{G}$ and inherits from the general representation-theoretic structure the following properties:
\begin{enumerate}
  \item $S,T\in \mathcal{C}\implies S\otimes T\in\mathcal{C}$,
  \item $S\in \hom_{\mathcal{C}}(\ell,r)$, $T\in\hom_{\mathcal{C}}(k,\ell)$, implies $ST\in\hom_{\mathcal{C}}(k,r)$,
  \item $T\in\mathcal{C}\implies T^*\in\mathcal{C}$,
  \item $\operatorname{id}_N\in\mathcal{C}$,
  \item $T_{\Uaa}\in\mathcal{C}$ where $T_{\Uaa}\in \hom_{\mathcal{C}}(2,0)$ is a row vector with columns indexed by $[N]\times [N]$ and entries:
  $$[T_{\Uaa}]_{1,(i,j)}=\delta_{i,j}.$$
\end{enumerate}
Given a quantum subgroup $\mathbb{G}\leq O_N^+$ with Tannaka--Krein dual $\mathcal{C}$, the kernel of the  *-homomorphism $\pi:\mathcal{O}(O_N^+)\twoheadrightarrow \mathcal{O}(\mathbb{G})$, given by $u_{ij}\mapsto \pi(u_{ij})$, is precisely the ideal in $\mathcal{O}(O_N^+)$ given by:
\begin{equation} I_{\mathcal{C}}:=\operatorname{span}\left([Tu^{\otimes k}-u^{\otimes \ell}T]_{\mathbf{i},\mathbf{j}}\colon T\in \hom_{\mathcal{C}}(k,\ell);\,k,\,\ell\geq 0;\,\mathbf{i}\in [N]^\ell,\,\mathbf{j}\in [N]^{k}\right).\label{Hopfideal}\end{equation}
On the other hand, if there is a collection of vector spaces $\mathcal{C}$ satisfying  these axioms, the ideal $I_{\mathcal{C}}$ generated in this way by $\mathcal{C}$ is a Hopf*-ideal so that $\mathcal{O}(O_N^+)/I_{\mathcal{C}}$ gives a compact matrix quantum group $\mathbb{G}_{\mathcal{C}}\leq O_N^+$. Moreover, this duality is bijective: $\mathcal{C}_{\mathbb{G}_{\mathcal{C}}}=\mathcal{C}$. This is the Tannaka--Krein Reconstruction Theorem (see \cite{mal} for a modern treatment).
\begin{remark}\label{amb}
To show that $I_{\mathcal{C}}\subset\mathcal{O}(O_N^+)$ is a Hopf*-ideal in this setting requires showing that it is an ideal, $*$-closed, and a coideal. Notably, $I_{\mathcal{C}}$ is a coideal by construction; for $\mathbf{i}\in [N]^{\ell}$, $\mathbf{j}\in [N]^k$:
$$\Delta([Tu^{\otimes k}-u^{\otimes \ell}T]_{(\mathbf{i},\mathbf{j})})=\sum_{\mathbf{t}\in [N]^\ell}u_{(\mathbf{i},\mathbf{t})}\otimes[Tu^{\otimes k}-u^{\otimes \ell}T]_{\mathbf{t},\mathbf{j}}+\sum_{\mathbf{m}\in [N]^k}[Tu^{\otimes k}-u^{\otimes \ell}T]_{\mathbf{i},\mathbf{m}}\otimes u_{(\mathbf{m},\mathbf{j})};$$
a calculation that requires none of the properties (1)-(5). In contrast, the key property to show that $I_{\mathcal{C}}$ is an ideal is (1), closure in $\mathcal{C}$ under tensor products. All five properties (1)-(5) are required to show that $I_{\mathcal{C}}$ is $*$-closed. See \cite{mal} for precise details.
\end{remark}

\subsection{Categories of Partitions}\label{CoP}
In this section, and indeed throughout the rest of the paper, rather than writing $\mathcal{NC}^{(N)}$ or $\mathcal{P}^{(N)}$, references to specific Tannaka--Krein duals such as $\mathcal{NC}$ and $\mathcal{P}$ will suppress the fact that these are given at some parameter $N$. Let $\mathcal{P}(k,\ell)$ be the set of partitions  of $[k+\ell]$ with the additional data that the elements of $[k]$ are designated `upper points', and the elements of $[k+1,k+\ell]$ are `lower points'. At parameter $N$, each partition $p\in \mathcal{P}(k,\ell)$ gives a linear map $T_p\in M_{N^{\ell}\times N^k}(\mathbb{C})$, with entries:
$$[T_{p}]_{(i_1,\dots,i_\ell),(j_1,\dots,j_k)}=\delta_p(\mathbf{i},\mathbf{j}).$$
The symbol $\delta_p$ is defined as follows: on the $k$ upper points of $p$ write, left-to-right, the indices $j_1,j_2,\dots,j_k$; and similarly on the $\ell$ lower points of $p$, write, again left-to-right, the indices $i_1,i_2,\dots,i_\ell$. If, for all $x,y\in[k+\ell]$, $x\sim_py$ implies the written indices are equal, then $\delta_p(\mathbf{i},\mathbf{j})=1$; else $\delta_p(\mathbf{i},\mathbf{j})=0$.

\bigskip\noindent

The set of non-crossing partitions and crossing partitions in $\mathcal{P}(k,\ell)$ are respectively denoted $\mathcal{NC}(k,\ell)$ and $\mathcal{CR}(k,\ell)$. Add the $(k,\ell)$ data to Definition \ref{2.2} and redefine $\mathcal{P}$ by:
$$\mathcal{P}:=\bigsqcup_{k,\,\ell\geq 0}\mathcal{P}(k,\ell).$$
Banica--Speicher  defined the notion of a \emph{category of partitions} $\mathcal{D}\leq\mathcal{P}$, and proved that for every $N\in\mathbb{N}$, a category of partitions gives rise to a Tannaka--Krein dual in turn giving a homogeneous compact matrix quantum group $S_N\leq \mathbb{G}_{\mathcal{D}}\leq O_N^+$. By abuse of notation, the Tannaka--Krein dual of $\mathbb{G}$  will also be denoted by $\mathcal{D}$:
$$\mathcal{D}=\operatorname{span}\left(T_p\colon p\in\mathcal{D}\right).$$

This furthermore splits into
$$\mathcal{D}=\bigsqcup_{k,\ell\geq 0}\hom_{\mathcal{D}}(k,\ell),$$
where $\mathcal{D}(k,\ell)=\mathcal{D}\cap\mathcal{P}(k,\ell)$, and $\hom_{\mathcal{D}}(k,\ell):=\operatorname{span}\left(T_p\colon p\in\mathcal{D}(k,\ell)\right)$.  Whether $\mathcal{D}$ refers to the Tannaka--Krein dual of $\mathbb{G}\leq O_N^+$ or the category of partitions will be clear from context. A compact matrix quantum group $S_N\leq \mathbb{G}\leq O_N^+$ whose Tannaka--Krein dual is given by a category of partitions is called \emph{easy}.
\begin{proposition}[Banica--Speicher]
A(n orthogonal) category of partitions is:
\begin{enumerate}
  \item Closed under horizontal concatenation $\otimes$; $p,\,q\in\mathcal{D}\implies p\otimes q\in\mathcal{D}$,
  \item Closed under conformable vertical concatenation $\circ$; $p\in\mathcal{D}(k,\ell)$, $q\in\mathcal{D}(\ell,r)$, then $q\circ p\in\mathcal{D}(k,r)$,
  \item Closed under horizontal reflection ${}^*$; $p\in\mathcal{D}(k,\ell)\implies p^*\in\mathcal{D}(\ell,k)$,
  \item Contains the identity $\idpart\in\mathcal{D}(1,1)$,
  \item Contains the cup $\Uaa\in\mathcal{D}(2,0)$.
\end{enumerate}
\end{proposition}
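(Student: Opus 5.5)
The plan is to unwind the definition of a category of partitions as Banica--Speicher give it, and check that each of (1)--(5) is either one of the defining axioms or a short consequence of them. Recall their definition: a collection $\mathcal{D}=\bigsqcup_{k,\ell}\mathcal{D}(k,\ell)$ with $\mathcal{D}(k,\ell)\subseteq\mathcal{P}(k,\ell)$ is a category of partitions when it satisfies the following:
\begin{enumerate}
\item[(a)] it is closed under the tensor product $\otimes$;
\item[(b)] it is closed under composition $\circ$;
\item[(c)] it is closed under the involution ${}^*$;
\item[(d)] it is closed under rotation, meaning that moving the leftmost upper point of $p\in\mathcal{D}(k,\ell)$ to the leftmost lower position (and similarly on the right) stays in $\mathcal{D}$;
\item[(e)] it contains the pair partition $\Uaa$ (in the paper's indexing, as an element of $\mathcal{D}(2,0)$), and, in some formulations, the identity $\idpart\in\mathcal{D}(1,1)$.
\end{enumerate}

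The first step is to make the three operations precise at the level of partitions of $[k+\ell]$, so that (1)--(3) make sense. For $p\in\mathcal{P}(k,\ell)$ and $q\in\mathcal{P}(k',\ell')$, $p\otimes q\in\mathcal{P}(k+k',\ell+\ell')$ places $q$ to the right of $p$, relabelling upper and lower points and keeping blocks unchanged. The reflection $p^*\in\mathcal{P}(\ell,k)$ exchanges upper and lower points, preserving left-to-right order. For $p\in\mathcal{P}(k,\ell)$ and $q\in\mathcal{P}(\ell,r)$, the composition $q\circ p$ stacks $q$ below $p$ and identifies the $\ell$ lower points of $p$ with the $\ell$ upper points of $q$. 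One then takes the equivalence relation on the $k+r$ remaining points generated by $\sim_p$ and $\sim_q$ (the transitive closure through the middle row), and deletes the middle row together with any components lying entirely in it (the closed loops). The point to check is that this produces a genuine partition in $\mathcal{P}(k,r)$. It does: the transitive closure is an equivalence relation, and its restriction to the outer $k+r$ points has non-empty classes. With this in place, (1)--(3) are literally axioms (a)--(c).

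For (4) and (5) I would argue as follows. If the identity is taken as an axiom, (4) is immediate. Otherwise, one rotation (axiom (d)) moves one of the two points of the pair partition across to the other row. This yields the partition with one upper and one lower point in a single block, which is exactly $\idpart\in\mathcal{D}(1,1)$. For (5), the pair partition is given by (e). If instead the convention places it in $\mathcal{D}(0,2)$ (the cap), apply (3) to get its reflection, the cup in $\mathcal{D}(2,0)$. The paper's row-vector convention $T_{\Uaa}\in\hom(2,0)$ fixes which version is meant.

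The only step with real content is the well-definedness of $\circ$, in particular handling closed loops in the middle row. I would treat it via the transitive-closure description above, noting that removed loops affect only the scalar $N^{c}$ at the level of linear maps and not the partition. The remainder is bookkeeping over which of identity and cup, or cap, is taken as primitive, with rotation and reflection converting between them.
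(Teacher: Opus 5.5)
Your proposal is correct and matches the paper, which gives no separate proof: the proposition restates Banica--Speicher's definition of a category of partitions, whose axioms are exactly (1)--(5), so unwinding the definition is all that is needed. In Banica--Speicher's formulation the identity and the pair partition are themselves axioms and closure under rotation is derived rather than assumed, so your rotation/reflection detour for (4) and (5) is valid but unnecessary.
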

The above axioms do not involve the parameter $N$. This enters via the following:
\begin{proposition}[Banica--Speicher]\label{func}
For a Tannaka--Krein dual given by a category of partitions:
\begin{enumerate}
  \item $T_p\otimes T_q=T_{p\otimes q}$,
  \item  $T_p\circ T_{q}=N^{c(p,q)}T_{p\circ q}$, where $c(p,q)$ is the number of closed blocks in $p\circ q$,
  \item $T_p^*=T_{p^*}$,
  \item $I_N=T_{\idpart}$,
  \item $T_{\Uaa}\in\mathcal{D}$.
\end{enumerate}
\end{proposition}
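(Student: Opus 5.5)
We verify each of the five identities directly from the index definition $[T_p]_{\mathbf{i},\mathbf{j}}=\delta_p(\mathbf{i},\mathbf{j})$ by comparing matrix entries. The plan is to treat (1), (3), (4), (5) as bookkeeping and to concentrate the work on the composition formula (2).

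For (1), the entry of $T_p\otimes T_q$ at $((\mathbf{i},\mathbf{i}'),(\mathbf{j},\mathbf{j}'))$ is the product $\delta_p(\mathbf{i},\mathbf{j})\delta_q(\mathbf{i}',\mathbf{j}')$. Since $p\otimes q$ places $p$ and $q$ side by side with no block meeting both, the condition ``all blocks of $p\otimes q$ carry constant labels'' splits into the two separate conditions. For (3), reflecting $p$ horizontally swaps upper and lower points. Hence $\delta_{p^*}(\mathbf{j},\mathbf{i})=\delta_p(\mathbf{i},\mathbf{j})$, which is real, so $T_{p^*}=T_p^{t}=T_p^*$. For (4), $\delta_{\idpart}(i,j)=\delta_{i,j}$. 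For (5), the entries of $T_{\Uaa}$ are $\delta_{i,j}$, which is exactly property (5) of a Tannaka--Krein dual from Section~\ref{TKD}.

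The main step is (2). Take $q\in\mathcal{P}(k,\ell)$ and $p\in\mathcal{P}(\ell,r)$. Then
$$[T_pT_q]_{\mathbf{i},\mathbf{j}}=\sum_{\mathbf{m}\in[N]^\ell}\delta_p(\mathbf{i},\mathbf{m})\,\delta_q(\mathbf{m},\mathbf{j}).$$
Form the stacked diagram with $q$ below and $p$ on top, identified along the $\ell$ middle points. Let $\Pi$ be the join of the equivalence relations of $p$ and $q$ on the disjoint union of the upper points, middle points and lower points. A summand is nonzero exactly when the labelling $(\mathbf{j},\mathbf{m},\mathbf{i})$ is constant on the blocks of $\Pi$. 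There are two kinds of blocks of $\Pi$.
\begin{enumerate}
  \item Blocks meeting the outer points. Their restriction to the outer points gives exactly the blocks of $p\circ q$. On such a block the middle labels are forced, and they exist precisely when $\delta_{p\circ q}(\mathbf{i},\mathbf{j})=1$.
  \item Blocks contained entirely in the middle row. These are precisely the $c(p,q)$ closed blocks removed when forming $p\circ q$. Each one may be labelled freely by any element of $[N]$.
\end{enumerate}
Counting the admissible $\mathbf{m}$ therefore gives
$$[T_pT_q]_{\mathbf{i},\mathbf{j}}=N^{c(p,q)}\,\delta_{p\circ q}(\mathbf{i},\mathbf{j}).$$

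The only delicate point is to state the definition of $p\circ q$ precisely enough to make this identification exact. The connectivity in $p\circ q$ must be the transitive closure through middle points, which is exactly what the label-propagation argument uses. We would also check that a middle point always belongs to exactly one block of $\Pi$, so that middle points lying in outer-connected blocks contribute no free factor. With that in place, the count above completes (2), and hence the proof.
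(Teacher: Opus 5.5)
Your proposal is correct and follows the standard Banica--Speicher argument, which the paper cites rather than reproves: entrywise verification from $\delta_p$, with $N^{c(p,q)}$ counting the free labels on the closed middle components of the stacked diagram. Two cosmetic points. First, upper points carry the input indices $\mathbf{j}$, so the product $T_pT_q$ corresponds to $q$ stacked \emph{above} $p$, not below it; your own labelling $(\mathbf{j},\mathbf{m},\mathbf{i})$ of upper, middle and lower points already assumes this. Second, in (5) membership in $\mathcal{D}$ comes from the cup being in the category of partitions; your index check only identifies $T$ of the cup with the duality vector required of a Tannaka--Krein dual.
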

Examples of categories of partitions include $\mathcal{P}$ and $\mathcal{NC}$. Also, $\mathcal{P}_2$, where $\mathcal{P}_2\leq\mathcal{P}$ consists of all partitions whose blocks are all of size two, and $\mathcal{NC}_2=\mathcal{P}_2\cap\mathcal{NC}$. Indeed, in this orthogonal setting, $\mathcal{NC}_2$ and $\mathcal{P}$ are extremal categories of partitions, so that for a category of partitions $\mathcal{D}$:
$$\mathcal{NC}_2\leq \mathcal{D}\leq \mathcal{P}.$$

\begin{remark}\label{order}
Note that for the purpose of detecting crossings in elements of $\mathcal{P}(k,\ell)$, the elements of the $k$ upper points are ordered from left-to-right, while the elements of the $\ell$ lower points are ordered from right-to-left. For example, the basic crossing $\{1,3\}\sqcup\{2,4\}\in\mathcal{P}(2,2)$ is drawn as
$$\BigPartition{
\Pline (1,0.85) (3,0.15)
\Pline (1,0.15) (3,0.85)
\Ptext(1,-0.125){4}
\Ptext(3,-0.125){3}
\Ptext(1,1.125){1}
\Ptext(3,1.125){2}
}\text{ rather than }\BigPartition{
\Pline (1,0.75) (1,0.25)
\Pline (3,0.25) (3,.75)
\Ptext(1,0){3}
\Ptext(3,0){4}
\Ptext(1,1){1}
\Ptext(3,1){2}
}.$$
There are clashes here: both with the left-to-right convention chosen in (\ref{eqpart}), and the definition of the symbol $\delta_p$. However, via Theorem \ref{frob}, everything will be done in  $\hom(0,k)$, and using the opposite order, left-to-right, does not change anything there.
\end{remark}
Given a matrix $T\in M_{N^\ell\times N^k}(\mathbb{C})$, denote by $\langle \mathcal{C},T\rangle$ the Tannaka--Krein dual \emph{generated} by $\mathcal{C}$ and $T$, that is, the smallest Tannaka--Krein dual  such that $\mathcal{C}\leq \langle \mathcal{C},T\rangle$ and $T\in \langle \mathcal{C},T\rangle$. Record a triviality for completeness:
\begin{proposition}\label{proptriv}
Let non-zero $\xi=\sum_{i=1}^m\alpha_i\xi_{p_i}$ such that all the $p_i\in \mathcal{NC}(k)$. Then $\langle\mathcal{NC},\xi\rangle=\mathcal{NC}$.
\end{proposition}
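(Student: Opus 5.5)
The plan is to show the two inclusions $\mathcal{NC}\leq\langle\mathcal{NC},\xi\rangle$ and $\langle\mathcal{NC},\xi\rangle\leq\mathcal{NC}$. The first holds by the definition of the generated Tannaka--Krein dual. For the second, it suffices to show that $\xi$ already lies in the Tannaka--Krein dual $\mathcal{NC}$. Indeed, $\langle\mathcal{NC},\xi\rangle$ is by definition the smallest Tannaka--Krein dual containing both $\mathcal{NC}$ and $\xi$. If $\xi\in\mathcal{NC}$, then $\mathcal{NC}$ is itself such a dual, and minimality forces $\langle\mathcal{NC},\xi\rangle\leq\mathcal{NC}$.

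To see that $\xi\in\mathcal{NC}$, recall the convention that $\xi_p$ denotes $T_p$ for $p\in\mathcal{P}(0,k)$, via the identification of $\mathcal{P}(k)$ with $\mathcal{P}(0,k)$ in Frobenius-reciprocity form. By Banica--Collins, and Banica--Speicher's passage from categories of partitions to Tannaka--Krein duals, the Tannaka--Krein dual of $S_N^+$ is
$$\hom_{\mathcal{NC}}(0,k)=\operatorname{span}\left(T_p\colon p\in\mathcal{NC}(0,k)\right).$$
Since each $p_i\in\mathcal{NC}(k)$, each $\xi_{p_i}$ lies in this span, and hence so does the linear combination $\xi$.

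The only point needing care is bookkeeping rather than mathematics. Remark~\ref{order} warns that ordering conventions for upper and lower points can affect which partitions count as crossing. For partitions with only lower points ($k=0$ upper points), reversing the order of $[k]$ maps non-crossing partitions to non-crossing partitions, so the notion of non-crossing is unambiguous here. I therefore expect no genuine obstacle; the statement is a direct consequence of the definitions. The hypothesis that $\xi$ is non-zero is not needed for the argument and serves only to make the statement non-vacuous.
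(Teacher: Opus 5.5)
Your proof is correct and is essentially the argument the paper has in mind: the paper records this statement as a triviality and gives no written proof. Each $\xi_{p_i}$, hence $\xi$, already lies in $\hom_{\mathcal{NC}}(0,k)$, so $\mathcal{NC}$ is itself a Tannaka--Krein dual containing $\mathcal{NC}$ and $\xi$, and minimality of $\langle\mathcal{NC},\xi\rangle$ forces equality.
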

\begin{proposition}\label{abelian}
Let $\mathbb{G}\leq O_N^+$ with Tannaka--Krein dual $\mathcal{C}\geq \mathcal{NC}_2$. The Tannaka--Krein dual of the classical version $G\leq \mathbb{G}$ is equal to $\langle\mathcal{C},T_{\crosspart}\rangle$.
\end{proposition}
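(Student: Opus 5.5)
We must show that the Tannaka--Krein dual of the classical version $G$ equals $\langle\mathcal{C},T_{\crosspart}\rangle$. The plan is to compare the two Hopf$^*$-ideals via the Tannaka--Krein correspondence. On one side, $\mathcal{O}(G)=\mathcal{O}(\mathbb{G})/J_N$. On the other, the ideal attached to $\langle\mathcal{C},T_{\crosspart}\rangle$ is, by (\ref{Hopfideal}), generated by $I_{\mathcal{C}}$ together with the relations coming from $T_{\crosspart}$. Since the Reconstruction Theorem is bijective, it suffices to show two things: the generated ideal equals $I_{\mathcal{C}}+J_N$ (viewed in $\mathcal{O}(O_N^+)$), and $\langle \mathcal{C},T_{\crosspart}\rangle$ is the full intertwiner space of the resulting quotient.

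\textbf{Step 1.} Compute the relations given by $T_{\crosspart}\in\hom(2,2)$ directly from the definition of $\delta_p$. With $p=\{1,4\}\sqcup\{2,3\}$ in the upper/lower labelling of Remark \ref{order}, the map $T_{\crosspart}$ is the flip $e_a\otimes e_b\mapsto e_b\otimes e_a$. The intertwining relation $T_{\crosspart}u^{\otimes 2}=u^{\otimes 2}T_{\crosspart}$ then reads entrywise $u_{ia}u_{jb}=u_{jb}u_{ia}$ for all indices. Hence the ideal generated by the relations of $T_{\crosspart}$ alone is exactly $J_N$.

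\textbf{Step 2.} Let $\mathcal{C}'$ be the dual of $G$. Since $G\leq\mathbb{G}$, we have $\mathcal{C}\subseteq\mathcal{C}'$. Since $\mathcal{O}(G)$ is commutative, Step 1 gives $T_{\crosspart}\in\mathcal{C}'$. As $\mathcal{C}'$ is a Tannaka--Krein dual, minimality yields $\langle\mathcal{C},T_{\crosspart}\rangle\subseteq\mathcal{C}'$.

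\textbf{Step 3.} For the reverse inclusion, let $\mathbb{K}$ be the quantum group reconstructed from $\mathcal{D}:=\langle\mathcal{C},T_{\crosspart}\rangle$, so that $\mathcal{C}_{\mathbb{K}}=\mathcal{D}$. By the bijectivity of Tannaka--Krein duality, $\mathcal{D}\supseteq\mathcal{C}$ means $\mathbb{K}\leq\mathbb{G}$. Moreover, $T_{\crosspart}\in\mathcal{D}$ means $\mathcal{O}(\mathbb{K})$ satisfies the commutation relations of Step 1, so $\mathcal{O}(\mathbb{K})$ is commutative. The quotient $\mathcal{O}(\mathbb{G})\to\mathcal{O}(\mathbb{K})$ therefore kills $J_N$ and factors through $\mathcal{O}(G)$, giving $\mathbb{K}\leq G$. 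Contravariance then gives $\mathcal{C}'\subseteq\mathcal{D}$, and combined with Step 2 we get $\mathcal{D}=\mathcal{C}'$.

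The main obstacle is the bookkeeping in Step 1. We must verify that the index conventions (upper left-to-right, lower right-to-left, and the $\delta_p$ definition) genuinely produce the flip, rather than the identity-like $\{1,3\}\sqcup\{2,4\}$ picture. We should also confirm that the hypothesis $\mathcal{C}\geq\mathcal{NC}_2$ is only needed to ensure that $\langle\mathcal{C},T_{\crosspart}\rangle$ satisfies the axioms with the cup present. Once Step 1 holds, the rest is formal Tannaka--Krein duality.
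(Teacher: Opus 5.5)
Your proposal is correct and is essentially the paper's argument: the key computation in both is that $T_{\crosspart}$ is the flip, so the entries of $T_{\crosspart}u^{\otimes 2}-u^{\otimes 2}T_{\crosspart}$ are exactly the commutators $u_{bc}u_{ad}-u_{ad}u_{bc}$ generating $J_N$, and your Steps 2--3 make explicit the two inclusions via reconstruction that the paper leaves implicit. One small slip: in the labelling of Remark \ref{order} (lower points read right-to-left) the basic crossing is $\{1,3\}\sqcup\{2,4\}$, whereas your $\{1,4\}\sqcup\{2,3\}$ corresponds to reading the lower points left-to-right as in the definition of $\delta_p$; your conclusion that $T_{\crosspart}$ is the flip is still correct.
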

\begin{proof}
It is required to show that if $\mathcal{C}$ is a Tannaka--Krein dual containing $T_{\crosspart}$,  then the commutator ideal $J_N\subset I_{\mathcal{C}}$.
To calculate the entries of $T_{\crosspart}\in M_{N^2\times N^2}(\mathbb{C})$ draw:
$$[T_{\crosspart}]_{(i_1,i_2),(j_1,j_2)}\sim \BigPartition{
\Pline (1,0.85) (3,0.15)
\Pline (1,0.15) (3,0.85)
\Ptext(1,-0.125){$i_1$}
\Ptext(3,-0.125){$i_2$}
\Ptext(1,1.125){$j_1$}
\Ptext(3,1.125){$j_2$}
}\implies [T_{\crosspart}]_{(i_1,i_2),(j_1,j_2)}=\delta_{i_1,j_2}\delta_{i_2,j_1}.$$
From here consider the $((a,b),(c,d))$-entry of $T_{\crosspart}u^{\otimes 2}-u^{\otimes 2}T_{\crosspart}$:
\begin{align*}
  [T_{\crosspart}u^{\otimes 2}-u^{\otimes 2}T_{\crosspart}]_{((a,b),(c,d))} & =\sum_{k_1,k_2=1}^{N}\left([T_{\crosspart}]_{(a,b),(k_1,k_2)}u^{\otimes 2}_{((k_1,k_2),(c,d))}-u^{\otimes 2}_{((a,b),(k_1,k_2))}[T_{\crosspart}]_{(k_1,k_2),(c,d)}\right) \\
   & =u^{\otimes 2}_{((b,a),(c,d))}-u^{\otimes 2}_{((a,b),(d,c))} \\
   & =u_{bc}u_{ad}-u_{ad}u_{bc},
\end{align*}
that is, the generators $u_{bc}u_{ad}-u_{ad}u_{bc}$ of $J_N$ are elements of $I_{\mathcal{C}}$.
\end{proof}
\begin{theorem}
Denoting $\mathcal{C}_{\mathbb{G}}$ as the Tannaka--Krein dual of a quantum subgroup $\mathbb{G}\leq O_N^+$:
\begin{enumerate}
\item $\mathcal{C}_{S_N}=\mathcal{P}$,
\item $\mathcal{C}_{O_N}=\mathcal{P}_2$,
\item $\mathcal{C}_{S_N^+}=\mathcal{NC}$,
\item $\mathcal{C}_{O_N^+}=\mathcal{NC}_2$.
\end{enumerate}
\end{theorem}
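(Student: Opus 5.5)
The plan is to reduce all four statements to two ingredients: the Tannaka--Krein Reconstruction Theorem of Section \ref{TKD}, and a combinatorial generation statement for categories of partitions. The reconstruction theorem gives a bijection $\mathcal{C}\leftrightarrow\mathbb{G}_{\mathcal{C}}$ under which the quotient of $\mathcal{O}(O_N^+)$ by $I_{\mathcal{C}}$ is $\mathcal{O}(\mathbb{G}_{\mathcal{C}})$. Its key consequence is the following principle. Suppose $\mathbb{G}\leq O_N^+$ is defined by a Hopf*-ideal generated by the relations $[Tu^{\otimes k}-u^{\otimes\ell}T]_{\mathbf{i},\mathbf{j}}$ for $T$ in a set $\mathcal{S}$. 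Then $\mathcal{C}_{\mathbb{G}}=\langle \mathcal{NC}_2,\mathcal{S}\rangle$, the smallest Tannaka--Krein dual containing $\mathcal{S}$.

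To justify the principle, argue in two directions. First, each $T\in\mathcal{S}$ intertwines in $\mathcal{O}(\mathbb{G})$, so $\langle\mathcal{NC}_2,\mathcal{S}\rangle\leq\mathcal{C}_{\mathbb{G}}$. Second, the ideal $I_{\langle\mathcal{NC}_2,\mathcal{S}\rangle}$ contains the defining relations of $\mathbb{G}$. Hence $\mathbb{G}_{\langle\mathcal{NC}_2,\mathcal{S}\rangle}$ is a quotient of $\mathcal{O}(\mathbb{G})$, and contravariance together with bijectivity gives equality. So for each group it suffices to write its defining relations in the form $Tu^{\otimes k}=u^{\otimes \ell}T$ for explicit partitions $T=T_p$, and then identify the category of partitions these generate.

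The relation-matching step proceeds group by group, using index computations as in the proof of Proposition \ref{abelian}.
\begin{enumerate}
\item[(4)] For $O_N^+$, orthogonality $u^tu=uu^t=1$ with $u=\overline u$ is exactly the statement that $T_{\Uaa}$ and its adjoint are intertwiners, together with $\operatorname{id}_N$. So $\mathcal{C}_{O_N^+}=\langle T_{\Uaa}\rangle$.
\item[(3)] For $S_N^+$, the magic relations $u_{ij}^2=u_{ij}$, $u_{ij}u_{ik}=0$ for $j\neq k$, and $\sum_j u_{ij}=1$ are encoded by the three-block partition $\Laaa$ (equivalently the fork in $\mathcal{P}(2,1)$) and by the singleton in $\mathcal{P}(0,1)$.
\item[(2)] For $O_N$, Proposition \ref{abelian} says the extra generator is $T_{\crosspart}$.
\item[(1)] For $S_N$, both sets of extra generators are added.
\end{enumerate}
By Proposition \ref{func}, the span of $\{T_p: p\in\mathcal{D}\}$ is closed under the operations (1)--(5) whenever $\mathcal{D}$ is a category of partitions. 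Therefore $\langle T_{p_1},\dots\rangle$ is the span of $T_p$ over the category of partitions generated by $p_1,\dots$.

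The combinatorial step is then to identify four generated categories of partitions. The cup and identity should generate $\mathcal{NC}_2$, the Temperley--Lieb diagrams. Adding the fork and singleton should give $\mathcal{NC}$. Adding the crossing should give $\mathcal{P}_2$, and adding both should give $\mathcal{P}$. For inclusion ``$\leq$'' in each case, check that the target is closed under $\otimes$, $\circ$ and ${}^*$; for example, composing non-crossing diagrams stays non-crossing. For ``$\geq$'', induct on the number of blocks. A non-crossing pairing always has an interval pair $\{i,i+1\}$, which can be peeled off as a cup tensored with identities. A non-crossing partition can be built from pairings by merging adjacent blocks with the fork and capping off singletons. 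An arbitrary partition is obtained from a non-crossing one by composing with tensor products of $\crosspart$ and identities, which realise adjacent transpositions of points.

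I expect the main obstacle to be the first direction of the principle rather than the combinatorics. One must show that the ideal generated by the finitely many defining relations already contains $I_{\mathcal{C}}$ for the whole generated dual. Concretely, intertwiner relations must be preserved under tensor products, composition and adjoints inside $\mathcal{O}(\mathbb{G})$. I would handle this by verifying directly that the set $\{T: Tu^{\otimes k}=u^{\otimes\ell}T \text{ in }\mathcal{O}(\mathbb{G})\}$ is closed under these operations, which is the routine representation-theoretic fact quoted in Section \ref{TKD}. Since this is the classical content of Woronowicz, Banica--Collins and Banica--Speicher, I would cite those papers at that point rather than reprove it.
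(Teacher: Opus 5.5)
Your proof is correct, but it takes a different route from the paper. The paper proves very little here. Items (2)--(4) are cited (Brauer; Banica--Collins twice), and (1) is deduced from (3) by Proposition \ref{abelian} (adjoin $T_{\crosspart}$ to pass to the classical version) together with Theorem \ref{noez}.

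You instead set up a uniform mechanism and then prove four generation statements for categories of partitions. The mechanism is that a presentation of $\mathcal{O}(\mathbb{G})$ by intertwiner relations for $T\in\mathcal{S}$ forces $\mathcal{C}_{\mathbb{G}}=\langle\mathcal{NC}_2,\mathcal{S}\rangle$. For (1) and (2) you share Proposition \ref{abelian} with the paper. However, you replace Banica--Curran--Speicher by the elementary fact that $T_{\crosspart}$ tensored with identities realises adjacent transpositions, so every partition is a permuted non-crossing one. The paper's own proof of Theorem \ref{noez} ends by invoking exactly this fact, so your route removes a detour.

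The paper's version buys brevity. Yours buys a self-contained explanation of why presentations by relations translate into generators. The cost is that your Temperley--Lieb and fork/singleton generation lemmas are only sketched, and these are exactly the content of the cited Banica--Collins results.

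Three smaller remarks:
\begin{itemize}
\item The step you flag as the main obstacle is routine: closure of intertwiner spaces under $\otimes$, $\circ$, ${}^*$ is the list in Section \ref{TKD}. The genuinely non-trivial input is the bijectivity of reconstruction you already invoke in your second direction, specifically $\mathcal{C}_{\mathbb{G}_{\mathcal{D}}}\subseteq\mathcal{D}$.
\item Both you and the paper take for granted that $\mathcal{O}(O_N)$ and $\mathcal{O}(S_N)$ are the quotients of $\mathcal{O}(O_N^+)$ by $J_N$ and $I_N+J_N$ respectively.
\item $\Laaa$ is the one-block partition on three points, not a ``three-block'' partition. The singleton generator is redundant, since it is the fork composed with $T_{\Uaa}^*$.
\end{itemize}
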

\begin{proof}
\begin{enumerate}
\item This was described as ``well-known'' by Banica--Speicher (Th. 1.10). Here, Proposition \ref{abelian}  together with Theorem \ref{noez} gives
    $$\mathcal{C}_{S_N}=\langle \mathcal{C}_{S_N^+},T_{\crosspart}\rangle=\mathcal{P}.$$
  \item This is a classical result of Brauer \cite{bra} which was an inspiration for the seminal work of Banica--Speicher.
  \item Banica--Collins (\cite{bac}, Prop. 2.1).
  \item Banica--Collins (\cite{bc2}, Th. 3.1).
\end{enumerate}
\end{proof}

Tannaka--Krein duality says that an exotic quantum permutation group gives rise to an exotic Tannaka--Krein dual:
\begin{equation}\mathcal{NC}\lneq \mathcal{C}\lneq \mathcal{P}.\end{equation}

Note that $\hom_{\mathcal{C}}(0,k)\subseteq M_{N^k\times 1}(\mathbb{C})$, that is, the elements of $\hom_{\mathcal{C}}(0,k)$ are vectors in $(\mathbb{C}^N)^{\otimes k}$, and instead write $\xi_p$ for $T_p$ ($p\in \mathcal{P}(0,k)$).
In the following, unless stated otherwise, when
$$\xi=\sum_{i=1}^m\alpha_i\xi_{p_i},$$
is written, it means each $p_i\in \mathcal{P}(0,k)$, for some uniform $k\geq 1$, and $\xi\neq 0$.

\bigskip\noindent

If $f:\mathcal{P}(k)\to \mathcal{P}(\ell)$, by abuse of notation denote by $f$ also the extension to $f:\hom_{\mathcal{P}}(0,k)\to \hom_{\mathcal{P}}(0,\ell)$. This requires some care: the na\"{\i}ve $f(\xi_p)=\xi_{f(p)}$ is not correct in general because of closed blocks (see Prop. \ref{func} (2)). For example, recall the restriction map $R_A:\mathcal{P}(k)\to \mathcal{P}(|A|)$; for $p=\LPartition{0.25:6}{0.25:1,2,3;0.25:4,5;0.25:7,8}$ and $A=[1,3]\cup[7,8]$, due to the two closed blocks in the middle:
\begin{align*}
  R_{A}(\xi_p) & =N^2\xi_{R_A(p)}.
\end{align*}

\begin{proposition}
The restriction map $R_A\in \hom_{\mathcal{NC}}(k,|A|)$.
\end{proposition}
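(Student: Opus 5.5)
The linear map $R_A\colon(\mathbb{C}^N)^{\otimes k}\to(\mathbb{C}^N)^{\otimes|A|}$ keeps the tensor legs indexed by $A$ and contracts (sums over) the legs in $[k]\setminus A$. The plan is to write $R_A$ explicitly as a tensor product of elementary non-crossing partition maps. Membership in $\hom_{\mathcal{NC}}(k,|A|)$ then follows from Proposition \ref{func}(1) and the closure properties of $\mathcal{NC}$.

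\textbf{Step 1: factorise $R_A$.}
\begin{itemize}
\item For $x\in A$ the leg is untouched, so the factor is $T_{\idpart}=I_N$.
\item For $x\notin A$ the leg is summed out. The map $e_i\mapsto 1$ is $T_s$, where $s\in\mathcal{P}(1,0)$ is the upper singleton: by the definition of $\delta_s$ its single entry is $1$ for every index $j$.
\end{itemize}
Hence
$$R_A=T_{q_1}\otimes T_{q_2}\otimes\cdots\otimes T_{q_k}=T_{q_1\otimes\cdots\otimes q_k},\qquad q_x=\begin{cases}\idpart,& x\in A,\\ s,& x\notin A.\end{cases}$$

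\textbf{Step 2: check the partition is non-crossing.} The partition $q=q_1\otimes\cdots\otimes q_k\in\mathcal{P}(k,|A|)$ has only blocks of size one or two. Each two-element block joins upper point $x\in A$ to the lower point in the corresponding position. Under the drawing convention of Remark \ref{order}, these vertical through-strings are nested, ordered consistently on top and bottom, and singletons cannot cross anything. So $q\in\mathcal{NC}(k,|A|)$, and $R_A=T_q\in\hom_{\mathcal{NC}}(k,|A|)$.

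\textbf{Step 3: check the identification and the closed-block factor.} One must confirm that this $T_q$ really is the map the paper calls $R_A$ on vectors, including the factor $N^{c}$ from closed blocks. Compose $T_q$ with $\xi_p$. By Proposition \ref{func}(2),
$$T_q\circ\xi_p=N^{c(q,p)}\xi_{q\circ p}.$$
The closed blocks are exactly the blocks of $p$ lying entirely in $[k]\setminus A$, which get capped off by singletons. The composite partition $q\circ p$ is $p\cap A=R_A(p)$. This reproduces the example before the proposition, where the two closed blocks give the factor $N^2$. It also shows that the linear extension of $R_A$ is precisely $T_q$.

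\textbf{Step 4: the singleton itself.} The only point needing care is that $s\in\mathcal{NC}(1,0)$, i.e.\ that the singleton lies in the category $\mathcal{NC}$ underlying $S_N^+$. This holds because $\mathcal{NC}$ is the category of all non-crossing partitions. At the level of quantum groups it corresponds to $\sum_j u_{ij}=1$.

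No step is a real obstacle. The main thing to state carefully is the convention issue of Remark \ref{order}, namely that reversing the order of the lower points does not create crossings among parallel vertical strings.
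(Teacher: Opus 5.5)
Your proposal is correct and is essentially the paper's proof: both realise $R_A$ as $T_q$, where $q$ is the tensor product of identity strings on $A$ and upper singletons on $[k]\setminus A$, and conclude from $q$ being non-crossing. Your extra checks — that the through-strings stay nested under the ordering convention of Remark \ref{order}, and that Proposition \ref{func}(2) yields exactly the closed-block powers of $N$ in the paper's example — are details the paper leaves implicit.
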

\begin{proof}
The restriction maps come from tensor products of $\idpart\in \mathcal{NC}(1,1)$ and $\upsingleton\in \mathcal{NC}(1,0)$. Given $A\subseteq[k]$, associate to each  $x\in [k]$ a
$${p_x}=\begin{cases}
        \idpart, & \mbox{if } x\in A, \\
        \upsingleton, & \mbox{otherwise}.
      \end{cases}$$

Now, $R_A=T_p$ where  $p=\bigotimes_{x=1}^k {p_x}$.
\end{proof}
\subsection{Tannaka--Krein Operators}\label{TKO}
An intertwiner $T\in\hom_{\mathcal{C}}(k,\ell)$  relates the objects $u^{\otimes k}$ and $u^{\otimes \ell}$, but the axioms of a Tannaka--Krein dual allow relations between intertwiners. Let $\mathcal{C}$ be a Tannaka--Krein dual.  The composition property gives an implication:
$$T\in \hom_{\mathcal{C}}(k,\ell),\,S\in\hom_{\mathcal{C}}(r,k)\implies ST\in\hom_{\mathcal{C}}(r,\ell).$$
Putting this another way, one can consider the multiplication operators:
\begin{align*}
L_T:\hom_{\mathcal{C}}(r,k)\to \hom_{\mathcal{C}}(r,\ell);\quad L_T(S)=TS,
\\ R_S:\hom_{\mathcal{C}}(k,\ell)\to \hom_{\mathcal{C}}(r,\ell);\quad R_S(T)=TS.
\end{align*}
Similarly, the vector space structure gives an implication:
$$S,\,T\in\hom_{\mathcal{C}}(k,\ell)\implies S+T\in\hom_{\mathcal{C}}(k,\ell).$$
Therefore, given $T\in\hom_{\mathcal{C}}(k,\ell)$, another (affine) operator is given by:
$${+}_{T}:\hom_{\mathcal{C}}(k,\ell)\to \hom_{\mathcal{C}}(k,\ell),\qquad {+}_T(S)=S+ T.$$

Suppose for a map $\Phi$ there is an implication:
$$T\in\hom_{\mathcal{C}}(k,\ell)\implies \Phi(T)\in\hom_{\mathcal{C}}(k',\ell'),$$
then call $\Phi$ a \emph{Tannaka--Krein operator}, and collect all such maps into  $\operatorname{Op}_{\mathcal{C}}(k,\ell;k',\ell')$. In particular, denote $\operatorname{Op}_{\mathcal{C}}(k,\ell):=\operatorname{Op}_{\mathcal{C}}(0,k;0,\ell)$, and $\operatorname{Op}_{\mathcal{C}}(k):=\operatorname{Op}_{\mathcal{C}}(k,k)$. Note in the case of the Tannaka--Krein operator $L_T$, the map is associated with a $T\in \hom_{\mathcal{C}}(k,\ell)$: if $T\in \hom_{\mathcal{C}}(k,\ell)$, then $L_T\in \operatorname{Op}_{\mathcal{C}}(k,\ell)$. This means there is an embedding $\hom_{\mathcal{C}}(k,\ell)\subset \operatorname{Op}_{\mathcal{C}}(k,\ell)$. However, there are more Tannaka--Krein operators than those got from the $\hom_{\mathcal{C}}$ spaces alone.  The tensor and adjoint structures imply that the inclusion $\hom_{\mathcal{C}}(k,\ell)\subset \operatorname{Op}_{\mathcal{C}}(k,\ell)$ can be strict. Furthermore, not only are some of the $\operatorname{Op}_{\mathcal{C}}$ spaces  larger than the corresponding $\hom_{\mathcal{C}}$ spaces, the $\operatorname{Op}_{\mathcal{C}}(k,\ell;k',\ell')$ spaces with $k'\neq k$, $\ell'\neq\ell$ contain no multiplication operators. For example, to construct a rotation $\hom_{\mathcal{C}}(k,\ell)\to \hom_{\mathcal{C}}(0,k+\ell)$, Banica--Speicher use the tensor structure, the rotation map being, for a certain $v\in\hom_{\mathcal{C}}(0,2k)$:
$$\xi_p\mapsto (\operatorname{id}^{\otimes k}\otimes \xi_p)\circ v\qquad (p\in \mathcal{P}(k,\ell)).$$

However, this rotation map is an element of $\operatorname{Op}_{\mathcal{C}}(k,\ell;0,k+\ell)$, and this $\operatorname{Op}_{\mathcal{C}}$ space contains no multiplication operators. For $T\in\hom_{\mathcal{C}}(k,\ell)$, $S\in\hom_{\mathcal{C}}(k',\ell')$, the tensor structure gives $\otimes_T:\hom_{\mathcal{C}}(k',\ell')\to \hom_{\mathcal{C}}(k'+k,\ell'+\ell)$, and ${}_S\otimes:\hom_{\mathcal{C}}(k,\ell)\to \hom_{\mathcal{C}}(k'+k,\ell'+\ell)$;
$$\otimes_T(S)=S\otimes T,\qquad {}_S\!\otimes (T)=S\otimes T.$$
This map $\otimes_T\in\operatorname{Op}_{\mathcal{C}}(k',\ell';k'+k,\ell'+\ell)$, and this $\operatorname{Op}_{\mathcal{C}}$ space contains no multiplication operators.  Not every use of the tensor structure is naturally outside $\hom_{\mathcal{C}}(k,\ell)$. For example, for $\mathcal{C}\geq \mathcal{NC}$, $\xi_p\mapsto \xi_p\otimes \xi_q$ with $q\in\mathcal{NC}(\ell)$ is a map in $\hom_{\mathcal{NC}}(k,k+\ell)$.

\bigskip\noindent

A permutation $\sigma\in S_k$ gives a $T_\sigma\in\hom_{\mathcal{P}}(k,k)$ given by the partition $\sigma=\bigsqcup_{i=1}^k\{i,k+\sigma(i)\}$ such that $T_\sigma(\xi_p)=\xi_{\sigma(p)}$, but if $\sigma\neq e$, then the partition $\sigma\in \mathcal{CR}(k,k)$, so  $T_\sigma\not\in \hom_{\mathcal{NC}}(k,k)$. Recall that if $r^n\in S_k$ is a cyclic permutation, then $r^n:\mathcal{P}(0,k)\to \mathcal{P}(0,k)$ is a rotation.
 Given the crossing nature of non-identity $T_{\sigma}\in\hom_{\mathcal{P}}(k,k)$, it is therefore noteworthy that the Tannaka--Krein duals of interest here are closed under such rotations (Proposition \ref{rots}). Let $s(i)=(k-i)\mod k+1$ be the reflection $s\in S_k$, and  $s(p)$ the \emph{reflection} of $p\in\mathcal{P}(k)$. The linear map $T_s$
 also cannot be got from an element of $\hom_{\mathcal{NC}}(k,k)$, but its conjugate $\overline{T_s}$ is an element of $\operatorname{Op}_\mathcal{C}(k)$:

\begin{proposition}[Banica--Speicher]\label{rots} Let $D_k=\langle r,s\rangle$ be the dihedral group of order $2k$.
For any Tannaka--Krein dual $\mathcal{NC}_2\leq \mathcal{C}\leq \mathcal{P}$, the rotations $T_{r^n}\in\operatorname{Op}_{\mathcal{C}}(k)$, and conjugate-linear reflection $\overline{T_s}\in\operatorname{Op}_{\mathcal{C}}(k)$.
\end{proposition}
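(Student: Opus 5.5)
The plan is to exhibit each of these maps explicitly as a composite of Tannaka--Krein operators built only from non-crossing partitions: tensoring, composition, and the adjoint. Since $\mathcal{NC}_2\leq\mathcal{C}$, every such operator preserves $\hom_{\mathcal{C}}(0,k)$. It suffices to treat the generators $r$ and $s$ of $D_k$. Indeed, $T_{r^n}=(T_r)^n$, a composite of elements of $\operatorname{Op}_{\mathcal{C}}(k)$ is again one, and the rotations and reflections (composed with conjugations) generate the rest.

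For the rotation $T_r$, I would use the Frobenius-type construction already quoted from Banica--Speicher. Given $\xi\in\hom_{\mathcal{C}}(0,k)$, form $\xi\otimes\xi_{\Uaa}$ via $\otimes_{\xi_{\Uaa}}$, giving an element of $\hom_{\mathcal{C}}(0,k+2)$; here $\xi_{\Uaa}$ is the pair partition vector $\sum_i e_i\otimes e_i$, which lies in $\mathcal{NC}_2(0,2)$. Next apply the map $T_{\Uaa}\otimes\operatorname{id}^{\otimes k}$ after placing the cup appropriately. Concretely, consider
$$\xi\;\mapsto\;(T_{\Uaa}^{*\,}\text{-type contraction})\,:\;(\operatorname{id}\otimes\ldots)$$
and more cleanly
$$T_r(\xi)=\bigl(\xi_{\Uaa}^*\otimes\operatorname{id}^{\otimes k}\bigr)\bigl(\operatorname{id}\otimes\xi\otimes\operatorname{id}\bigr)\xi_{\Uaa}.$$
Here $\operatorname{id}\otimes\xi\otimes\operatorname{id}\in\hom_{\mathcal{C}}(2,k+2)$ by the tensor axiom, the vector $\xi_{\Uaa}\in\hom_{\mathcal{C}}(0,2)$, and $\xi_{\Uaa}^*\otimes\operatorname{id}^{\otimes k}\in\hom_{\mathcal{C}}(k+2,k)$. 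Checking on indices, the outer cup connects the first leg to the new last leg, so the first tensor factor of $\xi$ is transported to the last position. That is, $\xi_p\mapsto\xi_{r(p)}$ with no closed blocks created, so the scalar $N^{c}$ is $1$ by Prop.~\ref{func}(2). Extending linearly, $T_r$ is the linear map $\xi\mapsto$ (the composite), and the composite visibly lies in $\hom_{\mathcal{C}}(0,k)$.

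For the reflection, the adjoint axiom gives $\xi^*\in\hom_{\mathcal{C}}(k,0)$. I would then nest $k$ cups to turn upper points into lower points:
$$\xi\mapsto(\xi^*\otimes\operatorname{id}^{\otimes k})\,\xi_{\sqcup_k},$$
where $\xi_{\sqcup_k}\in\mathcal{NC}_2(0,2k)$ is the nested pairing $\{i,2k+1-i\}$. On a basis vector, $\xi_p^*$ pairs the $i$-th leg with leg $2k+1-i$, so the output is $\xi_{s(p)}$, again with no closed blocks. Since $\xi^*=\overline{\xi}^{\,T}$, for $\xi=\sum\alpha_i\xi_{p_i}$ one obtains $\sum\overline{\alpha_i}\xi_{s(p_i)}=\overline{T_s}(\xi)$, which is conjugate-linear as claimed.

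The main obstacle is bookkeeping rather than concepts: fixing the index conventions (the right-to-left ordering of lower points in Remark~\ref{order}) so that the composite is exactly $r$ rather than $r^{-1}$, and is exactly $s$ rather than the identity. I would verify this by computing one entry $\delta_p$ for a small example such as $p\in\mathcal{P}(0,3)$. Either orientation suffices, since $r^{-1}$ generates the same cyclic group, and the reflection can be adjusted by a rotation.
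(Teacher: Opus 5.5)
Your proof is correct and is essentially the standard Banica--Speicher construction that the paper cites. The paper gives no proof of its own; it only indicates that rotations come from the tensor structure via maps of the form $\xi\mapsto(\operatorname{id}^{\otimes k}\otimes\xi)\circ v$ with $v\in\hom_{\mathcal{C}}(0,2k)$, and your two maps are of exactly that kind. Your cup/cap zig-zag $(\xi_{\Uaa}^*\otimes\operatorname{id}^{\otimes k})(\operatorname{id}\otimes\xi\otimes\operatorname{id})\xi_{\Uaa}$ is, on indices, exactly $T_{r^{-1}}$, which suffices since $T_r=T_{r^{-1}}^{\,k-1}$. Your nested-cup map $(\xi^*\otimes\operatorname{id}^{\otimes k})\xi_{\sqcup_k}$ is exactly $T_s(\overline{\xi})=\overline{T_s}(\xi)$.

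The only thing to fix is presentational: delete the abandoned first attempt via $\xi\otimes\xi_{\Uaa}$ followed by $T_{\Uaa}\otimes\operatorname{id}^{\otimes k}$. As written, that caps the first two legs of $\xi$ rather than rotating it.
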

 Therefore, there is an action $D_k\curvearrowright \hom_{\mathcal{C}}(0,k)$ so that $D_k\subset\operatorname{Op}_{\mathcal{C}}(k)$ (but note that $\overline{T_s}$ being conjugate-linear means there is only a representation of $\mathbb{Z}_k$ rather than of all of $D_k$).
This means, via Theorem \ref{noez}, that for $\mathcal{C}\lneq \mathcal{P}$ there is a strict inclusion $\hom_{\mathcal{C}}(k,k)\subsetneq \operatorname{Op}_{\mathcal{C}}(k)$, witnessed by e.g. $T_{r}$.

\bigskip\noindent

Note that if $T\in\mathcal{C}$, $L_T$ is an element of many $\operatorname{Op}_{\mathcal{C}}$ spaces, that is, $L_T(S)=TS$ is an element of $\mathcal{C}$ whenever $T$ is composable with $S\in\mathcal{C}$:
$$L_T\in\bigcap_{r\geq 0}\operatorname{Op}_{\mathcal{C}}(r,k;r,\ell).$$
However $\Phi\in\operatorname{Op}_{\mathcal{C}}$ being composable with $S\in\mathcal{C}$ does not imply that $\Phi(S)\in\mathcal{C}$. For example, $T_r\in\operatorname{Op}_{\mathcal{NC}}(k)$ is composable with $\operatorname{id}\in \hom_{\mathcal{NC}}(k,k)$, but $T_r\circ \operatorname{id}=T_r$ is not an element of  $\hom_{\mathcal{NC}}(k,k)$. That is, $T_r\in\operatorname{Op}_{\mathcal{NC}}(k)$, but not an element of $\operatorname{Op}_{\mathcal{NC}}(k,k;k,k)$.

\bigskip\noindent

The domain of $\Phi\in\operatorname{Op}_{\mathcal{C}}$ is therefore restricted; but this is not to say that the composition of $\Phi\in\operatorname{Op}_{\mathcal{C}}$ with $T\in\mathcal{NC}$ or $\Phi'\in\operatorname{Op}_{\mathcal{C}}$ is forbidden; in general, the compositions $\Phi\circ T$ and $\Phi\circ \Phi'$ can themselves be Tannaka--Krein operators. For example, for $T\in \hom_{\mathcal{C}}(k,\ell)$, the composition $T\circ T_r\in\operatorname{Op}_{\mathcal{C}}(k,\ell)$. This will be seen later with a family of maps in $\operatorname{Op}_{\mathcal{C}}(6,4)$ whose joint kernel is symmetric under the rotation action, the symmetry coming from the fact the family of maps themselves are symmetric under  $T\mapsto T\circ T_{r^n}$.

\bigskip\noindent

In an exotic Tannaka--Krein dual, with the parameter $N$ suppressed, one must be careful not to confuse the use of the vector space structure to subtract the non-crossing parts of a vector from $\hom_{\mathcal{C}}(0,k)$ with the existence of a well-defined idempotent onto the subspace\footnote{Note that $\mathcal{CR}$ is not a category of partitions and not a Tannaka--Krein dual.} $$\hom_{\mathcal{CR}}(0,k):=\operatorname{span}(\xi_p\colon p\in\mathcal{CR}(k)).$$
Consider the linear map
$$\Phi_{k,N}:\mathbb{C}\mathcal{P}(k)\to (\mathbb{C}^N)^{\otimes k}, \qquad p\mapsto \xi_p.$$
This maps:
$$\Phi_{k,N}(\mathbb{C}\mathcal{NC}(k))=\hom_{\mathcal{NC}}(0,k);\qquad \Phi_{k,N}(\mathbb{C}\mathcal{CR}(k))=\hom_{\mathcal{CR}}(0,k).$$
\begin{theorem}[Gromada--Weber \cite{grw}]\label{inj}
For any $k,N>0$,
$$\{\xi_p\colon p\in\mathcal{P}(0,k),\, |p|\leq N\}$$
forms a basis of $\hom_{\mathcal{P}}(0,k)$. As a corollary, $\Phi_{k,N}$ is injective if and only if $k\leq N$.
\end{theorem}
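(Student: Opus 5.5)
The plan is to show separately that the vectors $\xi_p$ with $|p|\leq N$ span $\hom_{\mathcal{P}}(0,k)$ and that they are linearly independent. The corollary then follows by counting.

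\textbf{Spanning.} For each $p\in\mathcal{P}(k)$, the vector $\xi_p$ is the indicator of the set of multi-indices $\mathbf{i}\in[N]^k$ whose kernel partition $\ker\mathbf{i}$ (where $x\sim y$ iff $i_x=i_y$) is coarser than or equal to $p$. The natural alternative basis consists of the vectors
$$\eta_q:=\sum_{\mathbf{i}\colon \ker\mathbf{i}=q}e_{\mathbf{i}},$$
so that $\xi_p=\sum_{q\succeq p}\eta_q$. Möbius inversion on the partition lattice gives $\eta_p=\sum_{q\succeq p}\mu(p,q)\xi_q$, so the spans of $\{\xi_p\}$ and $\{\eta_q\}$ coincide. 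Now $\eta_q=0$ exactly when $|q|>N$, because a partition with more than $N$ blocks cannot be the kernel of an index tuple with values in $[N]$. Hence $\hom_{\mathcal{P}}(0,k)=\operatorname{span}\{\eta_q\colon |q|\leq N\}$.

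\textbf{Linear independence.} The nonzero $\eta_q$ are linearly independent, since they have disjoint, nonempty supports in $[N]^k$. Therefore $\dim\hom_{\mathcal{P}}(0,k)=\#\{q\in\mathcal{P}(k)\colon |q|\leq N\}$.

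\textbf{Triangularity.} It remains to show that $\{\xi_p\colon |p|\leq N\}$ also spans. Coarsening can only decrease the number of blocks, so if $|p|\leq N$ and $q\succeq p$ then $|q|\leq N$. The set $\{p\colon |p|\leq N\}$ is therefore an upper set in the refinement order. On it, the transition matrix from $\{\eta_q\}$ to $\{\xi_p\}$ is unitriangular with respect to any linear extension of $\preceq$. The matrix is therefore invertible, and $\{\xi_p\colon|p|\leq N\}$ is a basis.

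\textbf{Corollary.} The map $\Phi_{k,N}$ is injective exactly when the full family $\{\xi_p\colon p\in\mathcal{P}(k)\}$ is independent. By the dimension count, this happens iff every $p\in\mathcal{P}(k)$ has $|p|\leq N$, that is, iff $k\leq N$. If $k>N$, the partition into singletons has $k>N$ blocks, so $\xi_{\text{singletons}}$ is a combination of the other $\xi_p$ and $\Phi_{k,N}$ has nonzero kernel.

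The only point needing care is the triangularity step. One must check that $\xi_p$ with $|p|\leq N$ involves only $\eta_q$ with $|q|\leq N$, which holds because $|q|\leq|p|$ whenever $q\succeq p$. The rest is routine.
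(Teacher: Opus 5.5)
Your proof is correct and complete. The exact-kernel vectors $\eta_q$ with $|q|\leq N$ have disjoint nonempty supports, so they form a basis of $\hom_{\mathcal{P}}(0,k)$. The zeta matrix $[q\succeq p]$ restricted to $\{p\colon |p|\leq N\}$ is unitriangular, so $\{\xi_p\colon |p|\leq N\}$ is a basis too. (The upper-set remark is harmless but not even needed, since $\eta_q=0$ for $|q|>N$.) The corollary then follows by counting.

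The paper gives no proof of this statement; it imports it from Gromada--Weber. Your argument is therefore a self-contained replacement for a citation, following the standard exact-kernel/M\"obius-inversion route. The citation keeps the exposition short. Your route also gives an explicit description of $\ker\Phi_{k,N}$. For each $p$ with $|p|>N$, the element $\sum_{q\succeq p}\mu(p,q)\,q\in\mathbb{C}\mathcal{P}(k)$ is sent to $\eta_p=0$. These elements are unitriangular with leading term $p$, so they form a basis of the kernel. This makes explicit the kernel vectors behind Corollaries \ref{bad} and \ref{int}, rather than only their number.

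In particular, take $p=\hat 0$, the partition into singletons, at $k=N+1$, and use $\mu(\hat 0,q)=\prod_{B\in q}(-1)^{|B|-1}(|B|-1)!$. This gives $\xi_{\hat 0}=\sum_{q\neq\hat 0}c_q\xi_q$ with $c_q=-\mu(\hat 0,q)$. At $k=5$, $N=4$ this is $c_q=(-1)^{|q|}\prod_{B\in q}(|B|-1)!$. The paper only verifies that identity in \emph{Maple} and remarks that it should follow from M\"obius inversion; your framework supplies that derivation directly.
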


The non-injectivity of $\Phi_{k,N}$ has a number of  corollaries. Firstly, a linear combination of crossing-partition vectors when added to $\mathcal{NC}$ does not necessarily generate a larger Tannaka--Krein dual:
\begin{corollary}\label{bad}
For every $N\geq 3$, there exists, unique up to scaling, a non-zero $\xi\in\hom_{\mathcal{CR}}(0,N+1)$ such that  $\langle \mathcal{NC},\xi\rangle=\mathcal{NC}$.
\end{corollary}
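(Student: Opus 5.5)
The plan is to find the kernel of $\Phi_{N+1,N}$ explicitly and read off $\xi$ from it. First note that $\langle \mathcal{NC},\xi\rangle=\mathcal{NC}$ holds if and only if $\xi\in\hom_{\mathcal{NC}}(0,N+1)$. So the statement reduces to showing that
$$\hom_{\mathcal{CR}}(0,N+1)\cap\hom_{\mathcal{NC}}(0,N+1)$$
is exactly one-dimensional.

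\textbf{Step 1: the kernel is one-dimensional.} By Theorem \ref{inj}, the vectors $\xi_p$ with $|p|\leq N$ form a basis of $\hom_{\mathcal{P}}(0,N+1)$. Among partitions of $[N+1]$, exactly one has $N+1$ blocks, namely the partition $0_{N+1}$ into singletons. Hence $\ker\Phi_{N+1,N}$ is one-dimensional.

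\textbf{Step 2: an explicit kernel vector.} Use Möbius inversion on the partition lattice. The vector $\sum_{p}\mu(0_{N+1},p)\,\xi_p$ has $\mathbf{i}$-entry equal to $1$ exactly when all indices $i_1,\dots,i_{N+1}$ are distinct. This is impossible with only $N$ values, so the vector vanishes, and
$$m:=\sum_{p\in\mathcal{P}(N+1)}\mu(0_{N+1},p)\,p$$
spans $\ker\Phi_{N+1,N}$. The key fact is the standard formula
$$\mu(0,p)=\prod_{B\in p}(-1)^{|B|-1}(|B|-1)!\neq 0,$$
so every partition appears in $m$ with a non-zero coefficient.

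\textbf{Step 3: existence.} Split $m=m^{\mathcal{CR}}+m^{\mathcal{NC}}$ according to whether $p$ is crossing, and set $\xi:=\Phi(m^{\mathcal{CR}})=-\Phi(m^{\mathcal{NC}})$. Then $\xi\in\hom_{\mathcal{CR}}(0,N+1)\cap\hom_{\mathcal{NC}}(0,N+1)$. It remains to check $\xi\neq0$. Since $N\geq3$, we have $N+1\geq4$, so crossing partitions of $[N+1]$ exist. All of them have at most $N$ blocks, so they are basis vectors by Theorem \ref{inj}. Their coefficients in $m^{\mathcal{CR}}$ are non-zero by Step 2, and $m^{\mathcal{CR}}$ does not involve the one non-basis element $0_{N+1}$. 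Hence $\xi\neq0$.

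\textbf{Step 4: uniqueness.} Suppose $\eta=\sum_{p\in\mathcal{CR}}a_p\xi_p$ also lies in $\hom_{\mathcal{NC}}(0,N+1)$, say $\eta=\sum_{q\in\mathcal{NC}}b_q\xi_q$. Then $\sum a_p p-\sum b_q q\in\ker\Phi_{N+1,N}=\mathbb{C}m$. Comparing crossing coefficients gives $a_p=c\,\mu(0,p)$ for some scalar $c$, so $\eta=c\,\xi$.

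The only step needing care is Step 2: checking the Möbius identity and that the coefficients never vanish. This is classical (Nica--Speicher \cite{nis}).
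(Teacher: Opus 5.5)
Your proof is correct, and it is a more explicit version of the paper's argument.

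\textbf{The paper's route.} The paper also starts from Theorem \ref{inj}, but only to write $\xi_{0_{N+1}}=\sum_{p\neq 0_{N+1}}\beta_p\xi_p$ with unspecified coefficients $\beta_p$. To see that some crossing $\beta_p$ is non-zero, it imports the linear independence of the non-crossing vectors $\{\xi_q\}_{q\in\mathcal{NC}(N+1)}$: from Banica's book for $N\geq4$, and from a Banica--Collins Gram determinant computation at $N=3$. It then subtracts the non-crossing part to get $\xi$. Uniqueness is only asserted in the remark after the proof.

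\textbf{What your route buys.}
\begin{enumerate}
\item You identify the kernel generator as $\sum_p\mu(0_{N+1},p)\,p$. Since no M\"obius coefficient vanishes, the non-vanishing of the crossing part comes for free. You need no external independence result and no separate $N=3$ case. As a by-product, you show the non-crossing vectors at $k=N+1$ are independent for every $N\geq3$.
\item Your Step 4 actually proves the uniqueness clause, which the paper leaves to a remark.
\item You get the explicit vector $\xi=\sum_{p\in\mathcal{CR}(N+1)}\mu(0_{N+1},p)\,\xi_p$, in which every crossing partition appears. At $N=4$ your relation $\sum_p\mu(0_5,p)\xi_p=0$ is exactly the identity $\xi_{0_5}=\sum_p c_p\xi_p$ with $c_p=(-1)^{|p|}\prod_{B\in p}(|B|-1)!$. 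The paper verifies that identity by computer after Corollary \ref{maxm5} and remarks it should follow from M\"obius inversion.
\item For general $N$, your vector is a combination of all $B_{N+1}-C_{N+1}$ crossing-partition vectors, each with non-zero coefficient, that generates only $\mathcal{NC}$. This generalises the length-ten example the paper gives at $N=4$.
\end{enumerate}

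The paper's version is shorter and avoids the M\"obius function, but it is non-constructive.
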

\begin{proof}
The only partition in $\mathcal{P}(N+1)$ that does not appear in the basis in Theorem \ref{inj} is the  discrete partition $\idpart^{\otimes (N+1)}\in\mathcal{NC}(N+1)$. Therefore
$$\xi_{\idpart^{\otimes (N+1)}}=\sum_{p\in\mathcal{P}(N+1)\backslash \{\idpart^{\otimes (N+1)}\}}\beta_p\xi_p\qquad (\beta_p\in\mathbb{C}).$$
As the vectors generating $\hom_{\mathcal{NC}}(0,N+1)$ are linearly independent (\cite{ba1}, Th. 9.9 for $N\geq 4$; \cite{bac}, Th. 3.1, computation of $\det G_{4N}>0$ at $N=3$ for $N=3$), necessarily crossing partitions appear in the linear combination. Subtract from the linear combination any non-crossing partitions:
$$\xi_{\idpart^{\otimes (N+1)}}-\sum_{p\in\mathcal{NC}(N+1)\backslash \{\idpart^{\otimes (N+1)}\}}\beta_p\xi_p=\sum_{p\in\mathcal{CR}(N+1)}\beta_p\xi_p.$$
As $\eta:=\sum_{p\in\mathcal{CR}(N+1)}\beta_p\xi_p$ is in $\mathcal{NC}$, $\langle \mathcal{NC},\eta\rangle =\mathcal{NC}$.
\end{proof}
At $k=N+1$, up to scaling, this vector is unique. There are more such vectors at $k=N+2$:
\begin{corollary}\label{int}
For every $N\geq 3$,
$$\dim \left(\hom_{\mathcal{NC}}(0,N+2)\cap \hom_{\mathcal{CR}}(0,N+2)\right)=\binom{N+2}{2}+1.$$
\end{corollary}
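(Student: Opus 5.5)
The plan is to compute the dimension of the intersection with the elementary identity
$$\dim(A\cap B)=\dim A+\dim B-\dim(A+B),$$
taking $A=\hom_{\mathcal{NC}}(0,N+2)$ and $B=\hom_{\mathcal{CR}}(0,N+2)$. Since $\mathcal{P}(N+2)=\mathcal{NC}(N+2)\sqcup\mathcal{CR}(N+2)$, we have $A+B=\hom_{\mathcal{P}}(0,N+2)$. All three dimensions on the right should follow from Theorem \ref{inj} of Gromada--Weber together with the linear independence of the non-crossing vectors already cited in the proof of Corollary \ref{bad}.

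\textbf{Step 1.} By Theorem \ref{inj}, $\dim\hom_{\mathcal{P}}(0,N+2)$ equals the number of partitions of $[N+2]$ with at most $N$ blocks. The partitions with more than $N$ blocks are:
\begin{itemize}
\item the discrete partition, with $N+2$ blocks;
\item the partitions with exactly $N+1$ blocks, which consist of one doubleton and $N$ singletons, giving $\binom{N+2}{2}$ of them.
\end{itemize}
Writing $B_{N+2}$ for the Bell number, this gives
$$\dim(A+B)=B_{N+2}-\binom{N+2}{2}-1.$$

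\textbf{Step 2.} By the linear independence of $\{\xi_p\colon p\in\mathcal{NC}(N+2)\}$ (\cite{ba1}, Th. 9.9 for $N\geq 4$; \cite{bac} for $N=3$), $\dim A=|\mathcal{NC}(N+2)|$, which is the Catalan number.

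\textbf{Step 3.} This is the key observation. Every crossing partition of $[N+2]$ has at most $N$ blocks. A crossing $(\kappa_1,\kappa_2,\kappa_3,\kappa_4)$ forces two distinct blocks each containing at least two points. These two blocks cover at least $4$ points with only $2$ blocks, so there are at most $2+(N+2-4)=N$ blocks in total. Hence every $p\in\mathcal{CR}(N+2)$ lies in the Gromada--Weber basis of Theorem \ref{inj}. The crossing-partition vectors are therefore linearly independent, and
$$\dim B=|\mathcal{CR}(N+2)|=B_{N+2}-|\mathcal{NC}(N+2)|.$$

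\textbf{Step 4.} Substituting Steps 1--3 into the identity, the Catalan numbers and Bell numbers cancel, leaving
$$\dim(A\cap B)=\binom{N+2}{2}+1.$$

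The main point needing care is Step 3: checking that no crossing partition is among the $\binom{N+2}{2}+1$ partitions excluded from the basis, which follows because the excluded partitions are all non-crossing. The hypothesis $N\geq 3$ is needed only so that the cited linear independence of the non-crossing vectors applies. As a consistency check, the same computation at $k=N+1$ gives intersection dimension $1$, recovering Corollary \ref{bad}.
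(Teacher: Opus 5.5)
Your argument is the paper's own: Grassmann's formula with $\dim(A+B)=B_{N+2}-\binom{N+2}{2}-1$ from Theorem \ref{inj}, $\dim B=B_{N+2}-C_{N+2}$ because every crossing partition has at most $N$ blocks, and $\dim A=C_{N+2}$. For $N\geq 4$ this is complete and correct.

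\textbf{The gap is Step 2 at $N=3$, and there the statement itself fails.} The reference \cite{bac} used in Corollary \ref{bad} is the computation $\det G_{4N}>0$. That gives independence of the $\mathcal{NC}(4)$ vectors, so it covers $k=N+1=4$ but not $k=N+2=5$. At $N=3$ and $k=5$ we have $C_5=42>41=\dim\hom_{\mathcal{P}}(0,5)$, so the non-crossing vectors are necessarily linearly dependent. Indeed $\hom_{\mathcal{NC}}(0,5)=\hom_{\mathcal{P}}(0,5)$ because $S_3^+=S_3$. Your Grassmann computation then gives $41+10-41=10$, not $11$.

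More directly, $A\cap B\subseteq B$ and $\dim B=|\mathcal{CR}(5)|=10<11=\binom{5}{2}+1$, so no argument can give the claimed value at $N=3$. Your remark that $N\geq 3$ is needed only so the cited independence applies is exactly where the proof breaks. The paper's own proof has the same defect: its citation for $\dim\hom_{\mathcal{NC}}(0,N+2)=C_{N+2}$ requires $N\geq 4$. The correct fix is to change the hypothesis to $N\geq 4$, under which your proof goes through as written.
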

\begin{proof}
Firstly $\dim \hom_{\mathcal{NC}}(0,N+2)=C_{N+2}$, the $(N+2)$-th Catalan number (Th. 9.10, \cite{ba1}). Also
$$\hom_{\mathcal{NC}}(0,N+2)+ \hom_{\mathcal{CR}}(0,N+2)=\hom_{\mathcal{P}}(0,N+2),$$
which, at parameter $N$, has dimension equal to the number of partitions of $[N+2]$ with at most $N$ blocks, determined by Stirling numbers of the second kind:
$$\dim \hom_{\mathcal{P}}(0,N+2)=\sum_{\ell=1}^{N}S(N+2,\ell)=B_{N+2}-\binom{N+2}{2}-1.$$
Note that crossing partitions must have at least two blocks of size at least two. Therefore each $p\in\mathcal{CR}(N+2)$ has at most $|p|=N$ blocks, and so each of the vectors in $\hom_{\mathcal{NC}}(0,N+2)$ are linearly independent by Theorem \ref{inj}, and there are $B_{N+2}-C_{N+2}$ of them. The result then follows by Grassmann's Dimension Formula.
\end{proof}
The following was known to Wang \cite{wa2}; here is a proof using Tannaka--Krein duality:
\begin{corollary}
$S_3^+=S_3$.
\end{corollary}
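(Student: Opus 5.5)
The plan is to show that at $N=3$ the inclusion $\mathcal{NC}\leq\mathcal{P}$ is an equality of Tannaka--Krein duals, that is, $\hom_{\mathcal{NC}}(k,\ell)=\hom_{\mathcal{P}}(k,\ell)$ for all $k,\ell$. The equality $S_3^+=S_3$ then follows from the Tannaka--Krein Reconstruction Theorem, since $\mathcal{C}_{S_3^+}=\mathcal{NC}$ and $\mathcal{C}_{S_3}=\mathcal{P}$. It suffices to prove $T_{\crosspart}\in\mathcal{NC}$: then, by Proposition \ref{abelian}, the Tannaka--Krein dual of the classical version of $S_3^+$ is $\langle\mathcal{NC},T_{\crosspart}\rangle=\mathcal{NC}$, so $\mathcal{O}(S_3^+)$ is commutative. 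Alternatively, Theorem \ref{noez} gives $\langle\mathcal{NC},T_{\crosspart}\rangle=\mathcal{P}$, so $\mathcal{P}=\mathcal{NC}$.

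By Frobenius reciprocity, i.e. the rotation maps in Proposition \ref{rots}, which lie in $\operatorname{Op}_{\mathcal{C}}$ for both duals, $T_{\crosspart}\in\hom(2,2)$ corresponds to $\xi_{\Labab}\in\hom(0,4)$, where $\Labab=\{1,3\}\sqcup\{2,4\}$. Since rotations are invertible Tannaka--Krein operators on both $\mathcal{NC}$ and $\mathcal{P}$, it is enough to show $\xi_{\Labab}\in\hom_{\mathcal{NC}}(0,4)$.

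The key step is a dimension count in $\hom(0,4)$ at $N=3$. The partition $\Labab$ is the unique crossing partition of $[4]$, so $\mathcal{P}(4)=\mathcal{NC}(4)\sqcup\{\Labab\}$, with $B_4=15$ and $C_4=14$. By Theorem \ref{inj} with $k=4$, $N=3$, the vectors $\xi_p$ with $|p|\leq3$ form a basis of $\hom_{\mathcal{P}}(0,4)$; only the discrete partition $\idpart^{\otimes 4}$ is excluded, so $\dim\hom_{\mathcal{P}}(0,4)=14$. As recalled in the proof of Corollary \ref{bad} (the $N=3$ case via \cite{bac}, $\det G_{4N}>0$), the $14$ vectors $\xi_p$, $p\in\mathcal{NC}(4)$, are linearly independent, so $\dim\hom_{\mathcal{NC}}(0,4)=14$. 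Hence $\hom_{\mathcal{NC}}(0,4)=\hom_{\mathcal{P}}(0,4)\ni\xi_{\Labab}$. Equivalently, this is Corollary \ref{bad} at $N=3$: the unique vector $\eta\in\hom_{\mathcal{CR}}(0,4)\cap\hom_{\mathcal{NC}}(0,4)$ is a nonzero multiple of $\xi_{\Labab}$.

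The only delicate point is the linear independence of the non-crossing vectors at $N=3$, $k=4$, which is taken from the cited Gram determinant computation. The rest is bookkeeping: the rotation from $\hom(0,4)$ back to $\hom(2,2)$, and the passage through Proposition \ref{abelian}.
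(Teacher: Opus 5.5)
Your argument is correct and is essentially the paper's proof. The paper reruns Corollary \ref{bad} at $N=3$: it uses the Gromada--Weber basis and the linear independence of the fourteen non-crossing vectors to solve the resulting linear relation for $\xi_{\Labab}$ in terms of non-crossing vectors, which is your count $\dim\hom_{\mathcal{NC}}(0,4)=14=\dim\hom_{\mathcal{P}}(0,4)$ written as an explicit relation. You also spell out the final step from $\xi_{\Labab}\in\mathcal{NC}$ to $S_3^+=S_3$ (via Theorem \ref{noez} or Proposition \ref{abelian}), which the paper leaves implicit.
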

\begin{proof}
Rerun the proof of Corollary \ref{bad} at $N=3$, to find:
$$\xi_{\Labab}=\frac{1}{\beta_{\Labab}}\left(\xi_{\idpart^{\otimes 4}}-\sum_{p\in\mathcal{P}(N+1)\backslash \{\idpart^{\otimes 4}\}}\beta_p\xi_p\right),$$
that is $\xi_{\Labab}\in\mathcal{NC}$ at $N=3$.
\end{proof}
In the region where $\Phi_{k,N}$ has non-trivial kernel, while the reduction to the crossing part is not a well-defined map, nonetheless the vector space structure gives the following implication:
$$\eta=\eta_1+\eta_2\implies \eta_2\in \langle\mathcal{NC},\eta\rangle\qquad(\eta_1\in\hom_{\mathcal{NC}}(0,k),\,\eta_2\in \hom_{\mathcal{CR}}(0,k)).$$
The Tannaka--Krein operator that implements this particular implication is $-_{\eta_1}$. However, instead  this implication will be written suggestively by $\operatorname{red}_{\mathcal{CR}}(\eta)=\eta_2$, while understanding $\operatorname{red}_{\mathcal{CR}}$ is not a well-defined map in $\operatorname{Op}_{\mathcal{C}}(k)$ (it is representative-dependent).
\begin{proposition}
For $k\leq N$, there exists in $\operatorname{Op}_{\mathcal{C}}(k)$ a well-defined idempotent
$$\pi_{\mathcal{CR}}:\hom_{\mathcal{C}}(0,k)\twoheadrightarrow \hom_{\mathcal{CR}}(0,k).$$
\end{proposition}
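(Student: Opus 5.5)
The plan is to use Theorem \ref{inj} (Gromada--Weber) to turn the representative-dependent reduction $\operatorname{red}_{\mathcal{CR}}$ into a genuine linear projection when $k\leq N$. Every partition $p\in\mathcal{P}(0,k)$ has $|p|\leq k\leq N$ blocks. So Theorem \ref{inj} says that $\{\xi_p\colon p\in\mathcal{P}(0,k)\}$ is a basis of $\hom_{\mathcal{P}}(0,k)$; equivalently, $\Phi_{k,N}$ is injective. Since $\mathcal{P}(k)=\mathcal{NC}(k)\sqcup\mathcal{CR}(k)$, splitting this basis gives an internal direct sum
$$\hom_{\mathcal{P}}(0,k)=\hom_{\mathcal{NC}}(0,k)\oplus\hom_{\mathcal{CR}}(0,k).$$
In the language of Corollary \ref{int}, the intersection that is nonzero at $k=N+2$ is trivial here.

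Next I define $\pi_{\mathcal{CR}}$ on all of $\hom_{\mathcal{P}}(0,k)$ as the linear projection onto $\hom_{\mathcal{CR}}(0,k)$ along $\hom_{\mathcal{NC}}(0,k)$. Explicitly,
$$\pi_{\mathcal{CR}}\Big(\sum_{p\in\mathcal{P}(k)}\beta_p\xi_p\Big)=\sum_{p\in\mathcal{CR}(k)}\beta_p\xi_p .$$
This is well defined because the coefficients $\beta_p$ are unique; this is exactly the point at which $\operatorname{red}_{\mathcal{CR}}$ failed for $k>N$, as in Corollary \ref{bad}. Linearity and $\pi_{\mathcal{CR}}^2=\pi_{\mathcal{CR}}$ are immediate from its being a direct-sum projection. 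I then restrict it to $\hom_{\mathcal{C}}(0,k)\subseteq\hom_{\mathcal{P}}(0,k)$, for any Tannaka--Krein dual $\mathcal{NC}\leq\mathcal{C}\leq\mathcal{P}$.

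It remains to check that $\pi_{\mathcal{CR}}\in\operatorname{Op}_{\mathcal{C}}(k)$, meaning that it maps $\hom_{\mathcal{C}}(0,k)$ into itself. Take $\eta\in\hom_{\mathcal{C}}(0,k)$ and write $\eta=\eta_1+\eta_2$ uniquely, with $\eta_1\in\hom_{\mathcal{NC}}(0,k)$ and $\eta_2\in\hom_{\mathcal{CR}}(0,k)$. Since $\mathcal{NC}\leq\mathcal{C}$, we have $\eta_1\in\hom_{\mathcal{C}}(0,k)$. Hence $\pi_{\mathcal{CR}}(\eta)=\eta_2=-_{\eta_1}(\eta)$ lies in $\hom_{\mathcal{C}}(0,k)$ by the vector space structure. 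So on each element $\pi_{\mathcal{CR}}$ agrees with the affine operator $-_{\eta_1}$ already identified as a Tannaka--Krein operator. The difference now is that the choice of $\eta_1$ is forced, so a single linear map $\pi_{\mathcal{CR}}$ does the job for all $\eta$.

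The image of the restricted map is $\hom_{\mathcal{C}}(0,k)\cap\hom_{\mathcal{CR}}(0,k)$, so the double-headed arrow in the statement should be read as surjecting onto this crossing part of $\hom_{\mathcal{C}}(0,k)$. It is all of $\hom_{\mathcal{CR}}(0,k)$ precisely when $\hom_{\mathcal{C}}(0,k)=\hom_{\mathcal{P}}(0,k)$. There is no substantial obstacle: the only nontrivial input is the injectivity of $\Phi_{k,N}$ for $k\leq N$ supplied by Theorem \ref{inj}.
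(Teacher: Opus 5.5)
Your proof is correct and follows the paper's route. The paper uses the same $k\leq N$ decomposition $\hom_{\mathcal{C}}(0,k)=\hom_{\mathcal{NC}}(0,k)\oplus\left(\hom_{\mathcal{C}}(0,k)\cap\hom_{\mathcal{CR}}(0,k)\right)$ to make $\operatorname{red}_{\mathcal{CR}}$ well defined; you make explicit the two steps it leaves implicit, namely the Gromada--Weber injectivity and $\eta_2=\eta-\eta_1\in\hom_{\mathcal{C}}(0,k)$, and your reading of the surjection as onto $\hom_{\mathcal{C}}(0,k)\cap\hom_{\mathcal{CR}}(0,k)$ matches the second summand there.
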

\begin{proof}
In the region $k\leq N$,
$$\hom_{\mathcal{C}}(0,k)=\hom_{\mathcal{NC}}(0,k)\oplus \left(\hom_{\mathcal{C}}(0,k)\cap \hom_{\mathcal{CR}}(0,k)\right).$$
Therefore, each $\eta\in\hom_{\mathcal{C}}(0,k)$ splits as $\eta_1+\eta_2\in\hom_{\mathcal{NC}}(0,k)\oplus \hom_{\mathcal{CR}}(0,k)$ uniquely, and so $\operatorname{red}_{\mathcal{CR}}$ is a well-defined map.
\end{proof}
Looking ahead to Theorems \ref{li} and \ref{alleq}, it can be speculated that the production of new linear independence results, and alternative bases of $\hom_{\mathcal{P}}(0,k)$, could be very fruitful.

\section{Generation Results}\label{Generation}
Let $\xi\in \hom_{\mathcal{P}}(0,k)$ and consider $\mathcal{C}=\langle\mathcal{NC},\xi\rangle$, the Tannaka--Krein dual generated by $\mathcal{NC}$ together with $\xi$. In this section, the $\operatorname{Op}_{\mathcal{NC}}\subset \operatorname{Op}_{\mathcal{C}}$ machinery will be used to establish the result that $\langle\mathcal{NC},\xi\rangle=\mathcal{P}$ under various conditions on $\xi$.

\bigskip\noindent

The first result here is that there is no easy exotic quantum permutation group (Theorem \ref{noez}):
\begin{theorem*}[Banica--Curran--Speicher]
  Let $p$ be a crossing partition. Then the Tannaka--Krein dual generated by the non-crossing partitions and a single crossing partition $p\in\mathcal{CR}$ is the category of all partitions.
  $$\langle \mathcal{NC},p\rangle=\mathcal{P}.$$
  \end{theorem*}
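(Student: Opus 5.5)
Since $\mathcal{NC}\leq\langle\mathcal{NC},p\rangle\leq\mathcal{P}$ and $\mathcal{P}$ is generated by $\mathcal{NC}$ together with the basic crossing $\Labab$ (Proposition \ref{abelian} and the description of $\mathcal{C}_{S_N}$), it suffices to show that $\xi_{\Labab}\in\langle\mathcal{NC},\xi_p\rangle$. By Frobenius reciprocity and Proposition \ref{rots}, we may assume $p\in\mathcal{P}(0,k)$. We may also replace $p$ by any rotation of it. The plan is to apply a sequence of Tannaka--Krein operators coming from $\hom_{\mathcal{NC}}$ to $\xi_p$. Each of these operators sends a single partition vector to a nonzero scalar multiple of a single partition vector, and this lets us shrink $p$ down to $\Labab$.

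First, fix a crossing $\kappa=(\kappa_1,\kappa_2,\kappa_3,\kappa_4)\in\operatorname{cr}(p)$. To remove points, we use two operations in $\hom_{\mathcal{NC}}$.
\begin{enumerate}
\item \emph{Restriction} $R_A$ discards points, at the cost of a factor $N^{c}$ for closed blocks; this factor is nonzero.
\item \emph{Merging adjacent points.} Composing with $\idpart^{\otimes i}\otimes T\otimes\idpart^{\otimes j}$, where $T\in\mathcal{NC}(2,1)$ is the three-point block, fuses two neighbouring legs into one block.
\end{enumerate}
The cleanest route is to restrict directly to $A=\kappa$. Since $\kappa_1\sim_p\kappa_3$, $\kappa_2\sim_p\kappa_4$ and $\kappa_1\not\sim_p\kappa_2$, the restricted partition $R_\kappa(p)$ is exactly $\{\kappa_1,\kappa_3\}\sqcup\{\kappa_2,\kappa_4\}=\Labab$. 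Hence
$$R_\kappa(\xi_p)=N^{c}\,\xi_{\Labab}$$
for some $c\geq 0$, where $c$ counts the blocks of $p$ that are disjoint from $\kappa$. These blocks become closed blocks, so by Proposition \ref{func}(2) each contributes a factor $N$. Because $R_\kappa\in\hom_{\mathcal{NC}}(k,4)$, composition gives $N^{c}\xi_{\Labab}\in\langle\mathcal{NC},\xi_p\rangle$. Dividing by $N^c$ then yields $\xi_{\Labab}$, and rotating back turns it into $T_{\crosspart}$.

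The main point to check is that the scalar bookkeeping in Proposition \ref{func}(2) is correct. Blocks of $p$ that meet $\kappa$ only in points we keep are not closed, because their kept legs stay attached. Blocks that are entirely deleted contribute exactly one factor $N$ each. So the coefficient is a positive power of $N$, never $0$. A second point to verify is that $R_\kappa$ genuinely lies in $\hom_{\mathcal{NC}}(k,4)$, which holds by the Proposition showing $R_A$ is a tensor product of $\idpart$'s and $\upsingleton$'s, together with the orientation conventions of Remark \ref{order} on $\hom(0,k)$.

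With $\xi_{\Labab}\in\langle\mathcal{NC},p\rangle$ established, we conclude via Proposition \ref{abelian}. By Theorem A of Banica--Speicher type (any category of partitions containing $\mathcal{NC}$ and the crossing is $\mathcal{P}$), $\langle\mathcal{NC},\Labab\rangle$ is the category of partitions generated by $\mathcal{NC}$ and the crossing. Combining $\Labab$ with the $\mathcal{NC}$ blocks yields every permutation partition, and hence every partition, which gives $\mathcal{P}$.
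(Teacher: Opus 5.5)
Your proof is correct and matches the paper's own argument. You rotate $p$ onto one line and restrict to a crossing $\kappa$, so that $R_\kappa(\xi_p)=N^{|p|-2}\xi_{\Labab}$; your count of closed blocks is exactly $|p|-2$, and the merging maps you list are never used and can be dropped. You then rotate to $T_{\crosspart}$ and use that $\mathcal{NC}$ together with $\crosspart$ generates $\mathcal{P}$.

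For that last step, rely on your permutation argument: adjacent transpositions generate all $T_\sigma$, and every partition is $\sigma$ applied to an interval partition. Alternatively, cite the classical fact $\mathcal{C}_{S_N}=\mathcal{P}$ from outside the paper. Do not cite the paper's derivation of $\mathcal{C}_{S_N}=\mathcal{P}$, because it invokes this very theorem and would make the argument circular. Also, ``Theorem A'' is a mis-citation, since Theorem A is the easiness-level-three result.
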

\begin{proof}
Assume $p\in \mathcal{CR}(k,\ell)$ and let $\kappa\in\operatorname{cr}(p)$. After rotation to  $\mathcal{CR}(0,k+\ell)$:
$$R_{\kappa}(p)=\Labab,$$
the unique element of $\mathcal{CR}(4)$, that can be rotated into $\crosspart \in \mathcal{CR}(2,2)$, the basic crossing, which together with $\mathcal{NC}$ generates $\mathcal{P}$.\end{proof}

As an illustration, consider the crossing $\kappa=\{4,5,6,9\}$:

$$R_{\kappa}\left(\LPartition{}{0.75:1,2,3,7,8;0.5:4,6;0.25:5,9}\right)=\Labab\text{ , and }R_{\kappa}(\xi_{\LPartition{}{0.75:1,2,3,7,8;0.5:4,6;0.25:5,9}})=N\xi_{\Labab}.$$
The following generalisation will be used repeatedly:
\begin{proposition}\label{lem1}
  Let non-zero $\xi=\sum_{i=1}^m \alpha_i\xi_{p_i}$, with $p_i\in \mathcal{P}(k)$, and suppose there exists a  crossing ``unique to $p_j$'':
  $$\kappa\in \operatorname{cr}(p_j)\left\backslash\bigcup_{i\neq j}\operatorname{cr}(p_i)\right.\,.$$
  Then $\alpha_j=0$ or $\langle \mathcal{NC},\xi \rangle=\mathcal{P}$.
\end{proposition}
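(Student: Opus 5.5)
Apply the restriction operator $R_\kappa\in\hom_{\mathcal{NC}}(k,4)$ to $\xi$, exactly as in the proof of the Banica--Curran--Speicher theorem just above. Since $R_\kappa$ is a composition with a non-crossing map, $R_\kappa(\xi)\in\langle\mathcal{NC},\xi\rangle$. For each $i$, $R_\kappa(\xi_{p_i})=N^{c_i}\xi_{R_\kappa(p_i)}$, where $c_i$ is the number of blocks of $p_i$ disjoint from $\kappa$ (closed blocks after composing). Each $R_\kappa(p_i)$ is a partition of the four points $\kappa_1<\kappa_2<\kappa_3<\kappa_4$. Among the fifteen elements of $\mathcal{P}(4)$ only $\Labab$ is crossing, and $R_\kappa(p_i)=\Labab$ holds precisely when $\kappa\in\operatorname{cr}(p_i)$. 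By hypothesis this happens only for $i=j$. Hence
$$R_\kappa(\xi)=\alpha_jN^{c_j}\xi_{\Labab}+\eta,\qquad \eta\in\hom_{\mathcal{NC}}(0,4).$$

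Subtract $\eta$, which lies in $\mathcal{NC}\subseteq\langle\mathcal{NC},\xi\rangle$ (the operator $-_\eta$, i.e. $\operatorname{red}_{\mathcal{CR}}$). This gives $\alpha_jN^{c_j}\xi_{\Labab}\in\langle\mathcal{NC},\xi\rangle$. If $\alpha_j\neq0$, divide by the non-zero scalar to get $\xi_{\Labab}\in\langle\mathcal{NC},\xi\rangle$. Rotate it (Proposition \ref{rots}) to $T_{\crosspart}$. Then $\langle\mathcal{NC},T_{\crosspart}\rangle=\mathcal{P}$ by Proposition \ref{abelian} and Theorem \ref{noez}, so $\langle\mathcal{NC},\xi\rangle=\mathcal{P}$, since it is sandwiched below $\mathcal{P}$.

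The main obstacle is that $\xi_{\Labab}$ must not lie in $\hom_{\mathcal{NC}}(0,4)$, otherwise the subtraction could cancel it. If $N\geq4$, Theorem \ref{inj} gives injectivity of $\Phi_{4,N}$, so the fifteen vectors are independent, $\xi_{\Labab}\notin\hom_{\mathcal{NC}}(0,4)$, and the coefficient of $\xi_{\Labab}$ is well-defined. For small $N$ ($N\leq3$) one has $S_N^+=S_N$, so $\mathcal{NC}=\mathcal{P}$ already and the conclusion is trivial. In any case, the argument never needs uniqueness of the decomposition: the explicit vector $\alpha_jN^{c_j}\xi_{\Labab}$ is produced in the dual, and that suffices. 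A minor bookkeeping point is to check that the rotation conventions (Remark \ref{order}) do not affect detection of $\kappa$ as a crossing. They do not, because crossing is rotation invariant and everything is done in $\hom(0,k)$.
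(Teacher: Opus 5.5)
Your proof is correct and is essentially the paper's argument: you apply $R_\kappa$, observe that $R_\kappa(p_i)=\Labab$ exactly when $\kappa\in\operatorname{cr}(p_i)$, subtract the non-crossing remainder to isolate $\alpha_jN^{|p_j|-2}\xi_{\Labab}$, and invoke the Banica--Curran--Speicher theorem. As you yourself conclude, the worry in your last paragraph about $\xi_{\Labab}$ possibly lying in $\hom_{\mathcal{NC}}(0,4)$ is moot, because the vector is exhibited explicitly in $\langle\mathcal{NC},\xi\rangle$ and no uniqueness of decomposition is needed.
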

\begin{proof}Let $\kappa\in \operatorname{cr}(p_j)$ be ``unique to $p_j$''. Note, with the powers of $N$ coming from deleted blocks:
$$R_{\kappa}(\xi_{p_i})=\begin{cases}
                                                                     N^{|p_j|-2}\xi_{\Labab}, & \mbox{ if } i=j, \\
                                                                     N^{|p_i|-c_i}\xi_{R_\kappa(p_i)}, & \mbox{ for some $0\leq c_i\leq 4$, otherwise}.
                                                                   \end{cases}.$$
                                                                   As $\kappa$ is not a crossing for any $p_i\neq p_j$,  the $R_{\kappa}(p_i)\in \mathcal{NC}(4)$. Hence:
\begin{align*}
 \operatorname{red}_{\mathcal{CR}}(R_\kappa(\xi))&=N^{|p_j|-2} \alpha_j\xi_{\Labab}.
\end{align*}
Therefore $\alpha_j=0$, or  $\xi_{\Labab}\in \langle \mathcal{NC},\xi\rangle$. If $\xi_{\Labab}\in\langle\mathcal{NC},\xi\rangle$, by Theorem \ref{noez}, $\langle \mathcal{NC},\xi\rangle=\mathcal{P}$.

\end{proof}
As an illustration, with $\xi=\alpha_1\xi_{\LPartition{}{0.4:1,3;0.8:2,4;0.8:5,6}}+\alpha_2N\xi_{\LPartition{}{0.2:1,2;0.6:3,5;0.2:4,6}}+\alpha_3\xi_{\LPartition{0.2:1}{0.6:2,4,6;0.2:3,5}}$
\begin{align*}
R_{\{[1,4]\}}(\xi)&= \alpha_1N\xi_{\Labab}+\alpha_2\xi_{\Laabc}+\alpha_3\xi_{\Labca},
\\ \implies \operatorname{red}_{\mathcal{CR}}(R_{\{[1,4]\}}(\xi))&=\alpha_1N\xi_{\Labcb}.
\end{align*}

Theorem \ref{bane2} (2) will be proved here. Banica's proof uses semi-circle cappings:

\begin{definition}
Given $i,j\in [k]$, with $i<j$, and $s=\idpart^{\otimes (i-1)}\otimes\sqcup\otimes \idpart^{\otimes (k-j)}$, the \emph{semi-circle capping} $S_{i,j}:\mathcal{P}(k)\to \mathcal{P}(k+i-j-1)$ is given by:
$$S_{i,j}:=T_{s}\circ R_{[1,i]\cup[j,k]}.$$
\end{definition}
Let $p':=R_{[1,i]\cup[j,k]}(p)$. Note $p' \in \mathcal{P}(k+i-j+1)$ and $T_{s}:\mathcal{P}(k+i-j+1)\to \mathcal{P}(k+i-j-1)$ merges blocks $[i]_{p'}$ and $[j]_{p'}$, then restricts to $[1,i-1]\cup [j+1,k]$.

\begin{proposition}
The semi-circle capping $S_{i,j}\in \hom_{\mathcal{NC}}(k,k+i-j+1)$.
\end{proposition}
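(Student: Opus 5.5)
The plan is to write $S_{i,j}$ as a composition of two maps already known to lie in $\mathcal{NC}$, and then use that the Tannaka--Krein dual $\mathcal{NC}$ is closed under composition. By definition,
$$S_{i,j}=T_s\circ R_{[1,i]\cup[j,k]},\qquad s=\idpart^{\otimes(i-1)}\otimes\sqcup\otimes\idpart^{\otimes(k-j)}.$$

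First, the restriction map $R_{[1,i]\cup[j,k]}$ lies in $\hom_{\mathcal{NC}}(k,k+i-j+1)$ by the earlier proposition on restriction maps. Indeed, it is $T_p$ for a tensor product of identity partitions and upper singletons, and each factor is non-crossing.

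Second, I check that $T_s$ lies in $\hom_{\mathcal{NC}}(k+i-j+1,\,k+i-j-1)$. The partition $s$ is a horizontal concatenation of identity lines and a single cap $\sqcup\in\mathcal{P}(2,0)$. The cap is the reflection of the cup $\Uaa$, so it lies in $\mathcal{NC}$ by axiom (3), reflection. Alternatively it is one of the defining non-crossing partitions directly. The identity $\idpart$ is in $\mathcal{NC}$. By Proposition \ref{func}(1), $T_s$ is the tensor product $\operatorname{id}^{\otimes(i-1)}\otimes T_{\sqcup}\otimes\operatorname{id}^{\otimes(k-j)}$. This lies in $\mathcal{NC}$ by closure under $\otimes$. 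One can also see directly that $s$ has no crossings, since its only non-through block is adjacent and nested between through-strings.

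Third, I compose the two. Closure under composition, property (2) of a Tannaka--Krein dual, gives $S_{i,j}=T_s\circ R_{[1,i]\cup[j,k]}\in\mathcal{NC}$. Its domain is $k$ and its codomain is $k+i-j-1$ points. Here I flag an issue: the statement writes $k+i-j+1$, which is the intermediate object. The index in the statement appears to be a typo, and the intended target is $\mathcal{P}(k+i-j-1)$, as in the definition. I would note this and prove membership in $\hom_{\mathcal{NC}}(k,k+i-j-1)$.

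There is no serious obstacle. The only care needed is bookkeeping. First, the arity of $s$ must match the output of the restriction, namely $(i-1)+2+(k-j)=k+i-j+1$. Second, Proposition \ref{func}(2) only introduces scalar factors $N^{c}$ when the composition is evaluated on partition vectors. This does not affect membership of the composed linear map in $\mathcal{NC}$.
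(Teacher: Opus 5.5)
Your argument is correct and is essentially the paper's own. The paper gives no separate proof: membership follows immediately from $S_{i,j}=T_s\circ R_{[1,i]\cup[j,k]}$, because the restriction map was just shown to lie in $\hom_{\mathcal{NC}}(k,k+i-j+1)$, $s$ is non-crossing, and $\mathcal{NC}$ is closed under composition. You are also right that the target should read $k+i-j-1$, as in the definition, since $k+i-j+1$ is only the intermediate arity; one harmless quibble is that the cap in $s$ is the one-block partition of $\mathcal{P}(2,0)$, which is the axiom cup itself rather than its reflection.
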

As an illustration, $S_{4,7}=\UPartition{\Ps0.7:5,6;0:1,2,3,8}{0.5:4,7}$ and
$$S_{4,7}\left(\alpha_1\xi_{\LPartition{}{0.8:1,3;0.4:2,4;0.4:5,6;0.4:7,8}}+\alpha_2\xi_{\LPartition{0.4:5,6,7,8}{0.8:1,3;0.4:2,4}}\right)= N\alpha_1\xi_{\Labab}+N^2\alpha_2\xi_{\Labac}.$$
Theorem \ref{bane2} (2) is the following:
\begin{theorem*}[Banica, \cite{ban}]\label{e2}
Let non-zero $\xi=\alpha \xi_{p}+\beta \xi_{q}$, with $\alpha,\beta\neq 0$, and $p\neq q$. Then $\langle \mathcal{NC},\xi\rangle=\mathcal{P}$ or $\mathcal{NC}$.
\end{theorem*}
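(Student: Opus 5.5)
We prove the statement by cases on which of $p,q$ are crossing, and then on how their crossing sets compare; Proposition \ref{lem1} disposes of almost every case. Work in $\hom(0,k)$ via Frobenius reciprocity, so $p,q\in\mathcal{P}(k)$.

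If both $p,q\in\mathcal{NC}(k)$, Proposition \ref{proptriv} gives $\langle\mathcal{NC},\xi\rangle=\mathcal{NC}$. If exactly one of them is crossing, say $p$, take any $\kappa\in\operatorname{cr}(p)$. It lies in no $\operatorname{cr}(q)=\emptyset$, so Proposition \ref{lem1} with $\alpha\neq0$ gives $\mathcal{P}$.

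So assume $p,q\in\mathcal{CR}(k)$. If $\operatorname{cr}(p)\not\subseteq\operatorname{cr}(q)$, a crossing unique to $p$ exists and Proposition \ref{lem1} finishes. The same holds with the roles of $p,q$ reversed. The remaining case is $\operatorname{cr}(p)=\operatorname{cr}(q)$, and this is the main obstacle.

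Here Theorem \ref{th0} gives $p^{\mathcal{CR}}=q^{\mathcal{CR}}$. Since $p\neq q$, the partitions differ only on the non-crossers $\chi_0(p)=\chi_0(q)$. I would first try the direct route. Restrict to a single crossing $\kappa$ and compute
\[
\operatorname{red}_{\mathcal{CR}}(R_\kappa(\xi))=(\alpha N^{|p|-2}+\beta N^{|q|-2})\,\xi_{\Labab}.
\]
This is nonzero, and we are done, unless $\alpha N^{|p|-2}+\beta N^{|q|-2}=0$. That cancellation is the genuine difficulty.

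To break the cancellation, I would separate $p$ from $q$ by a semi-circle capping before restricting. By Lemma \ref{nclemma}, after rotation (Proposition \ref{rots}) every non-crosser block restricted to $\kappa\cup\chi_0$ lies in an interval of non-crossers between two consecutive elements of $\kappa$. Choose non-crossers $a<b$ with $a\sim_p b$ but $a\not\sim_q b$, or the reverse. Then restrict to $\kappa\cup[a,b]$ and cap appropriately, using $S_{i,j}$ and the restriction maps, which lie in $\hom_{\mathcal{NC}}$. The aim is to reach a vector in $\hom(0,6)$ or $\hom(0,5)$ in which the images of $p$ and $q$ differ. One image should be, for example, $\Labab$ tensored with a pair on $a,b$, and the other $\Labab$ tensored with two singletons. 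The crossing parts are then $N^{c}\alpha\,\xi_{\Labab}\otimes\xi_{\text{pair}}$ and $N^{c'}\beta\,\xi_{\Labab}\otimes\xi_{\text{singletons}}$.

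Next, cap the extra points against each other, or attach them to a crossing point via a semi-circle capping. The two coefficients then pick up different powers of $N$, since closed blocks are counted differently in Proposition \ref{func}(2). This gives a second linear combination $(\alpha N^{s}+\beta N^{t})\xi_{\Labab}$ with $(s,t)$ not proportional to the first pair of exponents. Both coefficient combinations cannot vanish for $\alpha,\beta\neq0$ and $N\geq 2$. So some crossing-reduced vector is a nonzero multiple of $\xi_{\Labab}$, and Theorem \ref{noez} yields $\mathcal{P}$.

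The delicate point is choosing the capping so that the exponent pairs are genuinely independent. I expect this to reduce to the count of blocks of $p$ versus $q$ on $\chi_0$. Where the counts happen to coincide, I would instead use a restriction that keeps $a$ and $b$ uncapped, so that $p$ and $q$ give distinct partitions in $\mathcal{CR}$ of the same crossing type. The unique-crossing argument of Proposition \ref{lem1} can then be rerun after a rotation that merges the $a$-block into a crossing block, which creates a crossing present in only one of the two images.
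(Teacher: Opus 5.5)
Your case analysis is the paper's (Proposition \ref{proptriv}; Proposition \ref{lem1} whenever one partition has a crossing the other lacks; Theorem \ref{th0} and Lemma \ref{nclemma} when $\operatorname{cr}(p)=\operatorname{cr}(q)$). In the last case both arguments restrict to $\kappa\cup\{a,b\}$; take the two points $\{a,b\}$, not the interval $[a,b]$. After rotation this gives $R_{\kappa\cup\{a,b\}}(\xi)=\alpha N^{|p|-3}\xi_{p'}+\beta N^{|q|-4}\xi_{q'}$, with $p'=\{1,3\}\sqcup\{2,4\}\sqcup\{5,6\}$ and $q'=\{1,3\}\sqcup\{2,4\}\sqcup\{5\}\sqcup\{6\}$.

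The two routes differ only in the last move. The paper applies the single capping $S_{4,5}$, joining the crosser in position $4$ to $a$. This sends $p'$ to $\Labab$ but $q'$ to the non-crossing $\Labac$. The crossing is then unique to one term, and $\operatorname{red}_{\mathcal{CR}}$ isolates a nonzero multiple of $\xi_{\Labab}$ with no cancellation to worry about.

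Your main route uses two maps that land exactly on multiples of $\xi_{\Labab}$:
\begin{itemize}
\item singleton-capping positions $5$ and $6$ (i.e.\ $R_\kappa$) gives $(\alpha N^{|p|-2}+\beta N^{|q|-2})\xi_{\Labab}$;
\item capping $5$ against $6$ gives $(\alpha N^{|p|-2}+\beta N^{|q|-3})\xi_{\Labab}$.
\end{itemize}
Their difference is $\beta N^{|q|-3}(N-1)\xi_{\Labab}\neq 0$, so your argument closes for every $N\geq 2$. The ``delicate point'' you flag is therefore not delicate: the block counts on $\chi_0$ never enter.

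Your fallback paragraph is unnecessary, and as written it misfires, since a rotation cannot merge blocks. The merge you have in mind is precisely the paper's $S_{4,5}$. Its effect is to make the image of $q$ non-crossing, not to shift a power of $N$.

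As for what each buys: the paper's version uses one map and reuses the unique-crossing mechanism of Proposition \ref{lem1}. Yours is a $2\times 2$ nondegeneracy argument in the powers of $N$, the same style the paper later needs in Propositions \ref{propall} and \ref{propdis}. It costs one extra map and the harmless assumption $N\neq 1$.
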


\begin{proof}
 Note if $p,q\in \mathcal{NC}$ then $\xi\in \hom_{\mathcal{NC}}(0,k)$, and so $\langle \mathcal{NC},\xi\rangle=\mathcal{NC}$ by Proposition \ref{proptriv}. If $p\in \mathcal{NC}$ and $q\in\mathcal{CR}$, $\operatorname{red}_{\mathcal{CR}}(\xi)=\beta\xi_q$, and $\langle \mathcal{NC},q\rangle=\mathcal{P}$ by Theorem \ref{noez}. Consider therefore $p,q\in \mathcal{CR}$:

\begin{enumerate}
  \item[\textbf{Case 1}:]  If there is a crossing ``unique to $p$'' or ``unique to $q$'', then $\langle \mathcal{NC},\xi\rangle=\mathcal{P}$ by Proposition \ref{lem1}.
  \item[\textbf{Case 2}:] Assume that $\operatorname{cr}(p)=\operatorname{cr}(q)$. Choose a $\kappa\in\operatorname{cr}(p)\cap\operatorname{cr}(q)$. Without loss of generality, there exists a block $[x]_p\in p^{\mathcal{NC}}$ with an element $y\in[x]_p\backslash [x]_q$ (otherwise $p=q$). Assume without
loss of generality that $x<y$. Consider $R_{\kappa\cup\{x,y\}}(p)$: by Lemma \ref{nclemma}, up to a rotation, there is no $x<\kappa_i<y$. Therefore, after a rotation:
     \begin{align*}
     p':=R_{\kappa\cup\{x,y\}}(p)=\LPartition{}{0.25:1,3;0.5:2,4;0.5:5,6}&\implies S_{4,5}(p')={\Labab}
     \\q':=R_{\kappa\cup\{x,y\}}(q)=\LPartition{0.25:5,6}{0.25:1,3;0.5:2,4}&\implies S_{4,5}(q')={\Labac}
     \end{align*}
     That is, there is an element of $\langle\mathcal{NC},\xi\rangle$ with a crossing ``unique to one partition'':
     \begin{align*}
     (S_{4,5}\circ R_{\kappa\cup\{x,y\}})(\xi)=\alpha N^{|p|-2}\xi_{\Labab}+\beta N^{|q|-3}\xi_{\Labac}.
     \end{align*}
By Theorem \ref{noez}, $\langle\mathcal{NC},\xi\rangle=\mathcal{P}$.
\end{enumerate}
\end{proof}
The case of equal crossing parts is accompanied by a linear independence result:
\begin{theorem}\label{li}
Suppose that $p_1,\dots,p_m$ are distinct elements of $\mathcal{CR}(k)$ with $\operatorname{cr}(p_i)=\operatorname{cr}(p_j)$ for all $1\leq i,j\leq m$. Then $\{\xi_{p_i}\}_{i=1}^m$ is linearly independent.
\end{theorem}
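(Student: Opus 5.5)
The plan is to use the ``exact kernel'' basis of $(\mathbb{C}^N)^{\otimes k}$ and a triangularity argument in the refinement order. This needs a standing assumption on $N$: at $N=1$ all $\xi_p$ coincide, so I will assume $N\geq 4$ as elsewhere in the paper. For $q\in\mathcal{P}(k)$ let $e_q$ be the indicator vector of index tuples $\mathbf{i}\in[N]^k$ whose kernel, the partition $x\sim y\iff i_x=i_y$, is exactly $q$. Then $e_q\neq 0$ iff $|q|\leq N$, the non-zero $e_q$ have disjoint supports, and
$$\xi_p=\sum_{q\succeq p}e_q .$$
Suppose $\sum_i\alpha_i\xi_{p_i}=0$ with some $\alpha_i\neq0$. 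The goal is to exhibit a single index tuple $\mathbf{i}$ at which exactly one $\xi_{p_j}$ with $\alpha_j\neq 0$ is non-zero, giving $\alpha_j=0$, a contradiction.

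The first step is structural. By Theorem \ref{th0}, all $p_i$ share the same crossing part $p^{\mathcal{CR}}$ on the common set $\chi:=\chi(p_i)$. Lemma \ref{containlemma} shows that no non-crosser lies in a crossing block, so each $p_i=p^{\mathcal{CR}}\sqcup p_i^{\mathcal{NC}}$ with $p_i^{\mathcal{NC}}\in\mathcal{P}(\chi_0)$. Hence distinct $p_i$ differ only in $p_i^{\mathcal{NC}}$. Next choose $j$ with $\alpha_j\neq 0$ and $p_j^{\mathcal{NC}}$ minimal (finest) among these. Take the partition $q$ obtained from $p_j$ by merging all crossing blocks into a single block, keeping $p_j^{\mathcal{NC}}$. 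If $p_i\preceq q$, then $p_i^{\mathcal{NC}}\preceq p_j^{\mathcal{NC}}$, because a non-crosser of $p_i$ is never joined to $\chi$. Minimality then gives $p_i=p_j$. So any tuple with kernel exactly $q$ separates $p_j$ from the other $p_i$.

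The main obstacle is the count $|q|=1+|p_j^{\mathcal{NC}}|$, which may exceed $N$ when $k>N$, so that $e_q=0$ and the argument collapses. I expect this is exactly where the hypothesis of equal crossing sets, rather than arbitrary distinct partitions, must be exploited. My first attempt is to reduce $k$ before evaluating, using operators in $\operatorname{Op}_{\mathcal{NC}}$ that preserve linear relations. By Lemma \ref{nclemma}, each non-crossing block sits, relative to every crossing, inside a gap between adjacent crossers. So I would apply semi-circle cappings $S_{i,j}$ and restrictions $R_A$ to the non-crossing blocks of $p_j$ that are outermost within the gaps, repeatedly merging or deleting them. This reduces the number of non-crosser blocks while tracking $N$-powers, and it must be arranged so that distinct $p_i$ stay distinct, or at least $p_j$ stays distinguished. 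If that bookkeeping fails, the fallback is to pass to the chosen tuple but colour the non-crosser blocks using only the colours already on $\chi$ that do not create a coarsening of some competitor $p_i$. A competitor coarsening would require a non-crosser block to merge with a crossing block, so one only needs to avoid colour coincidences that reproduce some $p_i^{\mathcal{NC}}$. Finally, I would check via Theorem \ref{inj} that the resulting separating tuple genuinely exists at the given $N$.
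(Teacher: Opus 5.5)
Your refinement-order argument is sound as far as it goes. With $q$ obtained from $p_j$ by fusing $\chi$ into one block, $p_i\preceq q$ does force $p_i^{\mathcal{NC}}\preceq p_j^{\mathcal{NC}}$. So $\alpha_j$ is isolated whenever $1+|p_j^{\mathcal{NC}}|\leq N$, in particular for all $k\leq N+3$, since $|\chi_0|\leq k-4$. But the obstruction you flag for larger $k$ is fatal, and neither repair works.

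Take $p^{\mathcal{CR}}=\{1,3\}\sqcup\{2,4\}$ and the family $p^{\mathcal{CR}}\sqcup r$, with $r$ running over all non-crossing partitions of $\{5,\dots,k\}$. Every member has crossing set $\{\{1,2,3,4\}\}$, and your argument must cope with a putative relation supported on the whole family. The minimal member has $r$ discrete. Every $p^{\mathcal{CR}}\sqcup\{a,b\}\sqcup(\text{singletons})$ is a competitor, so a tuple isolates the minimal member's coefficient only if $i_5,\dots,i_k$ are pairwise distinct. That is impossible once $k-4>N$, however colours from $\chi$ are reused.

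Cappings, restrictions and merges do not help either. Composed with an evaluation, they give a non-negative combination of point evaluations, where $\xi_p(\mathbf{j})=1$ exactly when $\ker\mathbf{j}\succeq p$. When $k-4>N$, by pigeonhole every $\mathbf{j}$ with $\ker\mathbf{j}\succeq p^{\mathcal{CR}}\sqcup(\text{singletons})$ also satisfies $\ker\mathbf{j}\succeq p^{\mathcal{CR}}\sqcup\{a,b\}\sqcup(\text{singletons})$ for some $a<b$. So no such functional kills all competitors without also killing $\xi_{p_j}$.

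What is needed are signed functionals, like the $\varphi_u$ in the proof of Theorem \ref{alleq}. Their existence is exactly the linear independence of $\{\xi_r\colon r\in\mathcal{NC}(k-4)\}$, to which the theorem reduces on this family. That is a genuine theorem, not a colour count. It fails at $N=3$: there $C_5=42$ exceeds the $41$ partitions of $[5]$ with at most three blocks, so the statement itself is false at $N=3$, $k=9$.

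The missing step is already latent in your structural observation. Since $p_i=p^{\mathcal{CR}}\sqcup p_i^{\mathcal{NC}}$ with $p_i^{\mathcal{NC}}$ a partition of $\chi_0$, consider the permutation of tensor legs moving $\chi$ to the front, order-preservingly on $\chi$ and on $\chi_0$. It sends $\xi_{p_i}$ to $\xi_{p^{\mathcal{CR}}}\otimes\xi_{p_i^{\mathcal{NC}}}$, so a relation becomes $\xi_{p^{\mathcal{CR}}}\otimes\sum_i\alpha_i\xi_{p_i^{\mathcal{NC}}}=0$ with $\xi_{p^{\mathcal{CR}}}\neq 0$. The $p_i^{\mathcal{NC}}$ are pairwise distinct, and they are non-crossing in the induced order, since a crossing among them would be a crossing of $p_i$ made of non-crossers. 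Linear independence of non-crossing partition vectors for $N\geq 4$ (\cite{ba1}, Th.~9.9) then gives all $\alpha_i=0$. This is the paper's proof; your evaluation argument is the special case in which enough colours are available.
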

\begin{proof}

The general form of $\xi_{p_i}$ is:
$$\xi_{p_i}=\sum_{j_1,\dots,j_{|p_i|}=1}^N e_{j_{f(1)}}\otimes e_{j_{f(2)}}\otimes \cdots \otimes e_{j_{f(k)}},$$
where $f:\{1,2,\dots,k\}\twoheadrightarrow \{1,2,\dots,|p_i|\}$   with $b\sim_{p_i} c\implies f(b)=f(c)$. By Theorem \ref{th0}, all the crossing parts are equal $p_i^{\mathcal{CR}}=p_j^{\mathcal{CR}}$, and the union of these crossers is uniformly some $\chi(p)\subset [k]$. Recalling $\chi_0(p)=[k]\backslash \chi(p)$, the set of non-crossers, take the $\sigma\in S_{k}$ that maps the $r$-th crosser to $r$, and the $s$-th non-crosser to $|\chi(p)|+s$, and consider the associated change of basis matrix as applied to $(\mathbb{C}^N)^{\otimes k}$.

\bigskip\noindent

Note $\sigma$ is order-preserving on $\chi(p)$ and on its complement $\chi_0(p)$. It sends crossers in $p_i$ to crossers in $\sigma(p_i)$, because if $\kappa\in \operatorname{cr}(p_i)$, then so is $\sigma(\kappa)=\{\sigma(\kappa_1),\sigma(\kappa_2),\sigma(\kappa_3),\sigma(\kappa_4)\}$ because $\sigma$ is block-preserving, and order preserving on $\chi(p)$. Suppose there is a crossing $\kappa\in \operatorname{cr}(\sigma(p_i))$ that is not of this form. It cannot be that the pre-images $\sigma^{-1}(\kappa_i)$ are all non-crossers in $p_i$,  because $\sigma^{-1}$ is order preserving on $\sigma(\chi_0(p))$ and the pre-images would form a crossing among non-crossers in $p_i$. Assume without loss of generality that the blocks $[\sigma^{-1}(\kappa_1)]_{p_i}\in \chi_0(p)$ and $[\sigma^{-1}(\kappa_2)]_{p_i}\in \chi(p)$. A crossing must satisfy:
$$\kappa_1<\kappa_2<\kappa_3<\kappa_4,$$
but for all $x\in \chi_0(p)$, $y\in \chi(p)$,
$$\sigma(y)<\sigma(x).$$
In particular, under the hypothesis
$$\sigma(\sigma^{-1}(\kappa_2))<\sigma(\sigma^{-1}(\kappa_1))\implies \kappa_2<\kappa_1,$$
and a similar issue if $[\sigma^{-1}(\kappa_2)]_{p_i}\in \chi_0(p)$.

\bigskip\noindent

The associated change-of-basis therefore sends $\xi_{p_i}$ to $\xi_p\otimes \xi_{q_i}$, with $p\in \mathcal{CR}$ the image of the common crossing part $\sigma(p_i^{\mathcal{CR}})$, and $q_i\in \mathcal{NC}$ the image of the non-crossing part $\sigma(p_i^{\mathcal{NC}})$. Applying this change of basis to a linear combination $\sum_{i=1}^m \alpha_i \xi_{p_i}$ gives
$$\sum_{i=1}^m\alpha_i(\xi_p\otimes \xi_{q_i})=\xi_p\otimes\left(\sum_{i=1}^m\alpha_i \xi_{q_i}\right).$$
The vectors $\xi_q\in \hom_{\mathcal{NC}}(0,k)$ are linearly independent (\cite{ba1}, Th. 9.9), and so the linear combination is only equal to zero if all the $\alpha_i$ are zero.
\end{proof}
As an illustration, where $\sigma=(9\,\,5)(10\,\,6)(11\,\,7)(12\,\,8)$
\begin{align*}
\xi&=\alpha_1\xi_{\LPartition{0.4:7,8}{0.4:1,3;0.4:9,11;0.6:2,4;0.6:10,12;0.4:5,6}}+\alpha_2\xi_{\LPartition{}{0.4:1,3;0.4:9,11;0.6:2,4;0.6:10,12;0.6:5,8;0.4:6,7}}+\alpha_3\xi_{\LPartition{0.4:6,8}{0.4:1,3;0.4:9,11;0.6:2,4;0.6:10,12;0.6:5,7}}
\\\implies P_{\sigma}^{-1}(\xi)&=\alpha_1\xi_{\LPartition{0.4:11,12}{0.4:1,3;0.4:5,7;0.6:2,4;0.6:6,8;0.4:9,10}}+\alpha_2\xi_{\LPartition{}{0.4:1,3;0.4:5,7;0.6:2,4;0.6:6,8;0.6:9,12;0.4:10,11}}+\alpha_3\xi_{\LPartition{0.4:10,12}{0.4:1,3;0.4:5,7;0.6:2,4;0.6:6,8;0.6:9,11}}
\\&=\xi_{\LPartition{}{0.4:1,3;0.6:2,4;0.4:5,7;0.6:6,8}}\otimes\left(\alpha_1\xi_{\Laabc}+\alpha_2\xi_{\Labba}+\alpha_3\xi_{\Labac}\right)
\end{align*}
Using this linear independence result, a new proof of Theorem \ref{e2} (Case 2) can be obtained from this more general result:
\begin{theorem}\label{alleq}
Let non-zero $\xi=\sum_{i=1}^m\alpha_i\xi_{p_i}$ such that    the $p_i\in\mathcal{CR}$ with equal $\operatorname{cr}(p_i)=\operatorname{cr}(p_j)$ for all $i,j\in [m]$.
Then  $\langle\mathcal{NC},\xi\rangle=\mathcal{P}$.
\end{theorem}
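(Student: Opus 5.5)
We follow the proof of Theorem \ref{li}, but act on the partition side with non-crossing maps rather than with the (crossing) change of basis $P_\sigma$. By Theorem \ref{th0}, all the $p_i$ share a common crossing part $p_i^{\mathcal{CR}}=p^{\mathcal{CR}}$ on a common set of crossers $\chi(p)$, and they differ only in their non-crossing parts $p_i^{\mathcal{NC}}$ on $\chi_0(p)$. By Theorem \ref{li} the $\xi_{p_i}$ are linearly independent, so $\xi\neq 0$ forces some $\alpha_j\neq 0$. The aim is to produce from $\xi$, using only operators in $\operatorname{Op}_{\mathcal{NC}}$ (restrictions, semi-circle cappings, rotations), a nonzero multiple of $\xi_{\Labab}$ plus non-crossing terms. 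Theorem \ref{noez} then gives $\langle\mathcal{NC},\xi\rangle=\mathcal{P}$.

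First fix a crossing $\kappa\in\operatorname{cr}(p_1)$; it is common to all the $p_i$. Apply $R_{\kappa\cup\chi_0(p)}$. This restriction removes only crosser blocks, and those are the same for every $i$, so every term acquires the same power of $N$. The resulting partitions $p_i'$ all have exactly one crossing, namely $\kappa$, and pairwise distinct non-crossing parts. By Lemma \ref{nclemma}, after a rotation, each non-crossing block of each $p_i'$ lies inside one of the four gaps between consecutive elements of $\kappa$. A non-crossing partition of the elements in a gap may, however, also join elements of different gaps when it nests around $\kappa$ (the $a<\kappa_1<\kappa_4<b$ case). To handle this uniformly, view each $p_i'$ as $\Labab$ with non-crossing partitions $q_i^{(1)},\dots,q_i^{(4)}$ of the gap intervals inserted, allowing nesting across the outer boundary.

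The second step, which I expect to be the main obstacle, is to decouple the non-crossing data from the crossing. The idea is to pair with a dual vector built from $\mathcal{NC}$. On each gap interval, apply the adjoint of a non-crossing vector: cap the gap elements against $\xi_q$ for some non-crossing $q$, using the cup $\Uaa$ and tensor structure, which is a map in $\hom_{\mathcal{NC}}$. Doing this for all gaps simultaneously sends $\xi_{p_i'}$ to $c_i\,\xi_{\Labab}$, where $c_i$ is a Gram-type entry $N^{\#\text{blocks of }q_i\vee q}$. The resulting vector is $\bigl(\sum_i\alpha_i c_i\bigr)\xi_{\Labab}$, so we need a choice of $q$ with $\sum_i\alpha_i c_i(q)\neq 0$. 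Since the non-crossing Gram matrix is invertible (the linear independence of the $\xi_q$, \cite{ba1} Th. 9.9, equivalently nondegeneracy of the pairing on $\hom_{\mathcal{NC}}$), not all of these sums can vanish when some $\alpha_i\neq0$. The care needed is twofold. First, the capping must genuinely be an $\mathcal{NC}$-operator on the crossing-embedded picture: the caps must not cross the strings of $\kappa$, which is exactly why gaps are capped separately and the nested case is handled by rotating first. Second, blocks linking two gaps must be accounted for. I would first try capping only a single gap and inducting on the number of gaps in which the $p_i'$ differ, using the Gram matrix of that gap alone.

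Finally, once $\lambda\xi_{\Labab}$ with $\lambda\neq0$ lies in $\langle\mathcal{NC},\xi\rangle$, Theorem \ref{noez} concludes the proof. An alternative route for the decoupling step, if the pairing argument runs into trouble, is induction on $m$: apply a semi-circle capping $S_{x,y}$ to two non-crossers in the same gap that are in the same block for some $p_i$ and not for others, as in Case 2 of Theorem \ref{e2}. This keeps all crossing parts equal, merges or collapses terms, and reduces $m$ while keeping some coefficient nonzero, which Theorem \ref{li} lets us detect.
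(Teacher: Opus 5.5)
Your proposal is correct and is essentially the paper's proof. You restrict to $\kappa\cup\chi_0(p)$, picking up a uniform power of $N$. You then rotate so that $\kappa_1$ comes first; this rotation is uniform in $i$ and leaves no block linking different gaps, so your single-gap induction and the semi-circle fallback are not needed. Finally you apply $\operatorname{id}\otimes\varphi_1\otimes\operatorname{id}\otimes\varphi_2\otimes\operatorname{id}\otimes\varphi_3\otimes\operatorname{id}\otimes\varphi_4\in\hom_{\mathcal{NC}}(\ell,4)$, where each $\varphi_u$ is built from non-crossing $\xi_r^*$.

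The only difference is in how the caps are chosen. The paper takes the $\varphi_u$ to be dual-basis functionals that isolate a single $\alpha_j\neq0$, whereas you argue from non-degeneracy of the tensor-product non-crossing Gram matrix that some choice of caps gives a non-zero coefficient of $\xi_{\Labab}$. Both rest on the same ingredient, the linear independence of the $\xi_r$ for $r\in\mathcal{NC}$.
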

\begin{proof}
By Theorem \ref{li}, if $\xi\neq 0$, some $\alpha_j\neq 0$.  By Theorem \ref{th0}, each $p_j=p^{\mathcal{CR}}_j\sqcup p_j^{\mathcal{NC}}$ is such that the $p^{\mathcal{CR}}_i=p^{\mathcal{CR}}_j$ for all $i,j\in[m]$. Define $\chi(p):=\chi(p_j)$ (by assumption this choice is uniform in $j$). Choose any $\kappa$ in the common set of crossings. Let $A:=\kappa\cup \chi_0(p)$ be the crossing $\kappa$ together with all the non-crossers. Writing $q_j:=R_A(p_j)$, as the crossing parts are common, the number of deleted blocks is $|p^{\mathcal{CR}}_i|-2$, uniform in $i$:
      \begin{align*}
      R_A(\xi)&= \sum_{i=1}^mN^{|p^{\mathcal{CR}}_j|-2}\alpha_i \xi_{q_i}\implies \sum_{i=1}^m\alpha_i \xi_{q_i}\in\langle\mathcal{NC},\xi\rangle.
      \end{align*}
     Note  the $q_i\in \mathcal{CR}(\ell)$ with $\ell=k-|\chi(p)|+4$, and $q_i^{\mathcal{CR}}=\Labab$.    Consider in $q_j$ the $\ell-4$ non-crossers in subsets $A_u\subset [\ell]$, and the four crossers in order:
          $$[\ell]=A_0\sqcup\{\kappa_1\}\sqcup A_1 \sqcup \{\kappa_2\}\sqcup A_2\sqcup\{\kappa_3\}\sqcup A_3\sqcup\{\kappa_4\}\sqcup A_4.$$
          By Lemma \ref{nclemma}, note that $x\sim_{q_i} y$ with both $x$ and $y$ in different $A_u$ can only happen with one in $A_0$ and another in $A_4$, and a suitable rotation sets $A_0=\emptyset$.

\bigskip\noindent

To be distinct, $p_i\neq p_j$ for $i\neq j$, crossing partitions with equal crossing parts must be distinguished by their non-crossing parts, $R_{\chi_0(p)}(p_i)\neq R_{\chi_0(p)}(p_j)$. Disregarding $A_u=\emptyset$ as appropriate:
$$R_{\chi_0(p)}(p_i)=\bigsqcup_{u=1}^4R_{A_u}(p_i),$$
disjoint by Lemma \ref{nclemma} in the sense that for any blocks $B_u\subset A_u$, $B_v\subset A_v$, $u\neq v$, $B_u\cap B_v=\emptyset$. This implies that the $q_i\neq q_j$ are unequal, distinguished by at least one $R_{A_u}(q_i)\neq R_{A_u}(q_j)$. The disjointness between the $A_u$ implies that
$$\xi_{q_i}=\sum_{a,b=1}^{N}e_a\otimes\xi_{R_{A_1}(q_i)}\otimes e_b\otimes \xi_{R_{A_2}(q_i)}\otimes e_a\otimes \xi_{R_{A_3}(q_i)}\otimes e_b\otimes \xi_{R_{A_4}(q_i)}.$$
Define $r_u=R_{A_u}(q_j)$, $u\in [1,4]$ as appropriate. Define:
$$S_u=\{r\in\mathcal{NC}(|A_u|)\colon R_{A_u}(q_i)=r,\,\text{ for some }i\in [m]\}.$$
This set includes $r_u$, and, by (\cite{ba1}, Th. 9.9), $X_u=\{\xi_{r}\}_{r\in S_u}$ is  linearly independent, and therefore there exists $\varphi_u=\sum_{r\in S_u}\beta_r\xi_{r}^*$ in $\hom_{\mathcal{NC}}(|A_u|,0)$ such that:
$$\varphi_u(\xi_r)=\begin{cases}
                     1, & \mbox{if } r=r_u \\
                     0, & \mbox{otherwise}.
                   \end{cases}$$
Consider, with $\varphi_u$ excluded for $A_u=\emptyset$ as appropriate, a map $T\in \hom_{\mathcal{NC}}(\ell,4)$ by:
$$T=\operatorname{id}\otimes \varphi_1\otimes \operatorname{id}\otimes\varphi_2\otimes \operatorname{id}\otimes\varphi_3\otimes \operatorname{id}\otimes \varphi_4,$$
and consider $T(\xi_{q_i})$. If every non-crossing interval satisfies $R_{A_u}(q_i)=r_u$ the non-crossing interval of $q_j$, then $T(\xi_{q_i})=\xi_{\Labab}$, otherwise it gives zero. But if all of the non-crossing intervals are equal, then $R_{\chi_0}(p_i)=R_{\chi_0}(p_j)$, and $p_i=p_j$. Therefore, by construction, $(T\circ R_A)(\xi)=\alpha_j\xi_{\Labab}$, with $\alpha_j\neq 0$.
\end{proof}

\begin{proposition}\label{propall2}
Let non-zero $\xi=\sum_{i=1}^m\alpha_i\xi_{p_i}$ such that for each $j\in[m]$ there exists:
$$\kappa^j\in\left.\bigcap_{i\neq j}\operatorname{cr}(p_i)\right\backslash\operatorname{cr}(p_j).$$
Then  $\langle\mathcal{NC},\xi\rangle=\mathcal{P}$.
\end{proposition}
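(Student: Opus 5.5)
Restriction to a well-chosen crossing isolates all but one term, and the result then follows from Theorem \ref{noez}. Since $\xi\neq 0$, some $\alpha_j\neq 0$. The hypothesis gives a crossing $\kappa^j$ shared by every $p_i$ with $i\neq j$ but absent from $p_j$. If $m=1$ the hypothesis is vacuous in the intended reading, so there is nothing to do; assume $m\geq 2$.

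Pick $j$ with $\alpha_j\neq 0$ and apply the restriction map $R_{\kappa^j}\in\hom_{\mathcal{NC}}(k,4)$ to $\xi$. For each $i\neq j$, $\kappa^j\in\operatorname{cr}(p_i)$, so $R_{\kappa^j}(p_i)=\Labab$ and
$$R_{\kappa^j}(\xi_{p_i})=N^{|p_i|-2}\xi_{\Labab}.$$
For $i=j$, $\kappa^j\notin\operatorname{cr}(p_j)$, so $R_{\kappa^j}(p_j)\in\mathcal{NC}(4)$ and $R_{\kappa^j}(\xi_{p_j})=N^{c}\xi_{R_{\kappa^j}(p_j)}$ for some $c\geq 0$. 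This is the only point needing care. The four points of $\kappa^j$ are ordered $\kappa_1<\kappa_2<\kappa_3<\kappa_4$, and $\kappa^j$ fails to be a crossing of $p_j$ exactly when either $\kappa_1\not\sim\kappa_3$, or $\kappa_2\not\sim\kappa_4$, or all four points are in one block. In each case the restricted partition of $\{1,2,3,4\}$ is not $\Labab$, which is the unique element of $\mathcal{CR}(4)$, so it is non-crossing.

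Hence
$$\operatorname{red}_{\mathcal{CR}}\bigl(R_{\kappa^j}(\xi)\bigr)=\Bigl(\sum_{i\neq j}\alpha_i N^{|p_i|-2}\Bigr)\xi_{\Labab},$$
and this lies in $\langle\mathcal{NC},\xi\rangle$. If the scalar $\lambda_j:=\sum_{i\neq j}\alpha_iN^{|p_i|-2}$ is non-zero, then $\xi_{\Labab}\in\langle\mathcal{NC},\xi\rangle$, and Theorem \ref{noez} gives $\langle\mathcal{NC},\xi\rangle=\mathcal{P}$.

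The main obstacle is that $\lambda_j$ might vanish. This is why the hypothesis is imposed for every $j$: we get $m$ scalars $\lambda_j=\Lambda-\alpha_jN^{|p_j|-2}$, where $\Lambda=\sum_i\alpha_iN^{|p_i|-2}$. If every $\lambda_j$ vanished, then $\alpha_jN^{|p_j|-2}=\Lambda$ for all $j$. Summing over $j$ gives $\Lambda=m\Lambda$, so with $m\geq 2$ we get $\Lambda=0$, hence every $\alpha_j=0$ and $\xi=0$, a contradiction.

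Before this, one should check that $\xi=0$ cannot arise with non-zero coefficients; this is harmless since $\xi\neq 0$ is assumed. The $\alpha_i$ are the chosen representation, and the vanishing argument only uses them formally. So some $\lambda_j\neq 0$, which gives $\xi_{\Labab}\in\langle\mathcal{NC},\xi\rangle$ and completes the proof.
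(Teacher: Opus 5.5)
Your proof is correct and follows the paper's route. The paper proves a general criterion: restrict to each $\kappa$, subtract the non-crossing terms, and observe that the coefficients of $\xi_{\Labab}$ give a linear system in $x_j=\alpha_jN^{|p_j|-2}$ with coefficient vectors $\eta^\kappa$, which has only the trivial solution when these span $\mathbb{C}^m$. For this proposition the vectors $\eta^{\kappa^j}$ are the rows of the all-ones matrix minus the identity, and your identity $\Lambda=m\Lambda$ checks their invertibility by hand; you are right that $m\geq 2$ is needed, which the paper's ``easily seen'' also tacitly assumes.
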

This can be shown using the language of indicator functions. Given a set of $m$ partitions $p_i\in\mathcal{P}([k])$, and a four-element subset $\kappa\in\mathcal{P}_4([k])$, form the vector $\eta^{\kappa}\in\mathbb{C}^m$ with components:
$$\eta^{\kappa}_j=\mathds{1}_{\kappa}(\operatorname{cr}(p_j)).$$
\begin{proposition}
Let non-zero $\xi=\sum_{i=1}^{m}\alpha_i\xi_{p_i}$
 such that:
$$\operatorname{span}\left(\eta^\kappa\colon\kappa\in\mathcal{P}_4([k])\right)=\mathbb{C}^m.$$
Then $\langle\mathcal{NC},\xi\rangle=\mathcal{P}$.
\end{proposition}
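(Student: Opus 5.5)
The approach is to apply the restriction operators $R_\kappa$ to $\xi$ for suitable four-element subsets $\kappa$ and take linear combinations of the outputs. The goal is to isolate a multiple of $\xi_{\Labab}$ whose coefficient is a non-zero linear functional of $(\alpha_1,\dots,\alpha_m)$. Once such a vector is in hand, Theorem \ref{noez} gives $\langle\mathcal{NC},\xi\rangle=\mathcal{P}$.

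\textbf{Step 1: reduction.} First dispose of the trivial case. If some $\alpha_i\neq 0$ with $p_i\in\mathcal{NC}$, we pass to $\operatorname{red}_{\mathcal{CR}}(\xi)$. If $\operatorname{red}_{\mathcal{CR}}(\xi)=0$, then $\xi\in\hom_{\mathcal{NC}}(0,k)$, and the hypothesis must be read as forcing the crossing coefficients to be non-zero. I would argue this by noting that for non-crossing $p_i$ all $\eta^\kappa_i=0$, so the spanning hypothesis can only hold if no $p_i$ is non-crossing. Hence every $p_i\in\mathcal{CR}$, and the vector $\alpha=(\alpha_1,\dots,\alpha_m)$ is non-zero.

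\textbf{Step 2: compute $R_\kappa(\xi)$ and reduce.} Fix $\kappa\in\mathcal{P}_4([k])$, ordered $\kappa_1<\dots<\kappa_4$. The restriction $R_\kappa(p_i)$ is the unique crossing partition $\Labab$ of $[4]$ exactly when $\kappa\in\operatorname{cr}(p_i)$. Otherwise it lies in $\mathcal{NC}(4)$. Exactly as in the proof of Proposition \ref{lem1},
$$\operatorname{red}_{\mathcal{CR}}(R_\kappa(\xi))=\Big(\sum_{i=1}^m \eta^\kappa_i\, N^{|p_i|-2}\alpha_i\Big)\xi_{\Labab}.$$
The point needing care is that $\operatorname{red}_{\mathcal{CR}}$ is only representative-dependent in general. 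Here, however, $k=4\le N$ (we may assume $N\ge4$, since $S_N^+=S_N$ for smaller $N$), so $\pi_{\mathcal{CR}}$ is a well-defined idempotent and the formula is honest. Since $\xi_{\Labab}\neq0$, whenever the displayed coefficient is non-zero we obtain $\xi_{\Labab}\in\langle\mathcal{NC},\xi\rangle$.

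\textbf{Step 3: linear algebra.} Set $\beta_i=N^{|p_i|-2}\alpha_i$. Then $\beta\neq0$, since it is obtained from $\alpha$ by multiplying each entry by a non-zero scalar. The coefficient in Step 2 is the bilinear pairing $\sum_i\eta^\kappa_i\beta_i=\langle \eta^\kappa,\beta\rangle$, taken without conjugation. Suppose this pairing vanished for every $\kappa$. Then $\beta$ would annihilate $\operatorname{span}(\eta^\kappa)=\mathbb{C}^m$ under the non-degenerate bilinear form, which forces $\beta=0$, a contradiction. Hence some $\kappa$ gives a non-zero coefficient, and Step 2 concludes the argument.

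Proposition \ref{propall2} then follows as a special case. The vectors $\eta^{\kappa^j}=\mathbf{1}-e_j$ span $\mathbb{C}^m$ for $m\ge2$, and the case $m=1$ is Theorem \ref{noez}.

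\textbf{Main obstacle.} The only delicate point is Step 1, the interplay with non-injectivity of $\Phi_{k,N}$. I must make sure that "$\xi$ non-zero" really yields a non-zero coefficient vector on crossing partitions, rather than a representation of an element of $\mathcal{NC}$ in disguise as in Corollary \ref{bad}. To handle this, I would interpret the hypothesis as being about the formal coefficients $\alpha_i$, with all $p_i$ crossing. Under that reading the argument works regardless of linear dependence among the $\xi_{p_i}$, because Step 3 only uses $\alpha\neq0$ and the well-definedness of $\pi_{\mathcal{CR}}$ at $k=4$.
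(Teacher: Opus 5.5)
Your proof is correct and follows the paper's argument. You apply $R_\kappa$, remove the non-crossing terms to leave $\bigl(\sum_i \eta^\kappa_i N^{|p_i|-2}\alpha_i\bigr)\xi_{\Labab}$, and use that the $\eta^\kappa$ span $\mathbb{C}^m$, so these coefficients cannot all vanish unless $\alpha=0$, i.e.\ $\xi=0$. Your extra care in Step 1 and your appeal to the well-definedness of $\pi_{\mathcal{CR}}$ are harmless but unnecessary: the non-crossing terms can be subtracted explicitly, and $\xi\neq 0$ already forces $\alpha\neq 0$ whatever linear dependencies exist among the $\xi_{p_i}$.
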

\begin{proof}
For some $0\leq c_i\leq 4$, and $q_i\in \mathcal{NC}(4)$,
$$R_{\kappa}(\xi)-\sum_{i\colon\kappa\not\in\operatorname{cr}(p_i)}\alpha_i N^{|p_i|-c_i}\xi_{q_i}=\sum_{i\colon \kappa\in\operatorname{cr}(p_i)}\alpha_iN^{|p_i|-2}\xi_{\Labab}.$$
Define variables $x_j:=\alpha_jN^{|p_j|-2}$. For all the coefficients $\sum_{i\colon \kappa\in\operatorname{cr}(p_i)}\alpha_iN^{|p_i|-2}$ to be non-zero, the linear system
$$\left\{\sum_{i\colon \kappa\in\operatorname{cr}(p_i)}x_i=0\colon\kappa\in \mathcal{P}_4([k])\right\},$$
must have non-trivial solutions. If the vectors $\eta^{\kappa}$ are spanning, a subset $K\subset\cup_{i}\operatorname{cr}(p_i)$ gives a basis $\{\eta^\kappa\colon\kappa\in K\}$, which implies that the linear system:
$$\left\{\sum_{\kappa\in\operatorname{cr}(p_i)}x_i=0\colon \kappa\in K\right\},$$
has no non-trivial solutions, implying $\xi=0$.
\end{proof}
This is easily seen to be the case for Proposition \ref{propall2}. Where there are only one (Theorem \ref{bane2} (2), Theorem \ref{alleq}), two (Proposition \ref{propall}), or $m-1$ (Proposition \ref{propdis}) elements in $\cup_i\operatorname{cr}(p_i)$, this straightforward approach of the consideration of all $R_{\kappa}(\xi)$ giving zero coefficients of $\xi_{\Labab}$ cannot work as the system isn't of full rank. It can be the case that there is a reduction $m\to m-1$ if some $\alpha_j$ can be forced equal to zero:
\begin{proposition}\label{propall}
Let non-zero $\xi=\sum_{i=1}^m\alpha_i\xi_{p_i}$ such that there exists:
$$\kappa\in\bigcap_{i=1}^m\operatorname{cr}(p_i)\quad\text{ and }\quad \lambda\in\left.\bigcap_{i\neq j}\operatorname{cr}(p_i)\right\backslash\operatorname{cr}(p_j).$$
Then $\alpha_j=0$ and $\sum_{i\neq j}\alpha_i\xi_{p_i}\in \langle\mathcal{NC},\xi\rangle$, or $\alpha_j\neq 0$ and $\langle\mathcal{NC},\xi\rangle=\mathcal{P}$.
\end{proposition}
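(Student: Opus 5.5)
The plan is to run the indicator-function argument of the previous proposition on just the two four-element sets $\kappa$ and $\lambda$. Set $x_i:=\alpha_iN^{|p_i|-2}$. For any $\mu\in\mathcal{P}_4([k])$, restriction gives
$$R_\mu(\xi_{p_i})=\begin{cases}N^{|p_i|-2}\xi_{\Labab}, & \mu\in\operatorname{cr}(p_i),\\ N^{|p_i|-c_i}\xi_{q_i},\ q_i\in\mathcal{NC}(4), & \mu\notin\operatorname{cr}(p_i),\end{cases}$$
exactly as in the proof of Proposition \ref{lem1}. The first case holds because $R_\mu(p_i)=\Labab$ has two blocks, so exactly $|p_i|-2$ blocks are deleted.

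\textbf{Step 1: apply this with $\mu=\kappa$.} Since $\kappa$ is a crossing of every $p_i$, we get
$$R_\kappa(\xi)=\Big(\sum_{i=1}^m x_i\Big)\xi_{\Labab}\in\langle\mathcal{NC},\xi\rangle.$$
If $\sum_i x_i\neq0$, then $\xi_{\Labab}\in\langle\mathcal{NC},\xi\rangle$, and Theorem \ref{noez} gives $\langle\mathcal{NC},\xi\rangle=\mathcal{P}$.

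\textbf{Step 2: apply this with $\mu=\lambda$.} Now $\lambda\in\operatorname{cr}(p_i)$ for $i\neq j$ but $\lambda\notin\operatorname{cr}(p_j)$, so
$$R_\lambda(\xi)=\Big(\sum_{i\neq j}x_i\Big)\xi_{\Labab}+x_jN^{2-c_j}\xi_{q_j},\qquad q_j\in\mathcal{NC}(4).$$
Applying $\operatorname{red}_{\mathcal{CR}}$ leaves $\big(\sum_{i\neq j}x_i\big)\xi_{\Labab}$ in $\langle\mathcal{NC},\xi\rangle$. If this coefficient is nonzero, we again conclude $\langle\mathcal{NC},\xi\rangle=\mathcal{P}$.

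\textbf{Step 3: combine.} In the remaining case both coefficients vanish: $\sum_i x_i=0=\sum_{i\neq j}x_i$. Subtracting gives $x_j=0$, that is $\alpha_j=0$, and then $\sum_{i\neq j}\alpha_i\xi_{p_i}=\xi\in\langle\mathcal{NC},\xi\rangle$ trivially. So if $\alpha_j\neq0$, one of Steps 1 and 2 must have produced $\xi_{\Labab}$, and $\langle\mathcal{NC},\xi\rangle=\mathcal{P}$. This is the claimed dichotomy.

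\textbf{Main obstacle.} The delicate point is reading off "the coefficient of $\xi_{\Labab}$" in Steps 1 and 2, since $\operatorname{red}_{\mathcal{CR}}$ is only representative-dependent in general. What is needed is that a nonzero multiple of $\xi_{\Labab}$ cannot lie in $\hom_{\mathcal{NC}}(0,4)$, so that a vanishing coefficient really is forced. This takes place in $\hom(0,4)$. For $N\geq4$ we have $k=4\leq N$, so Theorem \ref{inj} shows that all fifteen partition vectors in $\hom_{\mathcal{P}}(0,4)$ are linearly independent, and the well-defined idempotent $\pi_{\mathcal{CR}}$ is available. For $N\leq3$ there is nothing to prove, since then $\mathcal{NC}=\mathcal{P}$ (e.g.\ $S_3^+=S_3$ by the corollary above), so $\langle\mathcal{NC},\xi\rangle=\mathcal{P}$ in either case. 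I would state this reduction to $N\geq4$ explicitly at the start.
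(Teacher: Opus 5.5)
Your proof is correct and is essentially the paper's. Both apply $R_\kappa$ and $\operatorname{red}_{\mathcal{CR}}\circ R_\lambda$; the paper simply subtracts the two resulting vectors to isolate $\alpha_jN^{|p_j|-2}\xi_{\Labab}$ in $\langle\mathcal{NC},\xi\rangle$ instead of your three-way case split. Your ``main obstacle'' is not actually an obstacle: each step is an exact vector identity plus closure of $\langle\mathcal{NC},\xi\rangle$ under adding elements of $\hom_{\mathcal{NC}}(0,4)$, and the case split is on the scalars $\sum_i x_i$ and $\sum_{i\neq j}x_i$ themselves, so no linear independence, no well-defined $\pi_{\mathcal{CR}}$, and no reduction to $N\geq 4$ is needed.
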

\begin{proof}
Note that for any $\mu\in\operatorname{cr}(p)$,
$$R_\mu(\xi_{p})=N^{|p|-2}\xi_{\Labab},$$
because all but two blocks are deleted. Also, as $\lambda\not\in \operatorname{cr}(p_j)$, $R_{\lambda}(\xi_{p_j})=N^{|p_j|-c}\xi_{R_\lambda(p_j)}$, for some $0\leq c\leq 4$ and $R_\lambda(p_j)\in\mathcal{NC}(4)$.  Therefore:
\begin{align*}
R_\kappa(\xi)&=\sum_{i=1}^m\alpha_i N^{|p_i|-2}\xi_{\Labab},
\\  R_{\lambda}(\xi) & =\alpha_j N^{c}\xi_{R_{\lambda}(p_j)}+\sum_{i\neq j} N^{|p_i|-2}\alpha_i\xi_{\Labab} \\
   \implies \operatorname{red}_{\mathcal{CR}}(R_{\lambda}(\xi))&=\sum_{i\neq j}N^{|p_i|-2}\alpha_i\xi_{\Labab}.
   \end{align*}
   From here
   \begin{align*} R_\kappa(\xi)-\sum_{i\neq j} N^{|p_i|-2}\alpha_i\xi_{\Labab}=\alpha_j N^{|p_j|-2}\xi_{\Labab}
   \end{align*}
   which implies that $\alpha_j=0$, or $\xi_{\Labab}\in\langle\mathcal{NC},\xi\rangle$.
\end{proof}
The final theorem in this section is more specific, and the proof is a brute-force case-by-case analysis:

\begin{proposition}\label{propdis}
Let non-zero $\xi=\alpha_1\xi_p+\alpha_2\xi_q+\alpha_3\xi_r$ such that
$$\operatorname{cr}(p)=\operatorname{cr}(q)\sqcup \operatorname{cr}(r).$$

Then  $\langle\mathcal{NC},\xi\rangle=\mathcal{P}$.

\end{proposition}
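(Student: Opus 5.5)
First, the hypothesis forces $\operatorname{cr}(q)\cap\operatorname{cr}(r)=\emptyset$, and $\operatorname{cr}(q),\operatorname{cr}(r)$ are both proper subsets of $\operatorname{cr}(p)$. Note that $q$ or $r$ may have empty crossing set. If $q\in\mathcal{NC}$, then $\operatorname{cr}(p)=\operatorname{cr}(r)$. Applying $\operatorname{red}_{\mathcal{CR}}$ leaves $\alpha_1\xi_p+\alpha_3\xi_r$, which is Theorem \ref{alleq} (or Theorem \ref{noez} if one coefficient vanishes). The same holds with $q$ and $r$ swapped. If both $q,r\in\mathcal{NC}$, then $\operatorname{cr}(p)=\emptyset$ and everything is non-crossing, which cannot give $\langle\mathcal{NC},\xi\rangle\neq\mathcal{NC}$. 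This degenerate case I would handle by reading the statement as assuming crossing partitions. So assume $q,r\in\mathcal{CR}$, and pick $\kappa\in\operatorname{cr}(q)$ and $\lambda\in\operatorname{cr}(r)$.

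\textbf{Step 1.} Restrict to $\kappa$. Since $\kappa\in\operatorname{cr}(p)\cap\operatorname{cr}(q)$ and $\kappa\notin\operatorname{cr}(r)$, we get
\[
\operatorname{red}_{\mathcal{CR}}(R_\kappa(\xi))=(\alpha_1N^{|p|-2}+\alpha_2N^{|q|-2})\xi_{\Labab}.
\]
Restricting to $\lambda$ similarly gives $(\alpha_1N^{|p|-2}+\alpha_3N^{|r|-2})\xi_{\Labab}$. If either coefficient is non-zero, Theorem \ref{noez} gives $\mathcal{P}$. So we may assume
\[
\alpha_2=-\alpha_1N^{|p|-|q|},\qquad \alpha_3=-\alpha_1N^{|p|-|r|},
\]
and $\alpha_1\neq 0$, since otherwise $\xi=0$. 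The crossing-count argument alone cannot finish, because the $\eta^\kappa$ span only a $2$-dimensional space. The proof therefore has to use block structure, in the spirit of Case 2 of Theorem \ref{e2}.

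\textbf{Step 2: exploit the disjointness.} The crossing $\lambda$ lies in $p$ but not in $q$. By Proposition \ref{prop1} applied to $\operatorname{cr}(q)\subset\operatorname{cr}(p)$, $q^{\mathcal{CR}}$ refines $p$ restricted to $\chi(q)$. I would choose $\kappa\in\operatorname{cr}(q)$ and $\lambda\in\operatorname{cr}(r)$ sharing a block of $p$, and restrict to $A=\kappa\cup\lambda$, or to a small set such as $\kappa\cup\{y\}$ with $y\in\lambda$. Such a pair should exist: if every block of $p$ met crossings from only one of $q$ or $r$, I would argue that the crossings of $p$ formed between a $q$-block and an $r$-block would belong to neither $\operatorname{cr}(q)$ nor $\operatorname{cr}(r)$. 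On $A$, the restriction $R_A(p)$ contains the crossings $\kappa$ and $\lambda$ together with crossings mixing them. Meanwhile $R_A(q)$ contains $\kappa$ but has $\lambda$'s points split away from $\kappa$'s blocks (or non-crossing), and symmetrically for $R_A(r)$.

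Next I would apply a semi-circle capping $S_{i,j}$, or a restriction to a four-point set $\mu\subset A$ that is a crossing of $R_A(p)$ but of neither $R_A(q)$ nor $R_A(r)$. Such a $\mu$ is a crossing unique to $p$ after restriction. The candidate is a mixed crossing with two points from $\kappa$ and two from $\lambda$ lying in a common $p$-block. Then Proposition \ref{lem1} applied to $R_A(\xi)$ gives $\mathcal{P}$, since $\alpha_1\neq 0$. Mixed crossings are genuinely in $\operatorname{cr}(p)$ and so must lie in $\operatorname{cr}(q)\sqcup\operatorname{cr}(r)$. Hence restriction alone will not produce them; a capping is needed.

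\textbf{Main obstacle.} The capping is where the brute-force case analysis lives. Capping two points $x<y$ that are in the same block of $p$ but in different blocks of $q$ (guaranteed since $\lambda\notin\operatorname{cr}(q)$ while $\lambda\in\operatorname{cr}(p)$ shares a block with $\kappa$) merges blocks in $q$ and $r$ but leaves $p$'s block structure unchanged. After this merge, $p$ and $q$ pick up different powers of $N$, so the cancellation relation $\alpha_1N^{|p|-2}+\alpha_2N^{|q|-2}=0$ from Step 1 is upset. Then $\operatorname{red}_{\mathcal{CR}}(R_\kappa)$ of the capped vector has a non-zero $\xi_{\Labab}$ coefficient, unless the merge also changes which of them contain $\kappa$. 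I would enumerate the relative positions of $\kappa$, $\lambda$ and the capped pair, following the five-position analysis of Lemma \ref{containlemma}, to check in each case that the powers of $N$ differ, using $N\geq 4$.
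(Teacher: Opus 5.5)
Your Step 1 is correct and matches the paper's opening (the two equations (\ref{case0})). You are also right that restrictions alone cannot finish. The gap is in Step 2, whose organising claim is false. The crossings of $q$ and of $r$ can sit on blocks of $p$ that do not cross each other at all. So there need not be $\kappa\in\operatorname{cr}(q)$ and $\lambda\in\operatorname{cr}(r)$ sharing a block of $p$; your justification presupposes ``crossings between a $q$-block and an $r$-block'', which need not exist. Worse, the pair $x\sim_p y$, $x\not\sim_q y$ that you call guaranteed need not exist either. Take
$$p=\{1,3\}\sqcup\{2,4\}\sqcup\{5,7\}\sqcup\{6,8\},\qquad q=\{1,3\}\sqcup\{2,4\}\sqcup\{5,6,7,8\},\qquad r=\{1,2,3,4\}\sqcup\{5,7\}\sqcup\{6,8\}.$$
Here $\operatorname{cr}(p)=\operatorname{cr}(q)\sqcup\operatorname{cr}(r)$, yet $p$ refines both $q$ and $r$, so your capping cannot even be set up. What works in this example is the paper's Case 4 map: cap all four points of $\lambda=\{5,6,7,8\}$ together. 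This removes one closed block from each of $p$ and $q$, whereas $R_\kappa$ removed two from $p$ and one from $q$. With $\alpha_1N^2+\alpha_2N=0$, the resulting coefficient of $\xi_{\Labab}$ is $\alpha_1N(1-N)\neq 0$.

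More fundamentally, the decisive step (``enumerate the relative positions \dots\ and check that the powers of $N$ differ'') is the whole proof, and it is left undone. The criterion you state is also not sufficient. Consider the five-point configuration
$R_{\kappa\cup\lambda}(p)=\{1,3\}\sqcup\{2,4,5\}$, $R_{\kappa\cup\lambda}(q)=\{1,3\}\sqcup\{2,5\}\sqcup\{4\}$, $R_{\kappa\cup\lambda}(r)=\{1,3\}\sqcup\{2,4\}\sqcup\{5\}$.
Every two-point semicircle capping leaves three points and destroys all crossings. The paper instead merges $4$ and $5$ into one point, after which \emph{all three} terms become the basic crossing. What finishes is the $3\times 3$ system formed by (\ref{case0}) and $\alpha_1N^{|p|-2}+\alpha_2N^{|q|-3}+\alpha_3N^{|r|-3}=0$, which forces $\alpha_1N^{|p|-3}(N-2)=0$. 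So the contribution of $r$ cannot be ignored.

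The paper supplies the missing organisation as follows:
\begin{enumerate}
\item Split by $|\kappa\cup\lambda|\in\{5,6,7,8\}$.
\item In the seven-point case and most six-point subcases, exhibit a crossing $\mu\in\operatorname{cr}(p)$ meeting $\kappa$ or $\lambda$ in three points, reducing to the five-point case.
\item In the remaining six-point subcase, use the double merge $T_s\in\hom_{\mathcal{NC}}(6,4)$.
\item In the eight-point case, prove $R_{\kappa\cup\lambda}(p)$ is two disjoint basic crossings and use four-point caps. When $|q'|=|r'|=2$, use the maps $T_{s_1},\dots,T_{s_4}$ to create a crossing unique to one term, so that Proposition \ref{lem1} applies.
\end{enumerate}
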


\begin{proof}
Take $\kappa\in \operatorname{cr}(p)\cap \operatorname{cr}(q)\cap\overline{\operatorname{cr}(r)}$ and $\lambda\in  \operatorname{cr}(p)\cap \operatorname{cr}(r)\cap\overline{\operatorname{cr}(q)}$. Then, for some $0\leq c_r,c_q\leq 4$, and $r',\,q'\in\mathcal{NC}$:
      \begin{align*}
         R_{\kappa}(\xi) & =(\alpha_1N^{|p|-2}+\alpha_2 N^{|q|-2})\xi_{\Labab}+\alpha_3 N^{|r|-c_r}\xi_{r'}, \\
         R_{\lambda}(\xi) & =(\alpha_1N^{|p|-2}+\alpha_3 N^{|r|-2})\xi_{\Labab}+\alpha_2 N^{|q|-c_q}\xi_{q'},  \\
       \end{align*}
       but it is possible for both coefficients of $\xi_{\Labab}$ to be zero. Record these being zero:
           \begin{align}
             \alpha_1N^{|p|-2}+\alpha_2 N^{|q|-2} & =0\nonumber \\
             \alpha_1N^{|p|-2}+\alpha_3 N^{|r|-2} & =0\label{case0}
           \end{align}Note if any $\alpha_i=0$ then $\xi=0$. Consider $R_{\kappa\cup\lambda}(\xi)$ and break into the four cases parameterised by $|\kappa\cup\lambda|\in\{5,6,7,8\}$ (note that $|\kappa\cup \lambda|=4$ if and only if $\kappa=\lambda$).
       \begin{enumerate}
         \item[\textbf{Case 1}:] In the case $|\kappa\cup\lambda|=5$, as $R_{\kappa\cup \lambda}(p)\in \mathcal{P}(5)$ has at least two crossings, it cannot have a singleton (otherwise there would be only a single crossing), therefore the block-size pattern of $R_{\kappa\cup\lambda}(p)$ is 3-2. There are five possibilities, each of which have two crossings:
             $$\LPartition{}{0.25:1,3;0.5:2,4,5},\,\LPartition{}{0.25:2,4;0.5:1,3,5},\,\LPartition{}{0.25:3,5;0.5:1,2,4},\,\LPartition{}{0.25:1,4;0.5:2,3,5},\,\LPartition{}{0.25:2,5;0.5:1,3,4},$$
           which, by inspection, can each be rotated to $\LPartition{}{0.25:1,3;0.5:2,4,5}$, so assume rotation to $R_{\kappa\cup\lambda}(p)=\LPartition{}{0.25:1,3;0.5:2,4,5}$. Consider $R_{\kappa\cup\lambda}(q)$, which contains $\kappa$ as a crossing. The block-size patterns for elements of $\mathcal{CR}(5)$ are 3-2 and 2-2-1, and the first of these options has two crossings, but $q$ only has one (otherwise there would be a crossing common to both $q$ and $r$). Therefore the block-size pattern of $R_{\kappa\cup\lambda}(q)$ is 2-2-1, which means  the block of size three in $R_{\kappa\cup\lambda}(p)$ is split in $R_{\kappa\cup\lambda}(q)$. It cannot be that
           $$R_{\kappa\cup\lambda}(q)=\LPartition{0.25:2}{0.5:1,3;0.6:4,5},$$
           as this has no crossing, therefore, without loss of generality, assume that:
           $$R_{\kappa\cup\lambda}(q)=\LPartition{0.25:4}{0.25:1,3;0.5:2,5}\text{ , and }R_{\kappa\cup\lambda}(r)=\LPartition{0.25:5}{0.25:1,3;0.5:4,2}.$$
Therefore, after a rotation, noting that $p$ loses all but two blocks, and $q$ and $r$ lose all but three blocks:
$$R_{\kappa\cup\lambda}(\xi)=\alpha_1N^{|p|-2}\xi_{\LPartition{}{0.25:1,3;0.5:2,4,5}}+\alpha_2N^{|q|-3}\xi_{\LPartition{0.25:4}{0.25:1,3;0.5:2,5}}+\alpha_{3}N^{|r|-3}\xi_{\LPartition{0.25:5}{0.25:1,3;0.5:4,2}}.$$
Now preserve $\{1,2,3\}$ and merge $\{4,5\}$ using  $T_{\BigPartition{
\Pblock 0.75 to 0.625:4,5
\Psingletons 0.75 to 0.625:1,2,3
\Psingletons 0.75 to 0.375:1,2,3
\Pline (1,0.375) (1,0.625)
\Pline (2,0.375) (2,0.625)
\Pline (3,0.375) (3,0.625)
\Pline (4.5,0.375) (4.5,0.625)
}}\in\hom_{\mathcal{NC}}(5,4)$ to get
\begin{align*}(T_{\BigPartition{
\Pblock 0.75 to 0.625:4,5
\Psingletons 0.75 to 0.625:1,2,3
\Psingletons 0.75 to 0.375:1,2,3
\Pline (1,0.375) (1,0.625)
\Pline (2,0.375) (2,0.625)
\Pline (3,0.375) (3,0.625)
\Pline (4.5,0.375) (4.5,0.625)
}}\circ R_{\kappa\cup\lambda})(\xi)&=\alpha_1N^{|p|-2}\xi_{\Labab}+\alpha_2N^{|q|-3}\xi_{\Labab}+\alpha_3N^{|r|-3}\xi_{\Labab}
\\&=\left(\alpha_1N^{|p|-2}+\alpha_2N^{|q|-3}+\alpha_3N^{|r|-3}\right)\xi_{\Labab}.
\end{align*}
Suppose that the coefficients of $\xi_{\Labab}$ here, along with those of $R_{\kappa}(\xi)$ and $R_{\lambda}(\xi)$, are all zero:
\begin{align*}
  \alpha_1N^{|p|-2}+\alpha_2N^{|q|-2} & =0 \\
  \alpha_1N^{|p|-2}+\alpha_3N^{|r|-2} & =0 \\
 \alpha_1N^{|p|-2}+\alpha_2N^{|q|-3}+\alpha_3N^{|r|-3}  & =0
\end{align*}
The first two equations give:
$$\alpha_2N^{|q|-3}=\alpha_3N^{|r|-3}=-\alpha_1N^{|p|-3},$$
which when substituted into the third gives:
$$\alpha_1N^{|p|-3}(N-2)=0,$$
which, as $N>3$, implies $\alpha_1=0$, and so $\xi=0$, a contradiction. Therefore, in this case, $\langle\mathcal{NC},\xi\rangle=\mathcal{P}$.
\item[\textbf{Case 2}:] Let $\kappa=\{\kappa_a,\kappa_b,\mu_a,\mu_b\}$ and $\lambda=\{\lambda_a,\lambda_b,\mu_a,\mu_b\}$. Assume without loss of generality that $\kappa_a<\kappa_b$, $\lambda_a<\lambda_b$, $\mu_a<\mu_b$ (relabelling after rotations if necessary). With crossings in $p$ in inherited from $q$ and $r$, if $\mu_a\sim \mu_b$ in any partition, then $\mu_a\sim \mu_b$ in all partitions.
\begin{enumerate}
  \item[\textbf{Subcase 2a}:] $\mu_a\sim\mu_b$ in all partitions. Then, also $\kappa_a\sim\kappa_b$ in $q$ and $p$, and $\lambda_a\sim\lambda_b$ in $r$ and $p$. It is not possible that $\mu_a\sim_p \kappa_b,\,\lambda_b$ or $\mu_b\sim_p\kappa_a,\,\lambda_a,$ as $\kappa,\lambda\in\operatorname{cr}(p)$.
  \begin{enumerate}
    \item[\textbf{Subsubcase 2ai}:]  Suppose $\kappa_a\sim_p\lambda_a$. Then $\{\kappa_a,\kappa_b,\lambda_a,\lambda_b\}\in R_{\kappa\cup\lambda}(p)$ and so, in order that crossings with $\{\mu_a,\mu_b\}$ be made, up to a rotation, with $\kappa_a,\,\lambda_a<\mu_a$, and $\mu_a<\kappa_b,\,\lambda_b<\mu_b$:
                 $$R_{\kappa\cup\lambda}(p)=
                 \BigPartition{
\Pblock 0 to 0.25:2,4,8,10
\Pblock 0 to 0.5:6,12
\Ptext(6,-0.2){$\mu_a$}
\Ptext(12,-0.2){$\mu_b$}
},$$
                 placing $\mu=(\kappa_a,\mu_a,\lambda_b,\mu_b)$ in $\operatorname{cr}(p)$. Note that if $\mu\in\operatorname{cr}(q)$, then
                 $$\mu\cap \lambda=\{\kappa_a,\mu_a,\lambda_b,\mu_b\}\implies |\mu\cap\lambda|=3\implies|\mu\cup\lambda|=5,$$
                 which is \textbf{Case 1} (and similarly if $\mu\in\operatorname{cr}(r)$).
    \item[\textbf{Subsubcase 2aii}:] Therefore assume that $\lambda_a\not\sim_p\kappa_a$.  This means that
                 $$R_{\kappa\cup\lambda}(p)=\BigPartition{
\Pblock 0 to 0.25:4,8
\Pblock 0 to 0.5:2,10
\Pblock 0 to 0.75:6,12
\Ptext(6,-0.2){$\mu_a$}
\Ptext(12,-0.2){$\mu_b$}
}\quad\text{ or }\quad \BigPartition{
\Pblock 0 to 0.25:2,8
\Pblock 0 to 0.5:4,10
\Pblock 0 to 0.75:6,12
\Ptext(6,-0.2){$\mu_a$}
\Ptext(12,-0.2){$\mu_b$}
}.$$
Therefore, where
                  $$s=\BigPartition{
\Psingletons 0.25 to 0.375:1.5,3,4.5,6
\Psingletons 0.75 to 0.625:3,6
\Pblock 0.75 to 0.625:1,2
\Pblock 0.75 to 0.625:4,5
\Pline (1.5,0.375) (1.5,0.625)
\Pline (4.5,0.375) (4.5,0.625)
\Pline (3,0.375) (3,0.625)
\Pline (6,0.375) (6,0.625)
},$$
                  $T_s\in\hom_{\mathcal{NC}}(6,4)$ merges $\{\lambda_a,\kappa_a\}$, merges $\{\lambda_b,\kappa_b\}$, and preserves $\mu_a,\mu_b$, yielding $$(T_s\circ R_{\kappa\cup\lambda})(\xi_p)=N^{|p|-3}\xi_{\Labab}.$$
It is not possible that $\lambda_a\sim_q \lambda_b$ or $\kappa_a\sim_r\kappa_b$ as these would give $\lambda\in \operatorname{cr}(q)$, $\kappa\in\operatorname{cr}(r)$ respectively. If $\kappa_a\sim_q\lambda_a$ or $\kappa_a\sim_q\lambda_b$; or $\lambda_a\sim_r\kappa_a$ or $\lambda_a\sim_r\kappa_b$ as these would form crossings $\{\lambda_a,\mu_a,\kappa_b,\mu_b\}$, $\{\kappa_a,\mu_a,\lambda_b,\mu_b\}$ in $\operatorname{cr}(q)$; respectively $\{\kappa_a,\mu_a,\lambda_b,\mu_b\}$, $\{\lambda_a,\mu_a,\kappa_b,\mu_b\}$ in $\operatorname{cr}(r)$ that would intersect with $\lambda$, respectively $\kappa$ in three places, reverting to \textbf{Case 1}. Therefore assume that $R_{\kappa\cup\lambda}(q)$ and $R_{\kappa\cup\lambda}(r)$ are each one of four different partitions:
\begin{align}\BigPartition{
\Psingletons 0 to 0.25:2,8
\Pblock 0 to 0.75:6,12
\Pblock 0 to 0.5:4,10
\Ptext(6,-0.2){$\mu_a$}
\Ptext(12,-0.2){$\mu_b$}
},&\qquad \BigPartition{
\Psingletons 0 to 0.25:2,10
\Pblock 0 to 0.75:6,12
\Pblock 0 to 0.5:4,8
\Ptext(6,-0.2){$\mu_a$}
\Ptext(12,-0.2){$\mu_b$}
},\nonumber\\[2ex] \BigPartition{
\Psingletons 0 to 0.25:4,8
\Pblock 0 to 0.75:6,12
\Pblock 0 to 0.5:2,10
\Ptext(6,-0.2){$\mu_a$}
\Ptext(12,-0.2){$\mu_b$}
},&\qquad\BigPartition{
\Psingletons 0 to 0.25:4,10
\Pblock 0 to 0.75:6,12
\Pblock 0 to 0.5:2,8
\Ptext(6,-0.2){$\mu_a$}
\Ptext(12,-0.2){$\mu_b$}
}.\label{case2aii}
\end{align}
Note that in these four cases
$$(T_s\circ R_{\kappa\cup\lambda})(\xi_q)=N^{|q|-4}\xi_{\Labab};\quad (T_s\circ R_{\kappa\cup\lambda})(\xi_r)=N^{|r|-4}\xi_{\Labab}.$$
It follows that
$$(T_s\circ R_{\kappa\cup\lambda})(\xi)=(\alpha_1N^{|p|-3}+\alpha_2N^{|q|-4}+\alpha_3N^{|r|-4})\xi_{\Labab}.$$
However if this is zero and (\ref{case0}) are also satisfied, then $\xi=0$. It follows that in this case $\langle\mathcal{NC},\xi\rangle=\mathcal{P}$.
   \end{enumerate}
  \item[\textbf{Subcase 2b}:] $\mu_a\not\sim\mu_b$ in all partitions. Then, also $\kappa_a\not\sim\kappa_b$ in $q$ and $p$, and $\lambda_a\not\sim\lambda_b$ in $r$ and $p$. On the other hand $\mu_a\sim\kappa_a$, $\mu_b\sim\kappa_b$ in $q$ and $p$; $\mu_a\sim \lambda_a$, $\mu_b\sim \lambda_b$ in $r$ and $p$. It follows that $p=\{\kappa_a,\lambda_a,\mu_a\}\sqcup\{\kappa_b,\lambda_b,\mu_b\}$. It also follows that $R_{\kappa}(q)=\{\kappa_a,\mu_a\}\sqcup\{\kappa_b,\mu_b\}$, and $R_{\lambda}(r)=\{\lambda_a,\mu_a\}\sqcup\{\lambda_b,\mu_b\}$. This implies that, in $R_{\kappa\cup\lambda}(p)$, $\kappa_a$ and $\mu_a$ cannot be adjacent; and $\lambda_a$ and $\mu_a$ cannot be adjacent. If $\kappa_a$ and $\lambda_a$ are adjacent in $\kappa\cup\lambda$, then the $\kappa_a$ in $\kappa=\{\kappa_a,\kappa_b,\mu_a,\mu_b\}$ may be replaced with $\lambda_a$ to produce a new crossing $\mu=\{\lambda_a,\kappa_b,\mu_a,\mu_b\}$ in $\operatorname{cr}(p)$. If $\mu\in\operatorname{cr}(q)$, then $|\mu\cap\lambda|=3$; similarly if $\mu\in\operatorname{cr}(r)$; both reverting to \textbf{Case 1}. Similarly it can be assumed that $\kappa_b$ is not adjacent to $\lambda_b$.  Therefore it can be assumed that
      $$R_{\kappa\cup \lambda}(p)=\BigPartition{
\Pblock 0 to 0.25:1,3,5
\Pblock 0 to 0.5:2,4,6
},$$
and thus $\mu=\{\lambda_a,\kappa_b,\mu_a,\mu_b\}$ in $\operatorname{cr}(p)$, with a similar reversion to \textbf{Case 1}.
\end{enumerate}

        \item[\textbf{Case 3}:] In the case $|\kappa\cup\lambda|=7$, there is one common crosser $|\kappa\cap\lambda|=1$, say $\kappa\cup\lambda=\{1\}$. Then $\kappa=(1,\kappa_2,\kappa_3,\kappa_4)$, and $\lambda=(1,\lambda_e,\lambda_o,\lambda_{e+2})$. This implies that in $p$, $\{1,\kappa_3,\lambda_o\}\subset [\kappa_1]_p$. Then, there are four possibilities for placing $\kappa_3$ in $\lambda$, so that $R_{\lambda\cup\{\kappa_3\}}(p)$ is one of:
    \begin{align}\BigPartition{
\Pblock 0 to 0.25:4,10
\Pblock 0 to 0.5:2,6,8
\Ptext(2,-0.2){$1$}
\Ptext(4,-0.2){$\lambda_e$}
\Ptext(8,-0.2){$\lambda_o$}
\Ptext(10,-0.2){$\lambda_{e+2}$}
\Ptext(6,-0.2){$\kappa_3$}
},&\qquad \BigPartition{
\Pblock 0 to 0.25:4,10
\Pblock 0 to 0.5:2,6,8
\Ptext(2,-0.2){$1$}
\Ptext(4,-0.2){$\lambda_e$}
\Ptext(6,-0.2){$\lambda_o$}
\Ptext(10,-0.2){$\lambda_{e+2}$}
\Ptext(8,-0.2){$\kappa_3$}
},\nonumber\\[2ex]\BigPartition{
\Pblock 0 to 0.25:4,8
\Pblock 0 to 0.5:2,6,10
\Ptext(2,-0.2){$1$}
\Ptext(4,-0.2){$\lambda_e$}
\Ptext(6,-0.2){$\lambda_o$}
\Ptext(8,-0.2){$\lambda_{e+2}$}
\Ptext(10,-0.2){$\kappa_3$}
}, &\qquad\BigPartition{
\Pblock 0 to 0.25:6,10
\Pblock 0 to 0.5:2,4,8
\Ptext(2,-0.2){$1$}
\Ptext(6,-0.2){$\lambda_e$}
\Ptext(8,-0.2){$\lambda_o$}
\Ptext(10,-0.2){$\lambda_{e+2}$}
\Ptext(4,-0.2){$\kappa_3$}
}.\label{case3}
\end{align}
These give, respectively, crossings in $p$:
$$\mu=(1,\lambda_e,\kappa_3,\lambda_{e+2}),\qquad\mu'=(1,\lambda_e,\kappa_3,\lambda_{e+2}),$$
$$\mu''=(\lambda_e,\lambda_{o},\lambda_{e+2},\kappa_3),\qquad\mu'''=(\kappa_3,\lambda_e,\lambda_o,\lambda_{e+2}),$$
all of which intersect $\lambda$ at three points, reverting to \textbf{Case 1}.
        \item[\textbf{Case 4}:] In the case $|\kappa\cup\lambda|=8$, $|\kappa\cap\lambda|=0$, it will be shown that $R_{\kappa\cup\lambda}(p)$ has block-size pattern 2-2-2-2. It is not possible that $R_{\kappa\cup\lambda}(q)$ (or $R_{\kappa\cup\lambda}(r)$) has a crossing block of size greater than two, as, if it did, it would have to include an element of $\lambda$, thus by Lemma \ref{containlemma}, giving a crossing $\mu$ such that $|\lambda\cap\mu|\neq 0$, reverting to a previous case. Therefore the crossing part of $R_{\kappa\cup\lambda}(q)$ has a 2-2 block-size pattern, as does the crossing part of $R_{\kappa\cup\lambda}(r)$. Therefore, each of $R_{\kappa\cup\lambda}(q)$ and $R_{\kappa\cup\lambda}(r)$ have exactly one crossing, and therefore $R_{\kappa\cup\lambda}(p)$ has exactly two crossings. Suppose $R_{\kappa\cup\lambda}(p)$ contains the crossing $\kappa$, and has a 4-2 block-size pattern, say $\{x_1,x_2,x_3,x_4\}\sqcup\{\kappa_{i+1},\kappa_{i+3}\}$. To make a crossing, there must exist $x_a,\,x_b$ such that
            $$\kappa_{i+1}<x_a<\kappa_{i+3}<x_b\text{ or }x_a<\kappa_{i+1}<x_b<\kappa_{i+3}.$$
            Now consider another $x_c\in\{x_1,x_2,x_3,x_4\}\backslash\{x_a,x_b\}$. There are three choices to place  $x_c$ in this ordering:
            $$x_c<\kappa_i,\,\quad \kappa_i<x_c<\kappa_{i+1}\text{, and }\kappa_{i+1}<x_c,$$
            each giving a second crossing $\kappa_{x_c}$ in $R_{\kappa\cup\lambda}(p)$. Finally placing the fourth $x_d$ in this ordering produces a third crossing $\kappa_{x_d}$ in $R_{\kappa\cup\lambda}(p)$, which is not possible. Therefore it cannot be the case that any $\kappa_i\sim_p \lambda_j$, as this would make a block of size four in $R_{\kappa\cup\lambda}(p)$.

\bigskip\noindent
            Therefore $R_{\kappa\cup\lambda}(p)$ has block-size pattern 2-2-2-2:
            $$R_{\kappa\cup\lambda}=\{\kappa_1,\kappa_3\}\sqcup\{\kappa_2,\kappa_4\}\sqcup\{\lambda_1,\lambda_3\}\sqcup\{\lambda_2,\lambda_4\}.$$
           Consider $R_{\kappa}(p)=\Labab$. Enlarge to $R_{\kappa\cup\{\lambda_{j},\lambda_{j+2}\}}(p)$, and attempt to place $\{\lambda_j,\lambda_{j+2}\}=\Laa$ inside $\kappa=\Labab$ without producing a new crossing. It forces $\lambda_{j}$ to be adjacent to $\lambda_{j+2}$ or
           $$R_{\kappa\cup\{\lambda_{j},\lambda_{j+2}\}}(p)=\BigPartition{
\Pblock 0.125 to 0.25:6,10
\Pblock 0.125 to 0.5:4,8
\Pblock 0.125 to 0.75:2,12
\Ptext(2,0){$\lambda_{j}$}
\Ptext(12,0){$\lambda_{j+2}$}
},$$

\smallskip\noindent
           which can be rotated to have $\lambda_{j}$ adjacent to $\lambda_{j+2}$, and therefore:
           $$R_{\kappa\cup\{\lambda_{j},\lambda_{j+2}\}}(p)=\BigPartition{
\Pblock 0 to 0.25:2,6
\Pblock 0 to 0.25:10,12
\Pblock 0 to 0.5:4,8
}.$$
Now place $\{\lambda_{j+1},\lambda_{j+3}\}$ in this: the $\lambda_{j+1},\lambda_{j+3}$ must be simultaneously be adjacent (in $R_{\kappa\cup\{\lambda_{j+1},\lambda_{j+3}\}}(p)$), and form a crossing $\lambda$, giving three possibilities (the first two of which are equal):
$$\LPartition{}{0.25:1,3;0.25:5,7;0.5:2,4;0.5:6,8},\qquad\LPartition{}{0.25:1,3;0.25:6,8;0.5:2,4;0.5:5,7},\qquad\LPartition{}{0.25:2,4;0.25:6,8;0.5:3,5;0.75:1,7},$$
and the third which can be rotated to $\Labab\Labab$. Therefore, for non-crossing partitions $q',r'\in\mathcal{NC}(4)$, of sizes $|q'|$ and $|r'|$,  up to a rotation:
$$R_{\kappa\cup \lambda}(\xi)=\alpha_1 N^{|p|-4}\xi_{\Labab\Labab}+\alpha_2 N^{|q|-2-|q'|}\xi_{\Labab\otimes q'}+\alpha_3N^{|r|-2-|r'|}\xi_{r'\otimes\Labab}.$$
Where $s=\idpart^{\otimes4}\otimes\Uaaaa$, it is the case that:
\begin{align}(T_s\circ R_{\kappa\cup \lambda})(\xi)=\alpha_1 N^{|p|-3}\xi_{\Labab}+\alpha_2 N^{|q|-1-|q'|}\xi_{\Labab}+\alpha_3N^{|r|-1-|r'|}\xi_{r'},\label{case4}\end{align}
yielding $(\alpha_1N^{|p|-3}+\alpha_2N^{|q|-1-|q'|})\xi_{\Labab}\in \langle\mathcal{NC},\xi\rangle$. Similarly, using $s'=\Uaaaa\otimes\idpart^{\otimes 4}$, $(\alpha_1N^{|p|-3}+\alpha_3N^{|r|-1-|r'|})\xi_{\Labab}\in\langle\mathcal{NC},\xi\rangle$. Multiply these coefficients by $N$:
 \begin{align*}
             \alpha_1N^{|p|-2}+\alpha_2 N^{|q|-|q'|} & =0, \\
             \alpha_1N^{|p|-2}+\alpha_3 N^{|r|-|r'|} & =0.
           \end{align*}
If these together with equations (\ref{case0}) are to have non-trivial solutions, then $q'$ and $r'$ must both have two blocks, $|q'|=|r'|=2$. This implies that:

$$q',\,r'\in\{\Labaa,\Labba,\Laaba,\Laabb,\Labbb,\Laaab\}.$$
\begin{enumerate}
  \item[\textbf{Subcase 4a}]: If $q'=\Labaa$, consider $T_{s_1}\in\hom_{\mathcal{NC}}(8,4)$, where
  $$s_1=\BigPartition{
\Psingletons 1 to 0.5:1,2,3,6
\Pblock 1 to 0.75:4,5
\Pblock 1 to 0.75:7,8
},$$
Observe:
\begin{align*}
T_{s_1}(\Labab\otimes\Labab)&={\Labab}
\\ T_{s_1}(\Labab\otimes\Labaa)&={\Labac}
\end{align*}
For $r'\in \{\Labbb,\Labaa,\Laaba,\Laaab\}$, $T_{s_1}(r'\otimes\Labab)$ has a singleton, so is also non-crossing. Furthermore if $r'\in\{\Laabb,\Labba\}$, then $T_{s_1}(r'\otimes\Labab)=r'$, non-crossing. Therefore $(T_{s_1}\circ R_{\kappa\cup \lambda})(\xi)$ has a crossing ``unique to $(T_{s}\circ R_{\kappa\cup\lambda}(p))$'', so by Proposition \ref{lem1}, $\langle\mathcal{NC},\xi\rangle =\mathcal{P}$.

\bigskip\noindent

This assertion about $(T_{s_a}\circ R_{\kappa\cup\lambda})(p)$ will also hold in \textbf{Subcases 4b-d}:
  \item[\textbf{Subcase 4b}]: If $q'\in\{\Labba,\Laaba,\Laabb\}$, consider $T_{s_2}\in\hom_{\mathcal{NC}}(8,4)$, where
  $$s_2=\BigPartition{
\Psingletons 1 to 0.5:1,2,3,7
\Psingletons 1 to 0.75:6,8
\Pblock 1 to 0.75:4,5
},$$
It is the case that:
\begin{align*}
T_{s_2}(\Labab\otimes\Labab)&={\Labab}  \\
  T_{s_2}(\Labab\otimes q') & ={\Labac}
\end{align*}
Similarly to the above, for $r'\in \{\Labbb,\Labaa,\Laaba,\Laaab\}$, $T_{s_2}(r'\otimes\Labab)$ has a singleton, so is also not crossing, and otherwise $T_{s_2}(r'\otimes\Labab)=r'$, non-crossing.

  \item[\textbf{Subcase 4c}]: If $q'=\Labbb$, consider $T_{s_3}\in\hom_{\mathcal{NC}}(8,4)$, where
    $$s_3=\BigPartition{
\Psingletons 1 to 0.5:1,2,3,8
\Pblock 1 to 0.75:4,5
\Pblock 1 to 0.75:6,7
}$$
Again, $T_{s_3}(\Labab\Labab)=\Labab$, while $T_{s_3}(\Labab\Labbb)$ has a singleton. Similarly to before, $T_{s_3}(q'\otimes\Labab)\in\mathcal{NC}$.

\item[\textbf{Subcase 4d}]: If $q=\Laaab$, consider $T_{s_4}\in\hom_{\mathcal{NC}}(8,4)$, where:
$$s_4=\BigPartition{
\Psingletons 1 to 0.25:1,2,3,8
\Psingletons 1 to 0.75:5,7
\Pblock 1 to 0.75:4,6
}$$
Again, $T_{s_4}(\Labab\Labab)=\Labab$, $T_{s_4}(\Labab\Laaab)=\Labac$, and as before $T_{s_4}(r'\otimes\Labab)\in\mathcal{NC}$.
\end{enumerate}

       \end{enumerate}

\end{proof}

\section{Easiness Level}\label{Easi}
By Theorem \ref{noez}, there is no easy exotic quantum permutation group $S_N\lneq \mathbb{G}\lneq S_N^+$.  Therefore, necessarily, the Tannaka--Krein  dual $\mathcal{C}$ of an exotic quantum permutation group must contain some linear combination $\xi=\sum_{i=1}^{m}\alpha_i\xi_{p_i}$ with all $\alpha_i\neq0$, $p_i\in\mathcal{CR}$, and $m>1$.  Suppose that $m_0$ is the minimal $m$ such that $\sum_{i=1}^m\alpha_i\xi_{p_i}$ is in the exotic Tannaka--Krein dual. Following Banica \cite{ban},  say that the associated exotic quantum permutation group has easiness level $m_0$. Similarly, if for all $\xi$ of length $m$ it is the case that $\langle\mathcal{NC},\xi\rangle=\mathcal{P}$ or $\langle\mathcal{NC},\xi\rangle=\mathcal{NC}$,  say that the inclusion $S_N\leq S_N^+$ is \emph{maximal at easiness level $m$}. Banica showed that the inclusion is maximal at easiness level two (\cite{ban}, Prop. 8.3; Th. \ref{bane2} (2)  here). If $S_N\leq S_N^+$ is maximal at all easiness levels, then there is no exotic Tannaka--Krein dual, and thus $S_N\leq S_N^+$ is maximal.

\bigskip\noindent

This is a further advance on this problem:
\begin{theorem}\label{thez3}
The inclusion $S_N\leq S_N^+$ is maximal at easiness level three.
\end{theorem}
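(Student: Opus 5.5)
The plan is to take a non-zero $\xi=\alpha_1\xi_{p_1}+\alpha_2\xi_{p_2}+\alpha_3\xi_{p_3}$ with all $\alpha_i\neq 0$ and distinct $p_i\in\mathcal{P}(k)$, and show that $\langle\mathcal{NC},\xi\rangle$ is $\mathcal{NC}$ or $\mathcal{P}$. The argument reduces to the crossing partitions, then to a case analysis on how the crossing sets $\operatorname{cr}(p_1),\operatorname{cr}(p_2),\operatorname{cr}(p_3)$ sit relative to one another.

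\textbf{Reduction to crossing partitions.}
\begin{enumerate}
\item If all $p_i\in\mathcal{NC}$, Proposition \ref{proptriv} gives $\mathcal{NC}$.
\item Otherwise apply $\operatorname{red}_{\mathcal{CR}}$, which discards the non-crossing summands.
\item If one crossing summand remains, Theorem \ref{noez} gives $\mathcal{P}$.
\item If two remain, Theorem \ref{bane2} (2) applies. One checks its proof only used the crossing summands, so it yields $\mathcal{P}$ (or the combination vanishes, which is excluded by Theorem \ref{li}-type independence or else returns us to a single term).
\item Hence assume $p_1,p_2,p_3\in\mathcal{CR}$ and write $C_i=\operatorname{cr}(p_i)$. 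The Venn regions of $C_1,C_2,C_3$ drive the case split.
\end{enumerate}

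\textbf{Case split on the Venn regions.}
\begin{enumerate}
\item Some region $C_j\setminus\bigcup_{i\neq j}C_i$ is non-empty. Proposition \ref{lem1} forces $\alpha_j=0$ (excluded) or gives $\mathcal{P}$. So assume every crossing lies in at least two of the $C_i$.
\item All three $C_i$ are equal. Theorem \ref{alleq} gives $\mathcal{P}$.
\item For each $j$ there is a crossing in the other two sets but not in $C_j$. Proposition \ref{propall2} applies.
\item There is a crossing common to all three, and a crossing $\lambda$ in exactly two, missing from $C_j$. Proposition \ref{propall} gives either $\mathcal{P}$ or $\alpha_j=0$ (excluded).
\item There is no common crossing. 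Then the non-empty regions are only pairwise intersections, and at most two of them can be non-empty without falling into item 3.
\begin{itemize}
\item If exactly one pairwise region is non-empty, say $C_1\cap C_2$, then since every crossing lies in two sets, $C_3$ is empty, contradicting $p_3\in\mathcal{CR}$.
\item If two are non-empty, say $C_1\cap C_2$ and $C_1\cap C_3$, then $C_1=C_2\sqcup C_3$, which is exactly Proposition \ref{propdis}.
\end{itemize}
\end{enumerate}

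\textbf{Completeness of the enumeration.} The remaining step is to check that these cases exhaust all configurations. Record which of the four regions $C_1\cap C_2\cap C_3$ and the three exact pairwise intersections are non-empty. If the triple region is non-empty, then either no pairwise region is non-empty (all $C_i$ equal, item 2) or some pairwise region is non-empty (item 4). If the triple region is empty, then either all three pairwise regions are non-empty (item 3) or one or two are (item 5).

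\textbf{Main obstacle.} The real difficulty is the disjoint-union configuration, already handled by the brute-force Proposition \ref{propdis}. What remains delicate here is twofold. First, the $\operatorname{red}_{\mathcal{CR}}$ reductions must not silently change which coefficients are non-zero when $k>N$, where $\Phi_{k,N}$ fails to be injective. Second, Proposition \ref{propall}'s alternative "$\alpha_j=0$" must genuinely contradict the setup. I would handle the first point by always working with the given representative of $\xi$ and its restrictions $R_\kappa$, where the coefficient of $\xi_{\Labab}$ is computed directly, rather than relying on uniqueness of decompositions.
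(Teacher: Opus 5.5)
Your proposal is correct and follows essentially the same Venn-region case analysis as the paper, applying Proposition \ref{lem1}, Theorem \ref{alleq}, Proposition \ref{propall}, Proposition \ref{propall2} and Proposition \ref{propdis} to exactly the same configurations. The only difference is organisational: you discard non-crossing summands and zero coefficients up front. As a result, the paper's case of a single shaded weight-two region, which it handles via $\operatorname{red}_{\mathcal{CR}}$ and Theorem \ref{bane2} (2), becomes vacuous for you, and the $\alpha_j=0$ alternatives become exclusions rather than reductions to length two.
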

\begin{proof}
Let non-zero $\xi=\sum_{i=1}^3\alpha_i\xi_{p_i}$ in $\hom_{\mathcal{P}}(0,k)$. Consider $\langle \mathcal{NC},\xi\rangle$, and, where the ambient set is the set of subsets of $[k]$ of size four, consider the Venn diagram with sets $\operatorname{cr}(p_1),\,\operatorname{cr}(p_2),\,\operatorname{cr}(p_3)$, with regions associated with binary strings of length three; i.e. $\operatorname{cr}(p_1)\cap\overline{\operatorname{cr}(p_2)}\cap\operatorname{cr}(p_3)\sim 101$, etc., and classified by weight, e.g. the weight of 110 is two.

\smallskip\noindent

If these sets are all empty, only the region of weight zero is shaded. Then  by Proposition \ref{proptriv} $\langle \mathcal{NC},\xi\rangle=\mathcal{NC}$.

\smallskip\noindent

If any region of weight one is shaded, then by Proposition \ref{lem1}, $\langle \mathcal{NC},\xi\rangle=\mathcal{P}$.

\smallskip\noindent

Otherwise, the regions shaded are those of weights zero (which can be ignored), two, and three. If only the region of weight three is shaded, then the $\operatorname{cr}(p_i)$ are all equal, and by Theorem \ref{alleq}, $\langle \mathcal{NC},\xi\rangle=\mathcal{P}$.

\smallskip\noindent

If the region of weight three, and at least one of weight two are shaded, then Proposition \ref{propall} gives either some $\alpha_j=0$, and so $\sum_{i\neq j}\alpha_i\xi_{p_i}\in \langle\mathcal{NC},\xi\rangle$, in which case $\langle\mathcal{NC},\xi\rangle=\mathcal{P}$ by Theorem \ref{bane2} (2); or $\alpha_j\neq 0$ and $\langle \mathcal{NC},\xi\rangle=\mathcal{P}$.

\smallskip\noindent

If the region of weight three is unshaded, and all the regions of weight two are shaded, Proposition \ref{propall2} gives $\langle \mathcal{NC},\xi\rangle=\mathcal{P}$.

\bigskip\noindent

The two remaining cases are if one or two regions of weight two are shaded. If it is only one region of weight two, say $\operatorname{cr}(p_1)\cap \operatorname{cr}(p_2)\cap\overline{\operatorname{cr}(p_3)}$, then all of $\operatorname{cr}(p_3)$ is unshaded. In this case $\operatorname{cr}(p_3)=\emptyset$, therefore $\operatorname{red}_{\mathcal{CR}}(\xi)=\alpha_1\xi_{p_1}+\alpha_2\xi_{p_2}$. As $\alpha_1\xi_{p_1}+\alpha_2\xi_{p_2}\in \langle \mathcal{NC},\xi\rangle$,  by Theorem \ref{bane2} (2), $\langle \mathcal{NC},\xi\rangle=\mathcal{P}$. The remaining case therefore is two regions of weight two, in which case, without loss of generality:
$$\operatorname{cr}(p_1)=\operatorname{cr}(p_2)\sqcup\operatorname{cr}(p_3).$$
By Proposition \ref{propdis}, $\langle \mathcal{NC},\xi\rangle=\mathcal{P}$.
  \end{proof}
  Note that the above theorem implies that a linear combination of length three in $\hom_{\mathcal{CR}}(0,k)$ is always outside $\mathcal{NC}$. It shows that
$$\xi=\sum_{i=1}^{3}\alpha_i\xi_{p_i}\implies \langle \mathcal{NC},\xi\rangle=\mathcal{P}\qquad (\alpha_i\in\mathbb{C},\,(\alpha_1,\alpha_2,\alpha_3)\neq 0,\,p_i\in\mathcal{CR}).$$
 Due to Corollary \ref{bad}, the definition of maximality at easiness level $m$ requires the \emph{or $\langle\mathcal{NC},\xi\rangle=\mathcal{NC}$} clause. This leads to a question:
\begin{question}\label{streasi}
What is the maximal $m$ such that for all $\xi=\sum_{i=1}^{m}\alpha_i\xi_{p_i}$ with all $\alpha_i\neq0$, $p_i\in\mathcal{CR}$, $\langle \mathcal{NC},\xi\rangle=\mathcal{P}$?
\end{question}
After Corollary \ref{maxm5}, it will be seen that at parameter $N=4$ there exists a linear combination of length ten, $\xi=\sum_{p\in\mathcal{CR}(5)}\alpha_p\xi_p$, with all  $\alpha_p\neq 0$, such that $\langle\mathcal{NC},\xi\rangle=\mathcal{NC}$. This shows that the uniform answer to Question \ref{streasi} is at most ten. Alternatively, one can rule out this phenomenon using the following rewriting protocol: given $\xi=\sum_i\alpha_i\xi_{p_i}$ in $\hom_{\mathcal{CR}}(0,k)$, express $\xi$ in the Theorem \ref{inj} basis:
$$\xi=\sum_{\underset{|p|\leq N}{p\in\mathcal{NC}(k)}}\beta_p\xi_p+\sum_{\underset{|p|\leq N}{p\in\mathcal{CR}(k)}}\beta_p\xi_p.$$
Then apply $\operatorname{red}_{\mathcal{CR}}$:
$$\sum_{\underset{|p|\leq N}{p\in\mathcal{CR}(k)}}\beta_p\xi_p\in\langle\mathcal{NC},\xi\rangle.$$

This decouples from the uniform-in-$N$ Question \ref{streasi} to an $N$-dependent question:
\begin{question}
At parameter $N$, what is the maximal $m(N)$ such that for all non-zero
$$\xi=\sum_{i=1}^{m(N)}\alpha_i\xi_{p_i}\qquad(\alpha_i\in\mathbb{C},\,p_i\in\mathcal{CR}(k),\,|p_i|\leq N),$$
$\langle \mathcal{NC},\xi\rangle=\mathcal{P}$?
\end{question}
At least with these conditions on $\xi$ the Tannaka--Krein dual $\langle \mathcal{NC},\xi\rangle$ is strictly larger than $\mathcal{NC}$ (because $\xi$ is not in $\mathcal{NC}$ because $|p_i|\leq N$ gives linear independence); however the rewriting from $\alpha_j$ to $\beta_j$ coefficients might be far from tractable.

\section{Moment Level}\label{Moment}
The previous section concerns generation in the context of a linear combination of a fixed number $m$ of partitions of generic $[k]$. An alternative paradigm is to study generation in the context of an arbitrary linear combination of partitions of a fixed $[k]$. The key result, whose proof uses Frobenius reciprocity, is the following (and a similar version can be stated for any strict intermediate compact matrix quantum group $\mathbb{G}\lneq \mathbb{G}'\lneq \mathbb{G}''$).
\begin{theorem}[Woronowicz]\label{frob}
If there exists an exotic quantum permutation group $S_N\lneq\mathbb{G}\lneq S_N^+$ with Tannaka--Krein dual $\mathcal{C}$, then there exists $k\geq 1$ such that
$$\dim\hom_{\mathcal{NC}}(0,k)\lneq \dim\hom_{\mathcal{C}}(0,k)\lneq \dim\hom_{\mathcal{P}}(0,k).$$
\end{theorem}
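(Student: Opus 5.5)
The plan is to argue by contraposition using Tannaka--Krein reconstruction together with Frobenius reciprocity. Suppose $\mathcal{C}$ is the Tannaka--Krein dual of an exotic $S_N\lneq\mathbb{G}\lneq S_N^+$, so that $\mathcal{NC}\lneq\mathcal{C}\lneq\mathcal{P}$. The inclusions $\hom_{\mathcal{NC}}(0,k)\subseteq\hom_{\mathcal{C}}(0,k)\subseteq\hom_{\mathcal{P}}(0,k)$ hold for every $k$, so all the inequalities $\leq$ are automatic. What has to be shown is that there is a single $k$ at which both inclusions are strict.

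The first step is to reduce from $\hom(k,\ell)$ to $\hom(0,k+\ell)$ via Frobenius reciprocity. The rotation (cup) maps built from $T_{\Uaa}$ and identities are available in any Tannaka--Krein dual containing $\mathcal{NC}_2$ (Proposition \ref{rots} and its underlying construction). They give linear isomorphisms $\hom_{\mathcal{D}}(k,\ell)\cong\hom_{\mathcal{D}}(0,k+\ell)$ for each of $\mathcal{D}=\mathcal{NC},\mathcal{C},\mathcal{P}$, and these isomorphisms are compatible with the inclusions. Since a Tannaka--Krein dual is determined by its $\hom$ spaces, $\mathcal{C}\neq\mathcal{NC}$ forces some $k_1$ with $\hom_{\mathcal{NC}}(0,k_1)\subsetneq\hom_{\mathcal{C}}(0,k_1)$. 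Likewise, $\mathcal{C}\neq\mathcal{P}$ forces some $k_2$ with $\hom_{\mathcal{C}}(0,k_2)\subsetneq\hom_{\mathcal{P}}(0,k_2)$. Because the spaces are finite-dimensional, strict inclusion is equivalent to strict inequality of dimensions.

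The main obstacle is getting one common $k$. The plan is to show that strictness propagates upward in $k$ on both sides, and then take $k=\max(k_1,k_2)$. In fact $k=k_1+k_2$ also works, and may be easier to arrange.

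For the lower side, take $\eta\in\hom_{\mathcal{C}}(0,k_1)\setminus\hom_{\mathcal{NC}}(0,k_1)$ and consider $\eta\otimes\xi_{\idpart}\in\hom_{\mathcal{C}}(0,k_1+1)$. If this vector lay in $\hom_{\mathcal{NC}}(0,k_1+1)$, apply the $\mathcal{NC}$-map $\operatorname{id}^{\otimes k_1}\otimes T_{\upsingleton}$. This returns $N\eta$, which would then lie in $\hom_{\mathcal{NC}}(0,k_1)$, a contradiction. Hence strictness persists for all $k\geq k_1$.

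For the upper side, pick $\zeta\in\hom_{\mathcal{P}}(0,k_2)\setminus\hom_{\mathcal{C}}(0,k_2)$, and the same trick applies. If $\zeta\otimes\xi_{\idpart}\in\mathcal{C}$, then contracting with the singleton (an $\mathcal{NC}\subseteq\mathcal{C}$ morphism) gives $N\zeta\in\mathcal{C}$, a contradiction. So strictness persists on this side for $k\geq k_2$ as well.

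Taking $k=\max(k_1,k_2)$ then gives both strict inequalities at once, which completes the argument. In order of execution: the reduction by Frobenius reciprocity, the existence of $k_1$ and $k_2$, the upward propagation via tensoring with a singleton and capping, and finally the choice of $k$.
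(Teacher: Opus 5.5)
Your proof is correct and follows the route the paper indicates: Tannaka--Krein duality gives $\mathcal{NC}\lneq\mathcal{C}\lneq\mathcal{P}$, and Frobenius reciprocity transports each strict inclusion to some $\hom(0,k)$ space. The paper gives only this one-line attribution, so your tensor-with-a-singleton-and-cap step fills in something it leaves implicit: that a single $k$ witnesses both strict inequalities. That step works here because $\mathcal{NC}\leq\mathcal{C}$ supplies both the singleton vector and its dual, with pairing $N\neq 0$.
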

Note that the dimension on the left is the Catalan number $C_k$, while the dimension on the right is related to Stirling numbers of the second kind, and bounded above by the Bell number $B_k$ (see Theorem \ref{inj}).
Suppose that $k_0$ is the minimal such $k$. Then say that $\mathbb{G}$ has moment level $k_0$. Similarly, if for all $\xi=\sum_{p\in\mathcal{P}(k)}\alpha_p\xi_p$, it is the case that $\langle \mathcal{NC},\xi \rangle=\mathcal{P}$ or $\langle \mathcal{NC},\xi \rangle=\mathcal{NC}$,  then say that the inclusion $S_N\leq S_N^+$ is \emph{maximal at moment level $k$}.

\bigskip\noindent

  In fact, by (\cite{fsw}, Appendix A) and \cite{mcc}, the inclusion $S_N\leq S_N^+$ is maximal at moment level four. This can be seen directly from the observation that the fourth Bell and Catalan numbers differ by one. Freslon--Speicher used the properties of the Haar state to conclude that the inclusion $S_N\leq S_N^+$ is also maximal at moment level five (see Appendix A). If $S_N\leq S_N^+$ is maximal at all moment levels, then there is no exotic Tannaka--Krein dual, and thus $S_N\leq S_N^+$ is maximal. This is because the dimensions of the $\hom_{\mathcal{C}}(0,k)$ spaces determine the values of the Haar state on monomials of degree $k$ (see Appendix A: Theorem \ref{Haardet}).

\subsection{Moment Level Five}
To show maximality at moment level five using Tannaka--Krein duality, the proof strategy proceeds as follows: assuming $\dim \hom_{\mathcal{C}}(0,5)>\dim \hom_{\mathcal{NC}}(0,5)$, one obtains a non-zero crossing vector $\eta\in \hom_{\mathcal{C}}(0,5)$ with  crossing partition terms. Then, $\xi=\operatorname{red}_{\mathcal{CR}}(\eta)$ is some non-zero $\xi\in\mathcal{C}$ with all terms coming from crossing partitions, which, because there are $B_5-C_5=10$ crossing partitions, is a linear combination of at most ten crossing-partition vectors:
$$\xi=\sum_{i=1}^{10}\alpha_i\xi_{p_i}\qquad (p_i\in\mathcal{CR}(5)).$$
There are nine useful maps in $\hom_{\mathcal{NC}}(5,4)\subseteq \hom_{\mathcal{C}}(5,4)$: these are the five singleton cappings, $S_x:=R_{[5]\backslash\{x\}}$; and the four merge maps  $M_{x,x+1}\in\hom_{\mathcal{C}}(5,4)$ given by the elements of $\mathcal{NC}(5,4)$:
$$M_{1,2}=\mathsf{Y}|||,\quad M_{2,3}=|\mathsf{Y}||,\quad M_{3,4}=||\mathsf{Y}|,\quad M_{4,5}=|||\mathsf{Y}.$$

Thereafter,  the larger $\operatorname{Op}_{\mathcal{NC}}(5)$ structure can be used to provide a tenth map and to conclude, like Freslon--Speicher, maximality at moment level five. Consider the crossing  partition:
  $$p=
\BigPartition{
\Psingletons 1 to 0.75:5
\Pline (5,0.75) (1,0.25)
\Pline (1,0) to (1,1)
\Pline (2,0) to (2,1)
\Pline (3,0) to (3,1)
\Pline (4,0) to (4,1)
}$$
Note $T_p\not\in \hom_{\mathcal{NC}}(5,4)$; however, $T_p\in\operatorname{Op}_{\mathcal{NC}}(5,4)$:
$$T_p=M_{1,2}\circ T_r.$$
Denote this $M_{6,1}$.  Applying  $(\operatorname{red}_{\mathcal{CR}}\circ T)$ to $\xi$ gives:
$$\left(\sum_{i=1}^{10}d_T(i)N^{c_T(i)}\alpha_i\right)\xi_{\Labab}.$$
Noting $T=T_{p_T}$ for some $p_T\in\mathcal{P}(5,4)$, the coefficients $d_T(i)$ are given by:
$$d_T(i)=\begin{cases}
                1, & \mbox{if }  p_T\circ p_i=\Labab,\\
                0, & \mbox{otherwise}.
              \end{cases}$$
The powers of $N$ are determined by the number of closed blocks in $p_T\circ p_i$, with $c_T(i)\in\{0,1\}$ for the singleton cappings $S_x$, and $c_T(i)=0$ for the merging morphisms. Define a matrix $M\in M_{10}(\mathbb{C})$:
$$M(T,j)=d_T(j)N^{c_T(j)}\qquad (T\in [10],\,j\in [10]).$$
With $\alpha\in\mathbb{C}^{10}$ the column vector with entries $\alpha_j$, the condition that $(\operatorname{red}_{\mathcal{CR}}\circ T)(\xi)=0$ for all ten maps $T\in [10]$ is equivalent to solving:
$$M(\alpha)=0.$$
The software system \emph{Maple} was used to build $M$, and $M$ is sufficiently sparse for the determinant to be calculated explicitly:
$$|\det M|=(N-4)(N^2-3N+1)^2.$$
\begin{corollary}\label{maxm5}
For all $N\geq 5$, $S_N\leq S_N^+$ is maximal at moment level five.
\end{corollary}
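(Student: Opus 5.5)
The plan is to show that for $N\geq 5$ no exotic $\mathcal{C}$ can satisfy $\dim\hom_{\mathcal{NC}}(0,5)<\dim\hom_{\mathcal{C}}(0,5)<\dim\hom_{\mathcal{P}}(0,5)$. Concretely, any $\mathcal{C}=\langle\mathcal{NC},\eta\rangle$ with $\eta\in\hom_{\mathcal{P}}(0,5)$ will be shown to be either $\mathcal{NC}$ or $\mathcal{P}$.

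First, $5\leq N$, so by Theorem \ref{inj} the map $\Phi_{5,N}$ is injective. Hence $\hom_{\mathcal{P}}(0,5)=\hom_{\mathcal{NC}}(0,5)\oplus\hom_{\mathcal{CR}}(0,5)$, and the idempotent $\pi_{\mathcal{CR}}$ is well defined. If $\eta\notin\hom_{\mathcal{NC}}(0,5)$, then $\xi:=\pi_{\mathcal{CR}}(\eta)$ is a non-zero element of $\langle\mathcal{NC},\eta\rangle$ of the form $\xi=\sum_{i=1}^{10}\alpha_i\xi_{p_i}$, where the sum runs over the ten elements of $\mathcal{CR}(5)$. By injectivity, $\alpha\neq 0$ as a vector. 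If $\eta\in\hom_{\mathcal{NC}}(0,5)$, then Proposition \ref{proptriv} gives $\mathcal{NC}$ directly.

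Next, apply the ten Tannaka--Krein operators already assembled, followed by $\operatorname{red}_{\mathcal{CR}}$. These are the five singleton cappings $S_x$, the four merges $M_{x,x+1}$, and $M_{6,1}=M_{1,2}\circ T_r$, which lies in $\operatorname{Op}_{\mathcal{NC}}(5,4)$ by Proposition \ref{rots}. Each image is an element of $\hom_{\mathcal{P}}(0,4)$. Since $\Phi_{4,N}$ is injective for $N\geq 4$ and $\mathcal{CR}(4)=\{\Labab\}$, $\operatorname{red}_{\mathcal{CR}}$ is well defined there and returns $(M\alpha)_T\,\xi_{\Labab}$. So if $\xi_{\Labab}\notin\langle\mathcal{NC},\xi\rangle$, then $M\alpha=0$ for the $10\times 10$ matrix $M$ described above. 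The determinant computation gives $|\det M|=(N-4)(N^2-3N+1)^2$. The roots of $N^2-3N+1$ are $(3\pm\sqrt5)/2\notin\mathbb{Z}$, and $N-4\neq 0$ for $N\geq5$, so $\det M\neq0$ and hence $\alpha=0$, a contradiction. Therefore some $T(\xi)$ has a non-zero $\xi_{\Labab}$-component, and $\xi_{\Labab}\in\langle\mathcal{NC},\xi\rangle$. By Theorem \ref{noez} (after rotating to $\crosspart$), $\langle\mathcal{NC},\eta\rangle=\mathcal{P}$.

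Finally, conclude as follows. Any $\mathcal{C}$ with $\mathcal{NC}\leq\mathcal{C}\leq\mathcal{P}$ and $\hom_{\mathcal{C}}(0,5)\neq\hom_{\mathcal{NC}}(0,5)$ contains such an $\eta$ and is therefore $\mathcal{P}$. So no exotic dual is distinguished at order five, which is maximality at moment level five.

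The main obstacle is the correctness of the matrix $M$: determining, for each of the ten maps and each of the ten crossing partitions, whether $p_T\circ p_i=\Labab$ and counting the closed blocks. The merges never close blocks. A singleton capping at $x$ closes a block exactly when $x$ is a singleton in $p_i$; this is possible only for the $2$-$2$-$1$ patterns, and then the result is $\Labab$ with factor $N$. This bookkeeping I would verify by hand using the dihedral symmetry of Proposition \ref{rots}, which acts on both the maps and the partitions, so that only orbit representatives need checking. A second point to check is that $\operatorname{red}_{\mathcal{CR}}$ at $k=4$ is genuinely well defined, which holds because $4\leq N$.
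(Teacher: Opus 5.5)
Your proposal is correct and follows the paper's argument. You reduce to a non-zero $\xi\in\hom_{\mathcal{CR}}(0,5)$ via $\pi_{\mathcal{CR}}$, apply the five singleton cappings, the four merges $M_{x,x+1}$ and the rotated merge $M_{6,1}=M_{1,2}\circ T_r$, and use $|\det M|=(N-4)(N^2-3N+1)^2\neq 0$ for $N\geq 5$ to force a non-zero $\xi_{\Labab}$-coefficient. Your planned hand-check via rotation also works: under the $\mathbb{Z}_5$ action $M$ is block-circulant with $2\times 2$ symbol rows $(N,\ \omega+\omega^2)$ and $(1+\omega,\ \omega^2)$. The symbol determinants $\omega^2(N-2-\omega-\bar\omega)$ multiply over $\omega^5=1$ to exactly the paper's determinant, up to sign.
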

For $S_5<S_5^+$, this is independent of Theorem \ref{bane2} (1). Of course, by Theorem \ref{fourmax}, $S_4<S_4^+$ is maximal at moment level five; here is an independent verification:
\begin{proposition}
  $S_4<S_4^+$ is maximal at moment level five.
\end{proposition}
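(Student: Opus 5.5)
The plan is to rerun the argument behind Corollary \ref{maxm5} at $N=4$, where the only failure is the factor $(N-4)$ in $|\det M|=(N-4)(N^2-3N+1)^2$. The degeneracy should be exactly accounted for by Corollary \ref{bad}. Suppose $\mathcal{NC}\lneq\mathcal{C}$ at $N=4$ with $\dim\hom_{\mathcal{C}}(0,5)>\dim\hom_{\mathcal{NC}}(0,5)=C_5$. Then pick $\eta\in\hom_{\mathcal{C}}(0,5)\setminus\hom_{\mathcal{NC}}(0,5)$, write it in terms of partition vectors, and pass to $\xi=\operatorname{red}_{\mathcal{CR}}(\eta)=\sum_{i=1}^{10}\alpha_i\xi_{p_i}$ with $p_i\in\mathcal{CR}(5)$. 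This $\xi$ lies in $\mathcal{C}$ but not in $\hom_{\mathcal{NC}}(0,5)$, since $\eta-\xi\in\hom_{\mathcal{NC}}(0,5)$. Applying the ten maps $S_x$, $M_{x,x+1}$, $M_{6,1}$ followed by $\operatorname{red}_{\mathcal{CR}}$ gives the vectors $(M\alpha)_T\,\xi_{\Labab}\in\mathcal{C}$. Since $4\le N$, Theorem \ref{inj} makes $\xi_{\Labab}$ linearly independent from $\hom_{\mathcal{NC}}(0,4)$, so these coefficients are well defined. If any $(M\alpha)_T\neq0$, then $\xi_{\Labab}\in\mathcal{C}$ and Theorem \ref{noez} gives $\mathcal{C}=\mathcal{P}$. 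So it remains to show that $M\alpha=0$ at $N=4$ forces $\xi\in\hom_{\mathcal{NC}}(0,5)$, which is a contradiction.

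\textbf{Step 1: the kernel of $M$ at $N=4$ is one-dimensional.} The entries of $M$ are polynomials in $N$ (namely $0$, $1$ or $N$), so $\det M$ is a polynomial in $N$. Since $N^2-3N+1=5\neq0$ at $N=4$, the root $N=4$ of $\det M$ is simple. If $\operatorname{rank}M(4)\le 8$, every $9\times 9$ minor would vanish at $N=4$. Expanding $\det M(N)$ around $N=4$ (for instance via Jacobi's formula for the derivative, which is a combination of $9\times9$ minors) would then force a zero of order at least two, contradicting simplicity. Hence $\operatorname{rank}M(4)=9$ and $\ker M(4)$ is a line. As a sanity check, one can also exhibit an explicit nonvanishing $9\times9$ minor directly from the Maple matrix.

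\textbf{Step 2: identify the kernel with the vector of Corollary \ref{bad}.} At $N=4$, $k=5=N+1$, and Corollary \ref{bad} gives a nonzero $\eta_0=\sum_{p\in\mathcal{CR}(5)}\beta_p\xi_p$ lying in $\hom_{\mathcal{NC}}(0,5)$. Each of the ten maps sends $\hom_{\mathcal{NC}}(0,5)$ into $\hom_{\mathcal{NC}}(0,4)$. For $M_{6,1}=M_{1,2}\circ T_r$ this holds because rotations preserve $\mathcal{NC}$. Since $\xi_{\Labab}\notin\hom_{\mathcal{NC}}(0,4)$ at $N=4$, the coefficient of $\xi_{\Labab}$ in the image of $\eta_0$ must vanish for every $T$, so $M\beta=0$. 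The coefficient vector $\beta$ is nonzero, because $\eta_0\neq 0$ is a combination of crossing vectors only. By Step 1, $\ker M(4)=\mathbb{C}\beta$.

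\textbf{Step 3: conclude.} If $M\alpha=0$, then $\alpha=c\beta$, so $\xi=c\eta_0\in\hom_{\mathcal{NC}}(0,5)$, contradicting the choice of $\eta$. Therefore every $\xi\in\hom_{\mathcal{P}}(0,5)$ satisfies $\langle\mathcal{NC},\xi\rangle\in\{\mathcal{NC},\mathcal{P}\}$ at $N=4$.

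The main obstacle is Step 1. The representation-dependence of $\operatorname{red}_{\mathcal{CR}}$ at $k=N+1$ is harmless here: it changes $\alpha$ only by multiples of $\beta$, which are already in $\ker M$. The rank claim, however, rests on the determinant factorisation being correct as a polynomial identity. I would first try the simple-root argument, and back it up with an explicit $9\times 9$ minor computed in Maple.
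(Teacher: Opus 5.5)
Your proof is correct. Its skeleton is the paper's: reduce to the crossing part, apply the ten test maps, note that $\ker M$ is a line at $N=4$, and show that this line lies in $\hom_{\mathcal{NC}}(0,5)$ via Corollary \ref{bad}.

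The difference is in how you justify the two key facts. The paper gets $\operatorname{rank}M=9$ from a separate Maple computation and reads off the explicit kernel vector $\sum_{|p|=3}\xi_p-2\sum_{|p|=2}\xi_p$. It then matches this vector against the crossing part of the Corollary \ref{bad} relation, using the explicit M\"obius coefficients $c_p=(-1)^{|p|}\prod_{B\in p}(|B|-1)!$, which are also checked in Maple.

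You instead deduce the rank from the simple root of the already-computed polynomial $\det M$: if every $9\times 9$ minor vanished at $N=4$, Jacobi's formula would make $N=4$ a double root. You then show abstractly that the crossing part $\beta$ of the Corollary \ref{bad} relation lies in $\ker M$. The reason is that all ten maps, including $M_{6,1}=M_{1,2}\circ T_r$, send $\hom_{\mathcal{NC}}(0,5)$ into $\hom_{\mathcal{NC}}(0,4)$. At $N=4$ that space does not contain the basic crossing vector, by Theorem \ref{inj}.

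What your route buys:
\begin{itemize}
\item It needs only the single determinant identity already used for Corollary \ref{maxm5}, and no explicit kernel or M\"obius computation.
\item It explains the factor $(N-4)$ rather than merely observing it, since your Step 2 alone forces $\det M(4)=0$.
\end{itemize}
What the paper's explicit computation buys is the actual kernel vector, with all ten coefficients nonzero. The paper uses this after Theorem \ref{thez3} to exhibit a length-ten crossing combination that generates only $\mathcal{NC}$. Your argument would need the M\"obius formula to recover that nonvanishing.
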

\begin{proof}
At $N=4$, \emph{Maple} computes $\operatorname{rank}(M)=9$, and $\ker M=\mathbb{C}\eta$, with
$$\eta=\sum_{\underset{|p|=3}{p\in\mathcal{CR}(5)}}\xi_p-2\sum_{\underset{|p|=2}{p\in\mathcal{CR}(5)}}\xi_p.$$
Let $\xi=\sum_{p\in\mathcal{P}(k)}\alpha_p\xi_p$. Then $\xi_0=\operatorname{red}_{\mathcal{CR}}(\xi)$ is an element of $\langle \mathcal{NC},\xi\rangle$ given by $\xi_0=\sum_{p\in\mathcal{CR}}\alpha_p\xi_p$. If $\xi_0$ is outside $\ker M$, then some $(\operatorname{red}_{\mathcal{CR}}\circ T)(\xi_0)$ gives the basic crossing and $\langle \mathcal{NC},\xi\rangle=\mathcal{P}$.

\bigskip\noindent

If $\xi_0$ is inside $\ker M$, then $\xi_0=\lambda \eta$ for some $\lambda\in\mathbb{C}$, and so $\langle \mathcal{NC},\xi\rangle=\langle \mathcal{NC},\eta\rangle$. Recall Corollary \ref{bad}: at $N=4$,
\begin{align*}
  \xi_{\idpart^{\otimes 5}} & =\sum_{p\in\mathcal{P}(5)\backslash\{\idpart^{\otimes 5}\}}c_p\xi_p.\end{align*}
  The following can be verified by direct computation in \emph{Maple}; it should also be possible to derive it via M\"{o}bius inversion on the partition lattice (see \cite{nis}, Lecture 10, Ex. 10.33):
  $$c_p=(-1)^{|p|}\prod_{B\in p}(|B|-1)!$$
  Therefore
\begin{align*} \xi_{\idpart^{\otimes 5}} & =-\sum_{\underset{|p|=3}{p\in\mathcal{CR}(5)}}\xi_p+2\sum_{\underset{|p|=2}{p\in\mathcal{CR}(5)}}\xi_p+\sum_{p\in\mathcal{NC}(5)\backslash\{\idpart^{\otimes 5}\}}c_p\xi_p
  \\ \implies \eta&=\sum_{p\in\mathcal{NC}(5)}c_p\xi_p,
\end{align*}
that is $\eta\in\mathcal{NC}$, and so $\langle \mathcal{NC},\xi\rangle=\mathcal{NC}$.
\end{proof}
\subsection{Moment Level Six}\label{ml6}
This approach does not yield maximality  at moment level six. There are $B_6-C_6=71$ crossing partitions of $[6]$, and in the first instance, there is a gap of five maps. The matrix $M$ defined below has non-zero entries equal to $N^c$ with $c\in\{0,1,2\}$. Maple performs
Gaussian elimination over the field $\mathbb{Q}(N)$ of rational
functions to compute the \emph{generic rank}: this calculation holds for all  but finitely many $N$. Results
established by this method will be described as holding for
\emph{generic $N$}. The exact-arithmetic computation in \emph{Maple}
for each $N\in[4,1000]$ confirms that no exceptional values occur in
this range.
\begin{proposition}
Let $\xi=\sum_{p\in\mathcal{CR}(6)}\alpha_p\xi_{p}$. There is a set $\mathcal{F}$ of 66 morphisms in $\operatorname{Op}_{\mathcal{NC}}(6,4)$ such that the linear system
$$\left\{(\operatorname{red}_{\mathcal{CR}}\circ T)(\xi)=0\colon T\in\mathcal{F}\right\}$$
is \emph{generically} of full rank.
\end{proposition}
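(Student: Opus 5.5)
The plan mirrors the moment-level-five argument, with $\hom(5,4)$ replaced by maps $\hom(6,4)$. For each candidate $T=T_{p_T}$ with $p_T\in\mathcal{P}(6,4)$, write
$$(\operatorname{red}_{\mathcal{CR}}\circ T)(\xi)=\Bigl(\sum_{p\in\mathcal{CR}(6)} d_T(p)N^{c_T(p)}\alpha_p\Bigr)\xi_{\Labab}.$$
Here $d_T(p)=1$ exactly when $p_T\circ p=\Labab$, and $c_T(p)$ counts the closed blocks. The set $\mathcal{F}$ then gives a $66\times 71$ matrix $M$ with entries $0$ or $N^c$, and the claim is that $M$ has rank $66$ over $\mathbb{Q}(N)$.

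The natural supply of morphisms comes in three kinds.
\begin{itemize}
\item[(i)] The $\binom{6}{2}=15$ restrictions $R_{[6]\setminus\{x,y\}}$, which lie in $\hom_{\mathcal{NC}}(6,4)$.
\item[(ii)] Maps of the form $R_{[5]\setminus\{x\}}\circ M_{y,y+1}$, which merge two adjacent points and then cap a singleton. This includes the cyclic merge $M_{6,1}=M_{1,2}\circ T_r$ made available through $\operatorname{Op}_{\mathcal{NC}}$ and Proposition \ref{rots}.
\item[(iii)] Double merges $M_{a,a+1}\otimes M_{b,b+1}$ and triple merges $\mathsf{Y}\!\!\mid$-type maps sending three adjacent points to one. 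Their rotations $T\circ T_{r^n}$ lie in $\operatorname{Op}_{\mathcal{NC}}(6,4)$ for every $n$.
\end{itemize}

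I would take $\mathcal{F}$ to be a rotation-invariant family built from these types. Rotation invariance makes the joint kernel of the system invariant under $\mathbb{Z}_6$. I would first check, using Proposition \ref{func}(2) for the powers of $N$, that every map in $\mathcal{F}$ sends each $p\in\mathcal{CR}(6)$ either to $\Labab$ or to a non-crossing partition. I would then discard redundant rows, keeping those whose supports distinguish the $71$ crossing partitions.

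To verify the rank, I would first organise the $71$ crossing partitions by block-size type and by $D_6$-orbit. Rotation equivariance of $\mathcal{F}$ then lets me decompose $M$ over the characters of $\mathbb{Z}_6$, giving smaller blocks whose ranks add up. For each block I would exhibit a $66\times66$ minor, or a minor of the appropriate block size, whose determinant is a non-zero polynomial in $N$. It suffices to show the determinant is non-zero at a single specialisation, or to show that its leading coefficient in $N$ is non-zero. The leading coefficient is controlled by the entries with the largest power $N^2$, coming from the singleton-capping rows. This yields generic full rank, with the exceptional $N$ among the roots of the determinant. In practice this is the Gaussian elimination over $\mathbb{Q}(N)$ done in Maple, and the decomposition above is how I would make it checkable.

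The main obstacle is choosing $\mathcal{F}$ so that the rank really reaches $66$ and does not stall below it. Many natural maps produce dependent rows, because both sides are equivariant under the reflection and rotation symmetries of $D_6$. The five-dimensional defect, $71-66$, should correspond to combinations such as sums over a $D_6$-orbit weighted by block counts, analogous to $\eta$ at $N=4$, that no map of $\hom(6,4)$ type detects. I would first compute the rank on each isotypic component with the restriction and merge families alone. Only where the rank falls short would I add rotated triple merges and other composites from $\operatorname{Op}_{\mathcal{NC}}$.
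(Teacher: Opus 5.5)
There is a genuine gap: the proposition is an existence claim whose entire content is a specific family $\mathcal{F}$ of 66 maps, and the proposal never produces it. Your set-up of $M$ (rows $T\in\mathcal{F}$, columns $p\in\mathcal{CR}(6)$, entries $d_T(p)N^{c_T(p)}$) and the elimination over $\mathbb{Q}(N)$ do match the paper.

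Moreover, the families you do name cannot reach rank 66 by counting alone. They are:
\begin{itemize}
\item the 15 restrictions;
\item the 30 merge-then-cap maps $R_{[5]\setminus\{x\}}\circ M_{y,y+1}$, that is, 24 that cap a point $x\notin\{y,y+1\}$ plus the 6 semicircle cappings $S_{y,y+1}$ obtained by capping the merged point;
\item the 9 double merges;
\item the 6 triple merges.
\end{itemize}
This is only $60$ distinct morphisms, so $\operatorname{rank}M\le 60<66$ whatever the computation says.

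The paper's $\mathcal{F}$ is exactly these 60 maps plus six more, which are cap-then-merge rather than merge-then-cap. Each caps a point $x$ with a singleton and merges its two neighbours $x-1$ and $x+1$. In the paper's $(x,y)$ parametrisation these are the cases $y=x-1$.

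The paper finds $\ker M$ of dimension exactly $71-66=5$, so its 66 rows are linearly independent. Hence your 60 rows have rank exactly 60, and no row of the paper's family is redundant. Your step ``add other composites where the rank falls short'' is therefore precisely where the six missing rows must come from, and you leave it unresolved.

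Two smaller points:
\begin{itemize}
\item Your remark that a $66\times 66$ minor non-vanishing at a single value of $N$ certifies generic rank is correct. Any one of the paper's exact rank-66 computations for $N\in[5,1000]$ already does this.
\item The claim that the leading coefficient of the determinant is controlled by the $N^2$ entries is not reliable, since top-degree contributions from different permutations can cancel.
\end{itemize}
The $\mathbb{Z}_6$ isotypic decomposition is legitimate once $\mathcal{F}$ is closed under $T\mapsto T\circ T_r$, which the completed family is. It only organises the computation, though; it does not replace it.
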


\begin{proof}
Firstly, the 66 morphisms:
\begin{enumerate}
  \item There are 15 maps $R_{\kappa}\in\hom_{\mathcal{NC}}(6,4)$, parameterised $\kappa\in\mathcal{P}_4([6])$.
  \item There are 30 maps $T_{\left(\BigPartition{
\Pblock .75 to 0.625:2,3
\Psingletons .75 to 0.625:1.5
\Pline (2.5,0.25) (2.5,0.625)
},x,y\right)}\in \operatorname{Op}_{\mathcal{NC}}(6,4)$, parameterised by the set of $(x,y)\in[6]^2$ such that $x\neq y$. The element $x$ is capped with a singleton, while $y$ and $y+1$ are merged, except when $y+1=x$ in which case $y$ and $y+2$ are merged, for example, $T_{\left(\BigPartition{
\Pblock .75 to 0.625:2,3
\Psingletons .75 to 0.625:1.5
\Pline (2.5,0.25) (2.5,0.625)
},3,2\right)}=T_p$ where:
$$p=
\BigPartition{
\Psingletons 1 to 0.75:1,2,3,4,5,6
\Psingletons 0.25 to 0: 2,3,4,5
\Pline (1,.75) (2,0.25)
\Pline (2,0.25) (2,0)
\Pline (2,0.75) (3,0.5)
\Pline (4,0.75) (3,0.5)
\Pline (3,0.5)  (3,0)
\Pline (5,0.75)  (4,0.25)
\Pline (4,0.25) (4,0)
\Pline (6,0.75)  (5,0.25)
\Pline (5,0.25) (5,0)
}$$
Note, again, the larger $\operatorname{Op}_{\mathcal{NC}}$ comes into play: e.g. $T_{\left(\BigPartition{
\Pblock .75 to 0.625:2,3
\Psingletons .75 to 0.625:1.5
\Pline (2.5,0.25) (2.5,0.625)
},2,6\right)}$ is not an element of $\operatorname{hom}_{\mathcal{NC}}(6,4)$.
    \item There are six semi-circle cappings $S_{x,x+1}\in \operatorname{Op}_{\mathcal{NC}}(6,4)$, parameterised by $x\in [6]$.
  \item There are nine maps $T_{(\mathsf{Y}\mathsf{Y},\{t_1,t_2\})}\in\operatorname{Op}_{\mathcal{NC}}(6,4)$, parameterised by pairs of adjacent elements $\{t_1,t_2\}\subset \mathcal{P}_2([6])$ such that $t_1\cap t_2=\emptyset$. Both pairs $t_1$ and $t_2$ are merged; for example, $T_{(\mathsf{Y}\mathsf{Y},\{\{6,1\},\{2,3\}\})}=T_p$ where
      $$p=
\BigPartition{
\Psingletons 1 to 0.75:1,2,3,4,5,6
\Psingletons 0.25 to 0:2.5,3.5,4.5,5.5
\Pline (1,0.75) (5.5,0.5)
\Pline (2,0.75) (2.5,0.5)
\Pline (3,0.75) (2.5,0.5)
\Pline (4,0.75) (3.5,0.25)
\Pline (5,0.75) (4.5,0.25)
\Pline (6,0.75) (5.5,0.5)
\Pline (5.5,0.5) (5.5,0.25)
\Pline (2.5,0.5) (2.5,0)
\Pline (3.5,0.25) (3.5,0)
\Pline (4.5,0.25) (4.5,0)
\Pline (5.5,0.25) (5.5,0)
}$$
  \item There are six maps $T_{\left(\BigPartition{
\Pblock .75 to 0.625:1,2,3
\Pline (2,0.25) (2,0.625)
},x\right)}\in\operatorname{Op}_{\mathcal{NC}}(6,4)$, parameterised by $x\in [6]$. These maps merge the triple $\{x,x+1,x+2\}$.
\end{enumerate}
Noting each $T$ in this collection is of the form $T_{p_T}$ for $p_T\in\mathcal{P}(6,4)$, applying $(\operatorname{red}_{\mathcal{CR}}\circ T)$ to $\xi$ gives:
$$\left(\sum_{p\in\mathcal{CR}(6)}\alpha_pd_T(p)N^{c_T(p)}\right)\xi_{\Labab}.$$
The coefficients $d_T(p)$ are given by:
$$d_T(p)=\begin{cases}
                1, & \mbox{if }  p_T\circ p=\Labab,\\
                0, & \mbox{otherwise}.
              \end{cases}$$
The powers of $N$ are determined by the number of closed blocks in $p_T\circ p$, with $c_T(p)\in\{0,1,2\}$ for the restriction maps $R_\kappa$, $c_T(p)\in\{0,1\}$ for the $T_{\left(\BigPartition{
\Pblock .75 to 0.625:2,3
\Psingletons .75 to 0.625:1.5
\Pline (2.5,0.25) (2.5,0.625)
},x,y\right)}$ and the semi-circle cappings, and $c_T(p)=0$ for merging morphisms. Define a matrix $M\in M_{66\times 71}(\mathbb{C})$:
$$M(T,p)=d_T(p)N^{c_T(p)}\qquad (T\in [66],\,p\in \mathcal{CR}(6)).$$
With $\alpha\in\mathbb{C}^{71}$ the column vector with entries $\alpha_p$, the condition that $(\operatorname{red}_{\mathcal{CR}}\circ T)(\xi)=0$ for all $T\in\mathcal{F}$ is equivalent to solving:
$$M(\alpha)=0.$$
The software system \emph{Maple} was used to both build $M$ and
solve this linear system. Gaussian elimination over $\mathbb{Q}(N)$
gives a five-dimensional kernel $\ker M$, with basis vectors having
entries in $\mathbb{Q}(N)$.
\end{proof}

Using exact arithmetic, for each $N\in[5,1000]$, \emph{Maple} finds $\operatorname{rank}(M)=66$, $\dim \ker M=5$;  and at $N=4$, $\operatorname{rank}(M)=54$.  For $N=5$, by Corollary \ref{bad}, there should exist in $\ker M$ a one-dimensional space $\mathbb{C}\eta$ such that $\langle\mathcal{NC},\eta\rangle=\mathcal{NC}$, so a priori the gap is four-dimensional. For $N=4$, by Corollary \ref{int}, there should exist in $\ker M$ a 16-dimensional such space, the gap being one-dimensional. See Question \ref{710}.

\bigskip\noindent

The following is immediate:
\begin{proposition}
If $\xi$ is outside $\ker M$, then $\langle \mathcal{NC},\xi\rangle=\mathcal{P}$.
\end{proposition}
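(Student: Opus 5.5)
Here $\xi=\sum_{p\in\mathcal{CR}(6)}\alpha_p\xi_p$ is a vector lying in the Tannaka--Krein dual under consideration, say $\xi\in\langle\mathcal{NC},\xi\rangle$, and ``outside $\ker M$'' means that its coefficient vector $\alpha\in\mathbb{C}^{71}$ satisfies $M\alpha\neq 0$. The proof will simply unwind the construction of $M$. Since $M\alpha\neq 0$, some row is non-zero, i.e.\ there is a $T\in\mathcal{F}$ with
$$c_T:=\sum_{p\in\mathcal{CR}(6)}\alpha_p\,d_T(p)\,N^{c_T(p)}\neq 0 .$$
Every $T\in\mathcal{F}$ lies in $\operatorname{Op}_{\mathcal{NC}}(6,4)\subseteq\operatorname{Op}_{\mathcal{C}}(6,4)$ for any $\mathcal{C}\geq\mathcal{NC}$. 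Each is either an honest element of $\hom_{\mathcal{NC}}(6,4)$ (the restrictions $R_\kappa$), or a composite of such a map with a rotation $T_{r^n}$. The latter covers the cyclically wrapped merges and cappings, just as $M_{6,1}=M_{1,2}\circ T_r$ was handled in the moment-level-five argument, and Proposition \ref{rots} shows that $T_{r^n}\in\operatorname{Op}_{\mathcal{C}}(6)$. Hence $T(\xi)\in\hom_{\mathcal{C}}(0,4)$ with $\mathcal{C}=\langle\mathcal{NC},\xi\rangle$.

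Next I compute $T(\xi)=\sum_p\alpha_pT_{p_T}(\xi_p)=\sum_p\alpha_pN^{c_T(p)}\xi_{p_T\circ p}$ using Proposition \ref{func}(2). Each $p_T\circ p$ lies in $\mathcal{P}(4)$, and $\mathcal{CR}(4)=\{\Labab\}$, so each term is either a multiple of $\xi_{\Labab}$ or a non-crossing vector. Those with $p_T\circ p=\Labab$ are exactly the $p$ with $d_T(p)=1$. Subtracting the non-crossing terms (the operator $\operatorname{red}_{\mathcal{CR}}$, realised as $-_{\eta_1}$ with $\eta_1\in\hom_{\mathcal{NC}}(0,4)$) leaves $c_T\,\xi_{\Labab}\in\langle\mathcal{NC},\xi\rangle$. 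Because $c_T\neq0$, this gives $\xi_{\Labab}\in\langle\mathcal{NC},\xi\rangle$. Rotating $\Labab$ to the basic crossing $\crosspart$ and applying Theorem \ref{noez} (equivalently Proposition \ref{abelian}), we conclude $\langle\mathcal{NC},\xi\rangle=\mathcal{P}$.

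The only point requiring care, and hence the main obstacle, is whether the representative-dependence of $\operatorname{red}_{\mathcal{CR}}$ matters at $k=6>N$ for small $N$. It does not: here $\xi$ is given as a specific linear combination, and we subtract specific non-crossing vectors. We never need uniqueness of the decomposition; we only need the \emph{existence} of $c_T\xi_{\Labab}$ in the dual. The definition of $M$ is likewise attached to the chosen coefficients $\alpha$, not to the vector $\xi$ itself. I would also check that, for the wrapped maps, the power $N^{c_T(p)}$ recorded in $M$ matches the number of closed blocks in the actual composite with the rotation. Since rotations create no closed blocks, this is immediate.
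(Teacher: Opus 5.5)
Your proposal is correct and is exactly the argument the paper leaves implicit when it calls this ``immediate''. A non-zero entry of $M\alpha$ gives some $T\in\mathcal{F}\subset\operatorname{Op}_{\mathcal{NC}}(6,4)$ with $(\operatorname{red}_{\mathcal{CR}}\circ T)(\xi)=c_T\,\xi_{\Labab}$ and $c_T\neq 0$, so $\xi_{\Labab}\in\langle\mathcal{NC},\xi\rangle$, and Theorem \ref{noez} finishes. Your remark that the representative-dependence of $\operatorname{red}_{\mathcal{CR}}$ is harmless here is correct: only the existence of the decomposition is used, which matters for $N<6$.
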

Note an invariance of $\ker M$ under the dihedral action is expected from the dihedral action on $\hom_{\mathcal{NC}}(6,4)$. For example, for the 30 maps of the form  $T_{\left(\BigPartition{
\Pblock .75 to 0.625:2,3
\Psingletons .75 to 0.625:1.5
\Pline (2.5,0.25) (2.5,0.625)
},x,y\right)}$, 12 are in the $D_6$ orbit of $T_{\left(\BigPartition{
\Pblock .75 to 0.625:2,3
\Psingletons .75 to 0.625:1.5
\Pline (2.5,0.25) (2.5,0.625)
},1,2\right)}$, 12 are in the $D_6$ orbit of  $T_{\left(\BigPartition{
\Pblock .75 to 0.625:2,3
\Psingletons .75 to 0.625:1.5
\Pline (2.5,0.25) (2.5,0.625)
},1,3\right)}$, and six are in the $D_6$ orbit of  $T_{\left(\BigPartition{
\Pblock .75 to 0.625:2,3
\Psingletons .75 to 0.625:1.5
\Pline (2.5,0.25) (2.5,0.625)
},2,1\right)}$.

\begin{proposition}
The following properties of $\ker M$ hold over \emph{generically}, and
are verified by exact arithmetic for all $N\in[4,1000]$:
\begin{enumerate}
\item $\ker M$ has a basis $\{v_1,v_2,v_3,v_4,v_5\}$ whose coefficients are real-valued polynomials in $N$, at degree at most three,
 \item $\ker M$ is invariant under the dihedral action $D_6\curvearrowright \hom_{\mathcal{P}}(0,6)$,

  \item $T_{r^3}$ is the identity on $\ker M$.
\end{enumerate}

\end{proposition}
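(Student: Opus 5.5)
The plan is to exploit the symmetry of the family $\mathcal{F}$ under the dihedral action, and then to make the generic computation rigorous by clearing denominators. Each $T\in\mathcal{F}$ is of the form $T_{p_T}$ with $p_T\in\mathcal{P}(6,4)$. The matrix entry $M(T,p)=d_T(p)N^{c_T(p)}$ depends only on the combinatorics of $p_T\circ p$.

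\textbf{Property (2).} The first step is to check that $\mathcal{F}$ is stable, as a set, under precomposition with $T_{r^n}$ and with the reflection $T_s$, up to the dihedral symmetry of $\Labab$. This is exactly the orbit decomposition recorded just before the statement:
\begin{itemize}
\item the 15 restrictions $R_\kappa$ are permuted among themselves, since $\mathcal{P}_4([6])$ is $D_6$-stable;
\item the 30 maps of type $(x,y)$ split into orbits of sizes $12+12+6$;
\item the six semicircle cappings form one orbit, as do the nine double merges and the six triple merges.
\end{itemize}
For $g\in D_6$ we have $p_T\circ g(p)=(p_{T}\circ g)(p)$, and $p_T\circ g=g'\circ p_{T'}$ for some $T'\in\mathcal{F}$ and $g'\in D_4$. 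Since $D_4$ fixes $\Labab$ and preserves the number of closed blocks, we get $M(T,g(p))=M(T',p)$. Hence $M\circ P_g=\Pi_g\circ M$ for a permutation $\Pi_g$ of the rows, and so $\ker M$ is $D_6$-invariant. The reflection acts conjugate-linearly on vectors, but $M$ has real entries, so its kernel is closed under conjugation and invariance persists.

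\textbf{Property (1).} Work over $\mathbb{Q}(N)$. Since $M$ has entries in $\mathbb{Z}[N]$ of rank $66$, Cramer's rule applied to a nonsingular $66\times66$ minor gives kernel vectors whose entries are $66\times66$ minors. These are polynomials, but of high degree. To achieve degree at most three, I would instead take the reduced row echelon basis returned by Maple, clear denominators in each vector, and divide by the gcd of its entries in $\mathbb{Z}[N]$. The degree bound is then read off from the explicit output. Realness is automatic, since the entries lie in $\mathbb{Q}[N]$.

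\textbf{Property (3).} Observe that $r^3$ is central in $D_6$, so $T_{r^3}$ commutes with the action and preserves $\ker M$. It therefore suffices to check that $P_{r^3}v_i=v_i$ for each of the five basis vectors. This is a finite identity among polynomials, checked coefficientwise.

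For the claim ``verified for all $N\in[4,1000]$'', I would rerun the computation at each integer $N$ rather than specialise the generic basis. The generic basis can degenerate exactly where its leading minor vanishes: indeed at $N=4$ the rank drops to $54$. So specialising is not enough there, and at $N=4$ the kernel must be recomputed directly and properties (2) and (3) rechecked on the larger kernel.

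\textbf{Main obstacle.} The hard part is rigour of ``generic''. Properties (2) and (3) are formal consequences of the symmetry and of explicit identities, but (1), and the exact dimension five, depend on the Maple elimination. The rank statement can be certified by exhibiting one $66\times66$ minor that is a nonzero polynomial. Its finitely many roots are then exactly the possible exceptional $N$.
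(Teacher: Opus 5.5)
Your handling of (1) and (3) matches the paper's. There, Maple's \texttt{Nullspace} basis (rational in $N$, numerators of degree at most two, common denominator $N-2$) is multiplied by $N-2$, and $T_{r^3}(v_i)=v_i$ is checked directly on the five vectors.

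For (2) you take a genuinely different route. The paper proves invariance purely by computation: it checks $M(T_r(v_i))=0$ and $M(T_s(v_i))=0$ for the computed basis, which is enough since $r,s$ generate $D_6$. It mentions the orbit structure of $\mathcal{F}$ only as heuristic motivation. You instead prove the equivariance $MP_g=\Pi_gM$. Each of the five families is defined cyclically, so precomposing with $T_g$ permutes $\mathcal{F}$ up to a dihedral relabelling of the four output points. Such a relabelling fixes the basic crossing and preserves closed-block counts.

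This is correct. Its only real content is checking that every member is specified equivariantly, including the wrap-around ones that lie only in $\operatorname{Op}_{\mathcal{NC}}(6,4)$. For example, the rule ``merge $y$ and $y+2$ when $y+1=x$'' is exactly what makes the 30 maps the full set of pairs (capped point, adjacent pair in the remaining 5-cycle).

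What each approach buys:
\begin{itemize}
\item Your argument gives invariance at every $N$, independent of genericity and of the computed basis. This includes $N=4$, where the rank is $54$ and the kernel is 17-dimensional. As you rightly flag, (1) therefore cannot hold literally at $N=4$, despite the stated range $[4,1000]$.
\item The paper's check is quicker to state, but it only certifies invariance where the generic basis is valid.
\end{itemize}

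Two harmless slips. First, the nine double merges form two $D_6$-orbits (sizes $6$ and $3$), not one; only set-stability is needed anyway. Second, the centrality of $r^3$ plays no role, since $T_{r^3}$ already preserves $\ker M$ by (2).
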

\begin{proof}
These are verified with the use of \emph{Maple}:
\begin{enumerate}
  \item The \emph{Maple} function \texttt{Nullspace} produces a basis for $\ker M$ with real coefficients rational in $N$,  maximum numerator degree two, but with a common denominator of $N-2$. Denote $\{v_i\}_{i=1}^5$ the \emph{Maple} basis multiplied by $N-2$.
  \item For each of the basis vectors:
  $$M(T_{r}(v_i))=0 \text{, \quad and }M(T_s(v_i))=0.$$
  \item For each of the basis vectors, $T_{r^3}(v_i)=v_i$.
\end{enumerate}

\end{proof}

Note that if a  morphism $T\in \operatorname{Op}_{\mathcal{NC}}(6,5)$ can produce a non-zero $T(\xi)\in \hom_{\mathcal{C}}(0,5)$ with a crossing, then the maximality of $S_N\leq S_N^+$ at moment level five implies that $\langle \mathcal{NC},\xi\rangle=\mathcal{P}$. For $N\geq 5$, linear independence in $\hom_{\mathcal{P}}(0,5)$ means that $\operatorname{red}_{\mathcal{CR}}(T(\xi))=0$ implies that each of the coefficients are zero. As $|\mathcal{CR}(5)|=10$, each morphism $T\in\operatorname{Op}_{\mathcal{NC}}(6,5)$ thus gives \emph{ten} equations in the $\alpha_p$. Readily there are 18 such morphisms coming from $\operatorname{Op}_{\mathcal{NC}}(6,5)$:
\begin{enumerate}
  \item six singleton cappings $S_{x}$;
  \item six merge maps $T_{(\mathsf{Y},x)}$,
  \item six triple-to-pair maps $T_{\left(\BigPartition{
\Pblock .75 to 0.625:1,2,3
\Pblock 0.25 to 0.375: 1.5, 2.5
\Pline (2.5,0.375) (2.5,0.25)
\Pline (1.5,0.375) (1.5,0.25)
\Pline (1.5,0.375) (2.5,0.375)
\Pline (2,0.625) (2,0.375)
},x\right)}$
\end{enumerate}
However, adding these 180 equations to $M$ does not increase the rank.

\bigskip\noindent

Where to from here? Perhaps the use of morphisms in $\operatorname{Op}_{\mathcal{C}}(6)$ built using the tensor structure; or more use of the dual structure, e.g. $\xi^{*}\in \hom_{\mathcal{C}}(6,0)$; or possibly some use of $u^{\otimes 6}\xi=\xi\mathbf{1}$; or use of the Haar state. The goal is clear: augment $M$ to a matrix of full rank, or,  using the larger  $\operatorname{Op}_{\mathcal{C}}(6)$ structure, find a vector $\eta\in\hom_{\mathcal{C}}(0,6)$ outside $\ker M$. Or perhaps this obstruction is real because there is an exotic quantum permutation group with $\dim_{\mathcal{C}}(0,6)> \dim \hom_{\mathcal{NC}}(0,6)$?

\bigskip\noindent

Given that $\ker M$ is a well-parameterised five-dimensional subspace, with $\xi\in\ker M$ expressed as
$$\xi=\sum_{i=1}^5 t_iv_i\qquad(t_i\in\mathbb{C}),$$
if one could build in $\langle \mathcal{NC},\xi\rangle$ a map $\xi\mapsto Q(t)\xi_{\Labab}$, for a positive definite Hermitian form $Q:\ker M\to \mathbb{R}$:
$$Q(t)=t^*Bt\qquad (t=(t_1,t_2,t_3,t_4,t_5)),$$
then maximality at moment level six could be concluded. Here a failed attempt is outlined: recalling the entries of the basis $\{v_i\}_{i=1}^5$ are real, one can build towards such a form using the conjugate-linear reflection:
$$\overline{T_s}(\xi)=\sum_{j=1}^{5}\overline{t_j}T_s(v_j).$$
Next, for $\xi\in\ker M$, consider $\xi\otimes \overline{T_s}(\xi)$. Now apply $T_p\in \hom_{\mathcal{NC}}(12,4)$ where
$$p=
\BigPartition{
\Psingletons 1 to 0.75:1,2,11,12
\Pblock 1 to 0.5:5,8
\Pblock 1 to 0.75:6,7
\Pline (3,1) (3,0)
\Pline (4,1) (4,0)
\Pline (9,1) (9,0)
\Pline (10,1) (10,0)
}$$
and with $V=[v_1|v_2|v_3|v_4|v_5]$, define
$$B_{ij}=\sum_{q,r\in\mathcal{CR}(6)}V(q,i)V(r,j)d_{T_p}(q,r)N^{c_{T_p}(q,r)}.$$
Here
$$d_{T_p}(q,r)=\begin{cases}
              1, & \mbox{if } p\circ (q\otimes s(r))=\Labab, \\
              0, & \mbox{otherwise},
            \end{cases}$$
            and $c_{T_p}(q,r)$ counts the number of deleted blocks in $q\otimes s(r)\mapsto p\circ (q\otimes s(r))$. One can verify that $B$ is Hermitian, so $Q(t)=t^*Bt$ is a Hermitian form, and
$$(\operatorname{red}_{\mathcal{CR}}\circ T_p)(\xi\otimes \overline{T_s}(\xi))=Q(t)\xi_{\Labab}.$$
             Unfortunately for this choice of $T_p$, $B$ is not positive definite, with zeroes along the diagonal. Perhaps another choice of $T_p\in \operatorname{Op}_{\mathcal{NC}}(12,4)$ can help?
\bigskip\noindent

It is natural to mix easiness level and moment level and talk about easiness-moment-level:
\begin{definition}
The inclusion $S_N\leq S_N^+$ is maximal at easiness-moment level $(m,k)$ if for all non-zero $\xi=\sum_{i=1}^m \alpha_i\xi_{p_i}$ with $p_i\in\mathcal{CR}(k)$, $\langle \mathcal{NC},\xi \rangle$ equals $\mathcal{P}$ or $\mathcal{NC}$.
\end{definition}
\begin{theorem}\label{easi-mom}
For generic $N\geq 6$, $S_N\leq S_N^+$ is maximal at easiness-moment-level $(31,6)$. In particular, $S_6<S_6^+$ is maximal at easiness-moment-level $(31,6)$.
\end{theorem}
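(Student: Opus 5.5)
The plan is to reduce the statement to a property of the five-dimensional kernel $\ker M$ from the preceding proposition.

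Let $\xi=\sum_{i=1}^{31}\alpha_i\xi_{p_i}$ be non-zero with $p_i\in\mathcal{CR}(6)$. Regard $\xi$ as a vector $\alpha\in\mathbb{C}^{71}$ supported on at most $31$ coordinates indexed by $\mathcal{CR}(6)$. For $N\geq 6$, Theorem \ref{inj} makes the vectors $\{\xi_p\}_{p\in\mathcal{P}(6)}$ linearly independent, so $\xi\neq 0$ means $\alpha\neq 0$.

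If $\alpha\notin\ker M$, then some $T\in\mathcal{F}$ gives $(\operatorname{red}_{\mathcal{CR}}\circ T)(\xi)$ equal to a non-zero multiple of $\xi_{\Labab}$. Then Theorem \ref{noez} gives $\langle\mathcal{NC},\xi\rangle=\mathcal{P}$. The only remaining danger is a non-zero $\alpha\in\ker M$ with $|\operatorname{supp}(\alpha)|\leq 31$. So the theorem reduces to a statement about the kernel alone:
$$\text{every non-zero vector of }\ker M\text{ has at least }32\text{ non-zero coordinates}.$$
Equivalently, for every set $Z\subset\mathcal{CR}(6)$ with $|Z|=40$, the $40\times 5$ submatrix $V_Z$ of $V=[v_1|\cdots|v_5]$ formed by the rows in $Z$ has rank $5$. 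Indeed, if some combination $Vt$ vanishes on $40$ coordinates, its support has size at most $31$.

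To establish this, I would take the polynomial basis $v_1,\dots,v_5$ with entries in $\mathbb{R}[N]$ of degree at most three. The minimum support size of a non-zero vector in the column space of $V$ is $71-z_{\max}$, where $z_{\max}$ is the largest number of rows of $V$ lying in a proper subspace. Checking all $\binom{71}{40}$ subsets is infeasible, so I would compute $z_{\max}$ through the matroid of rows, working over $\mathbb{Q}(N)$.

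The search is organised through flats. Every zero set of a non-zero $Vt$ is contained in a maximal proper flat, that is, a set of rows spanning a $4$-dimensional subspace. Each such flat is determined by $4$ independent rows. So I would enumerate the hyperplanes spanned by $4$-subsets of rows, count for each one the rows it contains, and take the maximum. This is $\binom{71}{4}\approx 10^6$ generic rank computations over $\mathbb{Q}(N)$, which is feasible. Two reductions cut it down further. Rows are identified up to scalar multiples. The dihedral invariance of $\ker M$ lets one restrict to $D_6$-orbit representatives, and the identity $T_{r^3}=\operatorname{id}$ on $\ker M$ forces rows in the same $r^3$-orbit to coincide, which collapses many rows.

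The claim is that this gives $z_{\max}\leq 39$, so every non-zero kernel vector has support at least $32$. This is the number $31$ in the statement.

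The main obstacle is genericity. Ranks computed over $\mathbb{Q}(N)$ hold only away from finitely many $N$, namely the roots of the relevant minors. This is why the statement says ``generic $N$''. For the specialisation to $N=6$, I would redo the flat enumeration in exact arithmetic at $N=6$. At $N=6$ the kernel is still five-dimensional, since for $k=6\leq N$ no vector of the kind in Corollary \ref{bad} intrudes, and I would recheck that $z_{\max}\leq 39$ there. If the generic bound failed at $N=6$, I would supplement $\mathcal{F}$ with the $\operatorname{Op}_{\mathcal{NC}}(6,5)$ equations. These give no extra rank, but they might still change which supports occur.
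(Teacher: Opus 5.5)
Your proposal follows the paper's proof: the paper also reduces to showing that every non-zero vector of the five-dimensional $\ker M$ has at least $32$ non-zero coordinates, and settles this by enumerating the $4$-subsets of the $71$ hyperplanes $H_j$ (rows of $V$) of rank four, taking the one-dimensional intersection and counting incident rows, with maximum $n_0=39$ both generically and in exact arithmetic at $N=6$. Your symmetry reductions ($D_6$-orbit representatives, rows coinciding under $T_{r^3}$) are valid optimisations the paper does not use, provided incidences are still counted with multiplicity over all $71$ coordinates; also, five-dimensionality of $\ker M$ at $N=6$ is a computed fact ($\operatorname{rank} M=66$ in exact arithmetic), not a consequence of the linear independence of the $\xi_p$.
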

\begin{proof}
It will be shown that for \emph{generic} $N\geq 6$, if $\xi$ contains 31 or fewer crossing partitions, it is outside $\ker M$, and so $\langle\mathcal{NC},\xi\rangle=\mathcal{P}$.

\bigskip\noindent

Note that for $N>5$, the vectors in $\hom_{\mathcal{CR}}(0,6)$ are linearly independent. There exists a vector $\xi_0\in \ker M$ with a minimal number of crossing partitions appearing, say $71-n_0$. Therefore, if $\xi$ is a vector with $70-n_0$ or fewer crossing partitions appearing, then $\xi$ is outside $\ker M$, and $\langle \mathcal{NC},\xi\rangle=\mathcal{P}$. Necessarily:
$$\xi_0=\sum_{i=1}^5t_iv_i\qquad(t_i\in\mathbb{C}).$$
Each $v_i=\sum_{j=1}^{71}\beta_{i,j}\xi_{p_j}$, for $p_j\in\mathcal{CR}(6)$, and $\beta_{i,j}$ a polynomial in $N$ of degree at most three. Therefore $\xi_0$ is given by a point $x\in\mathbb{C}^5\backslash \{0\}$ that is on a maximal number $n_0$ of the 71 linear hyperplanes:
$$H_j:=\left[\sum_{i=1}^5\beta_{i,j}t_i=0\right]\qquad (j=1,\dots,71).$$
Take $J\subset[71]$ as a subset of these 71 linear hyperplanes and form the $|J|\times 5$ matrix $V_J$ with entries $\beta_{i,j}$. If $\operatorname{rank}(V_J)<4$, one can keep adding hyperplanes to $J$ until one finds $|J'|>|J|$ and $\operatorname{rank}(V_{J'})=4$. Then there exists a subset $J_0\subset J'$ of size four, such that $\operatorname{rank}(V_{J_0})=4$. The intersection of these four hyperplanes is one-dimensional, so take a vector $v_0$ from this intersection, and note that $v_0$ lies in the intersection of $n_0$ hyperplanes.

\bigskip\noindent

Therefore $n_0$ can be found as follows: find for all $J\subset \mathcal{P}_4([71])$ such that:
$$\dim \left(\bigcap_{j\in J}H_j\right)=1,$$
a vector $x_J\in \cap_{j\in J}H_J$. Then, find out how many of the other hyperplanes contain $x_J$. The $x_J$ that maximises this gives $n_0$ hyperplanes.

\bigskip\noindent

Therefore Algorithm \ref{alg} was run on \emph{Maple} and it was found that $n_0=39$, with the maximising vector in $\mathbb{C}v_1$.
\end{proof}
Algorithm \ref{alg} took almost ten hours to terminate using symbolic computation. At $N=6$, it took almost five hours to terminate.
Let us finish with some questions:
\begin{question}\label{710}
What does $\ker M$ look like at $N=4$ and $N=5$?  It is known from Theorems \ref{fourmax} and \ref{bane2} (1) that at there are no exotic quantum permutation groups here: is there an independent proof of maximality at moment level six for $N=4,5$?  \begin{enumerate}
\item If yes, does this proof also speak to $N\geq 6$?
\item If no, is there an alternative proof of maximality at moment level six (at $N=6$ or $N\geq 6$); or could there in fact be an exotic quantum permutation group at moment level six?
\end{enumerate}
\end{question}
Pushing Theorem \ref{thez3} --- particularly Proposition \ref{propdis} --- to four partitions feels intractable. Pushing Theorem \ref{easi-mom} --- in terms of computation time --- to $k=7$ feels intractable. Perhaps however restricting to $k=7$ could make the push from moment-easiness levels $(3,k)$, and (generic $N$) $(31,6)$ tractable. That is, if $\xi$ in $\hom_{\mathcal{CR}}(0,7)$ is a linear combination of length four, is it the case that $\langle \mathcal{NC},\xi\rangle=\mathcal{P}$?
\begin{question}
Is $S_N\leq S_N^+$ maximal at moment-easiness-level (4,7)?
\end{question}

\begin{question}
Is $S_6<S_6^+$ maximal?
\end{question}
Given that there isn't a Tannaka--Krein proof here of Theorems \ref{fourmax} or \ref{bane2} (2), it seems a bit `rich' to be posing this question, but it is an obvious question. Perhaps better to pose the following question:
\begin{question}
  Is there a proof of the maximality of $S_4<S_4^+$ using Tannaka--Krein duality?
\end{question}

\appendix
\section{Freslon--Speicher's Maximality at Moment Level Five}\label{app:freslon}
\emph{The following argument of Freslon--Speicher appears in an unpublished note \cite{fre}. Amaury Freslon and Roland Speicher kindly permit the argument to be reproduced here.}

\bigskip

Suppose that $\mathbb{G}$ is exotic. Let $\pi:\mathcal{O}(S_N^+)\to \mathcal{O}(\mathbb{G})$, and $\varphi:=h_{\mathcal{O}(\mathbb{G})}\circ \pi$ the associated Haar idempotent. For any homogeneous orthogonal quantum group, by invariance under permutation of the indices (Section \ref{symm}, (\ref{inv})), the value of
$$h(u_{i_1j_1}\cdots u_{i_kj_k})=h\left(u^{\otimes k}_{\mathbf{i}\mathbf{j}}\right),$$
is determined by the kernels of the indices $\mathbf{i}=(i_1,\dots,i_k)$ and $\mathbf{j}=(j_1,\dots,j_k)$. These are the level sets $\ker \mathbf{i},\,\ker \mathbf{j}\in\mathcal{P}(k)$ of $\mathbf{i},\,\mathbf{j}$ viewed as functions $[k]\to [N]$:
$$\ker \mathbf{i}=\bigsqcup_{a\in \operatorname{im}(\mathbf{i})}\mathbf{i}^{-1}(a).$$
Let $\mathcal{P}_{N}(k)$ denote the set of partitions of $[k]$ with at most $N$ blocks.
Therefore pairs $(\mathbf{i},\mathbf{j}),\,(\mathbf{i}',\mathbf{j}')\in\mathcal{P}_N(k)\times\mathcal{P}_N(k)$ with equal kernels, $\ker \mathbf{i}=\ker \mathbf{i}'$, $\ker\mathbf{j}=\ker \mathbf{j}'$, give equal moments:
$$h\left(u^{\otimes k}_{\mathbf{i}'\mathbf{j}'}\right)=h\left(u^{\otimes k}_{\mathbf{i}\mathbf{j}}\right).$$
Let
$$\mathcal{P}_N:=\bigsqcup_{k\geq 1}\mathcal{P}_N(k).$$
Therefore the Haar state $h$ (and similar for $\varphi$) is, via abuse of notation, determined by:
$$h:\mathcal{P}_N\times\mathcal{P}_N\to \mathbb{C},\qquad (p,q)\mapsto h(p,q),$$
where
$$h(p,q)=h(u^{\otimes k}_{\mathbf{i}\mathbf{j}}),\text{ such that }(\ker \mathbf{i},\ker \mathbf{j})=(p,q).$$
The $\hom(0,k)$ spaces determine the values of the Haar state on monomials of degree $k$:

\begin{theorem}\label{Haardet}
Let $\mathcal{C}$ be the Tannaka--Krein dual of an exotic $\mathbb{G}\lneq S_N^+$. Then
$$\left[\text{for all }p,\,q\in \mathcal{P}(k),\quad \varphi(p,q)=h(p,q)\right]\iff\dim\hom_{\mathcal{C}}(0,k)=\dim\hom_{\mathcal{NC}}(0,k).$$
\end{theorem}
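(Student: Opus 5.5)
The statement is about how the Haar states $\varphi = h_{\mathcal{O}(\mathbb{G})}\circ\pi$ and $h$ act on the degree-$k$ moments. The plan is to realise both as projections onto fixed-point spaces and then compare ranks. The standard fact I would use is that for any compact matrix quantum group with fundamental representation $u$, the matrix
$$P_k := (\operatorname{id}\otimes h_{\mathbb{G}})(u^{\otimes k})\in M_{N^k}(\mathbb{C}),\qquad [P_k]_{\mathbf{i},\mathbf{j}}=h_{\mathbb{G}}(u^{\otimes k}_{\mathbf{i}\mathbf{j}}),$$
is the orthogonal projection onto the fixed vectors $\operatorname{Fix}(u^{\otimes k})=\hom(0,k)$. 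This follows from invariance of the Haar state together with the orthogonality and self-conjugacy of $u$, which make $u^{\otimes k}$ unitary. Since $\varphi(u^{\otimes k}_{\mathbf{i}\mathbf{j}})=h_{\mathcal{O}(\mathbb{G})}(v^{\otimes k}_{\mathbf{i}\mathbf{j}})$, where $v=\pi(u)$, the matrix $Q_k:=[\varphi(u^{\otimes k}_{\mathbf{i}\mathbf{j}})]$ is the orthogonal projection onto $\hom_{\mathcal{C}}(0,k)$. Likewise $P_k:=[h(u^{\otimes k}_{\mathbf{i}\mathbf{j}})]$ is the orthogonal projection onto $\hom_{\mathcal{NC}}(0,k)$.

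Next I would translate the left-hand side. By the index-permutation invariance (\ref{inv}) applied to both $h$ and $\varphi$ (both quantum groups contain $S_N$), the entry $[P_k]_{\mathbf{i},\mathbf{j}}$ depends only on $(\ker\mathbf{i},\ker\mathbf{j})$, and similarly for $Q_k$. Every pair $(p,q)\in\mathcal{P}_N(k)^2$ is realised by some index tuple. So the condition ``$\varphi(p,q)=h(p,q)$ for all $p,q$'' is exactly the matrix identity $Q_k=P_k$. Here I read $\mathcal{P}(k)$ in the statement as ranging over kernels that actually occur, i.e. partitions with at most $N$ blocks.

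The equivalence then follows. Because $\mathcal{NC}\leq\mathcal{C}$, we have $\hom_{\mathcal{NC}}(0,k)\subseteq\hom_{\mathcal{C}}(0,k)$. Two orthogonal projections onto nested subspaces coincide if and only if the subspaces are equal, and for nested finite-dimensional subspaces this holds if and only if the dimensions agree. For the forward direction one could also just take traces: $\operatorname{Tr}Q_k=\dim\hom_{\mathcal{C}}(0,k)$ and $\operatorname{Tr}P_k=\dim\hom_{\mathcal{NC}}(0,k)$.

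The main obstacle is the first step: justifying carefully that $(\operatorname{id}\otimes h)(w)$ is the orthogonal projection onto $\operatorname{Fix}(w)$ for the unitary representation $w=u^{\otimes k}$. It is idempotent by $(h\otimes h)\Delta=h$ and invariance. It is self-adjoint because $h(x^*)=\overline{h(x)}$ and the entries of $w$ are self-adjoint words whose adjoints are the reversed words; equivalently, it is the conditional expectation onto fixed points in the Peter--Weyl decomposition, since $h$ kills every nontrivial irreducible coefficient. Its range is exactly the fixed vectors, since $w(P\otimes 1)=P\otimes 1$ by invariance, and every fixed $\xi$ satisfies $P\xi=\xi$ by applying $h$ to $w(\xi\otimes1)=\xi\otimes1$. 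I would cite Woronowicz or Freslon's book for this rather than reprove it. I would also note that exoticity of $\mathbb{G}$ is not actually needed beyond $S_N\leq\mathbb{G}\leq S_N^+$.
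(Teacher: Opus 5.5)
Your argument is correct and is essentially the paper's proof unpacked. The paper cites that the $k$-th moment of the character equals $\dim\hom(0,k)$ (your trace of $Q_k$ and $P_k$) and the Weingarten formula (your fact that the moment matrix is the orthogonal projection onto $\hom(0,k)$, written in a basis), and your reading of $\mathcal{P}(k)$ as $\mathcal{P}_N(k)$ is a sensible clarification.
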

\begin{proof}
This follows from the fact that the $k$-th moment of the character of the fundamental representation is equal to $\dim\hom(0,k)$, and the Weingarten formula (\cite{ba1}, Th. 3.20).
\end{proof}

Suppose $p,\,q\in\mathcal{P}(k)$. If $p$ (or $q$) has a pair of related neighbours, then $h(p,q)$ is equal to the Haar state at a monomial of degree strictly less than $k$ (and, in particular, can be zero due to orthogonality along rows or columns). Therefore, if $S_N\leq S_N^+$ is known to be maximal for $k<k_0$, then if $p$ (or $q$) in $\mathcal{P}(k_0)$ have a related neighbour:
$$\varphi(p,q)=h(p,q).$$
Freslon--Speicher used this and the below to argue that $S_N\leq S_N^+$ is maximal at moment level five.
\begin{lemma}[Freslon--Speicher's Singleton Reduction Lemma]\label{srl}
If $p$ or $q\in\mathcal{P}(k)$ contain a singleton, then a moment $\varphi(p,q)$ is a linear combination of moments of order less than $k$, and moments of order at most $k$ with no singletons in the partitions.
\end{lemma}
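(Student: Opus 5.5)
The natural approach is to remove a singleton by pairing it with a neighbour, which is the mechanism by which $u$ is a magic unitary. Suppose without loss of generality that $p=\ker\mathbf{i}$ has a singleton at position $s$; the case of $q=\ker\mathbf{j}$ is symmetric, using the columns instead of the rows. Fix all indices other than $i_s$, and sum $i_s$ over all of $[N]$. Since $\sum_{a=1}^N u_{a j_s}=1$ in $\mathcal{O}(S_N^+)$, and hence in $\mathcal{O}(\mathbb{G})$, this gives
$$\sum_{a=1}^N\varphi(u_{i_1j_1}\cdots u_{a j_s}\cdots u_{i_kj_k})=\varphi(u_{i_1j_1}\cdots \widehat{u_{i_sj_s}}\cdots u_{i_kj_k}),$$
and the right-hand side is a moment of order $k-1$.

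The next step is to split the left-hand sum according to the kernel of the new index tuple. Using the index permutation symmetry (Lemma \ref{permute} and (\ref{inv}), applied to $\varphi=h_{\mathcal{O}(\mathbb{G})}\circ\pi$), each term depends only on the pair of kernels. The value $a$ is either a fresh value, not among the other $i_t$, or equal to the value of some existing block $B$ of $R_{[k]\setminus\{s\}}(p)$.

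The fresh values give $N-|p|+1$ copies of $\varphi(p,q)$; this uses $|p|\le N$. Each value equal to that of a block $B$ gives $\varphi(p_B,q)$, where $p_B$ is obtained by joining $s$ to $B$. So
$$(N-|p|+1)\,\varphi(p,q)=\varphi(\text{order }k-1)-\sum_{B}\varphi(p_B,q).$$
Each $p_B$ has strictly fewer blocks than $p$ and the same total size $k$.

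I would then iterate by downward induction on the number of singletons of the pair $(p,q)$, measured together with the number of blocks. Joining $s$ to $B$ removes the singleton at $s$. It can create no new singleton, because $B$ had at least one element and only grows. So each $p_B$ has strictly fewer singletons than $p$, and $q$ is unchanged. Repeating the step, first on the remaining singletons of $p$ and then on those of $q$, expresses $\varphi(p,q)$ as a linear combination of moments of order less than $k$ together with moments $\varphi(p',q')$ of order $k$ in which neither $p'$ nor $q'$ has a singleton. This is the statement of Lemma \ref{srl}.

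The main obstacle is the coefficient $N-|p|+1$, which must be nonzero to divide by. Since $|p|\le N$ for a realizable kernel, the coefficient is at least $1$, so this is fine. I would also check two degenerate cases: a block $B$ whose value would force more than $N$ distinct values, which cannot arise because $|p_B|\le|p|$; and the case $k=1$, where the lower-order moment is the counit-type value $\varphi(1)=1$.
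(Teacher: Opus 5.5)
Your proposal is correct and follows essentially the paper's argument: insert $\sum_{a}u_{aj_s}=1$ at the singleton, split the resulting sum by kernel into $N-|p|+1$ copies of $\varphi(p,q)$ (the paper's $\alpha$) plus terms $\varphi(p_B,q)$ with strictly fewer singletons, solve for $\varphi(p,q)$, and iterate. The differences are cosmetic. You insert the relation at the singleton's own position instead of first rotating it to position $1$. You also state explicitly that $\alpha=N-|p|+1\geq 1$, so the division is legitimate, and that the number of singletons is the quantity that decreases at each step.
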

\begin{proof}
Choose $\mathbf{i},\mathbf{j}$ such that:
$$\varphi(p,q)=\varphi(u_{i_1j_1}u_{i_2j_2}\cdots u_{i_kj_k}).$$
Let $\#_1(p)$ be the number of singletons in $p$. Without loss of generality, assume that $\#_1(p)\geq 1$, so that, after rotation, $p=\{1\}\sqcup p'$ for some $p'\in\mathcal{P}(k-1)$. Let $q'=\ker (j_2,j_3,\dots,j_k)$ and consider:
$$\varphi(p',q')=\varphi(u_{i_2j_2}u_{i_3j_3}\cdots u_{i_kj_k}).$$
Using $\sum_{\ell}u_{\ell j_1}=1$,
$$\varphi(p',q')=\sum_{\ell=1}^{N}\varphi(u_{\ell j_1}u_{i_2j_2}u_{i_3j_3}\cdots u_{i_kj_k})=\sum_{\ell=1}^{N}\varphi(p_\ell,q),$$
where $p_{\ell}=p'\cup\{\ell\}$. Note
$$\varphi(p_\ell,q)=\begin{cases}
                      \varphi(p,q), & \mbox{if } \{\ell\}\in p_{\ell}, \\
                      \varphi(p_\ell,q) & \mbox{, with $\#_1(p_\ell)<\#_1(p)$, otherwise}.
                    \end{cases}$$
                    That is, where $\alpha$ is the number of $p_{\ell}$ with $\ell$ a singleton in $p_\ell$:
                    $$\varphi(p,q)=\frac{1}{\alpha}\left(\varphi(p',q')-\sum_{\ell:\{\ell\}\not\in p_\ell}\varphi(p_\ell,q)\right).$$
Note $p',q'\in\mathcal{P}(k-1)$. Iterate this procedure on $p_\ell$  and $q$ until either all partitions have no singletons, or have degree less than $k$.
\end{proof}
\begin{lemma}
Every $p\in\mathcal{P}(5)$ contains a singleton, or, up to rotation, related neighbours.
\end{lemma}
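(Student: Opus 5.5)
The plan is a pigeonhole argument on the cyclic order of $[5]$. I read ``related neighbours, up to rotation'' as two cyclically adjacent points $x,x+1 \pmod 5$, the pair $\{5,1\}$ included, lying in the same block of $p$: a rotation $r^n$ makes such a pair genuinely adjacent in $[5]$. Assume $p\in\mathcal{P}(5)$ has no singleton. The goal is then to exhibit a block of $p$ containing two cyclically adjacent points.

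\textbf{Step 1: block-size patterns.} With no singletons every block has size at least two. The block sizes therefore form a partition of the integer $5$ into parts $\geq 2$. The only possibilities are $5$ and $3+2$. In either case $p$ has a block $B$ with $|B|\geq 3$.

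\textbf{Step 2: pigeonhole on the cycle.} View $[5]$ as the vertices of the $5$-cycle $1-2-3-4-5-1$. A subset with no two cyclically adjacent elements is an independent set of $C_5$, and such a set has size at most $\lfloor 5/2\rfloor=2$. One way to see this: if $B$ misses all adjacencies, each element of $B$ forces its clockwise successor to lie outside $B$. This gives an injection of $B$ into $[5]\setminus B$, so $2|B|\leq 5$. Hence the block $B$ with $|B|\geq 3$ contains some pair $\{x,x+1\}$ taken mod $5$.

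\textbf{Step 3: conclusion.} If $x\neq 5$, the points $x,x+1$ are already related neighbours in $p$. If the pair is $\{5,1\}$, apply the rotation $r$ (or any rotation moving $5,1$ to consecutive positions in $[5]$) to obtain related neighbours. Rotations are available by Proposition \ref{rots}, which is why ``up to rotation'' is harmless in the intended application.

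There is no real obstacle. The only point needing care is to make explicit that ``neighbours'' is understood cyclically, since $\{1,3,5\}\sqcup\{2,4\}$ has no linearly adjacent related pair and only the pair $\{5,1\}$ works.
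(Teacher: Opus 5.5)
Your proof is correct and complete. The paper states this lemma without proof: in Appendix A it is treated as evident, presumably by inspecting the two singleton-free block-size patterns, 5 and 3-2. So there is no competing argument to compare with. Your Steps 1--2 supply exactly the justification the paper omits, and the injection $b\mapsto b+1 \pmod 5$ is a clean way to see that a block of size at least three must contain a cyclically adjacent pair. Your observation about $\{1,3,5\}\sqcup\{2,4\}$ also correctly explains why ``up to rotation'' is needed in the statement.

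Your formulation has the added benefit of showing where the threshold lies. The same injection shows that any block of size $>k/2$ in a partition of $[k]$ contains cyclic neighbours, and the bound is sharp. At $k=6$, partitions such as $\{1,3,5\}\sqcup\{2,4,6\}$ or $\{1,4\}\sqcup\{2,5\}\sqcup\{3,6\}$ have neither singletons nor related neighbours, even up to rotation. This is exactly where the Freslon--Speicher reduction breaks down at moment level six.

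One small correction concerns your Step 3 remark about the application. In Appendix A, ``up to rotation'' is justified by traciality of the Haar states of $S_N^+$ and $\mathbb{G}$, both of Kac type. Traciality cyclically rotates the monomial, and hence rotates $p$ and $q$ simultaneously. Proposition \ref{rots} concerns rotations of intertwiner spaces and is not the tool used there. This does not affect the lemma itself, which is purely combinatorial.
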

As a corollary, using the explicit formulae in \cite{mcc}, any fifth moment can be calculated easily, for example:
$$h(u_{11}u_{22}u_{11}u_{23}u_{32})=\dfrac{N-3}{N(N-1)(N-2)(N^2-3N+1)}.$$

\bigskip\noindent

Combining Theorem \ref{Haardet} together with Freslon--Speicher's Singleton Reduction Lemma gives maximality at moment level five:
\begin{theorem}[Freslon--Speicher]\label{max5}
The inclusion  $S_N\leq S_N^+$ is maximal at moment level five.
\end{theorem}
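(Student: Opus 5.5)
We argue by contradiction, combining Theorem \ref{Haardet} with the Singleton Reduction Lemma \ref{srl} and the fact (Theorem \ref{frob}) that an exotic $\mathbb{G}$ has some minimal moment level $k_0$. Maximality at moment level four is already known, since $B_4-C_4=1$ and the single crossing vector $\xi_{\Labab}$ generates $\mathcal{P}$. So we may assume that $\dim\hom_{\mathcal{C}}(0,k)=\dim\hom_{\mathcal{NC}}(0,k)$ for all $k\leq 4$. By Theorem \ref{Haardet}, this gives $\varphi(p,q)=h(p,q)$ for all $p,q\in\mathcal{P}(k)$ with $k\leq 4$, where $\varphi$ is the Haar idempotent of $\mathbb{G}$ pulled back to $\mathcal{O}(S_N^+)$ and $h$ is the Haar state of $S_N^+$. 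The aim is to show that $\varphi(p,q)=h(p,q)$ also holds for all $p,q\in\mathcal{P}(5)$. Theorem \ref{Haardet} then forces $\dim\hom_{\mathcal{C}}(0,5)=C_5$, so no exotic $\mathcal{C}$ can be distinguished from $\mathcal{NC}$ at level five.

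Fix $p,q\in\mathcal{P}(5)$. By the preceding lemma, each of $p,q$ has either a singleton or, up to rotation, a pair of related neighbours.

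\textbf{Related neighbours.} Suppose, say, $i_a=i_{a+1}$ for the indices of $p$. Using the projection relations $u_{ij}u_{ik}=\delta_{jk}u_{ij}$ in $S_N^+$, the monomial either vanishes or collapses to a monomial of degree four. These relations hold in $\mathcal{O}(\mathbb{G})$ as well, so the same collapse applies to both $\varphi$ and $h$. The degree-four values agree, hence $\varphi(p,q)=h(p,q)$. Rotations are handled by the index-permutation symmetry (\ref{inv}) together with traciality of both Haar states, which are Kac type. We treat $q$ symmetrically, using column relations.

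\textbf{No related neighbours.} Then one of $p,q$ has a singleton, and we apply Lemma \ref{srl}. This expresses $\varphi(p,q)$ as a linear combination of moments of order at most four, together with moments of order five whose partitions contain strictly fewer singletons. The coefficients are universal: they come only from the relation $\sum_\ell u_{\ell j}=1$ and the count $\alpha$. The identical identity therefore holds for $h$.

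\textbf{Induction.} We induct on the total number of singletons $\#_1(p)+\#_1(q)$. Each order-five moment produced by the reduction is again either of related-neighbour type, which is settled above, or has a singleton and fewer singletons in total, which is settled by the inductive hypothesis. The process terminates because a partition of $[5]$ with no singletons and no related neighbours up to rotation does not exist by the lemma; the only singleton-free rotation-neighbour-free candidates would need all blocks of size at least two with no cyclically adjacent elements in one block, which is impossible on five points. Hence $\varphi=h$ on all degree-five moments.

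\textbf{Main obstacle.} The delicate step is checking that the singleton reduction genuinely closes the loop. Each order-five term $\varphi(p_\ell,q)$ with $\ell$ joined to a block of $p'$ must either have fewer singletons or acquire related neighbours. The singleton is removed, but merging $\ell$ into a block may also destroy a singleton of $p'$, so the count $\#_1$ still drops. We must also confirm that $\alpha\neq 0$, i.e.\ that at least one value of $\ell$ keeps $\{1\}$ a singleton; this needs $N>|p'|$, which holds for $N\geq 5$. For small $N$ one falls back on Theorem \ref{fourmax}.
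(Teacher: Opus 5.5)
Your proposal is correct and follows essentially the same route as the paper: remove singletons with the Singleton Reduction Lemma, use the five-point lemma to obtain (cyclically) related neighbours, collapse these to degree-four moments where maximality at level four applies, and conclude via Theorem~\ref{Haardet}. You make explicit two steps the paper leaves implicit, namely the induction on $\#_1(p)+\#_1(q)$ and the traciality/projection-relation collapse. Your small-$N$ caveat is unnecessary: any moment that actually occurs has $|p|\leq N$, so $\alpha=N-|p'|\geq 1$ automatically.
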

\begin{proof}
The previous two lemmas imply that the calculation of any moment of order five reduces to a linear combination:
\begin{align*}
\varphi(p,q)
&= \sum_{\substack{ p_a,q_b \in \mathcal{P}(\ell)\\\ell \le 4}} \alpha_{a,b} \varphi(p_a,q_b)
 + \sum_{\substack{p_c,q_d \in \mathcal{P}(5) \\ \#_1(p_c)=\#_1(p_d)=0}} \beta_{c,d} \varphi(p_c,q_d)& \\
&= \sum_{\substack{ p_a,q_b \in \mathcal{P}(\ell)\\ \ell \le 4}} \alpha_{a,b} h(p_a,q_b)
 + \sum_{\substack{p_c,q_d \in \mathcal{P}(5) \\ \#_1(p_c)=\#_1(p_d)=0}} \beta_{c,d} h(p_c,q_d)&=h(p,q)
\end{align*}

by, respectively, maximality of $S_N\leq S_N^+$ at moment level four, and the previous lemma giving that $p_c$ or $q_d$ have related neighbours.
\end{proof}

\section{Algorithm 1}

Pseudocode:

\bigskip

\begin{algorithm}[H]\label{alg}
\caption{Maximal Hyperplane Intersection Search}
\KwIn{Matrix $M$ with $n$ rows}
\KwOut{Maximum number of hyperplanes containing a nonzero vector}

$n \leftarrow \mathrm{RowDimension}(M)$\;
$bestCount \leftarrow 0$\;
$bestVector \leftarrow \mathrm{NULL}$\;
$bestSubset \leftarrow \mathrm{NULL}$\;

\ForEach{subset $S \subseteq \{1,\dots,n\}$ with $|S|=4$}{

    $M_S \leftarrow$ submatrix of $M$ with rows $S$\;

    \If{$\mathrm{rank}(M_S)=4$}{

        $NS \leftarrow \mathrm{NullSpace}(M_S)$\;

            $t \leftarrow$ basis vector of $NS$\;
            $count \leftarrow 0$\;

            \For{$i \leftarrow 1$ \KwTo $n$}{

                \If{$count + (n-i+1) \le bestCount$}{
                    \textbf{break}\;
                }

                \If{$\langle M_i , t \rangle = 0$}{
                    $count \leftarrow count + 1$\;
                }
            }

            \If{$count > bestCount$}{
                $bestCount \leftarrow count$\;
                $bestVector \leftarrow t$\;
                $bestSubset \leftarrow S$\;
            }
        }
    }

\Return $(bestCount, bestSubset, bestVector)$\;

\end{algorithm}

\subsection*{Acknowledgement}
The author is grateful to Amaury Freslon and Roland Speicher for permission to reproduce their argument on maximality at moment level five as Appendix \ref{app:freslon}. Partition diagrams were typeset using Daniel Gromada's \texttt{partmac} package; the author is grateful that this tool is available. The manuscript was edited with the assistance of an AI language model (Claude, Anthropic); all mathematical content is the sole work of the author.


\begin{thebibliography}{99}

\bibitem{ba2}T. Banica, {\em Quantum permutation groups}, \texttt{arXiv:2012.10975} [math.QA], (2020)
\bibitem{ban}T. Banica, Homogeneous quantum groups and their easiness level, \emph{Kyoto J. Math.} \textbf{61}, 1--30 (2021)
\bibitem{ba1}T. Banica, \emph{Introduction to quantum groups}, Springer Nature Switzerland, (2023) doi:10.1007/978-3-031-23817-8.
\bibitem{bb3}T. Banica and J. Bichon, Quantum groups acting on 4 points, \emph{J. Reine Angew. Math.} \textbf{626}, 74--114 (2009)
\bibitem{bbc}T. Banica, J. Bichon, and B. Collins, Quantum permutation groups: a survey, in: Noncommutative harmonic analysis with applications to probability, volume \textbf{78} of Banach Center Publ. (Polish Acad. Sci. Inst. Math., Warsaw), 13--34, (2007)
\bibitem{bbcc}T. Banica, J. Bichon, B. Collins, and S. Curran, A maximality result for orthogonal quantum groups, \emph{Comm. Algebra} \textbf{41}, 656--665, (2013)
\bibitem{bc2}T. Banica and B. Collins, Integration over compact quantum groups, \emph{Publ. Res. Inst. Math. Sci.} \textbf{43}, 277--302 (2007)
 \bibitem{bac}T. Banica and B. Collins,  Integration over quantum permutation groups, \emph{J. Funct. Anal.} \textbf{242} 641--657 (2007)
\bibitem{bcs}T. Banica, S. Curran, and R. Speicher, Classification results for easy quantum groups, \emph{Pacific J. Math.} \textbf{247}, 1--26 (2010)
 \bibitem{bas}T. Banica and R. Speicher, Liberation of orthogonal Lie groups, \emph{Adv. Math.} \textbf{222}, 1461--1501 (2009)
\bibitem{bra}R. Brauer, On algebras which are connected with the semisimple continuous groups, \emph{Ann. of Math.} \textbf{38}, 857--872 (1937)
\bibitem{bcf} M. Brannan, A. Chirvasitu and A. Freslon, Topological generation and matrix models for quantum reflection groups, \emph{Adv. Math.}, \textbf{363}, 1--26 (2020)
\bibitem{fr1}A. Freslon, \emph{Compact matrix quantum groups and their combinatorics}, Volume \textbf{106} of LMS Student Texts in Mathematics, Cambridge University Press (2023)
\bibitem{fr2}A. Freslon, On the classification of partition quantum groups, \emph{Exp. Math.} \textbf{39}, no 2, 238--270 (2021)
\bibitem{fre}A. Freslon, \emph{Notes from a talk on the intermediate quantum permutation problem}, lecture notes (2025), available at \url{https://www.imo.universite-paris-saclay.fr/~amaury.freslon/}
\bibitem{fsw}A. Freslon, A. Skalski, S. Wang, Tracial central states on compact quantum groups, \emph{J. Funct. Anal.}, \textbf{289} 7, 110988 (2025)
\bibitem{grw}D. Gromada and M. Weber, Intertwiner spaces of quantum group subrepresentations, \emph{Comm. Math. Phys.}, \textbf{376}, 81--115, (2020) \url{doi:10.1007/s00220-019-03463-y}
\bibitem{maa}L. Maa{\ss}en, The intertwiner spaces of non-easy group-theoretical quantum groups. \emph{J. Noncommut. Geom.} \textbf{14}, no. 3, pp. 987--1017 (2020).
\bibitem{nis}A. Nica and R. Speicher, \emph{Lectures on the Combinatorics of Free Probability}, Cambridge University Press, (2006).
\bibitem{juw}S. Jung and M Weber, Models of quantum permutations, \emph{J. Funct. Anal.}, \textbf{279}, Issue 2, (2020) \url{https://doi.org/10.1016/j.jfa.2020.108516}
    \bibitem{mal}S. Malacarne, Tannaka--Krein duality for compact quantum groups, \emph{J. Funct. Anal.} \textbf{278} (2020).
\bibitem{mc1}J.P. McCarthy, A state-space approach to quantum permutations, \emph{Exp. Math.}, Volume \textbf{40}, Issue 3, 628--664 (2022)
\bibitem{mc2}J.P. McCarthy, Tracing the orbitals of the quantum permutation group, \emph{Arch. Math.} \textbf{121} (2), 211--224 (2023)
\bibitem{mcc} J.P. McCarthy, Correction to: tracing the orbitals of the quantum permutation group, \emph{Arch. Math.} \textbf{123} (2), 681--682 (2024)
\bibitem{mc3}J.P. McCarthy, Analysis for idempotent states on quantum permutation groups. \emph{Math Phys Anal Geom} \textbf{28}, 14 (2025) \url{https://doi.org/10.1007/s11040-025-09511-5}
\bibitem{scw}L. Schmitz and M. Wack,  Finite Gröbner bases for quantum symmetric groups.  \emph{arXiv preprint} \texttt{arXiv:2503.15104} (2025)
\bibitem{wa1}S. Wang, Free products of compact quantum groups, {\em Comm. Math. Phys.} {\bf 167}, 671--692 (1995)
\bibitem{wa2}S. Wang, Quantum symmetry groups of finite spaces, {\em Comm. Math. Phys.} {\bf 195}, 195--211 (1998)
\bibitem{web}M. Weber,  Quantum Permutation Matrices. \emph{Complex Anal. Oper. Theory} \textbf{17}, 37 (2023) \url{https://doi.org/10.1007/s11785-023-01335-x}
\bibitem{wo1}S.L. Woronowicz, Compact matrix pseudogroups, {\em Comm. Math. Phys.} {\bf 111}, 613--665 (1987)
\end{thebibliography}
\end{document}